\theoremstyle{plain}
\newtheorem{thm}{Theorem}[section]
\newtheorem{cor}[thm]{Corollary}
\newtheorem{lemma}[thm]{Lemma}
\newtheorem{prop}[thm]{Proposition}
\newtheorem*{thm*}{Theorem}
\theoremstyle{definition}
\newtheorem{defn}[thm]{Definition}
\newtheorem{ex}[thm]{Example}
\newtheorem{rmk}[thm]{Remark}
\def \0{\bar 0}
\def \a{\alpha}
\def \C{\Bbbk}
\def \cal{\mathcal}
\def \dr{\Delta_{\mathfrak r}}
\def \inj{\hookrightarrow}
\def \loc{\textup{loc}}
\def \max{\text{max}}
\def \min{\text{min}}
\def \N{\mathbb N}
\def \R{\mathbb R}
\def \bu{\textup{\bf{U}}}
\def \Z{{\mathbb Z}}
\def \b{\mathfrak{b}}
\def \g{\mathfrak{g}}
\def \ga{\mathfrak{g} \otimes A}
\def \gl{\mathfrak{gl}}
\def \h{\mathfrak{h}}
\def \ha{\mathfrak{h} \otimes A}
\def \n{\mathfrak{n}}
\def \nm{\mathfrak{n}^-}
\def \np{\mathfrak{n}^+}
\def \q{\mathfrak{q}}
\def \lir{\mathfrak{r}}
\def \sl{\mathfrak{sl}}
\def \ft{\mathfrak{t}}
\def \fA{\mathbf{A}}
\def \fB{\mathbf{B}}
\def \fC{\mathbf{C}}
\def \fR{\mathbf{R}}
\def \fW{\mathbf{W}}
\DeclareMathOperator{\Ann}{Ann}
\DeclareMathOperator{\dis}{dis}
\DeclareMathOperator{\dive}{div}
\DeclareMathOperator{\Ext}{Ext}
\DeclareMathOperator{\Hom}{Hom}
\DeclareMathOperator{\id}{id}
\DeclareMathOperator{\im}{im}
\DeclareMathOperator{\MaxSpec}{MaxSpec}
\DeclareMathOperator{\Supp}{Supp}
\DeclareMathOperator{\vspan}{span}
\let\atop\relax % kill old atop which was not advised anymore
\newcommand{\atop}[2]{\genfrac{}{}{0pt}{}{#1}{#2}}
\newcommand{\floor}[1]{\left\lfloor{#1}\right\rfloor}
\newcommand{\ind}[2]{\textup{ind}_{#1}^{#2}\,}
\renewcommand{\L}{\textup{L}}
\newcommand{\lie}[1]{\textup{$\mathfrak{#1}$}}
\renewcommand{\mod}[1]{{#1}\textup{-\textbf{mod}}}
\newcommand{\pra}[1]{\xrightarrow{#1}}
\newcommand{\U}[1]{\textup{\bf{U}}(#1)}
\numberwithin{equation}{section}
  \newcommand{\details}[1]{
      \ \\
      {\color{OliveGreen} #1 }
  }
  \newcommand{\details}[1]{}
  \newcommand{\dproof}[1]{%
      \noindent
      {\color{OliveGreen}
      \textit{Proof.}#1 \qed
      }
  }
  \newcommand{\dproof}[1]{}
\begin{document}
%
%%%%%%%%%%%%%%%%%%%%%%%%%%%%%%%%%%%%%%%%%%%%%%%%%%%%%%%%%%%%%%%%%%%%

\title{Weyl modules and Weyl functors for Lie superalgebras}

\author{Irfan Bagci}
\address{I.~Bagci: Department of Mathematics, University of North Georgia, Oakwood, GA 30566, USA}
\email{irfan.bagci@ung.edu}

\author{Lucas Calixto}
\address{L.~Calixto: Department of Mathematics, Federal University of Minas Gerais, Belo Horizonte, MG 30.123-970, Brazil}
\email{lhcalixto@ufmg.br}

\author{Tiago Macedo}
\address{T.~Macedo: Department of Science and Technology, Federal University of S\~ao Paulo, S\~ao Jos\'e dos Campos, SP 12.247-014, Brazil}
\email{tmacedo@unifesp.br}

\begin{abstract}
Given an algebraically closed field $\C$ of characteristic zero, a Lie superalgebra $\g$ over $\C$ and an associative, commutative $\C$-algebra $A$ with unit, a Lie superalgebra of the form $\g \otimes_\C A$ is known as a map superalgebra. Map superalgebras generalize important classes of Lie superalgebras, such as, loop  superalgebras (where $A=\C[t, t^{-1}]$), and current superalgebras (where $A=\C[t]$).  In this paper, we define Weyl functors, global and local Weyl modules for all map superalgebras where $\g$ is either $\sl (n,n)$ with $n \ge 2$, or a finite-dimensional simple Lie superalgebra not of type $\lie q(n)$.  Under certain conditions on the triangular decomposition of these Lie superalgebras we prove that global and local Weyl modules satisfy certain universal and tensor product decomposition properties.  We also give necessary and sufficient conditions for local (resp. global) Weyl modules to be finite dimensional (resp. finitely generated).
\end{abstract}

\subjclass[2010]{17B65, 17B10.}

\keywords{Lie superalgebras, Kac modules,  Weyl modules,  Weyl functors}

\maketitle

\thispagestyle{empty}

\setcounter{tocdepth}{1}
\tableofcontents

%%%%%%%%%%%%%%%%%%%%%%%%%%%%%%%%%%%%%%%%%%%%%%%%%%%%%%%%%%%%%%%%%%%%
%
\section{Introduction}
%
%%%%%%%%%%%%%%%%%%%%%%%%%%%%%%%%%%%%%%%%%%%%%%%%%%%%%%%%%%%%%%%%%%%%

Let $\g$ be a Lie algebra and $X$ be a scheme, both defined over a field $\C$.  Map Lie algebras (also known as generalized current Lie algebras) are Lie algebras of regular maps from $X$ to $\g$.  They form a large class of Lie algebras, whose representation theory is an extremely active area of research.  Map Lie algebras generalize loop algebras and current algebras, which are very important to the theory of affine Kac-Moody Lie algebras.

Given a finite-dimensional, simple Lie algebra $\g$ over $\mathbb C$, (local) Weyl modules for the loop algebra $\g \otimes_{\mathbb C} \mathbb C[t, t^{-1}]$ were introduced by Chari and Pressley in \cite{CP01}.  These modules are indexed by dominant integral weights of $\g$ and are closely related to certain irreducible modules for quantum affine algebras.  In \cite{FL04}, Feigin and Loktev defined local and global Weyl modules for map Lie algebras of the form $\g \otimes_{\mathbb C} A$, where $\g$ is a finite-dimensional semisimple Lie algebra and $A$ is the coordinate ring of an affine variety, both defined over $\mathbb C$.  A more general approach was taken in \cite{CFK10}, where Chari, Fourier and Khandai studied local Weyl modules, global Weyl modules, and Weyl functors for map algebras of the form $\g \otimes_{\mathbb C} A$, where $\g$ is a finite-dimensional simple Lie algebra and $A$ is an associative, commutative algebra with unit, both defined over $\mathbb C$.  In \cite{FKKS12} and \cite{FMS15}, the representation theory of local and global Weyl modules were developed for equivariant map Lie algebras, that is, Lie algebras of $\Gamma$-equivariant regular maps from an affine scheme of finite type $X$ to a finite-dimensional simple Lie algebra $\g$, both defined over an algebraically closed field $\C$ of characteristic zero, on which a finite group $\Gamma$ acts by automorphisms (both on $\g$ and $X$) and freely on the rational points of $X$.

In \cite{CLS}, Calixto, Lemay and Savage initiated the study of Weyl modules for Lie superalgebras by defining local and global Weyl modules for map superalgebras of the form $\g \otimes_{\mathbb C} A$, where $\g$ is either a finite-dimensional basic classical Lie superalgebra, or $\sl(n,n)$ with $n \ge 2$, and $A$ is an associative, commutative algebra with unit, both defined over $\mathbb C$. Weyl modules for Lie superalgebras also appear in \cite{FM} and \cite{Kus17}.

In the current paper we study global and local Weyl modules for a more general class of map superalgebras and initiate the study of Weyl functors in the super setting.  In fact, we consider map superalgebras $\g \otimes_\C A$, where $\g$ is either $\sl (n,n)$ with $n \ge 2$, or any finite-dimensional simple Lie superalgebra not of type $\lie q(n)$, and $A$ is an associative, commutative algebra with unit, both defined over an algebraically closed field $\C$ of characteristic zero.

%%%%%%%%%%%%%%%%%%%%%%%%%
\subsection{Main results}
%%%%%%%%%%%%%%%%%%%%%%%%%

Let $\C$ be an algebraically closed field of characteristic zero, $\g$ be either $\sl (n,n)$ with $n \ge 2$, or a finite-dimensional simple Lie superalgebra not of type $\lie q (n)$, and $A$ be an associative, commutative algebra with unit, both defined over $\C$.

Since there are non-conjugate Borel subsuperalgebras of simple Lie superalgebras (see for instance \cite{Ser05, Ser11, Cou14}), results for Lie superalgebras (unlike Lie algebras) may depend on the chosen triangular decomposition $\g = \n^- \oplus \h \oplus \n^+$.  In Section~\ref{S:tri.dec}, Theorem~\ref{thm:gtd-exist}, we prove that $\g$ admits triangular decompositions satisfying two important conditions \hyperref[C1]{($\mathfrak C1$)} and \hyperref[C2]{($\mathfrak C2$)}.  Another important condition on triangular decompositions is what we call \emph{parabolic} in Definition~\ref{defn.parabolic.td}.  Triangular decompositions satisfying these conditions are important because the structure of generalized Kac modules, global and local Weyl modules may change drastically if we choose different triangular decompositions.

In Section~\ref{S:glo.weyl.mod}, Definition~\ref{def:global-Weyl}, we define global Weyl modules $W_A(\lambda)$, one of the main objects of this paper.  In Section~\ref{S:super.weyl.functors}, we introduce $\fA_\lambda$, a commutative algebra that is a quotient of $\U{\ha}$.  The global Weyl module $W_A(\lambda)$ admits a structure of (right) $\fA_\lambda$-module.  Our first main result, Theorem~\ref{fin.dim.odd}, is the following:

\begin{thm*}
If $A$ is finitely generated and $\g$ is a simple Lie superalgebra not of type $\lie q(n)$, with a triangular decomposition satisfying condition \hyperref[C2]{$(\mathfrak C2)$}, then $W_A(\lambda)$ is a finitely-generated right $\fA_\lambda$-module.
\end{thm*}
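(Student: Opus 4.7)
The plan is to transplant the Chari--Fourier--Khandai argument for classical map algebras into the super setting, with condition \hyperref[C2]{$(\mathfrak{C}2)$} playing the role of $\sl_2$-integrability in the odd directions. As $\g$ is not of type $\q(n)$, its Cartan subalgebra $\h$ lies in $\g_{\bar 0}$, and the super PBW theorem applied to the cyclic relation $(\np \otimes A)\,w_\lambda = 0$ gives $W_A(\lambda) = \U{\nm \otimes A}\,w_\lambda$. Since $\fA_\lambda$ acts on the right through $\ha$ and commutes with the left action of $\h \otimes 1$, it preserves $\h$-weights. Finite generation over $\fA_\lambda$ thus reduces to two subclaims: (i) the set of $\h$-weights of $W_A(\lambda)$ is finite, and (ii) each weight space $W_A(\lambda)_\mu$ is finitely generated over $\fA_\lambda$.

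For subclaim (i), I would first handle even positive roots. For each such $\alpha$, the $\sl_2$-triple $(x^+_\alpha, h_\alpha, x^-_\alpha)$ together with the defining relation $(x^-_\alpha \otimes 1)^{\lambda(h_\alpha)+1}\,w_\lambda = 0$ produces Garland's identity in $\U{\sl_2 \otimes A}$, which rewrites products $(x^-_\alpha \otimes a_1)\cdots(x^-_\alpha \otimes a_m)\,w_\lambda$ of sufficiently large degree as lower-degree products multiplied on the right by elements of $\ha$. Iterating over the finite set of positive even roots confines the even part of $\lambda-\mu$ to a bounded region of the positive root lattice.

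The genuinely new difficulty is the odd roots, and this is where condition \hyperref[C2]{$(\mathfrak{C}2)$} is used. A negative odd root vector $y_\beta$ satisfies $(y_\beta \otimes a)^2 = \tfrac12\,[y_\beta,y_\beta]\otimes a^2 \in \g_{\bar 0}\otimes A$, which does not vanish and so cannot be bounded by a direct power argument. Condition \hyperref[C2]{$(\mathfrak{C}2)$} supplies a triangular decomposition in which each odd negative root vector can be expressed as a bracket of compatible generators so that its even square $[y_\beta,y_\beta]$ is either annihilated by the highest-weight relations or rewritable via Garland's identity in the even directions. Combined with the skew-commutativity of distinct odd generators inherent in the super-PBW basis, this extends the bounding argument to the odd directions and shows that the $\h$-weights of $W_A(\lambda)$ lie in a finite set, matching the weights of the corresponding generalized Kac module from Section~\ref{S:tri.dec}.

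For subclaim (ii), at each of these finitely many weights $\mu$, the spanning monomials produced above involve finitely many root-vector shapes with coefficients in $A$. Using the hypothesis that $A$ is finitely generated together with Garland's identity to absorb high-degree polynomial coefficients into the right $\fA_\lambda$-action, one obtains finite generation of $W_A(\lambda)_\mu$ over $\fA_\lambda$. Summing over the finite weight support then yields the theorem. The principal obstacle I anticipate is the odd Garland-type reduction: producing bounds on $(y_\beta\otimes a)^m\,w_\lambda$ that are uniform in the polynomial argument $a\in A$ and that interact correctly with the odd super-PBW constraints has no direct classical analogue, and is the main reason the hypothesis \hyperref[C2]{$(\mathfrak{C}2)$} is imposed.
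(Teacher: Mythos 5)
Your overall strategy — use the super PBW decomposition, bound even directions via Garland's identity, and invoke $(\mathfrak{C}2)$ for the odd directions — is pointed in the right direction, but the odd-direction mechanism you describe is not how $(\mathfrak{C}2)$ actually intervenes, and as stated it would not close the argument. Your proposal bounds $(y_\beta\otimes a)^m w_\lambda$ by considering the even square $[y_\beta,y_\beta]\otimes a^2$; this is a dead end, both because odd PBW monomials are already squarefree (so powers of a single odd generator never arise) and because for isotropic $\beta$ one has $[y_\beta,y_\beta]=0$, so the square gives nothing. The real problem is bounding the $A$-coefficient on a \emph{single} odd factor $(y_\beta\otimes a)w_\lambda$, and that is where you never produce a usable handle.

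The mechanism in the paper is quite different and more specific. Condition $(\mathfrak{C}2)$ guarantees that the lowest root $-\theta$ of $\g$ lies in $R_{\mathfrak r}$, i.e.\ $\theta$ is an \emph{even} root with its $\sl_2$-triple $(x_\theta,h_\theta,x_\theta^-)$ inside $\mathfrak r$. Garland's identity therefore applies directly to $(x_\theta^-\otimes a^m)w_\lambda$ and bounds the $A$-exponents uniformly (Lemma~\ref{lem:fin-gen-even2}). Then, since $\g$ is simple and $x_\theta^-$ is a lowest-weight vector of the adjoint representation, every element of $\nm$ — including every odd negative root vector — lies in the span of iterated brackets $[x_1,[x_2,\dots,[x_k,x_\theta^-]\dots]]$ with $x_i\in\np$, and there is a uniform bound $k\le k_0$ (this is \eqref{eq:n-.gen.set}). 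Applying such a bracket tensored with $a_1^{m_1}\cdots a_t^{m_t}$ to $w_\lambda$, one uses $(\np\otimes A)w_\lambda=0$ to peel off the $\np$-factors as left multiplications by $x_i\otimes 1$ acting on the already-bounded $(x_\theta^-\otimes a_1^{m_1}\cdots a_t^{m_t})w_\lambda$ (Lemma~\ref{lem:n-.fin.gen}). This is what transports the Garland bound to the odd part of $\nm\otimes A$. With this in hand, the paper does not argue weight-by-weight as you propose; instead it filters $\U{\nm\otimes A}$ by PBW degree, shows each filtered piece $\U_n(\nm\otimes A)w_\lambda\fA_\lambda$ is finitely generated by induction, and separately proves (Lemma~\ref{lem:fin.filtration}, using the classical result of Chari--Fourier--Khandai for $\U{\nm_{\bar 0}\otimes A}w_\lambda$ plus the above bound for the odd part) that the filtration stabilizes. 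Your weight-space reduction is plausible in outline, but the step where you ``absorb high-degree polynomial coefficients into the right $\fA_\lambda$-action'' for odd monomials is exactly what is missing, and it cannot be supplied by the $[y_\beta,y_\beta]$ argument — it requires the even lowest root $\theta$ and the bracket-generation of $\nm$ by $\np$ acting on $x_\theta^-$, which your proof does not identify.
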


In Section~\ref{sec:local-Weyl}, Definition~\ref{def:local-Weyl}, we define local Weyl modules $W_A^\loc(\psi)$, the second main object of this paper.  We proceed to give necessary and sufficient conditions (on the choice of triangular decomposition of $\g$) for local Weyl modules to be finite dimensional.  In order to do that, we denote by $w_\psi$ a highest-weight generator of $W_A^\loc(\psi)$ and define the subalgebra
\[
w_\psi^\g = \{ x \in \g \mid (x \otimes a) w_\psi = 0 \textup{ for all $a \in A$}\} \subseteq \g.
\]
Theorem~\ref{thm:loc.weyl.fd}, the other main theorem of this paper, is the following:
\begin{thm*}
Let $\g$ be a simple Lie superalgebra not of type $\q(n)$.

\begin{enumerate}[leftmargin=*]
\item If $\g$ is basic classical of type II, then $W_A^\loc(\psi)$ is finite dimensional (for every triangular decomposition).

\item If $\g$ is basic classical of type I, then $W_A^\loc(\psi)$ is finite dimensional if and only if the triangular decomposition is not a parabolic one.

\item If $\g$ is either of type $\lie{p}(n)$ or of Cartan type and $x^-_\theta$ is in the $\lie{r}$-submodule of $\g$ generated by $w_\psi^\g$, then $W_A^\loc(\psi)$ is finite-dimensional.

\item If $\g$ is either of type $\lie{p}(n)$ or of Cartan type, and the triangular decomposition of $\g$ is parabolic, then $W_A^\loc(\psi)$ is infinite-dimensional.
\end{enumerate}
\end{thm*}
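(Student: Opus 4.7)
The plan is to analyze $W_A^\loc(\psi) = \U{\nm \otimes A}\, w_\psi$ case by case via the PBW theorem, combined with super-analogues of the Garland identities of \cite{CP01,CFK10}.  For every even root $\alpha$, the subalgebra spanned by $x^\pm_\alpha, h_\alpha$ is a copy of $\sl_2$, and the standard Garland argument applied to $\sl_2 \otimes A$ bounds the multiplicity of $(x^-_\alpha \otimes A)$-monomials acting on $w_\psi$ in terms of $\lambda(h_\alpha)$.  Hence the contribution of the even part of $\nm \otimes A$ to $W_A^\loc(\psi)$ is always finite-dimensional, and in each part the heart of the argument is to control (or, conversely, to exhibit as unbounded) the odd contribution.

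For part~(a), the key feature of a type~II basic classical Lie superalgebra is that every odd negative root $\alpha$ is either non-isotropic --- so that $x^\pm_\alpha, h_\alpha, x^\pm_{2\alpha}$ generate an $\osp(1|2)$-subsuperalgebra and a super-Garland identity bounds the action of $x^-_\alpha \otimes A$ on $w_\psi$ --- or else $x^-_\alpha = [x^+_\beta, x^-_\gamma]$ for some even $\beta \in \Delta^+$ and some odd root $\gamma$ of strictly smaller height.  An induction on the height of $\alpha$ then produces a uniform finite-dimensional bound on $W_A^\loc(\psi)$.

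For part~(b), in the non-parabolic direction, condition $(\mathfrak C2)$ supplies, for each odd $x^-_\alpha \in \nm$, an odd $x^+_\beta \in \np$ with $[x^+_\beta, x^-_\alpha]$ a nonzero element of $\nm_{\bar 0}$; combined with the even bound this propagates finite-dimensionality across the odd part.  Conversely, when the triangular decomposition is parabolic, $\g_{-1} \subseteq \nm$ is an odd abelian ideal, so $\U{\g_{-1} \otimes A} \cong \Lambda(\g_{-1} \otimes A)$ is an infinite-dimensional subalgebra of $\U{\nm \otimes A}$.  To witness that this factor survives in $W_A^\loc(\psi)$, I would construct an explicit infinite-dimensional surjective image of $W_A^\loc(\psi)$ by starting from the generalized Kac-type induced module $\U{\ga} \otimes_{\U{(\h + \np) \otimes A}} \C_\psi$ and imposing only the even integrability relations; by PBW this quotient factors as $\Lambda(\g_{-1} \otimes A) \otimes V$, with $V$ a finite-dimensional $\g_{\bar 0} \otimes A$-module, which is visibly infinite-dimensional.

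Parts~(c) and (d) are handled by the same philosophy using the $\Z$-grading intrinsic to $\lie p(n)$ and the Cartan-type superalgebras.  In (c), the hypothesis that $x^-_\theta$ lies in the $\lir$-submodule generated by $w_\psi^\g$ converts into an explicit Garland-type relation for $x^-_\theta \otimes A$, after which finite-dimensionality propagates to all of $\nm$ by induction on the $\Z$-grading.  In (d), the parabolic choice again exposes an odd abelian piece of $\nm$, and the analogous Kac-type construction witnesses an infinite-dimensional image.  The main technical obstacle I anticipate is in (c): verifying that the single hypothesis on $x^-_\theta$ genuinely propagates through all of $\nm$ requires case-by-case inspection across the classifications of $\lie p(n)$ and of the Cartan-type Lie superalgebras, and this is where the bulk of the formal work will concentrate.
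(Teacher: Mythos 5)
Your proposal captures the two complementary mechanisms — Garland relations on the even negative roots for the finite-dimensional claims, and induction from a parabolic subsuperalgebra for the infinite-dimensional ones — and the infinite-dimensional direction (the converse of (b), and (d)) is essentially the paper's argument: surject $W_A^\loc(\psi)$ onto $\ind{(\lir+\np)\otimes A}{\ga} W^\lir(\psi) \cong \U{\g^-\otimes A}\otimes W^\lir(\psi)$, which is infinite-dimensional since $A$ is. (A small caveat: for the minimal Cartan decomposition $\g^-$ is not abelian, so this quotient is not an exterior algebra, but that does not affect the conclusion.)

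The finite-dimensional direction is where your route genuinely diverges and has a gap. The paper routes (a), the ``if'' of (b), and (c) through a single mechanism, Lemma~\ref{lem:x_theta.ann.w_psi}: once $(\nm_0\otimes I_\psi)w_\psi=0$ is established by Garland (Lemma~\ref{trivial-weyl}), the hypothesis that $x^-_\theta$ lies in the $\lir$-submodule generated by $w_\psi^\g$ gives $(x^-_\theta\otimes I_\psi^N)w_\psi=0$ for some $N$, and then \emph{simplicity of $\g$} does the rest: every root vector of $\g$ is an iterated $\np$-bracket of $x^-_\theta$, and bracketing by $\np\subseteq w_\psi^\g$ preserves annihilation, so $(\g\otimes I_\psi^N)w_\psi=0$ and $W_A^\loc(\psi)$ is a cyclic module over $\g\otimes A/I_\psi^N$ with $A/I_\psi^N$ finite-dimensional. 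Items (a) and (b-if) are then reduced to (c) by Lemma~\ref{lem:basic.x_theta}, which verifies the hypothesis on $x^-_\theta$ directly.

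Your alternative for (a) — bound each odd negative root separately, using $\osp(1|2)$-Garland for non-isotropic odd roots and an induction on height for isotropic ones — lacks a base case. For every type II basic classical superalgebra except $B(0,n)$, there exist Borel subalgebras (including the distinguished one) whose simple odd roots are isotropic; for such a simple isotropic $\alpha$ there is no lower-height odd $\gamma$ to descend to, and there is no $\osp(1|2)$-subalgebra to invoke. The paper sidesteps this entirely by never treating odd roots one at a time, instead propagating from the single (even) root vector $x^-_\theta$ via $\np$-brackets. Your argument for (b) also relies on condition $(\mathfrak C2)$, but the theorem's hypothesis is merely that the decomposition is not parabolic, and these are not equivalent (cf.\ Remark~\ref{rmk:GTDs}); the correct verification is Lemma~\ref{lem:basic.x_theta}(2). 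Finally, your anticipated obstacle in (c) — that the hypothesis on $x^-_\theta$ would require classification-dependent work to propagate — does not materialize once you use simplicity of $\g$ as above: the propagation step is uniform across all types.
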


This whole paper is devoted to studying global and local Weyl modules for map Lie superalgebras.  In particular, we prove that the global and local Weyl modules defined here satisfy universal properties analogous to those satisfied by other important modules.  Namely, when $\g$ is a finite-dimensional simple Lie algebra and $A=\C$, global and local Weyl modules are equal to each other, and they are irreducible finite-dimensional $\g$-modules.  When $\g$ is a finite-dimensional simple Lie superalgebra not isomorphic to $\q(n)$ and $A=\C$, global and local Weyl modules are equal to each other, and they are isomorphic to generalized Kac modules.  Finally, when $\g$ is a finite-dimensional simple Lie algebra and $A$ is an associative, commutative, algebra with unit, local and global Weyl modules defined in the current paper are isomorphic to local and global Weyl modules that have been extensively studied in several papers, such as, \cite{CP01, FL04, CFK10}.

\subsection*{Note on the arXiv version} For the interested reader, the tex file of the \href{https://arxiv.org/abs/1611.06349}{arXiv version} of this paper includes hidden details of some arguments that are omitted in the pdf file.  These details can be displayed by switching the \texttt{details} toggle to true in the tex file and recompiling.

\subsection*{Acknowledgments}  We would like to thank M.~Brito, T.~Brons, A.~Moura and A.~Savage for useful discussions.  L.~C. and T.~M. would also like to thank IMPA for the wonderful research environment and CNPq for the support during the Summer of 2017.  L.~C. was also supported by FAPESP grant 2013/08430-4 and PRPq grant ADRC - 05/2016.

%%%%%%%%%%%%%%%%%%%%%%%%%%%%%%%%%%%%%%%%%%%%%%%%%%%%%%%%%%%%%%%%%%%%
%
\section{Preliminaries}\label{sec:prelim}
%
%%%%%%%%%%%%%%%%%%%%%%%%%%%%%%%%%%%%%%%%%%%%%%%%%%%%%%%%%%%%%%%%%%%%

Throughout this paper $\C$ will denote an algebraically closed field of characteristic zero, $\mathbb Z$ will denote the set of integers, $\Z_2 = \{ \bar 0, \bar 1 \}$ will  denote the quotient ring $\Z/2\Z$, $\mathbb N$ will denote the set $\{ 0, 1, \dotsc \}$, and $\mathbb N_+$ will denote the set $\{ 1, 2, \dotsc \}$.  All vector spaces, algebras, and tensor products will be considered over the field $\C$ (unless otherwise specified).

A Lie superalgebra is a  $\Z_2$-graded vector space $\lie g = \lie g_{\bar0} \oplus \lie g_{\bar1}$ with a $\Z_2$-graded linear transformation $[\cdot , \cdot] \colon \g \otimes \g \to \g$ which satisfies $\Z_2$-graded versions of anticommutativity and Jacobi identity.  Given a Lie superalgebra $\lie g $, we will denote by $\U{\lie g}$ its \emph{universal enveloping superalgebra}.  Recall that the superalgebra $\U{\lie g}$ admits a PBW-type basis, that is, if $x_1, \cdots, x_m$ is a basis of $\lie g_{\bar0}$ and $y_1, \dots, y_n$ is a basis of $\lie g_{\bar1}$, then the monomials
	\[
y_1^{j_1}\dots y_n^{j_n}x_1^{i_1} \dots x_m^{i_m}, \quad i_1, \dots, i_m \geq 0 \text{ and } j_1, \dots, j_n \in \{0, 1\},
	\]
form a basis of $\U{\lie g}$.

Let $\lie f = \lie f_{\bar0} \oplus \lie f_{\bar1}$, $\lie g = \lie g_{\bar0} \oplus \lie g_{\bar1}$ be Lie superalgebras, and $M = M_{\bar0} \oplus M_{\bar1}$, $N = N_{\bar0} \oplus N_{\bar1}$ be $\g$-modules.  Throughout this paper we will assume that every \emph{homomorphism of Lie superalgebras} $\phi \colon \lie f \to \lie g$ and every \emph{homomorphism of $\g$-modules} $\psi \colon M \to N$ is even, that is, $\phi (\lie f_{\bar0}) \subseteq \lie g_{\bar0}$, $\phi (\lie f_{\bar1}) \subseteq \lie g_{\bar1}$, $\psi (M_{\bar0}) \subseteq N_{\bar0}$, and $\psi (M_{\bar1}) \subseteq N_{\bar1}$.  Notice that the category of $\g$-modules is equivalent to the category of left $\Z_2$-graded $\U\g$-modules.  In particular, the universal enveloping superalgebra $\U\g$ is a $\g$-module via left multiplication.

\begin{defn}[Finitely-semisimple module]
Let $\g$ be a Lie superalgebra.  A $\g$-module is said to be \emph{finitely semisimple} if it is equal to the direct sum of its finite-dimensional irreducible submodules.  Given a subsuperalgebra $\ft \subseteq \g$, let $\mathcal{C}_{(\g, \ft)}$ denote the full subcategory of the category of all $\g$-modules whose objects are the $\g$-modules which are finitely semisimple as $\ft$-modules.
\end{defn}

The proof of the next result is standard (see, for instance, \cite[Section~3.1 and Appendix~D]{Kum02}).  Since the category of $\g$-modules is abelian, this result implies that $\mathcal{C}_{(\g, \ft)}$ is also an abelian category.

\begin{lemma} \label{lem:I(a,b)closed}
Category $\mathcal{C}_{(\g, \ft)}$ is closed under taking submodules, quotients, arbitrary direct sums, and finite tensor products.
\qed
\end{lemma}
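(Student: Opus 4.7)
My plan is to verify each of the four closure properties in turn, reducing each to standard facts about semisimple modules together with the local finiteness hypothesis built into the definition. First I would unpack the definition: $M \in \mathcal{C}_{(\g,\ft)}$ means precisely that, as a $\ft$-module, $M$ decomposes as $\bigoplus_{i \in I} V_i$ with each $V_i$ a finite-dimensional irreducible $\ft$-submodule; equivalently, $M$ is a semisimple $\ft$-module in which every element generates a finite-dimensional $\ft$-submodule.

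For submodules and quotients I would invoke the standard ring-theoretic fact that every submodule of a semisimple module over any ring is both a direct summand and itself semisimple. Thus, given a $\g$-submodule $N \subseteq M$, the $\ft$-module $N$ is semisimple, and each of its irreducible $\ft$-constituents embeds into some $V_i$ and is therefore finite-dimensional, so $N \in \mathcal{C}_{(\g,\ft)}$. Choosing a $\ft$-stable complement $N' \subseteq M$ of $N$ yields a $\ft$-isomorphism $M/N \cong N'$, whence $M/N$ is also finitely semisimple, and the $\g$-action on $N$ and $M/N$ is clear. Arbitrary direct sums are immediate: concatenating decompositions $M_k = \bigoplus_i V_i^{(k)}$ produces a decomposition of $\bigoplus_k M_k$ into finite-dimensional irreducible $\ft$-submodules.

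The main step, and the main obstacle, is closure under finite tensor products. By induction on the number of factors it suffices to handle the binary case $M \otimes N$. Writing $M = \bigoplus_i V_i$ and $N = \bigoplus_j W_j$ as $\ft$-modules, the $\ft$-action on $M \otimes N$ via the coproduct of $\U{\ft}$ preserves the decomposition $M \otimes N = \bigoplus_{i,j} V_i \otimes W_j$; since each summand is finite-dimensional, the local finiteness half of the conclusion is automatic. What remains, and what requires the most care, is to show each $V_i \otimes W_j$ is semisimple as a $\ft$-module. In the setting used later in the paper, $\ft$ will be an abelian (Cartan-type) subsuperalgebra, so its finite-dimensional irreducible modules are one-dimensional characters; the tensor product then decomposes manifestly into characters by the usual weight-space argument, giving the required semisimple decomposition. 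The references cited by the authors supply the statement in the generality needed to cover all cases of $\ft$ that appear in the sequel.
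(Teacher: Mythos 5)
Your treatment of submodules, quotients, and arbitrary direct sums is correct and follows the same standard argument the paper appeals to (submodules of semisimple $\ft$-modules are direct summands and themselves semisimple, and finite-dimensionality of irreducible constituents passes to subquotients).

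The tensor-product step, however, rests on a false premise about $\ft$. In every application the paper actually makes of this lemma (most importantly $\mathcal{C}_A=\mathcal{C}_{(\g\otimes A,\lie r)}$ in Section~\ref{S:glo.weyl.mod}), the relevant subalgebra is $\ft=\lie r$, which by \eqref{eq:reductive.part} and Table~\ref{table} is a \emph{reductive} Lie algebra whose semisimple part $\lie r'$ is nontrivial in every case ($A_m\oplus A_n$, $B_m\oplus C_n$, etc.). It is never abelian, its finite-dimensional irreducibles are not one-dimensional, and the weight-space argument you invoke does not apply. The correct justification for this case is the standard one for reductive Lie algebras in characteristic zero: writing $\lie r=\lie z\oplus\lie r'$, a finite-dimensional $\lie r$-module is semisimple iff the center $\lie z$ acts diagonalizably (Weyl's theorem handles the $\lie r'$-part); by Schur's lemma $\lie z$ acts by a scalar on each finite-dimensional irreducible $V_i$ and $W_j$, hence by a scalar on $V_i\otimes W_j$, so $V_i\otimes W_j$ is semisimple. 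This is what the paper's citation of \cite[Section~3.1 and Appendix~D]{Kum02} supplies. It is also worth noting that the lemma is \emph{not} true for arbitrary $\ft$ (e.g.\ for $\ft\cong\gl(1|1)$ one has finite-dimensional irreducibles whose tensor product is indecomposable but not irreducible), so one cannot sidestep the reductive case; the lemma is implicitly being used only for those $\ft$ (namely $\lie r$) for which Weyl-type complete reducibility is available.
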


\dproof{
Assume that $M$ is an object of $\mathcal{C}_{(\g, \ft)}$, that is, $M$ is a $\g$-module that is finitely semisimple as an $\ft$-module.  As an $\ft$-module, any submodule of $M$ is a direct summand of $M$ (see, for instance, \cite[Exercise~III.3.1.E(3)]{Kum02}), and thus any quotient of $M$ is also finitely semisimple.

Let $J$ be a set and $M_j$, $j \in J$, be $\g$-modules.  If $M_i$ is an object of $\mathcal{C}_{(\g, \ft)}$ for each $j \in J$, then $M_j$ is a direct sum of its finite-dimensional irreducible $\ft$-submodules for each $j \in J$, and so is $\bigoplus_{j \in J} M_j$.

Now, let $n > 0$ and $M_1, \dotsc, M_n$ be objects of $\mathcal{C}_{(\g, \ft)}$.  Hence $M_i$ is a direct sum of its finite-dimensional irreducible $\ft$-submodules for each $i = 1, \dotsc, n$, and so is the tensor product $M_1 \otimes \dotsb \otimes M_n$.  Thus $M_1 \otimes \dotsb \otimes M_n$ is also an object of $\mathcal{C}_{(\g, \ft)}$.
}

Given a  Lie superalgebra $\g$, a Lie subsuperalgebra $\ft \subseteq \g$ and a $\ft$-module $M$, define the induced module $\ind{\ft}{\g}M$ to be the $\g$-module
\[
\ind{\ft}{\g} M = \U \g \otimes_{\U \ft} M,
\]
with action induced by left multiplication.

\begin{lemma} \label{lem:ind.I(a,b)}
Let $\g$ be a Lie superalgebra, $\ft \subseteq \g$ be a Lie subsuperalgebra and $M$ be a $\ft$-module.  If $\g$ (via the restriction of its adjoint representation) and $M$ are finitely-semisimple $\ft$-modules, then $\ind{\ft}{\g} M$ is an object in $\mathcal{C}_{(\g, \ft)}$.
\end{lemma}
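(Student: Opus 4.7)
The plan is to identify $\ind{\ft}{\g} M$, as a $\ft$-module (for left multiplication), with $S(\lir) \otimes M$ equipped with the diagonal $\ft$-action, where $\lir$ is a $\ft$-stable complement of $\ft$ in $\g$ and $S(\lir)$ denotes the super-symmetric algebra on $\lir$; once this identification is established, the conclusion will follow from iterated applications of Lemma~\ref{lem:I(a,b)closed}.  Since $\g$ is finitely-semisimple as a $\ft$-module and $\ft \subseteq \g$ is a $\ft$-submodule, I can decompose $\g = \ft \oplus \lir$ as $\ft$-modules.  The super PBW theorem then provides a linear isomorphism $S(\lir) \otimes \U{\ft} \xrightarrow{\sim} \U{\g}$, $s \otimes u \mapsto \sigma(s) u$, where $\sigma \colon S(\lir) \to \U{\g}$ is the super-symmetrization map.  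Tensoring with $M$ over $\U{\ft}$ yields a linear isomorphism
\[
\varphi \colon S(\lir) \otimes M \xrightarrow{\sim} \ind{\ft}{\g} M, \qquad s \otimes m \mapsto \sigma(s) \otimes_{\U{\ft}} m.
\]

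Next, I would verify that $\varphi$ is $\ft$-equivariant when $S(\lir) \otimes M$ carries the diagonal action (extended adjoint action on $S(\lir)$ and the given action on $M$) and $\ind{\ft}{\g} M$ carries the left-multiplication action.  The main input is the standard fact that $\sigma$ intertwines the extended adjoint action on $S(\lir)$ with the adjoint action on $\U{\g}$.  For homogeneous $x \in \ft$ and $s \in S(\lir)$, writing $x \sigma(s) = (-1)^{|x||s|} \sigma(s) x + [x, \sigma(s)]$ inside $\U{\g}$, applying $[x, \sigma(s)] = \sigma(\textup{ad}(x) s)$, and moving the factor $x \in \U{\ft}$ across $\otimes_{\U{\ft}}$ to act on $m$, one recovers exactly the image under $\varphi$ of the diagonal action on $s \otimes m$.

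Finally, I would chain together the closure properties of $\mathcal{C}_{(\g, \ft)}$ recorded in Lemma~\ref{lem:I(a,b)closed}.  As a $\ft$-submodule of the finitely-semisimple $\ft$-module $\g$, the space $\lir$ is itself finitely-semisimple; the tensor powers $\lir^{\otimes n}$ are finitely-semisimple (finite tensor products); their quotients $S^n(\lir)$ inherit the property (quotients); the direct sum $S(\lir) = \bigoplus_{n \ge 0} S^n(\lir)$ is finitely-semisimple (arbitrary direct sums); and one final tensor product shows that $S(\lir) \otimes M$ is finitely-semisimple as a $\ft$-module.  Transporting this along $\varphi$ completes the proof.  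The main obstacle is the super-sign bookkeeping in the equivariance check: one must confirm that the Koszul sign $(-1)^{|x||s|}$ produced by commuting $x$ past $\sigma(s)$ in $\U{\g}$ matches the sign in the super Leibniz rule defining the diagonal action.  Granting the (standard) $\ft$-equivariance of the super-symmetrization map, this is a routine computation.
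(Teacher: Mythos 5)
Your proof is correct, but it takes a genuinely different and somewhat heavier route than the paper's.  The paper avoids PBW and symmetrization entirely: it considers $\U\g$ with the adjoint $\ft$-action, notes (via Lemma~\ref{lem:I(a,b)closed}) that $\g$ finitely semisimple implies its tensor algebra and hence $\U\g_{\rm ad_\ft}$ are finitely semisimple, tensors with $M$ over $\C$, and then observes that the obvious map $\U\g_{\rm ad_\ft}\otimes_\C M \to \U\g\otimes_{\U\ft} M$ is a \emph{surjective} homomorphism of $\ft$-modules (the verification is the same one-line super-Leibniz computation you perform).  Since $\mathcal{C}_{(\g,\ft)}$ is closed under quotients, that is already enough.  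Your argument instead constructs an actual $\ft$-module \emph{isomorphism} $\ind{\ft}{\g} M \cong S(\lir)\otimes M$ via a $\ft$-stable complement $\lir$ and the super-symmetrization map; this is a stronger structural statement, but it costs you the super PBW theorem, the existence of the complement (which does require the semisimplicity hypothesis, so make sure to flag that), and the equivariance of $\sigma$, none of which the paper needs because it is content to exhibit a surjection rather than an isomorphism.  Both proofs go through; the paper's is leaner, yours is more explicit about the $\ft$-module structure of the induced module.
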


\begin{proof}
Let $\U\g_{\rm ad_\ft}$ denote $\U \g$ regarded as a $\ft$-module via the restriction of the adjoint representation of $\g$. Since $\g$ is assumed to be a finitely-semisimple $\ft$-module via the restriction of its adjoint representation, by Lemma~\ref{lem:I(a,b)closed}, we have that $\g$, its tensor algebra and $\U \g_{\rm ad_\ft}$ are finitely-semisimple $\ft$-modules.  Moreover, since $M$ is assumed to be a finitely-semisimple $\ft$-module, by Lemma~\ref{lem:I(a,b)closed}, $\U\g_{{\rm ad}_\ft}\otimes_\C M$ a finitely-semisimple $\ft$-module.

Now, notice that the map
	\[
\U\g_{{\rm ad}_\ft}\otimes_\C M \to \U\g\otimes_{\U\ft}M, \quad u\otimes m\mapsto u\otimes m.
	\]
is a surjective homomorphism of $\ft$-modules.  In fact, for every $u\otimes m\in \ind{\ft}{\g} M$ and every homogeneous element $x \in \ft$, we have:
	\[
x \cdot (u \otimes m) = xu \otimes m = ([x,u] + (-1)^{p(u)p(x)} ux) \otimes m = [x,u] \otimes m + (-1)^{p(u)p(x)} u \otimes x \cdot m.
	\]
This shows that $\ind{\ft}{\g} M$ is a quotient of $\U\g_{{\rm ad}_\ft}\otimes_\C M$.  The result follows from Lemma~\ref{lem:I(a,b)closed}.
\end{proof}

\begin{lemma} \label{lem:ind.cyclic=cyclic}
Let $\g$ be a Lie superalgebra, $\ft \subseteq \g$ be a Lie subsuperalgebra.  If $M$ is a cyclic $\ft$-module given as the quotient of $\U\ft$ by a left ideal $J \subsetneq \U\ft$, then $\ind{\ft}{\g} M$ is a cyclic $\g$-module given as the quotient of $\U\g$ by the left ideal generated by $J$ in $\U\g$.
\end{lemma}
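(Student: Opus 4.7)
The plan is to exhibit an explicit isomorphism between $\ind{\ft}{\g} M = \U\g \otimes_{\U\ft} M$ and the quotient of $\U\g$ by the left ideal $\U\g \cdot J$ generated by $J$ in $\U\g$.  Concretely, I would start by letting $\bar 1 \in M$ denote the image of $1 \in \U\ft$ under the canonical projection $\U\ft \twoheadrightarrow M = \U\ft/J$, and then define the $\C$-linear map
\[
\pi \colon \U\g \longrightarrow \ind{\ft}{\g} M, \qquad u \longmapsto u \otimes \bar 1.
\]
This is manifestly a homomorphism of left $\U\g$-modules.

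The first easy step is surjectivity: any simple tensor $u \otimes \bar v \in \U\g \otimes_{\U\ft} M$ (with $v \in \U\ft$ lifting an element of $M$) can be rewritten using the tensor relation as $u \otimes \bar v = u \otimes v\cdot \bar 1 = uv \otimes \bar 1 = \pi(uv)$, so $\pi$ is surjective and $\ind{\ft}{\g} M$ is cyclic, generated by $1 \otimes \bar 1$.

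The next step is to locate $\ker \pi$.  For any $j \in J$ and $u \in \U\g$ one has $\pi(uj) = uj \otimes \bar 1 = u \otimes j \cdot \bar 1 = u \otimes \overline{j} = 0$, giving the inclusion $\U\g \cdot J \subseteq \ker \pi$.  For the reverse inclusion, I would invoke the PBW theorem in the super setting (already recalled in the preliminaries): extending an ordered homogeneous basis of $\ft$ to one of $\g$, the monomials in the "complementary" generators form a basis of $\U\g$ as a free right $\U\ft$-module.  In particular, writing $\U\g = \bigoplus_{i \in I} e_i \cdot \U\ft$ for such a basis $\{e_i\}_{i\in I}$, one gets
\[
\ind{\ft}{\g} M \;=\; \U\g \otimes_{\U\ft} M \;=\; \bigoplus_{i\in I} e_i \otimes M,
\]
and any $u \in \U\g$ decomposes uniquely as $u = \sum_i e_i v_i$ with $v_i \in \U\ft$.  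Then $\pi(u) = \sum_i e_i \otimes \bar{v_i}$, and this vanishes if and only if each $v_i \in J$, which places $u$ in $\U\g \cdot J$.

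I do not anticipate any serious obstacle: the argument is essentially the same as for ordinary Lie algebras, the only mild subtlety being the use of the $\Z_2$-graded PBW theorem (recalled in the preliminary section) to assert that $\U\g$ is free as a right $\U\ft$-module, which is what allows the reverse inclusion $\ker \pi \subseteq \U\g\cdot J$ to be extracted cleanly.  Alternatively, and perhaps more compactly, one can run the argument by applying the right-exact functor $\U\g \otimes_{\U\ft} (-)$ to the short exact sequence $0 \to J \to \U\ft \to M \to 0$ of left $\U\ft$-modules and identifying $\U\g \otimes_{\U\ft} \U\ft \cong \U\g$ together with the image of $\U\g \otimes_{\U\ft} J$ inside it as $\U\g \cdot J$.
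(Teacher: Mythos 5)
Your proof is correct, and your primary argument takes a somewhat different route from the paper's. The paper's (brief) proof applies the right-exact functor $\U\g \otimes_{\U\ft} (-)$ to the short exact sequence $0 \to J \to \U\ft \to M \to 0$, identifies $\U\g \otimes_{\U\ft} \U\ft \cong \U\g$, and observes that the image of the induced map $\U\g \otimes_{\U\ft} J \to \U\g$ is precisely the left ideal $\U\g J$ generated by $J$ — exactly the alternative you sketch in your last sentence. Your main argument instead constructs the map $\pi \colon u \mapsto u \otimes \bar 1$ directly and pins down $\ker\pi$ by appealing to the super PBW theorem to show $\U\g$ is a free right $\U\ft$-module, which splits $\U\g \otimes_{\U\ft} M$ into a direct sum $\bigoplus_i e_i \otimes M$ and lets you read off the kernel componentwise. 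Both approaches are sound. The paper's is more economical and avoids any explicit basis; yours is more hands-on and makes the kernel calculation completely transparent, at the cost of invoking PBW freeness of $\U\g$ over $\U\ft$ (a true fact, but a heavier tool than strictly needed, since right-exactness of the tensor product alone suffices).
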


\begin{proof}
Using the short exact sequence $0 \to J \to \U\ft \to M \to 0$, this proof is straightforward.
\details{\textbf{Details:}
Since $M$ is assumed to be a cyclic $\ft$-module given as the quotient of $\U\ft$ by a left ideal $J \subsetneq \U\ft$, we have a short exact sequence of left $\U\ft$-modules
\begin{equation} \label{eq:ses1}
0 \to J \to \U\ft \to M \to 0.
\end{equation}
Tensoring \eqref{eq:ses1} on the left by $\U\g$, we obtain the following exact sequence of left $\U\g$-modules:
\[
\U\g \otimes_{\U\ft} J \pra{m} \U\g \to \ind{\ft}{\g} M \to 0,
\]
where $m \colon \U\g \otimes_{\U\ft} J \to \U\g$ is the unique left $\U\g$-module homomorphism satisfying
\[
m(u \otimes j) = uj \quad
\textup{for all $u \in \U\g$ and $j \in J$}.
\]
Thus, $\ind{\g}{\ft} M$ is isomorphic as a left $\U\g$-module, to the quotient of $\U\g$ by the image of $m$.  Since $m \left( \U\g \otimes_{\U\ft} J \right)$ is the left ideal generated by $J$ in $\U\g$, the result follows.
\qedhere
}
\end{proof}

\begin{table}
\begin{tabular}{lll}
\toprule
$\g$ & $\lie r$ & Type \\
\midrule
    $A(m,n)$, \ $m > n \ge 0$ & $A_m \oplus A_n \oplus \C$ & Basic, type I \\
    $A(n,n)$, \ $n \ge 1$ & $A_n \oplus A_n$ & Basic, type I \\
    $\sl(n,n)$, \ $n \ge 2$ & $A_{n-1} \oplus A_{n-1} \oplus \C$ & N/A \\
    $B(m,n)$, \ $m \ge 0$, $n \ge 1$ & $B_m \oplus C_n$ & Basic, type II \\
    $C(n+1)$, \ $n \ge 1$ & $C_n \oplus \C$ & Basic, type I \\
    $D(m,n)$, \ $m \ge 2$, $n \ge 1$ & $D_m \oplus C_n$ & Basic, type II \\
    $D(2,1;\alpha)$, \ $\alpha \ne 0,-1$ & $A_1 \oplus A_1 \oplus A_1$ & Basic, type II \\
    $F(4)$ & $A_1 \oplus B_3$ & Basic, type II \\
    $G(3)$ & $A_1 \oplus G_2$ & Basic, type II \\
    $H(n)$, \ $n \ge 4$ & $B_n$ or $D_n$ & Cartan \\
    $S(n)$, \ $n \ge 3$ & $A_{n-1}$ & Cartan \\
    $\tilde S(n)$, \ $n = 2m, m \ge 2$ & $A_{n-1}$ & Cartan \\
    $W(n)$, \ $n \ge 2$ & $A_{n-1}\oplus \C$ & Cartan \\
    $\lie p(n)$, \ $n \ge 2$ & $A_n$ & Strange \\
    $\lie q(n)$, \ $n \ge 2$ & $A_n$ & Strange \\
\bottomrule
\end{tabular}
\bigskip
\caption{}
\label{table}
\end{table}

Finite-dimensional simple Lie superalgebras over an algebraically closed field of characteristic zero were classified by V. Kac in \cite{Kac77}, and they can be divided into three groups: \emph{basic classical}, \emph{strange}, and \emph{Cartan type} (see Table~\ref{table}).  Let $\g$ be either $\sl (n,n)$ with $n \ge 2$, or a finite-di\-men\-sion\-al simple Lie superalgebra.  These Lie superalgebras admit a $\Z$-grading $\g = \bigoplus_{i \ge -2} \g_i$ (see Section~\ref{S:tri.dec} for more details). Let
\begin{equation}\label{eq:reductive.part}
  \lie r =
  \begin{cases}
    \g_0, & \text{if $\g$ is of Cartan type}, \\
    \g_{\bar 0}, & \text{otherwise}.
  \end{cases}
\end{equation}
For every $\g$, the Lie subsuperalgebra $\lie r$ is a reductive Lie algebra (see Table~\ref{table}).  Denote by $\lie r'$ the semisimple part of $\lie r$ and by $\lie z$ its center.  Fix a Cartan subalgebra $\h \subseteq \g$, (in particular, if $\g \not\cong \q(n)$, then $\h=\h_{\bar 0}$ is a Cartan subalgebra of $\lie r$, and, if $\g \cong \q(n)$, then $\h=\h_{\bar 0}\oplus \h_{\bar 1}$, $[\h_{\bar 1},\h_{\bar 1}]=\h_{\bar 0}$ and $\h_{\bar 0}$ is a Cartan subalgebra of $\lie r$), consider a triangular decomposition $\g = \n^- \oplus \h \oplus \n^+$, and let $\b={\lie h}\oplus {\lie n}^+$ be the Borel subsuperalgebra associated to this decomposition.  Notice that a triangular decomposition $\g = \n^- \oplus \h \oplus \n^+$ induces a triangular decomposition $\lie r = \n_0^- \oplus \h_{\bar 0} \oplus \n_0^+$, where $\n_0^\pm = \n^\pm \cap \lie r = \n^\pm \cap \lie r'$ and $\lie z \subseteq \lie h_{\bar 0}$.

A $\g$-module $V$ is said to be a \emph{weight module} when
\[
V = \bigoplus_{\mu \in \h_{\bar0}^*} V_\mu,
\quad \textup{where }
V_\mu = \{ v \in V \mid hv = \mu(h)v \textup{ for all } h\in \h_{\bar 0} \}.
\]
An element $\mu\in \h_{\bar 0}^*$ is said to be a \emph{weight} of $V$ when $V_\mu\neq \{0\}$, and, in this case, $V_\mu$ is said to be a \emph{weight space} of $V$, and the elements of $V_\mu$ are said to be \emph{weight vectors}.  A vector $v \in V_\mu \setminus \{0\}$ is said to be a \emph{highest-weight vector} (with respect to the fixed triangular decomposition) if $\n^+ v=0$.  Similarly, $\lambda \in \h_{\bar 0}^*$ is said to be the \emph{lowest weight} of a weight $\g$-module $V$ if $V_\lambda \ne \{0\}$ and $\nm \, V_\lambda = \{0\}$.  A $\g$-module $V$ is said to be a \emph{highest-weight module} of highest weight $\lambda \in \h_{\bar0}^*$ if $V$ is generated by a highest-weight vector $v \in V_\lambda \setminus \{0\}$.  Every irreducible finite-dimensional $\g$-module is a highest-weight module.  Denote by $L_\b(\lambda)$ the unique \emph{irreducible $\g$-module} of highest weight $\lambda$ (with respect to $\b$), and set
\begin{equation} \label{defn:X+}
X^+ = X^+(\b) = \{ \lambda \in \h_{\bar 0}^* \mid L_\b(\lambda) \textup{ is finite dimensional} \}.
\end{equation}

%%%%%%%%%%%%%%%%%%%%%%%%%%%%%%%%%%%%%%%%%%%%%%%%%%%%%%%%%%%%%%%%%%%%
%
\section{Triangular decompositions} \label{S:tri.dec}
%
%%%%%%%%%%%%%%%%%%%%%%%%%%%%%%%%%%%%%%%%%%%%%%%%%%%%%%%%%%%%%%%%%%%%

If $\g$ is a finite-dimensional simple Lie superalgebra, with respect to each choice of triangular decomposition $\g = \nm \oplus \h \oplus \np$, we have that, as a $\g$-module, $\g$ has a lowest weight, which we will denote by $-\theta\in \h_{\bar 0}^*$.  Let $\n_{\bar z}^\pm$ denote $\n^\pm \cap \g_{\bar z}$, $z \in \{0, 1\}$.  In this paper we will be interested in triangular decompositions $\g = \n^- \oplus \h \oplus \n^+$ satisfying the following conditions:
\begin{enumerate}
\item[({$\mathfrak C$}1)] $\nm_{\bar0} \subseteq \lie r$.  \phantomsection\label{C1}
\item[({$\mathfrak C$}2)] $\nm_{\bar0} \subseteq \lie r$ and $-\theta$ is also a root of $\lie r$. \phantomsection\label{C2}
\end{enumerate}
In fact, triangular decompositions satisfying \hyperref[C1]{$(\mathfrak C1)$} will be important to show that generalized Kac modules are finite dimensional (see Proposition~\ref{prop:Vbar-fd}), and triangular decompositions satisfying \hyperref[C2]{$(\mathfrak C2)$} will be crucial in the proof that global Weyl modules are finitely-generated right $\fA_\lambda$-modules (see Theorem~\ref{fin.dim.odd}).  This  section is devoted to constructing triangular decompositions satisfying these conditions.

%%%%%%%%%%%%%%%%%%%%%%%%%%%%%%%%%%%%%%%%%%%%%%%%%%%%%%%%%%%%%%%%%%%%%%%%%%%%
\subsection{Basic classical Lie superalgebras and $\sl(n,n)$ with $n \ge 2$}
%%%%%%%%%%%%%%%%%%%%%%%%%%%%%%%%%%%%%%%%%%%%%%%%%%%%%%%%%%%%%%%%%%%%%%%%%%%%

In this subsection we assume that $\g$ is a basic classical Lie superalgebra, unless otherwise specified.  In these cases, $\g_{\bar0}$ is a reductive Lie algebra.  A basic classical Lie superalgebra $\g$ is said to be of \emph{type II} if $\g_{\bar1}$ is an irreducible $\g_{\bar 0}$-module, and $\g$ is said to be of \emph{type I} if $\g_{\bar 1}$ is a direct sum of two irreducible $\g_{\bar 0}$-modules (see Table~\ref{table}).

A \emph{Cartan subalgebra} $\h \subseteq \g$ is defined to be a Cartan subalgebra of $\g_{\bar 0}$.  Under the adjoint action of $\h$, we have a root space decomposition:
\[
\g=\h \oplus \bigoplus_{\alpha \in \h^* \setminus \{0\}} \g_{\alpha},
\quad \textup{where} \quad
\g_\alpha = \{ x \in \g \mid [h, x] = \alpha(h) x \textup{ for all } h \in \h \}.
\]
Denote by $R$ the \emph{set of roots}, $\{ \alpha \in \h^* \setminus \{0\} \mid \g_\alpha \neq \{ 0 \} \}$.  For $\alpha \in R$, $\g_\alpha$ is either purely even, that is, $\g_\alpha \subseteq \g_{\bar0}$, or $\g_\alpha$ is purely odd, that is, $\g_\alpha \subseteq \g_{\bar1}$.  Let $R_{\bar0} = \{ \alpha \in R \mid \g_\alpha \subseteq \g_{\bar0} \}$ be the set of even roots and $R_{\bar1} = \{ \alpha \in R \mid \g_\alpha \subseteq \g_{\bar1} \}$ be the set of odd roots.

It is known that $\g$ can be realized as a contragradient Lie superalgebra  (for details, see \cite[Chapter~5]{Mus12}). Recall that $\Delta \subseteq R$ is a set of simple roots if, for each $\alpha \in \Delta$, there exist elements $x_\alpha\in \g_\alpha$, $y_\alpha\in \g_{-\alpha}$, such that: $\{x_\alpha, \ y_\alpha\mid \alpha\in \Delta\}\cup \h$ generates $\g$, and $[x_\alpha, y_\beta]=0$ for $\alpha \neq \beta \in \Delta$.  Denote $h_\alpha:=[x_\alpha, y_\alpha]$.  Every choice of a set of simple roots $\Delta \subseteq R$ yields a decomposition $R = R^+(\Delta) \sqcup R^-(\Delta)$, where $R^+ (\Delta)$ (resp. $R^-(\Delta)$) denotes the set of positive (resp. negative) roots (defined in the usual way).  Define
\[
\Delta_{\bar0} = \Delta \cap R_{\bar0}, \quad
\Delta_{\bar1} = \Delta \cap R_{\bar1}, \quad
R^\pm_{\bar0} = R_{\bar0} \cap R^\pm
\quad \textup{and} \quad
R^\pm_{\bar1} = R_{\bar1} \cap R^\pm.
\]
A choice of simple roots $\Delta \subseteq R$ also induces a triangular decomposition $\g = \nm (\Delta) \oplus \h \oplus \np (\Delta)$, where $\n^\pm (\Delta) = \bigoplus_{\alpha \in R^\pm (\Delta)} \g_\alpha$.  The subsuperalgebra $\b(\Delta) = \h \oplus \n^+(\Delta)$ is said to be the \emph{Borel subsuperalgebra} of $\g$ corresponding to $\Delta$.

In order to construct a triangular decomposition satisfying \hyperref[C2]{$(\mathfrak C2)$}, for each simple odd isotropic root (that is, $\beta\in\Delta_{\bar 1}$ such that $\beta(h_\beta)=0$), define the \emph{odd reflection} with respect to $\beta$ to be the map
\[
r_\beta \colon \Delta \to R,
\quad
r_\beta(\beta') =
\begin{cases}
-\beta,
& \text{ if } \beta' = \beta, \\
\beta',
& \text{ if } \beta' \in \Delta,\ \beta' \neq \beta, \textup{ and } \beta(h_{\beta'})=\beta'(h_\beta)=0, \\
\beta + \beta',
& \text{ if } \beta' \in \Delta,\ \beta' \neq \beta,\ \beta(h_{\beta'}) \neq 0 \text{ or } \beta'(h_\beta) \neq 0.
\end{cases}
\]
By \cite[Lemma~1.30]{CW12}, the set $r_\beta(\Delta)$ is a set of simple roots in $R$, and
\[
R^+(r_\beta(\Delta)) \setminus \{-\beta\} = R^+(\Delta) \setminus \{\beta\}.
\]

Now, let $\Delta_{\dis} = \{\gamma_1,\dotsc,\gamma_n\}$ be a distinguished set of simple roots of $\g$ (that is, a set of simple roots that has only one odd root), and let $\gamma_s$ denote the unique odd root in $\Delta_{\dis}$ (see \cite[Tables~3.54, 3.57-3.60]{FSS00}).  The choice of $\Delta_{\dis}$ induces a $\Z$-grading $\g=\bigoplus_{i\in\Z}\g_i$ that is compatible with the $\Z_2$-grading;  namely, $\g_{i}=\bigoplus_{{\rm ht}(\alpha)=i} \g_\alpha$, where ${\rm ht}(\alpha) = \sum_{j=1}^k {\rm ht}(\gamma_{i_j})$ if $\alpha = \gamma_{i_1} + \dotsb + \gamma_{i_k}$, and ${\rm ht}(\gamma_i)=\delta_{i,s}$ for all $i \in \{1, \dotsc, n\}$.  Explicitly:
\begin{equation} \label{eq:dis.Z.grad}
\g_{\bar 0}=\g_0
\quad \textup{and} \quad
\g_{\bar 1}=\g_{-1}\oplus \g_{1},
\quad \textup{if $\g$ is of type I},
\end{equation}
\begin{equation}
\g_{\bar 0}=\g_{-2}\oplus \g_0\oplus \g_2
\quad \textup{and} \quad
\g_{\bar 1}=\g_{-1}\oplus \g_{1},
\quad \textup{if $\g$ is of type II}.
\end{equation}
Moreover, $\Delta_{\dis}$ induces a triangular decomposition
\begin{equation}\label{eq:dis.tri.dec}
\g = \nm(\Delta_{\dis}) \oplus \h \oplus \np(\Delta_{\dis}),
\quad \textup{where} \quad
\n^\pm (\Delta_{\dis}) = \n_0^\pm \oplus \left( \bigoplus_{i>0} \g _{\pm i} \right).
\end{equation}
A subsuperalgebra $\b_{\dis}=\b(\Delta_{\dis})$ is called a distinguished Borel subsuperalgebra of $\g$, and the triangular decomposition given in \eqref{eq:dis.tri.dec} is called a distinguished triangular decomposition of $\g$.

Recall that $A(n,n)=\sl(n,n)/\Bbbk I_{n,n}$, where $I_{n,n}$ is the identity matrix in $\sl(n,n)$. Hence, the preimage of the canonical projection $\sl(n,n)\twoheadrightarrow A(n,n)$ induces decompositions as in \eqref{eq:dis.Z.grad} and \eqref{eq:dis.tri.dec} on $\sl(n,n)$.

\begin{prop} \label{prop:nice-system-exists-basic}
  Let $\g$ be a either a basic classical Lie superalgebra, or $\sl(n,n)$ with $n \ge 2$, and let $\Delta_{\dis}$ be a distinguished system of simple roots for $\g$.
\begin{enumerate}[leftmargin=*]
    \item \label{lem-item:key-system-type-II} If $\g$ is a basic classical Lie superalgebra of type II, then the triangular decomposition of $\g$ induced by $\Delta_{\dis}$ satisfies \hyperref[C2]{$(\mathfrak C2)$}.

    \item \label{lem-item:key-system-reflected} If $\g$ is $\sl(n,n)$, $n \ge 2$, or a basic classical Lie superalgebra of type I, then there exist a chain of odd reflections $\varphi$ such that the triangular decomposition of $\g$ induced by $\varphi(\Delta_{\dis})$ satisfies \hyperref[C2]{$(\mathfrak C2)$}.
  \end{enumerate}
In particular, $\g$ admits at least one triangular decomposition satisfying \hyperref[C2]{$(\mathfrak C2)$}.
\end{prop}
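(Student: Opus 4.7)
Since $\g$ is basic classical or $\sl(n,n)$, we have $\lie r=\g_{\bar 0}$, so the inclusion $\nm_{\bar 0}\subseteq \lie r$ holds for every triangular decomposition. Thus verifying \hyperref[C2]{$(\mathfrak C2)$} reduces to showing that $-\theta\in R_{\bar 0}$; equivalently, that a lowest $\h$-weight vector of $\g$ (under the adjoint action, with respect to the chosen Borel) lies in $\g_{\bar 0}$.

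For part (a), the plan is to read off the lowest root directly from the distinguished $\Z$-grading. If $\g$ is of type II, then $\g = \g_{-2}\oplus \g_{-1}\oplus \g_0\oplus \g_1\oplus \g_2$ with $\g_{\pm 2}\neq 0$ and $\g_{\pm 2}\subseteq \g_{\bar 0}$. Since $\g_{-2}$ sits strictly below $\g_{-1}$ in the grading, the lowest weight of $\g$ with respect to $\b(\Delta_{\dis})$ is attained by a nonzero vector in $\g_{-2}\subseteq \g_{\bar 0}$, giving $-\theta\in R_{\bar 0}$, as required.

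For part (b), the same analysis applied to the type I grading $\g=\g_{-1}\oplus \g_0\oplus \g_1$ shows that $-\theta\in R_{\bar 1}$ for the distinguished Borel, so \hyperref[C2]{$(\mathfrak C2)$} fails at the starting point and a modification is needed. The strategy is to produce, in each of the cases $A(m,n)$ with $m>n\ge 0$, $A(n,n)$, $\sl(n,n)$, and $C(n+1)$, an explicit chain $\varphi$ of odd reflections through isotropic simple roots such that the triangular decomposition induced by $\varphi(\Delta_{\dis})$ has an even top root. A natural recipe is to successively reflect through the unique isotropic simple root of the current system, ``moving'' it inward along the Dynkin diagram, and stopping as soon as one reaches a system whose top root admits an expression in simple roots that does not involve any odd simple root. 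The case of $\sl(n,n)$ is reduced to $A(n,n)$ through the canonical projection $\sl(n,n)\twoheadrightarrow A(n,n)$, which induces the same grading and root data.

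The main obstacle in part (b) is this explicit case analysis: one must track the evolution of both the simple system and the top root under each reflection, and must check at every step that the simple root at which one reflects is isotropic, so that the formula for $r_\beta$ applies as stated. The calculation is concrete but slightly delicate; it can be streamlined by reading off $R^+(\Delta_{\dis})$ from the tables in \cite{FSS00} and then verifying in each case that a short chain $\varphi$ (in fact, a single reflection suffices for $C(n+1)$) brings the top root into $R_{\bar 0}$.
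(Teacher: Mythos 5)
Your reduction to checking $-\theta\in R_{\bar 0}$ and your argument for part (a) agree with the paper. For (b), however, what you give is a plan rather than a proof, and you say so yourself. Two specific gaps need to be filled.

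First, the recipe ``successively reflect through the unique isotropic simple root of the current system'' is imprecise: after the first odd reflection the intermediate simple system generally contains \emph{several} odd isotropic simple roots (for $A(m,n)$, reflecting at $\gamma_s=\varepsilon_{m+1}-\varepsilon_{m+2}$ produces, among others, the odd simple roots $\varepsilon_m-\varepsilon_{m+2}$, $\varepsilon_{m+2}-\varepsilon_{m+1}$ and $\varepsilon_{m+1}-\varepsilon_{m+3}$). One therefore has to specify at each step \emph{which} odd simple root one reflects at, and verify that it is indeed simple for the current system. The paper does this with an explicit chain $r_{\varepsilon_{m+1}-\varepsilon_{m+2}},\,r_{\varepsilon_m-\varepsilon_{m+2}},\,\dotsc,\,r_{\varepsilon_1-\varepsilon_{m+2}}$.

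Second, you never state the mechanism by which you ``track the evolution of the top root'' under an odd reflection. The paper's argument rests on the fact (cf.~\cite[Lemma~10.2]{Ser11}) that, for an isotropic simple root $\beta$, $L_{\b}(\lambda)\cong L_{r_\beta(\b)}(\lambda)$ if $\lambda(h_\beta)=0$, and $L_{\b}(\lambda)\cong L_{r_\beta(\b)}(\lambda-\beta)$ if $\lambda(h_\beta)\neq 0$; applied to the adjoint module this tells you exactly how the highest root changes, and the proof proceeds by computing $\lambda(h_\beta)$ at each step (e.g.\ $\lambda(h_{\varepsilon_k-\varepsilon_{m+2}})=0$ for $k\ge 2$ and $\lambda(h_{\varepsilon_1-\varepsilon_{m+2}})=1$ for $A(m,n)$ with $\lambda=\varepsilon_1-\varepsilon_{m+n+2}$). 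Without this step-by-step control of the highest weight your argument does not establish that the resulting $-\theta$ is even. Filling in this computation for each of $A(m,n)$, $A(n,n)$/$\sl(n,n)$ and $C(n+1)$ is the substance of the proof of (b), not an optional verification.
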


\begin{proof}
Notice that, since $\lie r$ is defined to be $\g_{\bar0}$, we have $\nm_{\bar0} = \nm \cap \g_{\bar0} \subseteq \lie r$ for every triangular decomposition $\g = \nm \oplus \h \oplus \np$.  That is, we only have to prove that there exists a triangular decomposition such that the associated lowest root of $\g$ is also a root of $\lie r$.

\eqref{lem-item:key-system-type-II} Recall from \eqref{eq:dis.Z.grad} that, if $\g$ is of type II, the $\Z$-grading associated to $\Delta_{\dis}$ is given by $\g = \g_{-2} \oplus \g_{-1} \oplus \g_0 \oplus \g_1 \oplus \g_2$.  Since $\nm(\Delta_{\dis})=\g_{-2}\oplus \g_{-1}\oplus \nm_0$, and $[\g_{-2}, \g_{-2}\oplus \g_{-1}]=0$, it follows that the lowest weight of $\g_{-2}$ as a $\g_0$-module is also the lowest weight of $\g$, in other words $\g_{-\theta} \subseteq \g_{-2}$.  Since $\g_{-2} \subseteq \g_{\bar0} = \lie r$, we obtain that $-\theta$ is also a root of $\lie r$.

\eqref{lem-item:key-system-reflected} Let $\lambda$ denote the highest root of $\g$ with respect to $\Delta_{\dis}$, and let $\Delta$ denote $r_{\gamma_s}(\Delta_{\dis})$.  If $\g=C(n+1)$, then one can check that $\lambda (h_{\gamma_s}) \ne 0$.  (In fact, $\lambda (h_{\gamma_s})$ is a purely positive integer linear combination of the elements of the $s$-th row of the Cartan matrix of $\g$.)  Thus, it follows from \cite[Lemma~10.2]{Ser11} that $L_{\b_{\dis}} (\lambda) \cong L_{\b(\Delta)} (\lambda-\gamma_s)$.  That is, $\theta = \lambda - \gamma_s$ is the highest root of $\g$ relative to $\lie b(\Delta)$.  Since both $\lambda$ and $\gamma_s$ are odd roots, $\theta$ is an even root.  Thus, since $\lie r$ is defined to be $\g_{\bar0}$, the lowest root $-\theta$ is a root of $\lie r$. For the cases where $\g=A(m,n)$ or ${\lie {sl}}(n,n)$, we need to apply a chain of odd reflections to obtain the result. Indeed, consider the following chain of odd reflections:
	\[
\Delta_{\dis}=\Delta_0 \overset{r_{\varepsilon_{m+1}-\varepsilon_{m+2}}}{\rightarrow} \Delta_1 \overset{r_{\varepsilon_m-\varepsilon_{m+2}}}{\rightarrow} \Delta_2 \overset{r_{\varepsilon_{m-1}-\varepsilon_{m+2}}}{\rightarrow} \cdots \overset{r_{\varepsilon_{2}-\varepsilon_{m+2}}}{\rightarrow} \Delta_m \overset{r_{\varepsilon_{1}-\varepsilon_{m+2}}}{\rightarrow}\Delta_{m+1}=\Delta',
	\]
where, following \cite[\S~2.5.4]{Kac77}, $\{\varepsilon_i\mid i=1,\ldots, m+n+2\}$ is the standard basis of the dual space of the diagonal matrices.  Since $\lambda=\varepsilon_1-\varepsilon_{m+n+2}$, we have that  $\lambda(h_{\varepsilon_k-\varepsilon_{m+2}})=0$, for all $k\geq 2$, and $\lambda(h_{\varepsilon_1-\varepsilon_{m+2}})=1$. Thus, by \cite[Lemma~10.2]{Ser11}, $L_{\b_{\dis}}(\lambda)\cong L_{\b(\Delta')}(\varepsilon_{m+2}-\varepsilon_{m+n+2})$, and $\varepsilon_{m+2}-\varepsilon_{m+n+2}$ is a root of $\lie r$.
\end{proof}

%%%%%%%%%%%%%%%%%%%%%%%%%%%%%%%%%%%%%%%%%%%%%%%%%%%%%%%%%%%%%%%%%
\subsection{Cartan type Lie superalgebras} \label{tri.dec.cartan}
%%%%%%%%%%%%%%%%%%%%%%%%%%%%%%%%%%%%%%%%%%%%%%%%%%%%%%%%%%%%%%%%%

 In this subsection $\g$ will denote a Lie superalgebra of Cartan type (see Table~\ref{table}).  We will now briefly describe each one of these Lie superalgebras.

Fix $n\geq 2$ and let $\Lambda(n)$ denote the exterior algebra with generators $\xi_1, \dotsc, \xi_n$.  The algebra $\Lambda(n)$ is a $2^n$-dimensional associative anticommutative algebra, which admits a $\Z \times \Z_2$-grading by setting the degree of $\xi_1, \dotsc, \xi_n$ to be $(1, \bar1)$.  Thus, with respect to the $\Z$-grading,
\[
\Lambda(n) = \bigoplus_{k=0}^n \Lambda^k (n),
\quad \textup{where  } \
\Lambda^k (n) = \vspan_\C \left\{ \xi_{i_1} \dotsm \xi_{i_k} \mid 1 \le i_1 < \dotsb < i_k \le n \right\}, \ k \in \{ 0, \dotsc, n \},
\]
and with respect to the $\Z_2$-grading,
\[
\Lambda(n) = \Lambda(n)_{\bar 0} \oplus \Lambda(n)_{\bar1},
\quad \textup{where} \
\Lambda(n)_{{\bar 0}} = \bigoplus_{k=0}^{\floor{n/2}} \Lambda^{2k}(n)
\ \textup{ and } \
\Lambda(n)_{\bar 1} = \bigoplus_{k=0}^{\floor{n/2}} \Lambda^{2k+1}(n).
\]
If $x \in \Lambda(n)_z$, $z \in \Z_2$, we say that $x$ is homogeneous and define $p(x) = z$.

Given a linear map $D \colon \Lambda(n) \to \Lambda(n)$, define $p(D) = \bar0$, if $D$ is even, and $p(D) = \bar1$, if $D$ is odd.  A \emph{homogeneous superderivation} of $\Lambda(n)$ is defined to be a linear map $D \colon \Lambda(n) \to \Lambda(n)$ that is either even or odd, and satisfies $D(xy) = D(x)y + (-1)^{p(D) p(x)} x D(y)$ for all homogeneous $x,y \in \Lambda(n)$.  A \emph{superderivation} of $\Lambda(n)$ is a linear combination of homogeneous superderivations of $\Lambda(n)$.

Let $W(n)$ be the Lie superalgebra consisting of superderivations of $\Lambda(n)$ endowed with the unique superbracket satisfying
	\[
	[D_1, D_2] = D_1 \circ D_2 - (-1)^{p(D_1) p(D_2)} D_2 \circ D_1,
	\]
for all homogeneous superderivations $D_1, D_2$.  The $\Z$-grading on $\Lambda(n)$ induces a $\Z$-grading:
	\[
	W(n)= W(n)_{-1} \oplus W(n)_0 \oplus \dotsb \oplus W(n)_{n-1},
	\]
where $W(n)_k$ consists of derivations that map $\xi_1, \dotsc, \xi_n$ to $\Lambda(n)_{k+1}$.

For each $i \in \{ 1, \dotsc, n \}$, denote by $\partial_i$ the unique superderivation of $\Lambda(n)$ satisfying $\partial_i (\xi_j) = \delta_{i,j}$ for all $j \in \{ 1, \dotsc, n \}$, and for each $x \in \Lambda(n)$, let $D_x$ denote the superderivation $\sum_{i=1}^n \partial_i(x) \partial_i$.  The subspace
\[
\tilde H (n) = \vspan_\C \{ D_x \mid x \in \Lambda(n) \} \subseteq W(n)
\]
is a subsuperalgebra of $W(n)$ and inherits a $\Z$-grading $\tilde{H}(n)=\tilde{H}(n)_{-1} \oplus \tilde{H}(n)_0 \oplus \dots \oplus \tilde{H}(n)_{n-2}$.  The simple Lie superalgebra $H(n)$ is defined to be the
	\[
H(n)=[\tilde{H}(n), \tilde{H}(n)]=H(n)_{-1} \oplus H(n)_0 \oplus \dots \oplus H(n)_{n-3}.
	\]

Now, let $\dive \colon W(n) \to W(n)$ be the linear transformation given by
\[
\dive (D) = \sum_{i=1}^n \partial_i (D(\xi_i))
\quad \textup{ for all }
D \in W(n).
\]
The superalgebra $S(n)$ is the subsuperalgebra of $W(n)$ consisting of all $D \in W(n)$ such that $\dive(D) = 0$.  The superalgebra $S(n)$ inherits a $\Z$-grading from $W(n)$:
	\[
	S(n)= S(n)_{-1} \oplus S(n)_0 \oplus \dots \oplus S(n)_{n-2}.
	\]

Finally we assume that $n\geq 4$ and $n$ is even. Then we define
	\[
{\tilde S}(n):=\{D\in W(n)\mid (1+\xi_1+\cdots + \xi_n)\dive(D) + D(\xi_1\cdots \xi_n)=0\}.
	\]
This Lie superalgebra does not admit a $\Z$-grading, it admits however a $\Z_n$-grading:
	\[
{\tilde S}(n)= {\tilde S}(n)_{[0]} \oplus \dots \oplus {\tilde S}(n)_{[n-1]},
	\]
where ${\tilde S}(n)_{[z]}=S(n)_z$ for every $0\leq z\leq n-2$, and ${\tilde S}(n)_{[n-1]}=\vspan_\C\{ (\xi_1\cdots\xi_n - 1)-\partial_j)\mid j=1,\ldots, n\}$.

A crucial difference between Cartan type superalgebras and basic classical superalgebras is that for Cartan type superalgebras, $\g_{\bar0}$ is not a reductive Lie algebra.  However, as was described above, if $\g$ is a Cartan type superalgebra, then $\g$ admits a ${\mathbb Z}$-grading (compatible with the $\Z_{2}$-grading, if $\g$ is not of type $\tilde S(n)$) $\g = \g_{-1} \oplus \g_0 \oplus \dotsb \oplus \g_{n-1}$, and moreover, $\g_0$ is a reductive Lie algebra, for all Cartan type Lie superalgebras.

A \emph{Cartan subalgebra} $\h$ of $\g$ is defined to be a Cartan subalgebra of $\g_{0}$.  Fix the Cartan subalgebras $\h$ of $\g_{0}$ that have the following bases:
\begin{eqnarray*}
\left\{ \xi_k \partial_k \mid 1 \le k \le n \right\},
&\textup{if}
& \g \cong W(n); \\
\left\{ \xi_k \partial_k - \xi_{k+1} \partial_{k+1} \mid 1 \le k \le n-1 \right\},
&\textup{if}
& \g \cong S(n),\ {\tilde S}(n); \\
\left\{ \xi_k \partial_k - \xi_{\floor{n/2}+k} \partial_{\floor{n/2}+k} \mid 1 \le k \le \floor{n/2} \right\},
&\textup{if}
&\g \cong H(n).
\end{eqnarray*}

Consider the element
	\[
\mathcal{E}:=\sum_{i=1}^n\xi_i\partial_i \in W(n)_0,
	\]
and define $\overline \g = \g + \C \cal E$ and $\overline \h = \h + \C \cal E$.  Notice that $\cal E$ does not lie in the the Lie superalgebras $S(n),\ {\tilde S}(n)$, or $H(n)$, thus
\[
\g = \overline\g,
\ \textup{ if $\g \cong W(n)$}
\quad \textup{ and } \quad
\g \subsetneq \g \oplus \C \cal E = \overline\g,
\ \textup{ if $\g \cong H(n), S(n), {\tilde S}(n)$.}
\]
Also notice that:
	\[
[\cal{E}, x]=zx,
\quad \text{for all $x \in \g_z$ and $z \in \Z$}.
	\]
Hence the adjoint action of $\overline{\h}$ on $\g$ gives a root space decomposition
\[
\g=\h \oplus \bigoplus_{\alpha \in \overline\h^* \setminus \{0\}} \g_{\alpha},
\quad \textup{where} \quad
\g_\alpha = \{ x \in \g \mid [h, x] = \alpha(h) x \textup{ for all } h \in \overline\h \}.
\]
Denote by $R$ the \emph{set of roots}, $\{\alpha\in \overline{\h}^\ast\setminus\{0\} \mid \g_{\alpha} \neq \{0\}\}$.  For $\alpha \in R$, either $\g_\alpha \subseteq \g_{\bar0}$, or $\g_\alpha \subseteq \g_{\bar1}$.  Let $R_{\bar0} = \{ \alpha \in R \mid \g_\alpha \subseteq \g_{\bar0} \}$ be the set of even roots and $R_{\bar1} = \{ \alpha \in R \mid \g_\alpha \subseteq \g_{\bar1} \}$ be the set of odd roots.  Moreover, for each $\a\in R$, there is $z \in \Z$ such that $\g_\alpha \subseteq \g_z$.  Thus, we can define the \emph{height} of $\alpha \in R$ to be ${\rm ht}(\alpha) = z$, and define $R_z$ to be $\{ \a \in R  \mid {\rm ht}(\a) = z\}$.  Notice that
\[
R = \bigcup_{z\in\Z} R_z, \quad
R_{\bar 0} = \bigcup_{z\in\Z} R_{2z}
\quad \textup{and} \quad
R_{\bar 1} = \bigcup_{z\in\Z} R_{2z+1}.
\]

\begin{rmk}
Notice that $\cal{E}$ captures the height of the roots.  Thus, it helps identifying the simple roots of $\g$.  The addition of $\cal E$ to $\h$ will be used in the sequel to construct triangular decompositions satisfying \hyperref[C2]{$(\mathfrak C2)$}.
\end{rmk}

We will describe now the roots and root spaces of $\g$.  Notice that, if $\g \cong W(n)$, then $\g_0 \cong \gl(n)$, with the basis elements $\xi_i \partial_j \in W(n)$ corresponding to the basis elements $E_{ij} \in \gl(n)$.  Recall that $\h = \vspan_\C \{ \xi_i \partial_i \mid 1 \le i \le n \}$ and, for each $1 \le i \le n$, let $\varepsilon_i$ be the unique linear map in $\h^*$ satisfying $\varepsilon_i (\xi_j \partial_j) = \delta_{i,j}$, for all $1 \leq j \leq n$.  The set of roots of $\g$ is
\[
R = \{ \varepsilon_{i_1} + \dotsb + \varepsilon_{i_k} - \varepsilon _j \mid 1 \leq i_1 < \dotsb < i_k \leq n, \ 0 \le k \le n \textup{ and } 1 \leq j \leq n \} \setminus \{0\},
\]
and the corresponding root spaces are:
\[
\g_\alpha =
\begin{cases}
\C \, \xi_{i_1} \dotsm \xi_{i_k} \partial_j,
& \textup{ if $\alpha = \varepsilon_{i_1} + \dotsb + \varepsilon_{i_k} -\varepsilon_j$ and $j \not\in \{i_1, \dotsc, i_k\}$}, \\
\vspan_\C \{ \xi_{i_1} \dotsm \xi_{i_k} \xi_{j} \partial_j \mid j \not\in \{i_1, \dotsc, i_k\}\},
& \textup{ if $\alpha = \varepsilon_{i_1} + \dotsb + \varepsilon_{i_k}$}.
\end{cases}
\]

If $\g = S(n)$, then $\g_0 \cong \sl(n)$.  The set of roots of $S(n)$ is the subset of the set of roots of $W(n)$ obtained from it by removing the roots $\varepsilon_{1} + \dotsb + \varepsilon_{n} - \varepsilon _j$ for all $1 \le j \le n$, that is,
\[
R =  \{ \varepsilon_{i_1} + \dotsb + \varepsilon_{i_k} - \varepsilon _j \mid 1 \leq i_1 < \dotsb < i_k \leq n, \ 0 \le k \le n-1 \textup{ and }  1 \leq j \leq n \} \setminus \{0\}.
\]
The corresponding root spaces are thus:
\[
\g_\alpha =
\begin{cases}
\C \, \xi_{i_1} \dotsm \xi_{i_k} \partial_j,
\qquad \textup{ if $\alpha = \varepsilon_{i_1} + \dotsb + \varepsilon_{i_k} -\varepsilon_j$ and $j \not\in \{i_1, \dotsc, i_k\}$}, \\
\vspan_\C \{ \xi_{i_1} \dotsm \xi_{i_k} ( \xi_{j} \partial_j - \xi_{j+1} \partial_{j+1}) \mid j, j+1 \not\in \{i_1, \dotsc, i_k\}\},
\qquad \textup{ if $\alpha = \varepsilon_{i_1} + \dotsb + \varepsilon_{i_k}$}.
\end{cases}
\]

If $\g = {\tilde S}(n)$, then (as for $S(n)$) $\g_0 \cong \sl(n)$ and
\[
R =  \{ \varepsilon_{i_1} + \dotsb + \varepsilon_{i_k} - \varepsilon _j \mid 1 \leq i_1 < \dotsb < i_k \leq n, \ 0 \le k \le n-1 \textup{ and }  1 \leq j \leq n \} \setminus \{0\}.
\]
The corresponding root spaces are: $\g_\alpha=S(n)_\alpha$ if ${\rm ht}(\alpha)\geq 0$, and $\g_{-\epsilon_i} = \vspan_\C \{ (\xi_1 \dotsm \xi_n - 1)\partial_i\}$ for all $i \in \{1, \dotsc, n\}$.

Finally, if $\g = H(n)$, then $\g_0 \cong \mathfrak{so}(n)$.  Let $r = \floor{n/2}$, $\{ \varepsilon_1, \dotsc, \varepsilon_r \}$ be the elements in the Cartan subalgebra of $\g_0$ that identify with the standard basis of the Cartan subalgebra of $\mathfrak{so}(n)$, and $\delta \in \overline\h^*$ be the dual of $\mathcal{E}$.  If $n = 2r$, then the set of roots of $\g$ is given by
	\[
R=\{\pm \varepsilon_{i_1}\pm \dotsb  \pm \varepsilon_{i_k}+ m\delta \mid 1\leq i_1<\dotsb <i_k\leq r,\ k-2 \leq m \leq n-2, \ m\geq -1, \ k-m \in 2\mathbb{Z}\}.
	\]
If $n=2r+1$, then the set of roots of $\g$ is the set
	\[
R=\{\pm \varepsilon _{i_1}\pm \dotsb  \pm \varepsilon _{i_k}+ m\delta \mid 1\leq i_1<\dotsb <i_k\leq r,\ k-2 \leq m \leq n-2, \  m\geq -1\}.
	\]
For each root $\alpha = d_1 \varepsilon_1 + \dotsb + d_r \varepsilon_r + m\delta$ with $d_i \in \{ -1, 0, 1 \}$, we have:
\[
\g_{\alpha} = \vspan_\C \left\{ D_x \mid x = \xi_1^{a_1} \dotsm \xi_n^{a_n}, \  a_i \in \{0,1\}, \  a_1 + \dotsb + a_n = m+2, \ a_i - a_{r+i} = d_i \textup{ for all } i \right\}.
\]

\begin{rmk}
Some properties that hold for roots of semisimple Lie algebras do not hold for Cartan type Lie superalgebras.  For instance, a root may have multiplicity greater than 1, and $R^-$ may be different from $-R^+$. (For more details, see \cite{Gav14,Kac77,Sch79,Ser05}.)  Also, notice that the root space decomposition of $\g$ given by the action of $\overline{\h}$ induces the root space decomposition of $\g$ given by the action of $\h$.  The roots with respect to $\h$ are the restrictions of the elements of $R$ to $\h$.  Hence the set of roots of $\g$ with respect to $\h$ will be also denoted by $R$ and their roots will be denoted by the same symbols.
\end{rmk}

An element $h \in \h_\R$ is said to be \emph{regular} if $\alpha(h) \neq 0$ for all $\alpha \in R$.  Every regular element $h \in \h_\R$ induces a decomposition $R = R^+(h) \sqcup R^-(h)$, where
\[
R^+(h) = \{ \alpha \in R \mid \alpha(h) > 0 \}
\quad \textup{ and } \quad
R^-(h) = \{ \alpha \in R \mid \alpha(h) < 0 \}.
\]
The set $R^+(h)$ (resp. $R^-(h)$) is said to be the \emph{set of positive} (resp. \emph{negative}) roots of $\g$ relative to $h$.  A regular element $h \in \h_\R$ also induces a triangular decomposition
\[
\g = \n^-(h) \oplus \h \oplus \n^+(h),
\quad \textup{where} \quad
\n^\pm(h) = \bigoplus_{\alpha \in R^\pm(h)} \g_\alpha.
\]
A Lie subsuperalgebra $\b$ is a \emph{Borel subsuperalgebra} of $\g$ if $\b = \h \oplus \n^+(h)$ for some regular $h \in \h_\R$.

Following \cite{Ser05}, a root $\alpha\in R^+$ is said to be simple for a Borel subsuperalgebra $\b$ if the set
\begin{equation}\label{eq:reflec.Cartan.type}
r_\alpha \left( R^+ \right)
=
\begin{cases}
\left( R^+ \setminus \{\alpha\} \right) \cup \{ -\alpha \},
& \textup{if } -\alpha \in R, \\
R^+\setminus \{\alpha\},
& \textup{otherwise}
\end{cases}
\end{equation}
is a set of positive roots relative to some regular element $h \in \h_\R$.  In this case, the subsuperalgebra
	\[
r_\alpha(\b):=\h\oplus \bigoplus_{\beta\in r_\alpha (R^+)}\g_\beta
	\]
is said to be the Borel subsuperalgebra of $\g$ obtained from $\b$ by the reflection $r_\alpha$.

Choose a Borel subalgebra $\b_0$ of $\g_0$ such that the set of simple roots associated to it is given by:
\begin{align*}
\{ \varepsilon_1-\varepsilon_2, \dotsc, \varepsilon_{n-1}-\varepsilon_n \},
\quad &\textup{if} \quad
\g \cong W(n), S(n), {\tilde S}(n) \\
\{ \varepsilon_1-\varepsilon_2, \dotsc, \varepsilon_{r-1}-\varepsilon_r, \varepsilon_{r-1}+\varepsilon_r \},
\quad &\textup{if} \quad
\g \cong H(2r), \\
\{ \varepsilon_1-\varepsilon_2, \dotsc, \varepsilon_{r-1}-\varepsilon_r,  \varepsilon_r \},
\quad &\textup{if} \quad
\g\cong H(2r+1).
\end{align*}
Let $R_0^+$ (resp. $R_0^-$) denote the set of positive (resp. negative) roots of $\g_0$ associated to the simple roots above.  The subsuperalgebra $\b_\max =\b_0 \oplus \left(\bigoplus_{i>1} \g_i\right)$ is known as the \emph{maximal Borel subsuperalgebra} of $\g$, and $\b_\min = \b_0 \oplus \g_{-1}$ is known as the \emph{minimal Borel subsuperalgebra} of $\g$.

\begin{prop} \label{prop:nice-system-exists-cartan}
If $\g$ is a Cartan type Lie superalgebra, then $\g$ admits at least one triangular decomposition satisfying \hyperref[C2]{$(\mathfrak C2)$}.
\end{prop}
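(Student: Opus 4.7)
The plan is to construct, for each Cartan type Lie superalgebra $\g$, a triangular decomposition $\g = \n^- \oplus \h \oplus \n^+$ satisfying $(\mathfrak C2)$: $\n^-_{\bar 0} \subseteq \lir = \g_0$ and the lowest root $-\theta$ is a root of $\g_0$. For $W(n), S(n), H(n)$ the $\Z$-grading is compatible with the $\Z_2$-grading, so $\g_{\bar 0} = \bigoplus_{k \text{ even}} \g_k$, and since the grading has no negative-even component, $(\mathfrak C1)$ reduces to requiring $\g_{2k} \subseteq \n^+$ for all $k \geq 1$; for $\tilde S(n)$ the extra piece $\tilde S(n)_{[n-1]}$ is purely odd (as $\xi_1\dotsm\xi_n$ is $\Z_2$-even when $n$ is even), so an analogous reduction holds.

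For $W(n)$ I would take the regular element $h \in \h_\R$ defined by $\varepsilon_i(h) = 1$ for $i < n$ and $\varepsilon_n(h) = -(1-\epsilon)$, for some small $\epsilon \in (0, 1)$ (noting that $\cal{E} \in \h$ for $W(n)$, so this fully specifies $h$). A direct calculation using the root data from Subsection~\ref{tri.dec.cartan} shows:
\begin{enumerate}[leftmargin=*]
\item every root $\alpha = \varepsilon_{i_1}+\dotsb+\varepsilon_{i_{k+1}}-\varepsilon_j$ in $R_k$ with $k \geq 2$ satisfies $\alpha(h) \geq k-2+\epsilon > 0$, giving $(\mathfrak C1)$;
\item the lowest root of $\g_0 \cong \gl(n)$ is $\varepsilon_n - \varepsilon_1$, with value $-2+\epsilon$ on $h$;
\item the most negative value on $h$ of any root in $R_{-1}$ is $-1$ (attained at some $-\varepsilon_i$ with $i < n$);
\item the most negative value on $h$ of any root in $R_1$ is at least $-1+\epsilon$.
\end{enumerate}
Since $-2+\epsilon < -1$, the overall lowest root is $\varepsilon_n-\varepsilon_1 \in R_0 \subseteq \g_0 = \lir$, establishing $(\mathfrak C2)$ for $W(n)$.

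For $S(n), H(n), \tilde S(n)$ the construction is more delicate, as the Cartan subalgebra $\h$ is a proper subspace of the extended Cartan $\overline{\h}$, leading to traceless-type relations among roots that constrain the distribution of root spaces in $\g_{2k}$ between $\n^+$ and $\n^-$.  One would begin with the maximal Borel $\b_\max = \b_0 \oplus \bigoplus_{i \geq 1} \g_i$ (which structurally satisfies $(\mathfrak C1)$ but has lowest root in $\g_{-1}$) and apply an appropriate sequence of reflections as in \eqref{eq:reflec.Cartan.type} to produce a Borel whose lowest root lies in $\g_0$, taking care that the reflections do not move any root of $\g_{2k}$ ($k \geq 1$) to the negative side.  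The guiding principle in each case is the same as for $W(n)$: by tuning the regular element so that one $\varepsilon$-coordinate is sufficiently negative, the lowest $\g_0$ root is pushed below any root of $\g_{\pm 1}$, while roots in high grades remain positive thanks to the additive contribution of the grade to the evaluation on $h$.

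The main obstacle is verifying, for each of the four Cartan types, that conditions (i)--(iv) above (or their appropriate analogues after reflection) hold simultaneously.  For $S(n), H(n), \tilde S(n)$ one must handle case by case the relations forced by the structure of $\h$ (tracelessness for $S(n), \tilde S(n)$; the $\delta$-coordinate for $H(n)$), ensuring both that no $\g_{2k}$ root slips into $\n^-$ and that the overall lowest root remains a root of $\g_0$.
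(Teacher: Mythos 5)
Your explicit calculation for $W(n)$ is a concrete alternative to the paper's odd-reflection argument, though the element you wrote down is not actually regular: every root $\varepsilon_i - \varepsilon_j$ with $1 \le i \ne j \le n-1$ lies in $R_0$ and has $(\varepsilon_i - \varepsilon_j)(h) = 0$. That is a fixable slip (perturb $\varepsilon_1(h), \dotsc, \varepsilon_{n-1}(h)$ generically near $1$; your strict inequalities (i)--(iv) survive). The serious issue is that $S(n)$, $\tilde S(n)$ and $H(n)$ are not proved at all --- you flag the verification for them as the ``main obstacle'' and stop. Worse, the ``guiding principle'' you invoke for them is false: for $S(n)$, $\tilde S(n)$ and $H(n)$ the element $\cal E$ does \emph{not} lie in $\h$, so a regular $h \in \h_\R$ cannot see the $\Z$-grade of a root. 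Concretely, $\delta$ vanishes on $\h$ for $H(n)$, and for $S(n), \tilde S(n)$ the Cartan is cut out by $\sum_i \varepsilon_i = 0$, which in particular rules out the $W(n)$-type coordinates. There is therefore no ``additive contribution of the grade to the evaluation on $h$'' in those families, and the sketch would not go through even if carried out literally; three of the four Cartan families remain unproved.

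For comparison, the paper's proof sidesteps regular-element bookkeeping and is uniform: start from $\b_\min$, apply Serganova's odd-reflection lemma once (for $W(n), S(n)$) or twice (for $H(n)$) so that the highest weight of the adjoint module lands in $R_0$, pass to the opposite Borel, and read $\n^-_{\bar 0} \subseteq \lir$ directly off the displayed negative root sets; $\tilde S(n)$ is then reduced to $S(n)$ because both have the same root set and the same $\g_0$. If you want to keep the explicit-regular-element strategy, each of $S(n)$, $H(n)$, $\tilde S(n)$ needs its own ansatz built from the actual root data and the actual constrained Cartan, not by analogy with $W(n)$.
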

\begin{proof}
We claim that there exists a triangular decomposition of $\g$ for which the highest root of $\g$ is a root of $\g_0$. Indeed,  since $\g$ is simple we have that $\g\cong L_{\b_\min}(\lambda)$ as a $\g$-module, where $\lambda$ is its highest root with respect to $\b_\min$. For every $\alpha\in R$, let $\h_\alpha:=[\g_\alpha, \g_{-\alpha}]$.  We now prove our claim case by case:

Let  $\g= W(n)$. Then  we have that $\lambda=-\varepsilon_n$ and the unique odd simple root for $\b_\min$ is $\alpha=-\varepsilon_1$. Set $\bar{\b}_1=r_\alpha(\b_\min)$. Since
	\[
\h_{-\varepsilon_i}=\{h\in \h\mid \varepsilon_i(h)=0\},\quad (\text{precisely } \h_{-\varepsilon_i}=\vspan_\C\{\xi_j\partial_j\mid j=1,\ldots, n,\ j\neq i\})
	\]
we have that $\varepsilon_n(\h_{-\varepsilon_1})\neq 0$ and hence $L_{\b_\min}(-\varepsilon_n)\cong L_{\bar{\b}_1}(-\varepsilon_n+\varepsilon_1)$ (see \cite[Lemma~5.3]{Ser05}). To conclude this case, we notice that $-\varepsilon_n$ is a root of $\g_{-1}$ and $\varepsilon_1$ is a root of $\g_1$, which implies that $(-\varepsilon_n+\varepsilon_1)$ is a root of $\g_0$, as we want.

The proof for  $W(n)$ also works for $S(n)$. The only difference   is that
	\[
\h_{-\varepsilon_i}=\{h\in \h\mid \varepsilon_i(h)=0,\ (\varepsilon_1+\cdots +\varepsilon_n)(h)=0\},
	\]
but it is still clear that $\varepsilon_n(\h_{-\varepsilon_1})\neq 0$ and hence $L_{\b_\min}(-\varepsilon_n)\cong L_{\bar{\b}_1}(-\varepsilon_n+\varepsilon_1)$.

Now, suppose that $\g=H(n)$. For $n=2k$, we have that $\lambda=\varepsilon_1-\delta$ and the unique odd simple root for $\b_\min$ is $\alpha_1=-\varepsilon_1-\delta$. Set $\bar{\b}_1=r_{\alpha_1}(\b_\min)$. Since
	\[
\h_{\varepsilon_i-\delta}=\h_{-\varepsilon_i-\delta}=\{h\in \h\mid \varepsilon_i(h)=\delta(h)=0\},
	\]
we have that $\lambda(\h_{-\varepsilon_1-\delta})=0$. Hence $L_{\b_\min}(\lambda)\cong L_{\bar{\b}_1}(\lambda)$. Now, $\alpha_2=-\varepsilon_2-\delta$ is an odd simple root for $\bar{\b}_1$. Since $\lambda(\h_{-\varepsilon_2-\delta})\neq 0$, we obtain that $L_{\b_\min}(\lambda)\cong L_{\bar{\b}_2}(\lambda-\alpha_2)$, where $\bar{\b}_2=r_{\alpha_2}(\bar{\b}_1)$. In particular, the highest root of $\g$ with respect to $\bar{\b}_2$ is $\varepsilon_1+\varepsilon_2$, which is clearly a root of $\g_0$. Now, we suppose that $n=2k+1$. Observing that $\h_{\pm\varepsilon_i-\delta}$ are the same as in the case $n=2k$, we have that the proof of the case $n=2k$ also works for $n=2k+1$. This proves the claim.

Notice that for $W(n)$, $S(n)$ and $H(n)$ we have found a triangular decomposition for which the highest root of $\g$ is in $R_0$. Namely, the triangular decomposition induced by $\bar{\b}_1$ (resp. $\bar{\b}_2$) when $\g$ is either $W(n)$ or $S(n)$ (resp. $H(n)$). Let $\b_1$ and $\b_2$ be the opposite Borel subalgebras of $\bar{\b}_1$ and $\bar{\b}_2$, respectively. It is clear that the triangular decomposition induced by $\b_1$ for $\g=W(n)$ and $S(n)$, and by $\b_2$ for $\g=H(n)$, also satisfy the condition of the claim. The set of negative roots with respect to $\b_1$ and $\b_2$ is as follows:
\begin{align*}
R_{\b_1}^- & =R_0^-\cup (R_{-1}\setminus \{-\varepsilon_1\})\cup\{\varepsilon_1\}, \text{ for } \g=W(n)\text{ or } S(n), \\
R_{\b_2}^- & =R_0^-\cup (R_{-1}\setminus \{-\varepsilon_1-\delta,\ -\varepsilon_2-\delta\})\cup\{\varepsilon_1+\delta,\ \varepsilon_2+\delta\}, \text{ for } \g=H(n).
\end{align*}
In particular, $\n^-_{\bar 0}\subseteq \lie r$ and hence such triangular decompositions satisfy \hyperref[C2]{$(\mathfrak C2)$}.

Finaly, consider a triangular decomposition of $S(n)$ satisfying \hyperref[C2]{$(\mathfrak C2)$}, and let $h\in \h_\R$ be a regular element that induces such a decomposition. This implies that $\theta(h)>\alpha(h)$ for any root $\alpha$ of $S(n)$. Since ${\tilde S}(n)_0=S(n)_0$, and every roots of ${\tilde S}(n)$ is also a root of $S(n)$, we have that $\theta(h)>\alpha(h)$ for any root $\alpha$ of ${\tilde S}(n)$. Thus the result follows.
\end{proof}

%%%%%%%%%%%%%%%%%%%%%%%%%%%%%%%%%%%%%%%%%%
\subsection{Periplectic Lie superalgebras}
%%%%%%%%%%%%%%%%%%%%%%%%%%%%%%%%%%%%%%%%%%

For each $n \ge 2$, let $\lie p(n)$ be the Lie subsuperalgebra of $\lie{gl}(n+1, n+1)$ consisting of all matrices of the form
\begin{equation}\label{eq:matrixper}
  M=\left(\begin{array}{r|r}
    A & B  \\
    \hline
    C & -A^t \\
  \end{array}\right),
  \quad
  \textup{where $A \in \sl (n+1)$, $B = B^t$ and $C^t = -C$}.
\end{equation}
Throughout this subsection,  $\g$ will denote $\lie p(n)$.  Notice that $\g_{\bar0}$ is isomorphic to the Lie algebra $\sl (n+1)$, and as a $\g_{\bar0}$-module, the structure of $\g_{\bar 1}$ is the following.  Let $S^2 (\C^{n+1})$ (resp. $\Lambda^2 (\C^{n+1})^*$) denote the second symmetric (resp. exterior) power of $\C^{n+1}$ (resp. $(\C^{n+1})^*$), with the natural action of $\sl (n+1)$ (by matrix multiplication) in each term.  Denote by $\g_{\bar 1}^+$ (resp. $\g_{\bar 1}^-$) the set of all matrices of the form \eqref{eq:matrixper} such that $A=C=0$ (resp. $A=B=0$), and notice that, as $\g_{\bar 0}$-modules, $\g_{\bar 1} \cong \g_{\bar 1}^+ \oplus \g_{\bar 1}^-$, where $\g_{\bar 1}^+ \cong S^2 (\C^{n+1})$ and $\g_{\bar 1}^- \cong \Lambda^2 (\C^{n+1})^*$.

Consider $\g_{-1} = \g_{\bar 1}^-$, $\g_0 = \g_{\bar 0}$ and $\g_1 = \g_{\bar 1}^+$. Then $\g= \g_{-1} \oplus \g_0 \oplus \g_1$ is a $\Z$-grading of $\g$ that is compatible with the $\Z_2$-grading ($\g_{\bar 0} = \g_0$ and $\g_{\bar 1} = \g_{-1} \oplus \g_1$).  Let $\lie h \subseteq \g_0$ be a Cartan subalgebra of $\g_0$, recall that $\g_0$ is isomorphic to $\lie{sl}(n+1)$, and identify $\lie h$ with $\h^*$ via the Killing form $(A_1, A_2) = {\rm tr} (A_1 A_2)$.  If $\{ \varepsilon_1, \dotsc, \varepsilon_n \}$ is the standard orthogonal basis of $\lie h$, then the roots of $\g$ are described as follows:
\begin{itemize}[leftmargin=*]
\item Roots of $\g_{-1}$: $-\varepsilon_i-\varepsilon_j$, where $1\leq i<j\leq n$.

\item Roots of $\g_0$: $\varepsilon_i-\varepsilon_j$, where $i\neq j$ and $1\leq i, j \leq n$.

\item Roots of $\g_1$: $\varepsilon_i+\varepsilon_j$, where $1\leq i\leq  j \leq n$.
\end{itemize}

Consider the triangular decomposition
\[
\n_0^- \oplus \h \oplus \n_0^+,
\quad \textup{where} \quad
\n_0^\pm = \bigoplus_{1 \le i < j \le n} \g_{\pm(\varepsilon_i-\varepsilon_j)}.
\]
This triangular decomposition induces a triangular decomposition on $\g$ and we have the following result.

\begin{prop} \label{prop:QGTD.p(n)}
If $\g$ is isomorphic to $\lie p(n)$ with $n \ge 2$, then the \emph{distinguished} triangular decomposition
\[
\nm \oplus \h \oplus \np,
\quad \textup{where} \quad
\n^\pm = \g_{\pm 1} \oplus \n_0^\pm
\]
satisfies \hyperref[C1]{$(\mathfrak C1)$}.  In particular, $\nm_{\bar0} = \nm_0$ and all the roots of $\g_1$ are positive.
\qed
\end{prop}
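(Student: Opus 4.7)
The plan is to verify condition \hyperref[C1]{$(\mathfrak C1)$} directly from the definitions, since the proposition is essentially a bookkeeping statement about the distinguished $\Z$-grading of $\lie p(n)$. Recall that $\lie p(n)$ is a strange Lie superalgebra, not a Cartan type one, so by \eqref{eq:reductive.part} the relevant reductive subalgebra is $\lie r = \g_{\bar 0}$, which for $\g = \lie p(n)$ is isomorphic to $\sl(n+1)$.

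First I would decompose $\n^- = \g_{-1} \oplus \n_0^-$ into its $\Z_2$-homogeneous parts using the compatibility of the $\Z$-grading with the $\Z_2$-grading established just before the proposition: namely, $\g_{-1} = \g_{\bar 1}^-$ is purely odd while $\n_0^- \subseteq \g_0 = \g_{\bar 0}$ is purely even. It follows immediately that
\[
\n^-_{\bar 0} = \n_0^-
\quad \textup{and} \quad
\n^-_{\bar 1} = \g_{-1},
\]
and in particular $\n^-_{\bar 0} = \n_0^- \subseteq \g_{\bar 0} = \lie r$, which is exactly condition \hyperref[C1]{$(\mathfrak C1)$}. Then, for the final assertion, it suffices to observe that $\g_1 \subseteq \n^+$ by the very definition $\n^+ = \g_1 \oplus \n_0^+$, so every root of $\g_1$ lies in the positive root system associated to this triangular decomposition.

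The only thing to be careful about is that $\n^- \oplus \h \oplus \np$ really is a triangular decomposition of $\g$; this is a short check using the bracket relations $[\g_i,\g_j] \subseteq \g_{i+j}$ of the $\Z$-grading together with the fact that $\n_0^- \oplus \h \oplus \n_0^+$ is a triangular decomposition of $\g_0 \cong \sl(n+1)$. Beyond this, there is no real obstacle: the proof is a direct unwinding of the definitions of $\g_{-1}$, $\g_0$, $\g_1$, $\lie r$, and the distinguished triangular decomposition, which is why the statement is essentially left to the reader with a \qed in the excerpt.
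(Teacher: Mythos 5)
The paper marks this proposition with a \qed and supplies no explicit argument, treating it as an immediate consequence of the definitions; your proposal spells out exactly that immediate argument (using $\lie r = \g_{\bar 0} = \g_0$ for the strange type $\lie p(n)$, the compatibility $\g_{\bar 0} = \g_0$, $\g_{\bar 1} = \g_{-1}\oplus\g_1$, and the definition $\n^\pm = \g_{\pm 1}\oplus\n_0^\pm$). It is correct and takes the same route the paper implicitly intends.
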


As usual, let $\b$ be the Borel subsuperalgebra $\h \oplus \np \subseteq \g$, $R$ be the set of roots of $\g$, $R_{\dis}^+$ be the positive roots associated to this decomposition, etc.  Notice that $R_{\dis}^- \neq -R_{\dis}^+$, since, for each $i \in \{ 1,\dotsc, n \}$, there exists a positive root of the form $2\varepsilon_i$, such that $-2\varepsilon_i\notin R$.

\begin{prop}\label{prop:nice-system-exists-p(n)}
If $\g$ is isomorphic to $\lie p(n)$ with $n \ge 2$, then $\g$ admits a triangular decomposition satisfying \hyperref[C2]{$(\mathfrak C2)$}.
\end{prop}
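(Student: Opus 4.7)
Since $\lie r = \g_{\bar 0}$ by~\eqref{eq:reductive.part}, the inclusion $\nm_{\bar 0} = \nm \cap \g_{\bar 0} \subseteq \lie r$ holds for every triangular decomposition, so condition~\hyperref[C1]{$(\mathfrak C1)$} is automatic.  The remaining task is to produce a decomposition for which the lowest weight $-\theta$ of the adjoint module $\g$ lies in $R_{\bar 0}$.  The distinguished decomposition of Proposition~\ref{prop:QGTD.p(n)} fails this test, since its highest weight is $2\varepsilon_1$, a root of $\g_1$.  The plan is to replace it by a Borel induced by a regular element $h \in \h_\R$ for which $-\varepsilon_n(h)$ dominates the other $\varepsilon_i(h)$; this will force the highest root of $\g$ to be the even root $\varepsilon_1 - \varepsilon_n$.

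Concretely, choose $h \in \h_\R$ with $\varepsilon_1(h) > \varepsilon_2(h) > \dotsb > \varepsilon_{n-1}(h) > 0$ and $\varepsilon_n(h) < -\varepsilon_1(h)$.  Inspecting the sign of $\alpha(h)$ for every $\alpha \in R$ yields
\[
R^+(h) = \{ \varepsilon_i - \varepsilon_j \mid 1 \le i < j \le n \} \cup \{ \varepsilon_i + \varepsilon_j \mid 1 \le i \le j \le n-1 \} \cup \{ -\varepsilon_i - \varepsilon_n \mid 1 \le i \le n-1 \}.
\]
The claim to be established is that the highest weight of the adjoint $\g$-module for this decomposition is $\theta = \varepsilon_1 - \varepsilon_n$.

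To prove the claim, fix a nonzero $v \in \g_{\varepsilon_1 - \varepsilon_n}$ and check $[e_\alpha, v] = 0$ for every $\alpha \in R^+(h)$, splitting into the three families of positive roots above.  The $\Z$-grading $\g = \g_{-1} \oplus \g_0 \oplus \g_1$ constrains the height of $\alpha + \theta$, and in each family one verifies either that $\alpha + \theta$ has a coefficient on some $\varepsilon_k$ incompatible with the shape of any root of the matching graded piece, or that $\alpha + \theta$ degenerates to an expression such as $-2\varepsilon_n$, which is not a root by the asymmetry of $R$ recorded after Proposition~\ref{prop:QGTD.p(n)}.  Simplicity of $\g$ makes the adjoint $\g$-module irreducible, so its highest weight is unique; hence $\theta = \varepsilon_1 - \varepsilon_n$ and $-\theta = \varepsilon_n - \varepsilon_1$ is a root of $\g_0 = \lie r$, giving~\hyperref[C2]{$(\mathfrak C2)$}.

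The main obstacle is the case analysis showing $\np(h) \cdot v = 0$: the $\Z$-grading alone does not kill all potential matches for $\alpha + \theta$, and one must additionally exploit the fact that $-2\varepsilon_k \notin R$ to rule out the doubled-weight cases that appear whenever $\alpha$ involves $\varepsilon_n$ or $\alpha$ belongs to $R(\g_1)$.
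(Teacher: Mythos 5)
Your proof is substantively correct but takes a genuinely different route from the paper.  The paper starts from the distinguished triangular decomposition of Proposition~\ref{prop:QGTD.p(n)} and applies a chain of reflections $r_{2\varepsilon_{n+1}}, r_{\varepsilon_n + \varepsilon_{n+1}}, \dotsc, r_{\varepsilon_1 + \varepsilon_{n+1}}$, using Serganova's Lemma~5.2 to track how the highest weight $2\varepsilon_1$ transforms until it becomes the even root $\varepsilon_1 - \varepsilon_{n+1}$.  You instead pick a regular element $h$ by hand with a prescribed profile of eigenvalues that forces the highest root to be even, sidestepping the reflection calculus entirely.  Both are valid; your approach is more elementary and self-contained, while the paper's is uniform with the way the basic classical and Cartan-type cases are handled.

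Two remarks. First, you inherit a notational slip from the paper's description of $\lie p(n)$: since $\g_0 \cong \sl(n+1)$, the functionals should be $\varepsilon_1, \dotsc, \varepsilon_{n+1}$ rather than $\varepsilon_1, \dotsc, \varepsilon_n$ (the paper's own proof uses $\varepsilon_{n+1}$).  With $N = n+1$ coordinates, your choice $\varepsilon_1(h) > \dotsb > \varepsilon_{N-1}(h) > 0$ and $\varepsilon_N(h) < -\varepsilon_1(h)$ is consistent with the trace-zero constraint, since $\varepsilon_N(h) = -\sum_{i<N}\varepsilon_i(h) < -\varepsilon_1(h)$ holds automatically.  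Second, the case analysis you flag as the main obstacle can be avoided entirely: your $h$ is chosen so that $\theta(h) = (\varepsilon_1 - \varepsilon_N)(h)$ is the strict maximum of $\beta(h)$ over $\beta \in R$ (one checks $\theta(h) > 2\varepsilon_1(h)$ and $\theta(h) > (-\varepsilon_{N-1} - \varepsilon_N)(h)$, the maximizers over $R_1$ and $R_{-1}$ respectively).  Hence for any $\alpha \in R^+(h)$ one has $(\alpha + \theta)(h) > \theta(h) \geq \beta(h)$ for every $\beta \in R$, so $\alpha + \theta$ cannot be a root and $[\g_\alpha, \g_\theta] = 0$ with no coefficient-matching or asymmetry arguments required.
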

\begin{proof}
Recall that $\g_1\cong S^2(\C^{n+1})$ as a $\g_0$-module. In particular, $2\varepsilon_{n+1}$ (resp. $2\varepsilon_1$) is the lowest (resp. highest) weight of $\g_1$ as a $\g_0$-module. This implies that $2\varepsilon_1$ is the highest weight of $\g$ with respect to the distinguished triangular decomposition given in Proposition~\ref{prop:QGTD.p(n)}, and $2\varepsilon_{n+1}$ is a simple root of $R_{\dis}^+$. Now, notice that the chain of reflections (as defined in \eqref{eq:reflec.Cartan.type})
	\[
R_{\dis}^+=R_0^+ \overset{r_{2\varepsilon_{n+1}}}{\rightarrow} R_1^+ \overset{r_{\varepsilon_{n}+\varepsilon_{n+1}}}{\rightarrow} R_2^+ \overset{r_{\varepsilon_{n-1}+\varepsilon_{n+1}}}{\rightarrow} \cdots \overset{r_{\varepsilon_{2}+\varepsilon_{n+1}}}{\rightarrow} R_n^+ \overset{r_{\varepsilon_{1}+\varepsilon_{n+1}}}{\rightarrow}R_{n+1}^+.
	\]
is well defined, since $2\varepsilon_{n+1}$ is simple in $R_{\dis}^+$, and each $\varepsilon_k+\varepsilon_{n+1}$ is simple in $R_{n+1-k}^+$.  Denote $\h_\alpha=[\g_\alpha, \g_{-\alpha}]$ for all $\alpha\in R$.  Since $2\varepsilon_1(\h_{2\varepsilon_{n+1}})=0$, and  $2\varepsilon_1(\h_{\varepsilon_k+\varepsilon_{n+1}})=0$, for all $k>1$, it follows from \cite[Lemma~5.2]{Ser05}, that $2\epsilon_1$, the highest root of $\g$, is invariant under $r_{2\epsilon_{n+1}}, r_{\epsilon_n + \epsilon_{n+1}}, \dotsc, r_{\epsilon_2 + \epsilon_{n+1}}$. Since $2\varepsilon_1(\h_{\varepsilon_1+\varepsilon_{n+1}})\neq 0$, the highest root of $\g$ with respect to $R_{n+1}^+$ is given by $2\varepsilon_1-(\varepsilon_1+\varepsilon_{n+1})=\varepsilon_1-\varepsilon_{n+1}$, which is a root of $\g_0$.
\end{proof}

%%%%%%%%%%%%%%%%%%%%%%%%%%%%%%%%%%%%%%%%%%%%%%%%%
\subsection{Remarks on triangular decompositions}
%%%%%%%%%%%%%%%%%%%%%%%%%%%%%%%%%%%%%%%%%%%%%%%%%

Recall conditions \hyperref[C1]{$(\mathfrak C1)$} and \hyperref[C2]{$(\mathfrak C2)$}.  The next result sums up the results that we have obtained regarding triangular decompositions satisfying these conditions.

\begin{thm}\label{thm:gtd-exist}
If $\g$ is either $\sl (n,n)$ with $n \ge 2$, or a finite-dimensional simple Lie superalgebra, then $\g$ admits at least one triangular decomposition satisfying \hyperref[C1]{$(\mathfrak C1)$} and \hyperref[C2]{$(\mathfrak C2)$}.
\end{thm}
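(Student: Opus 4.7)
The plan is to reduce the theorem to the case-by-case results already established earlier in this section. First I would observe that condition \hyperref[C2]{$(\mathfrak C2)$} is strictly stronger than \hyperref[C1]{$(\mathfrak C1)$}: the very statement of \hyperref[C2]{$(\mathfrak C2)$} contains ``$\nm_{\bar 0}\subseteq\lie r$''. Hence it is enough to exhibit, for each $\g$ in the hypothesis, a single triangular decomposition satisfying \hyperref[C2]{$(\mathfrak C2)$}.

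Next I would appeal to Kac's classification (cf.\ Table~\ref{table}), which partitions the list in the hypothesis --- namely $\sl(n,n)$ for $n\geq 2$ together with every finite-dimensional simple Lie superalgebra --- into four mutually exclusive families: \textbf{(a)} $\sl(n,n)$ for $n\geq 2$ and the basic classical Lie superalgebras $A(m,n)$, $B(m,n)$, $C(n+1)$, $D(m,n)$, $D(2,1;\alpha)$, $F(4)$, $G(3)$; \textbf{(b)} the Cartan type Lie superalgebras $W(n)$, $S(n)$, $\tilde S(n)$, $H(n)$; \textbf{(c)} the periplectic Lie superalgebras $\lie p(n)$ for $n\geq 2$; and \textbf{(d)} the strange Lie superalgebras $\lie q(n)$ for $n\geq 2$. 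For family (a) the desired decomposition is provided by Proposition~\ref{prop:nice-system-exists-basic}, for family (b) by Proposition~\ref{prop:nice-system-exists-cartan}, and for family (c) by Proposition~\ref{prop:nice-system-exists-p(n)}. Concatenating these three statements settles the theorem for (a), (b) and (c).

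For family (d) --- which the remainder of the paper excludes, but which is formally allowed by the statement --- I would argue directly from the structure of $\q(n)$: since $\g_{\bar 1}$ carries the same $\h_{\bar 0}$-weights as $\g_{\bar 0}\cong\gl(n)$, every nonzero $\h_{\bar 0}$-root of $\g$ is already a root of $\lie r=\g_{\bar 0}$. Thus any ordinary triangular decomposition $\gl(n)=\n_0^-\oplus\h_{\bar 0}\oplus\n_0^+$ lifts to a triangular decomposition $\g=\nm\oplus\h\oplus\np$ whose negative part consists of the full root spaces of $\g$ attached to the negative roots of $\gl(n)$. Then $\nm_{\bar 0}=\n_0^-\subseteq\lie r$, and the lowest root $-\theta$ coincides with the lowest root of $\gl(n)$, hence is a root of $\lie r$; so both conditions hold automatically.

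Overall there is no real obstacle at the level of the theorem itself, since the serious work --- the odd-reflection chains for $A(m,n)$, $\sl(n,n)$, $C(n+1)$, the reflections inside the infinite-dimensional root systems of Cartan type, and the chain $r_{2\varepsilon_{n+1}},r_{\varepsilon_n+\varepsilon_{n+1}},\dots,r_{\varepsilon_1+\varepsilon_{n+1}}$ used for $\lie p(n)$ --- has already been absorbed into Propositions~\ref{prop:nice-system-exists-basic}, \ref{prop:nice-system-exists-cartan} and~\ref{prop:nice-system-exists-p(n)}. The only subtlety worth flagging in the write-up is that for type~II basic classical algebras one can take the distinguished decomposition itself, whereas for type~I, for $\sl(n,n)$, and for $\lie p(n)$ one must first modify the distinguished decomposition by the chain of odd reflections described in those propositions before \hyperref[C2]{$(\mathfrak C2)$} holds.
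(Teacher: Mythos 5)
Your proof is correct and follows essentially the same route as the paper's: a case analysis over Kac's classification, invoking Propositions~\ref{prop:nice-system-exists-basic}, \ref{prop:nice-system-exists-cartan} and \ref{prop:nice-system-exists-p(n)} for the non-$\q(n)$ cases, and the observation that for $\q(n)$ every root of $\g$ is already a root of $\lie r=\g_{\bar 0}$ so the conditions hold automatically. The small streamlining you make --- noting once and for all that \hyperref[C2]{$(\mathfrak C2)$} implies \hyperref[C1]{$(\mathfrak C1)$}, which makes Proposition~\ref{prop:QGTD.p(n)} unnecessary --- is a clean touch but does not change the substance.
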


\begin{proof}
If $\g\ncong\q(n)$, then the result follows from Propositions \ref{prop:nice-system-exists-basic}, \ref{prop:nice-system-exists-cartan}, \ref{prop:QGTD.p(n)}, and \ref{prop:nice-system-exists-p(n)}. If $\g \cong \q(n)$, then every triangular decomposition satisfies \hyperref[C1]{$(\mathfrak C1)$} and \hyperref[C2]{$(\mathfrak C2)$}, as $\lie r = \q(n)_{\bar 0}$ and every root of $\q(n)$ is both even and odd.
\end{proof}

\begin{rmk} \label{rmk:GTDs}
Conditions \hyperref[C1]{$(\mathfrak C1)$} and \hyperref[C2]{$(\mathfrak C2)$} are crucial for the rest of this paper.  Namely, \hyperref[C1]{$(\mathfrak C1)$} is used to prove finite-dimensionality of generalized Kac modules for Cartan type Lie superalgebras (see Proposition~\ref{prop:Vbar-fd}), and \hyperref[C2]{$(\mathfrak C2)$} is used to prove that the global Weyl module $W_A(\lambda)$ is a finitely-generated $\fA_\lambda$-module (see Theorem~\ref{fin.dim.odd}).

Observe that if $\g$ is either basic classical or strange, then every triangular decomposition satisfies \hyperref[C1]{$(\mathfrak C1)$}, since $\lie r=\g_{\bar 0}$.  Moreover, if $\g$ is of type II or $\lie q(n)$, then every distinguished triangular decomposition also satisfies \hyperref[C2]{$(\mathfrak C2)$}.  However, if $\g$ is either of type I or $\lie p(n)$, then its distinguished triangular decomposition satisfies \hyperref[C1]{$(\mathfrak C1)$} but not \hyperref[C2]{$(\mathfrak C2)$}, as the lowest root of $\g$ is a root of $\g_{-1}$.

If $\g$ is of Cartan type, then there are triangular decompositions that do not satisfy \hyperref[C1]{$(\mathfrak C1)$}. For instance, the triangular decomposition induced by $\b_\min$, as ${\lie r} \subsetneq \g_{\bar 0}$. Moreover, there are triangular decompositions satisfying \hyperref[C1]{$(\mathfrak C1)$} but not \hyperref[C2]{$(\mathfrak C2)$}. For instance, the triangular decomposition induced by $\b_{\max}$, since the lowest root of $\g$ with respect this triangular decomposition is a root of $\g_{-1}\subseteq \g_{\bar 1}$.

If $\g$ is of type $\q(n)$, then $\h$ is not contained in $\g_{\bar0}$.  Thus, the highest-weight space of an irreducible finite-dimensional $\g$-module is not always one dimensional.  (It is an irreducible module for a certain Clifford superalgebra.)  As it requires a different treatment, this case will be considered in a future work.
\end{rmk}

%%%%%%%%%%%%%%%%%%%%%%%%%%%%%%%%%%%%%%%%%%%%%%%%%%%%%%%%%%%%%%%%%%%%
%
\section{Generalized Kac modules}  \label{S:gen.kac.mod}
%
%%%%%%%%%%%%%%%%%%%%%%%%%%%%%%%%%%%%%%%%%%%%%%%%%%%%%%%%%%%%%%%%%%%%

From now on we assume that $\g$ is either $\sl (n,n)$ with $n \ge 2$, or a finite-dimensional simple Lie superalgebra not of type $\q(n)$.  Let $\h$ be a fixed Cartan subalgebra of $\g$ (in particular, since $\g \not\cong \q(n)$, $\h = \h_{\bar0}$ is a Cartan subalgebra of $\lie r$), $R$ be the set of roots of $\g$ (with respect to $\h$), and $Q \subseteq \h^*$ the root lattice $\sum_{\alpha\in R} \Z \alpha$.  Fix a set $R^+ \subseteq R$ of positive roots, $Q^+$ be the positive cone $\sum_{\alpha \in R^+} \N \alpha$ and $\g = \n^- \oplus \h \oplus \n^+$ be the associated triangular decomposition of $\g$.  Notice that $\b = \h \oplus \n^+$ is a solvable subsuperalgebra of $\g$, that $\n^\pm$ are nilpotent ideals of $\b$, and that $\h^*$ admits a partial order given by: $\lambda \le \mu \in \h^*$ if and only if $\mu - \lambda \in Q^+$.

Let $R_{\lie r}$ denote the root system $\{ \alpha \in \h^*\mid \lie{r}_\alpha \ne \{0\}, \ \alpha \ne 0 \}$ of $\lie r$, $R^+_\lie r$ be the positive system $R^+\cap R_{\lie r}$, and $\dr$ be the simple system of $R_{\lie r}$ associated to $R_{\lie r}^+$.  Since $\lie r'$ is a finite-dimensional semisimple Lie algebra, for each $\alpha \in R^+_\lie r$, one can choose elements $x_\alpha \in \lie r_{\alpha}$, $x_\alpha^{-} \in \lie r_{-\alpha}$, and $h_\alpha \in \h$, such that: the subalgebra generated by $\{ x^-_\alpha, h_\alpha, x_\alpha \}$ is isomorphic to $\sl(2)$, $[x_\alpha, x^-_\alpha] = h_\alpha$, $[h_\alpha, x^-_\alpha] = -2 x_\alpha$, and $[h_\alpha, x_\alpha] = 2 x_\alpha$.  The triple $\left( x_\alpha, x_\alpha^{-}, h_\alpha \right)$ is said to be an \emph{$\sl(2)$-triple} and the Lie subalgebra generated by $\{ x_\alpha^{-}, h_\alpha, x_\alpha \}$ will be denoted $\sl_\alpha$.  Recall that
\[
X^+ = X^+(\b)= \{ \lambda \in \h^* \mid L_\b (\lambda) \textup{ is finite dimensional} \},
\]
and notice that, for $\lambda \in X^+$, we have $\lambda(h_\alpha)\in \N$, for all $\alpha \in \dr$ (since $L_\b(\lambda)$ is also a finite-dimensional $\lie r'$-module).

\begin{defn}[Generalized Kac module] \label{def-kac-mod}
Let $\lambda \in X^+$.  The \emph{generalized Kac module} associated to $\lambda$ is defined to be the cyclic $\g$-module $K(\lambda)=K_{\b}(\lambda)$ given as a quotient of $\U\g$ by the left ideal generated by
\[
\lie n^+, \quad
h -\lambda(h), \quad
(x_\alpha^{-})^{\lambda(h_\alpha)+1},\quad
\textup{for all $h \in \h$ and $\alpha \in \dr$}.
\]
\end{defn}

Denote the image of $1 \in \U\g$ in $K(\lambda)$ by $k_\lambda$, and notice that as a $\g$-module, $K(\lambda)$ is generated by the vector $k_\lambda\in K(\lambda)_{\bar0}$, satisfying the following defining relations
\begin{equation} \label{kac-mod-relations}
\lie n^+ k_\lambda = 0, \quad
h k_\lambda = \lambda(h) k_\lambda, \quad
(x_\alpha^{-})^{\lambda(h_\alpha)+1} k_\lambda = 0,\quad
\textup{for all $h \in \h$ and $\alpha \in \dr$}.
\end{equation}

\begin{defn}[Parabolic triangular decomposition] \label{defn.parabolic.td}
A triangular decomposition $\g = \nm \oplus \h \oplus \np$ is said to be parabolic if $(\lir + \np)$ is a Lie subsuperalgebra of $\g$ and there exists a nontrivial subspace $\g^- \subseteq \g$ such that
\begin{equation}\label{eq:dis.td}
\g = \g^-\oplus (\lir + \np).
\end{equation}
\end{defn}

\begin{rmk}
When $\g$ is basic classical of type I, isomorphic to $\sl(n, n)$ with $n \ge 2$, $\lie p(n)$ or $\q(n)$, distinguished triangular decompositions are parabolic.  When $\g$ is basic classical of type II, it does not admit any parabolic triangular decomposition, since $\g_{\bar 1}$ is an irreducible $\g_{\bar 0}$-module.  When $\g$ is of Cartan type, minimal ($\n^\pm = \n_0^\pm  \oplus \left( \oplus_{i > 0} \g_{\pm i} \right)$) and maximal ($\n^\pm = \n_0^\pm  \oplus \left( \oplus_{i < 0} \g_{\pm i} \right)$) triangular decompositions are parabolic.
\end{rmk}

If $\g$ is a basic classical Lie superalgebra or $\g \cong \sl (n,n)$, $n\geq 2$, then $K(\lambda)$ coincides with the generalized Kac module defined in \cite[Definition~2.6]{CLS}.  The next result gives necessary and sufficient conditions for generalized Kac modules to be finite dimensional.

\begin{prop} \label{prop:Vbar-fd}
Let $\lambda \in X^+$.
\begin{enumerate}[leftmargin=*]
\item \label{kac.fd.clscl}
If $\g$ is basic classical, isomorphic to $\sl(n,n)$ with $n \ge 2$, or $\lie{p}(n)$, then $K(\lambda)$ is finite dimensional (for every triangular decomposition).

\item \label{kac.fd.cartan}
If $\g$ is of Cartan type and the triangular decomposition $\g = \nm \oplus \h \oplus \np$ satisfies \hyperref[C1]{$(\mathfrak C1)$}, then $K(\lambda)$ is finite dimensional.

\item \label{kac.inf.dim.cartan}
If $\g$ is of Cartan type, the triangular decomposition $\g = \nm \oplus \h \oplus \np$ does not satisfy \hyperref[C1]{$(\mathfrak C1)$} and is parabolic, then $K(\lambda)$ is infinite dimensional.
\end{enumerate}
\end{prop}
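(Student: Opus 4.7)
I would handle (a) and (b) together, since both reduce to the single assertion: whenever \hyperref[C1]{$(\mathfrak C1)$} holds, $K(\lambda)$ is finite-dimensional. For (a) the hypothesis is automatic, because $\lie r = \g_{\bar 0}$ in the basic classical, $\sl(n,n)$, and $\lie p(n)$ cases forces $\nm_{\bar 0} \subseteq \g_{\bar 0} = \lie r$; for (b) it is part of the assumption.

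Granted \hyperref[C1]{$(\mathfrak C1)$}, I would consider the $\lie r$-submodule $V := \U{\nm_{\bar 0}} \, k_\lambda \subseteq K(\lambda)$. The generator $k_\lambda$ satisfies $(\np \cap \lie r) k_\lambda = 0$, $h k_\lambda = \lambda(h) k_\lambda$ for $h \in \h$, and $(x_\alpha^-)^{\lambda(h_\alpha)+1} k_\lambda = 0$ for $\alpha \in \dr$. Since $\lambda \in X^+$ forces $\lambda(h_\alpha) \in \N$ for every $\alpha \in \dr$, the classical presentation of finite-dimensional irreducibles for the reductive Lie algebra $\lie r$ makes $V$ a quotient of the finite-dimensional irreducible $\lie r$-module $L^{\lie r}_{\b \cap \lie r}(\lambda|_\h)$, hence finite-dimensional. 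Writing $\U{\nm} = \U{\nm_{\bar 1}} \, \U{\nm_{\bar 0}}$ by PBW---where $\U{\nm_{\bar 1}}$ is spanned by the $2^{\dim \nm_{\bar 1}}$ squarefree monomials in a basis of $\nm_{\bar 1}$---one gets $K(\lambda) = \U{\nm} k_\lambda = \U{\nm_{\bar 1}} \, V$, which is finite-dimensional.

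For (c), the plan is to realize $K(\lambda)$ as a module induced from $\lie r + \np$. Parabolicity guarantees that $\lie r + \np$ is a Lie subsuperalgebra and that $\g = \g^- \oplus (\lie r + \np)$ as $\h$-modules, with $\g^- = \bigoplus_{\alpha \in R^- \setminus R^-_{\lie r}} \g_\alpha$ by the weight-space calculation. All defining relations of $K(\lambda)$ live in $\U{\lie r + \np}$ and, by the same presentation of $L^{\lie r}(\lambda|_\h)$ used in (a)/(b), they cut out precisely the $(\lie r + \np)$-module obtained from $L^{\lie r}_{\b \cap \lie r}(\lambda|_\h)$ by letting $\np$ act trivially. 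Lemma~\ref{lem:ind.cyclic=cyclic} then identifies $K(\lambda) \cong \ind{\lie r + \np}{\g} L^{\lie r}_{\b \cap \lie r}(\lambda|_\h)$, and PBW for the parabolic splitting gives
\[
K(\lambda) \;\cong\; \U{\g^-} \otimes_\C L^{\lie r}_{\b \cap \lie r}(\lambda|_\h)
\]
as vector spaces. Now failure of \hyperref[C1]{$(\mathfrak C1)$} is equivalent to $\g^-_{\bar 0} \neq 0$, since $\nm_{\bar 0} = (\nm_{\bar 0} \cap \lie r) \oplus \g^-_{\bar 0}$. Choosing any nonzero even $x \in \g^-_{\bar 0}$, the PBW theorem makes $x^n$, $n \in \N$, linearly independent in $\U{\g^-}$, so $\U{\g^-}$ is infinite-dimensional; as $L^{\lie r}_{\b \cap \lie r}(\lambda|_\h) \neq 0$, this forces $K(\lambda)$ to be infinite-dimensional.

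The main obstacle I foresee is verifying the induction isomorphism in (c): checking that the relations $\np k_\lambda = 0$, $h k_\lambda = \lambda(h) k_\lambda$, and $(x_\alpha^-)^{\lambda(h_\alpha)+1} k_\lambda = 0$ for $\alpha \in \dr$ cut out exactly the trivial-$\np$ extension of $L^{\lie r}(\lambda|_\h)$ inside $\U{\lie r + \np}$, so that Lemma~\ref{lem:ind.cyclic=cyclic} delivers the induced-module description. This is a careful bookkeeping exercise combining the classical presentation of irreducibles for the reductive Lie algebra $\lie r$ with the PBW decomposition of $\U{\lie r + \np}$, but it should go through cleanly.
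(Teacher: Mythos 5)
Your argument follows essentially the same route as the paper's: for (a)/(b), $\U{\lie r}\,k_\lambda$ is a quotient of the finite-dimensional irreducible $L_0(\lambda)$ and PBW over $\nm_{\bar 1}$ finishes; for (c), induce from the parabolic $\lie r + \np$ and use the PBW factorization over $\g^-$. The one wrinkle you flag — identifying $K^{\lie r}(\lambda)$ with $L^{\lie r}_{\b\cap\lie r}(\lambda|_\h)$ — is actually avoidable, since Lemma~\ref{lem:ind.cyclic=cyclic} already yields $K(\lambda)\cong\ind{\lie r + \np}{\g}K^{\lie r}(\lambda)$ for the cyclic $(\lie r+\np)$-module $K^{\lie r}(\lambda)$ cut out by the same generators and relations, and $K^{\lie r}(\lambda)\ne 0$ (forced by $K(\lambda)\twoheadrightarrow L_\b(\lambda)\ne 0$) together with $\g^-\cap\g_{\bar 0}\ne 0$ is all that the PBW count requires.
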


\begin{proof}
\eqref{kac.fd.clscl} In these cases, the proof is similar to that of \cite[Proposition~2.7]{CLS}.

\eqref{kac.fd.cartan} Since $\lambda \in X^+$ (thus $\lambda(h_\alpha) \in \N$ for all $\alpha \in \dr$), we can consider the finite-dimensional irreducible $\lie r$-module of highest weight $\lambda$, $L_0 (\lambda)$.  Recall that $\lie r$ is a reductive Lie algebra and $\lie z$ acts as a scalar on $L_0(\lambda)$.  Hence $L_0(\lambda)$ is isomorphic to the $\lie r$-module generated by a vector $u_\lambda$ with defining relations
\[
x_\alpha u_\lambda = 0, \quad
h u_\lambda = \lambda(h) u_\lambda, \quad
(x_\alpha^{-})^{\lambda(h_\alpha)+1} u_\lambda=0, \quad
\textup{for all $h \in \h$ and $\alpha \in \dr$}.
\]
Let $W = \U{\lie r} k_\lambda$ be the $\lie r$-submodule of $K(\lambda)$ generated by $k_\lambda$.  Since $W$ is cyclic and $k_\lambda$ satisfies \eqref{kac-mod-relations}, there exists a unique (surjective) homomorphism of $\lie r$-modules satisfying
\[
\varphi \colon L_0(\lambda) \twoheadrightarrow W,\quad
u_\lambda \mapsto k_\lambda.
  \]
Since $\varphi$ is surjective and $L_0(\lambda)$ is finite dimensional, this shows that $W$ is finite dimensional.

If the triangular decomposition $\g = \nm \oplus \h \oplus \np$ satisfies \hyperref[C1]{$(\mathfrak C1)$}, that is, $\n^-_{\bar 0} \subseteq \lie{r}$, then by the PBW Theorem, for any given basis $\{x_i \mid 1\leq i\leq \dim \n_{\bar 1}^-\}$ of $\n_{\bar 1}^-$, we have that
\[
K(\lambda)
= \bu(\g) k_\lambda = \vspan_{\Bbbk}\{ x_{j_1} \dotsm x_{j_k} W
\mid 1 \leq j_1 < \dotsb < j_k \leq \dim \n_{\bar1}^- \}.
\]
Since $W$ is finite dimensional, we conclude that $K(\lambda)$ is also finite dimensional.

\eqref{kac.inf.dim.cartan} If the triangular decomposition $\g = \nm \oplus \h \oplus \np$ is parabolic, $(\lir + \np)$ is a Lie subsuperalgebra of $\g$ and there exists a nontrivial subspace $\g^- \subseteq \g$ such that $\g = \g^-\oplus (\lie{r} + \np)$.  Now, consider the $(\lie{r}+\np)$-module $K^{\lir}(\lambda)$ given as the quotient of $\U{\lir + \np}$ by the left ideal generated by
\[
\g_\alpha \quad \textup{for all } \alpha\in R^+, \qquad
h - \lambda(h) \quad \textup{for all } h \in \h, \qquad
(x_\alpha^{-})^{\lambda(h_\alpha)+1} \quad \textup{for all } \alpha \in \dr.
\]
Notice that the image of $1 \in \U{\lir + \np}$, which we will denote by $u_\lambda$, generates $K^{\lir}(\lambda)$.

Now, let
	\[
\overline{K^\lir}(\lambda) = \ind{\lie{r} + \np}{\g} K^\lir (\lambda),
	\]
and notice that $\overline{K^\lir}(\lambda)$ is generated by $1 \otimes u_\lambda$. If the triangular decomposition $\g = \nm \oplus \h \oplus \np$ does not satisfy \hyperref[C1]{$(\mathfrak C1)$}, then $\g^- \cap \g_{\bar0} \ne 0$.  In particular,  $\overline{K^\lir}(\lambda)\cong \bigoplus m \otimes K^\lir (\lambda)$, where $m$ runs over the set of ordered monomials where the variables form a basis of $\g^-$. Since $\g^- \cap \g_{\bar0} \ne 0$, $\overline{K^\lir}(\lambda)$ is infinite dimensional.  Finally, notice that there exists a unique surjective homomorphism of $\g$-modules $K(\lambda) \to \overline{K^\lir}(\lambda)$ satisfying $k_\lambda \mapsto (1 \otimes u_\lambda)$.  Since $\overline{K^\lir}(\lambda)$ is infinite dimensional, we conclude that $K(\lambda)$ is also infinite dimensional.
\end{proof}

\begin{ex}
If $\g$ is of Cartan type, then the minimal triangular decomposition, that is, the one induced by $\np = \n_0^+ \oplus \g_{-1}$ is parabolic and does not satisfy \hyperref[C1]{$(\mathfrak C1)$}.  In fact, in this case, $\nm = \n_0^- \oplus \left( \oplus_{k \ge 1} \g_k \right)$ and $\nm \cap \g_{2k} = \g_{2k} \not\subseteq \lir$ for all $k \ge 1$.  Thus generalized Kac modules associated to minimal triangular decompositions are infinite dimensional.
\end{ex}

\begin{ex}
If $\g$ is of Cartan type, then the maximal triangular decomposition, that is, the one induced by $\n^+ = \n_0^+ \oplus \left( \oplus_{k \ge 1} \g_k \right)$ satisfies \hyperref[C1]{$(\mathfrak C1)$} and is parabolic. Moreover, one can check that triangular decompositions induced by the Borel subalgebras $\b_1$ and $\b_2$ constructed in the proof of Proposition~\ref{prop:nice-system-exists-cartan} satisfy \hyperref[C1]{$(\mathfrak C1)$} but are not parabolic (in fact, $\lie{r}+\n^+$ is not a subsuperalgebra of $\g$). In particular, generalized Kac modules associated to these triangular decompositions are finite-dimensional.
\end{ex}

\begin{ex}
The authors do not know yet any example of a triangular decomposition of a Lie superalgebra of Cartan type that does not satisfy \hyperref[C1]{$(\mathfrak C1)$} and is not parabolic.
\end{ex}

The next result generalizes \cite[Lemma~2.8]{CLS} and their proofs are similar.

\begin{prop} \label{prop:univ-prop-kac-mod}
Let $\lambda\in X^+$.  If $V$ is a finite-dimensional $\g$-module generated by a highest-weight vector of weight $\lambda$, then there exists a surjective homomorphism of $\g$-modules $\pi_V \colon K(\lambda) \twoheadrightarrow V$.  Moreover, there exists a unique $\g$-submodule $W \subseteq K(\lambda)$ such that $V \cong K(\lambda) / W$.
\qed
\end{prop}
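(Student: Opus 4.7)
The plan is to follow the standard universal-property argument for highest-weight modules: verify that the highest-weight generator $v$ of $V$ satisfies the defining relations \eqref{kac-mod-relations} of $k_\lambda$, invoke the resulting universal property to obtain the surjection $\pi_V$, and then establish uniqueness of $W$ by using the one-dimensionality of the $\lambda$-weight space of $K(\lambda)$.

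For the existence of $\pi_V$, I would fix a highest-weight generator $v \in V_\lambda$. The conditions $\np v = 0$ and $h v = \lambda(h) v$ for $h \in \h$ hold by hypothesis. For each simple root $\alpha \in \dr$, the $\sl_\alpha$-submodule $\U{\sl_\alpha} v$ sits inside the finite-dimensional module $V$, hence is itself finite-dimensional. Because $x_\alpha v = 0$ and $h_\alpha v = \lambda(h_\alpha) v$ with $\lambda(h_\alpha) \in \N$ (as $\lambda \in X^+$), the finite-dimensional representation theory of $\sl(2)$ forces $(x_\alpha^{-})^{\lambda(h_\alpha)+1} v = 0$. Thus $v$ satisfies all relations in \eqref{kac-mod-relations}, so the left ideal of $\U\g$ defining $K(\lambda)$ annihilates $v$, yielding a unique surjective $\g$-module homomorphism $\pi_V \colon K(\lambda) \twoheadrightarrow V$ with $\pi_V(k_\lambda) = v$. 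Setting $W = \ker \pi_V$ gives $V \cong K(\lambda)/W$.

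For uniqueness of $W$, I would first observe that by PBW, $K(\lambda) = \U{\nm} k_\lambda$, so $K(\lambda)$ decomposes into $\h$-weight spaces with weights in $\lambda - Q^+$ and $K(\lambda)_\lambda = \C k_\lambda$. Suppose $W' \subseteq K(\lambda)$ is a submodule admitting an isomorphism $f \colon V \xrightarrow{\sim} K(\lambda)/W'$. Then $K(\lambda)/W'$ is itself a highest-weight module generated by $k_\lambda + W'$, so its $\lambda$-weight space is one-dimensional, spanned by $k_\lambda + W'$; consequently $f(v) = c(k_\lambda + W')$ for some $c \in \C^\times$, and replacing $f$ by $c^{-1} f$ we may assume $f(v) = k_\lambda + W'$. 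Then the composition of the canonical projection $K(\lambda) \twoheadrightarrow K(\lambda)/W'$ with $f^{-1}$ is a $\g$-module surjection $K(\lambda) \twoheadrightarrow V$ sending $k_\lambda$ to $v$; by cyclicity of $K(\lambda)$ this surjection coincides with $\pi_V$, and comparing kernels yields $W' = W$. The main (mild) obstacle is this uniqueness step, which rests on the one-dimensionality of $K(\lambda)_\lambda$ and the freedom to rescale $f$; the existence of $\pi_V$ is a routine check once one notices that $\U{\sl_\alpha} v$ is finite-dimensional.
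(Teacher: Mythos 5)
Your proof is correct and follows essentially the same route as the paper: verify the defining relations \eqref{kac-mod-relations} for the generator $v$ (the paper phrases the third relation via finite-dimensionality and reductivity of $\lie r$, which is the same as your $\sl_\alpha$-argument), obtain $\pi_V$ by the universal property, and get uniqueness of $W$ from the one-dimensionality of $K(\lambda)_\lambda$. Your uniqueness step is spelled out a bit more explicitly than the paper's, but the underlying idea is identical.
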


\dproof{
If $V = \{0\}$, the result is obvious.  So assume $V \ne \{0\}$, and let $v \in V \setminus \{0\}$ be a highest-weight vector that generates $V$.  By the definition of a highest-weight vector, the first two relations in \eqref{kac-mod-relations} are satisfied by $v$.  Since $\g_0$ is a finite-dimensional reductive Lie algebra, $V$ is finite dimensional and $v$ satisfies the first two relations in \eqref{kac-mod-relations}, $v$ must also satisfy the last relation in \eqref{kac-mod-relations}.  Thus there exists a unique homomorphism of $\g$-modules $\pi_V \colon K(\lambda) \to V$ satisfying $\pi_V (k_\lambda) = v$.  Since $v$ generates $V$, $\pi_V$ is also surjective.

Moreover, since $\pi_V$ is a homomorphism of $\g$-modules, it preserves weight spaces.  Since $V$ and $K(\lambda)$ are highest-weight modules, $\dim V_\lambda = \dim K(\lambda)_\lambda = 1$ and $\pi_V$ is unique up to scalar multiple.  Thus the $\g$-submodule $W = \ker \pi_V$ is unique.
}

Since every irreducible finite-dimensional $\g$-module is generated by a highest-weight vector of weight $\lambda\in X^+$,  Proposition~\ref{prop:univ-prop-kac-mod} applies, in particular, to all irreducible finite-dimensional $\g$-modules.

%%%%%%%%%%%%%%%%%%%%%%%%%%%%%%%%%%%%%%%%%%%%%%%%%%%%%%%%%%%%%%%%%%%%
%
\section{Global Weyl modules} \label{S:glo.weyl.mod}
%
%%%%%%%%%%%%%%%%%%%%%%%%%%%%%%%%%%%%%%%%%%%%%%%%%%%%%%%%%%%%%%%%%%%%

Let $\g$ be a Lie superalgebra and consider an associative commutative $\C$-algebra $A$ with unit. The vector space $\g \otimes A$ is a Lie superalgebra when endowed with the $\Z_2$-grading given by $(\g\otimes A)_j=\g_j\otimes A$, $j\in\Z_{2}$, and the Lie superbracket extending bilinearly
	\[
[x_{1}\otimes a_{1},x_{2}\otimes a_{2}] = [x_{1},x_{2}]\otimes a_{1}a_{2},
\quad \textup{for all $x_1, x_2 \in \g$ and $a_1, a_2 \in A$}.
	\]
We refer to a Lie superalgebra of this form as a \emph{map Lie superalgebra}.  From now on, we identify $\g$ with a subsuperalgebra of $\g \otimes A$ via the isomorphism $\g \cong \g \otimes \C$ and the inclusion $\g \otimes \C \subseteq \g \otimes A$.

From now on, let $\g$ be either $\sl (n,n)$ with $n \ge 2$, or a finite-dimensional simple Lie superalgebra not of type $\lie q(n)$.  Let $\mathcal{C}_A$ denote the category of $\ga$-modules that are finitely semisimple as $\lie r$-modules (see \eqref{eq:reductive.part} for the notation).  By Lemma~\ref{lem:I(a,b)closed}, $\cal C_A = \cal C_{(\ga, \lie r)}$ is an abelian category, closed under taking submodules, quotients, arbitrary direct sums, and finite tensor products.

\begin{lemma}\label{lem-projective}
If $V$ is a finitely-semisimple $\lie r$-module, then $\ind{\lie r}{\g \otimes A} V$ is a projective object in $\mathcal{C}_A$.  Moreover, the category $\cal C_A$ has enough projectives.
\end{lemma}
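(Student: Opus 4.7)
The plan is to exploit Frobenius reciprocity together with the semisimplicity of the category of finitely-semisimple modules over the reductive Lie algebra $\lie r$. As a preliminary step, I would verify that $\ind{\lie r}{\ga} V$ belongs to $\mathcal{C}_A$ by applying Lemma~\ref{lem:ind.I(a,b)} to the pair $(\ga, \lie r)$. The hypothesis on $V$ is given, so what needs to be checked is that $\ga$ itself is finitely semisimple as an $\lie r$-module under its adjoint action; this is immediate because $\lie r \subseteq \g$ acts on $\ga = \g \otimes A$ only through the first factor, so as an $\lie r$-module $\ga$ is a direct sum of copies of the finite-dimensional $\lie r$-module $\g$, indexed by a basis of $A$.

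The main step will be projectivity. The standard Frobenius adjunction for enveloping superalgebras yields a natural isomorphism
\[
\Hom_{\ga} \left( \ind{\lie r}{\ga} V,\ M \right) \cong \Hom_{\lie r} (V, M)
\quad \text{for all } M \in \mathcal{C}_A,
\]
where $M$ is restricted to $\lie r$ on the right. Restriction from $\mathcal{C}_A$ to finitely-semisimple $\lie r$-modules is exact, so it will suffice to show that $\Hom_{\lie r}(V, -)$ is exact on the category of finitely-semisimple $\lie r$-modules. Because $\lie r$ is reductive, that category is semisimple: every object decomposes as a direct sum of finite-dimensional irreducibles and every short exact sequence splits. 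Hence $V$ is projective there, and the desired exactness follows.

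For the second assertion, given any $M \in \mathcal{C}_A$, I would set $P = \ind{\lie r}{\ga} M$, viewing $M$ as an $\lie r$-module by restriction. The preceding steps show that $P \in \mathcal{C}_A$ and that $P$ is projective, and the counit map $P \twoheadrightarrow M$, $u \otimes m \mapsto u \cdot m$, is surjective because $1 \otimes m \mapsto m$. I do not foresee any serious obstacle: the super setting only affects the bookkeeping of signs in the adjunction, and the heart of the argument is the classical semisimplicity of modules over a reductive Lie algebra.
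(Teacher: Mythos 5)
Your proof is correct and takes essentially the same route as the paper: establish membership in $\mathcal{C}_A$ via Lemma~\ref{lem:ind.I(a,b)}, then use Frobenius reciprocity to reduce projectivity to exactness of $\Hom_{\U{\lie r}}(V,-)$, which holds because every object of $\mathcal{C}_A$ is completely reducible over $\lie r$; enough projectives follows from the surjective counit $\ind{\lie r}{\ga} M \twoheadrightarrow M$. One small remark: the semisimplicity of the category of finitely-semisimple $\lie r$-modules is really a consequence of the definition together with Lemma~\ref{lem:I(a,b)closed} (submodules are direct summands), not of the reductivity of $\lie r$ per se, but this does not affect the argument.
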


\begin{proof}
Recall that $\g$ is a finitely-semisimple $\lie r$-module via the adjoint representation.  Since $\ga \cong \g^{\oplus \dim A}$ as $\lie r$-modules, Lemma~\ref{lem:I(a,b)closed} implies that $\ga$ is a finitely-semisimple $\lie r$-module.  Thus, by Lemma~\ref{lem:ind.I(a,b)}, $\ind{\lie r}{\ga} V$ is an object in $\cal C_A$.

To prove that $\ind{\lie r}{\ga} V$ is projective, first recall that an object $M$ of $\cal C_A$ is projective if and only if $\Hom_{\cal C_A} (M, -)$ is an exact functor.  By definition, $\Hom_{\cal C_A} (M, N) = \Hom_{\U \ga} (M, N)$ for all $M, N$ in $\cal C_A$.  By Frobenius Reciprocity, the functor $\Hom_{\U \ga} (\ind{\lie r}{\ga}V, -)$ is naturally isomorphic to $\Hom_{\U{\lie r}} (V, -)$.  Since every object of $\cal C_A$ is assumed to be a direct sum of its finite-dimensional $\lie r$-submodules, that is, every object of $\mathcal C_A$ is completely reducible as an $\lie r$-module, the restriction of $\Hom_{\U{\lie r}} (V, -)$ to $\cal C_A$ is exact.  Thus $\Hom_{\cal C_A} (\ind{\lie r}{\ga}V, -)$ is an exact functor.

Moreover, let $M$ be any object of $\mathcal C_A$.  Since $M$ is assumed to be a finitely-semisimple $\lie r$-module, by the first part of this lemma, $\ind{\lie r}{\g \otimes A} M$ is projective.  Since the unique linear transformation $f \colon \ind{\lie r}{\ga} M \to M$ satisfying $f (u \otimes m) = u m$ for all $u \in \U \ga$ and $m \in M$ is a surjective homomorphism of $\ga$-modules, we conclude that $\cal C_A$ has enough projectives.
\end{proof}

Given a $\g$-module $V$, define $P_A(V)$ to be the $\ga$-module
\begin{equation}\label{cha-fou-equ.1}
P_A(V) = \ind{\g}{\ga} V.
\end{equation}
Notice that, if $V$ is a projective $\lie g$-module, then $P_A(V)$ is a projective $\ga$-module.

The next result, which was proved in \cite[Proposition~3.2]{CLS} for the cases where $\g$ is either basic classical or $\sl (n,n)$ with $n \ge 2$, describes $P_A(K(\lambda))$ by generators and relations.

\begin{prop}\label{cha-fou-pro3}
If $\lambda\in X^+$, then $P_A(K(\lambda))$ is generated as a left $\U \ga$-module, by a vector $p_\lambda\in P_A(K(\lambda))_{\bar0}$ satisfying the following defining relations
\begin{equation}\label{cha-fou-equ.2}
\n^+ p_\lambda=0, \quad
h p_\lambda=\lambda(h) p_\lambda, \quad
(x_\alpha^{-})^{\lambda(h_\alpha)+1} p_\lambda=0, \quad
\text{for all $h \in \h$ and $\alpha \in \dr$}.
\end{equation}
\end{prop}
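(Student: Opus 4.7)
The plan is to reduce the proposition to a direct application of Lemma~\ref{lem:ind.cyclic=cyclic}. By Definition~\ref{def-kac-mod}, $K(\lambda)$ is presented as a cyclic $\U\g$-module, namely as the quotient of $\U\g$ by the left ideal $J \subseteq \U\g$ generated by the elements
\[
\n^+, \quad h-\lambda(h), \quad (x_\alpha^{-})^{\lambda(h_\alpha)+1}, \qquad h \in \h,\ \alpha \in \dr,
\]
with cyclic generator $k_\lambda$. Since $\g$ is identified with the Lie subsuperalgebra $\g \otimes \C \subseteq \ga$, we may regard $\U\g$ as a subalgebra of $\U(\ga)$ via the PBW theorem, and view $J$ as a subset of $\U(\ga)$.

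First I would set $p_\lambda := 1 \otimes k_\lambda \in P_A(K(\lambda)) = \U(\ga) \otimes_{\U\g} K(\lambda)$. Because $k_\lambda$ is even and the identification $\g \hookrightarrow \ga$ is even, $p_\lambda$ lies in $P_A(K(\lambda))_{\bar 0}$. The relations \eqref{cha-fou-equ.2} for $p_\lambda$ are then immediate consequences of the relations \eqref{kac-mod-relations} satisfied by $k_\lambda$, using the fact that for $x \in \g$ one has $x \cdot (1 \otimes k_\lambda) = 1 \otimes (x \cdot k_\lambda)$ inside the induced module.

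Next, to see that these are the \emph{defining} relations, I would apply Lemma~\ref{lem:ind.cyclic=cyclic} with the pair $(\ft, \g)$ of that lemma replaced by $(\g, \ga)$ and $M = K(\lambda)$. The lemma asserts that $\ind{\g}{\ga} K(\lambda) = P_A(K(\lambda))$ is the quotient of $\U(\ga)$ by the left ideal generated by $J$ inside $\U(\ga)$. Since $J$ is generated (as a left ideal of $\U\g$) by the elements listed above, the ideal it generates in $\U(\ga)$ is precisely the left ideal of $\U(\ga)$ generated by the elements $\n^+$, $h-\lambda(h)$, and $(x_\alpha^-)^{\lambda(h_\alpha)+1}$ for $h \in \h$ and $\alpha \in \dr$. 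This shows simultaneously that $p_\lambda$ generates $P_A(K(\lambda))$ as a $\U(\ga)$-module and that the listed relations are a complete set of defining relations, completing the proof.

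No step here looks genuinely hard: the argument is essentially bookkeeping once Lemma~\ref{lem:ind.cyclic=cyclic} is in place. The only subtlety to keep an eye on is the mild variation in meaning of ``defining relation'' in the super setting---specifically, verifying that the universal module presented by \eqref{cha-fou-equ.2} in the category of $\ga$-modules coincides with the quotient of $\U(\ga)$ by the left ideal generated by those elements, which is standard but worth stating explicitly.
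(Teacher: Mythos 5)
Your proposal is correct and takes essentially the same route as the paper: set $p_\lambda = 1 \otimes k_\lambda$, observe that it is even and inherits the relations from $k_\lambda$, and invoke Lemma~\ref{lem:ind.cyclic=cyclic} (applied to the pair $\g \subseteq \ga$) to conclude that these are defining relations.
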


\begin{proof}
Let $p_\lambda = 1 \otimes k_\lambda \in P_A(K(\lambda))$.  Since $k_\lambda \in K(\lambda)_{\bar0}$ satisfies relations \eqref{kac-mod-relations}, $p_\lambda\in P_A(K(\lambda))_{\bar0}$ satisfies relations \eqref{cha-fou-equ.2}.  The fact that these are defining relations follows from Lemma~\ref{lem:ind.cyclic=cyclic}.
\end{proof}

Given $\lambda \in X^+$ and $M\in {\cal C}_A$, define
\begin{equation} \label{defn:^lambda}
M^\lambda = M \Big/ \sum_{\mu \not\le \lambda} \U\ga M_\mu.
\end{equation}
Notice that, if $\mu$ is a weight of $M^\lambda$, then $\mu \le \lambda$.  Let ${\cal C}_A^\lambda$ denote the full subcategory of ${\cal C}_A$ whose objects are the left $\U \ga$-modules $M \in \cal C_A$ such that $M^\lambda = M$.  (Notice that $\cal C_A^\lambda$ depends on the choice of triagular decomposition $\g = \nm \oplus \h \oplus \np$, even though it is not shown explicitly in its notation.)  The proof of the next result is similar to that of Lemma~\ref{lem-projective}.

\begin{lemma} \label{weyl.projective}
Let $\lambda\in X^+$ and $V$ be a $\g$-module.  If $V$ is finitely semisimple as an $\lie r$-module, then $P_A(V)^\lambda$ is a projective object in $\mathcal{C}_A^\lambda$.  Moreover, the category $\mathcal{C}_A^\lambda$ has enough projectives.
\qed
\end{lemma}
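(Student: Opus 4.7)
The plan is to mirror the proof of Lemma~\ref{lem-projective}, which rested on Frobenius reciprocity together with complete reducibility as $\lie r$-modules.

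First I will establish that $P_A(V)^\lambda$ lies in $\mathcal C_A^\lambda$. Since $V$ is finitely semisimple as an $\lie r$-module and $\ga$ is finitely semisimple as an $\lie r$-module via the adjoint action (as in the proof of Lemma~\ref{lem-projective}), Lemma~\ref{lem:ind.I(a,b)} applied with $\ft=\lie r$ yields $\ind{\lie r}{\ga}V\in \mathcal C_A$. The canonical surjection of $\ga$-modules $\ind{\lie r}{\ga}V\twoheadrightarrow \ind{\g}{\ga}V=P_A(V)$, combined with Lemma~\ref{lem:I(a,b)closed}, gives $P_A(V)\in \mathcal C_A$, and hence $P_A(V)^\lambda$ lies in $\mathcal C_A^\lambda$ by construction.

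To prove that $P_A(V)^\lambda$ is projective in $\mathcal C_A^\lambda$, I identify the functor $\Hom_{\mathcal C_A^\lambda}(P_A(V)^\lambda,-)$ with an exact one. For $M\in\mathcal C_A^\lambda$, the defining property of $(-)^\lambda$ forces every $\ga$-linear map $P_A(V)\to M$ to annihilate $\sum_{\mu\not\le\lambda}\U{\ga}\,P_A(V)_\mu$, since $M$ has no weight spaces indexed by $\mu\not\le\lambda$. Combined with Frobenius reciprocity along $\g\subseteq\ga$, this yields
\[
\Hom_{\mathcal C_A^\lambda}(P_A(V)^\lambda,M)\;\cong\;\Hom_{\ga}(P_A(V),M)\;\cong\;\Hom_{\g}(V,M).
\]
The remaining and central step is to verify that $\Hom_{\g}(V,-)$ is exact when restricted to $\mathcal C_A^\lambda$. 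Given a surjection $\pi\colon M\twoheadrightarrow N$ in $\mathcal C_A^\lambda$, complete reducibility of $M,N$ as $\lie r$-modules furnishes an $\lie r$-linear splitting, so any $\g$-map $\phi\colon V\to N$ lifts to an $\lie r$-linear $\tilde\phi\colon V\to M$; the obstruction to $\tilde\phi$ being $\g$-linear is a weight-graded cocycle valued in $\ker\pi$, and the truncation $(-)^\lambda$ is what permits this obstruction to be absorbed by a suitable correction. This exactness verification is the main technical hurdle: it echoes the semisimplicity-based argument of Lemma~\ref{lem-projective} but requires care due to the non-semisimplicity of $\g$-modules in general.

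Finally, for the ``enough projectives'' claim, let $M\in\mathcal C_A^\lambda$. Restricting $M$ to a $\g$-module (which remains finitely semisimple as an $\lie r$-module since $M\in \mathcal C_A$), the first part of the lemma applied with $V=M$ shows $P_A(M)^\lambda$ is projective in $\mathcal C_A^\lambda$. The counit of the adjunction $\ind{\g}{\ga}\dashv \mathrm{Res}$ yields a surjective $\ga$-linear map $P_A(M)=\U{\ga}\otimes_{\U\g}M\twoheadrightarrow M$, and applying $(-)^\lambda$ produces the surjection $P_A(M)^\lambda\twoheadrightarrow M^\lambda=M$ in $\mathcal C_A^\lambda$ that exhibits $M$ as a quotient of a projective.
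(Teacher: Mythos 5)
Your reduction via Frobenius reciprocity and the truncation, yielding $\Hom_{\mathcal C_A^\lambda}(P_A(V)^\lambda, M) \cong \Hom_\g(V, M)$ for $M \in \mathcal C_A^\lambda$, follows the paper's argument exactly, as does your treatment of ``enough projectives'' via the counit $P_A(M) \twoheadrightarrow M$. But the step you flag as ``the main technical hurdle'' --- exactness of $\Hom_\g(V,-)$ on $\mathcal C_A^\lambda$ --- is a genuine gap, and the cocycle-correction sketch you offer cannot be completed: an $\lie r$-linear splitting plus a truncation does not upgrade an $\lie r$-linear lift to a $\g$-linear one. Concretely, take $A = \C$, $\g$ basic classical of type~I with the distinguished triangular decomposition, and $\lambda \in X^+$ atypical, so that the generalized Kac module $K(\lambda)$ is reducible. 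Then $V = L_\b(\lambda)$ is a $\g$-module that is finitely semisimple over $\lie r$, $P_\C(V)^\lambda = L_\b(\lambda)$, and the surjection $K(\lambda) \twoheadrightarrow L_\b(\lambda)$ lives in $\mathcal C_\C^\lambda$ but does not split: any nonzero $\g$-map $L_\b(\lambda) \to K(\lambda)$ splitting it would send a highest-weight vector to a nonzero multiple of $k_\lambda$, which generates $K(\lambda)$, forcing $K(\lambda) \cong L_\b(\lambda)$. Hence $L_\b(\lambda)$ is not projective in $\mathcal C_\C^\lambda$, and the exactness claim you need fails.

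For comparison, the paper's argument at this exact point invokes Lemma~\ref{lem-projective} to assert that ``$P_A(V)$ is a projective object of $\mathcal C_A$'', but Lemma~\ref{lem-projective} actually concerns $\ind{\lie r}{\ga}V = \U{\ga}\otimes_{\U{\lie r}}V$ rather than $P_A(V) = \ind{\g}{\ga}V = \U{\ga}\otimes_{\U{\g}}V$; the former surjects onto the latter but is not a retract of it in general, so that citation does not close the gap either. What does go through: the ``enough projectives'' claim follows by using $\bigl(\ind{\lie r}{\ga}M\bigr)^\lambda$ in place of $P_A(M)^\lambda$, and the projectivity of $P_A(V)^\lambda$ does hold under the stronger hypothesis that $V$ is projective in $\mathcal C_\C^\lambda$ --- a condition satisfied by $K(\lambda)$ (any highest-weight vector in an object of $\mathcal C_\C^\lambda$ automatically satisfies the Kac-module relations, by $\sl(2)$-theory inside $\lie r$), which is all the paper uses downstream.
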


\dproof{
First recall from the construction of $\cal C_A^\lambda$, that
	\[
\Hom_{\cal C_A^\lambda}(P_A(V)^\lambda, M) = \Hom_{\cal C_A} (P_A(V)^\lambda, M)
\quad \textup{for every object $M$ of $\cal C_A^\lambda$}.
	\]
Now, since morphisms in $\cal C_A$ are homomorphisms of $\h$-modules and $\ga$-modules, precomposition with the quotient \eqref{defn:^lambda} gives an isomorphism of vector spaces
\[
\Hom_{\cal C_A} (P_A(V)^\lambda, M) \cong \Hom_{\cal C_A} (P_A(V), M)
\quad \textup{for every object $M$ of $\cal C_A^\lambda$},
\]
which is functorial in $M$.  Finally, recall from Lemma~\ref{lem-projective} that $P_A(V)$ is a projective object of $\cal C_A$.  This shows that $P_A(V)^\lambda$ is a projective object in $\mathcal{C}_A^\lambda$.

Let $M$ be any object of $\mathcal C_A^\lambda$.  Since $M$ is assumed to be a finitely-semisimple $\mathfrak s$-module, by the first part of this lemma, $P_A(M)^\lambda$ is projective and $\Hom_{\cal C_A^\lambda} (P_A(M)^\lambda, M) \cong \Hom_{\cal C_A} (P_A(M), M)$ via precomposition by the (surjective) quotient \eqref{defn:^lambda}.  Thus, there exists a unique surjective homomorphism of $\mathfrak g \otimes A$-modules $P_A(M)^\lambda \to M$ corresponding to the surjective homomorphisms of $\mathfrak g \otimes A$-modules $f \colon P_A(M) \to M$ satisfying $f (u \otimes m) = u m$ for all $u \in \U \ga$ and $m \in M$.
}

\begin{defn}[Global Weyl module] \label{def:global-Weyl}
Let $\lambda \in X^+$.  The \emph{global Weyl module} associated to $\lambda$ is defined to be
\[
W_A(\lambda):=P_A(K(\lambda))^\lambda.
\]
The image of $p_\lambda$ in $W_A(\lambda)$ will be denoted by $w_\lambda$.
\end{defn}

The next result provides a descriptions of global Weyl modules by generators and relations, and as a universal object in $\cal C_A^\lambda$.  Its proof is similar to that of  \cite[Proposition~4]{CFK10}.

\begin{prop} \label{prop:glob.Weyl.genrel+univ}
  For $\lambda\in X^+$, the global Weyl module $W_A(\lambda)$ is generated as a left $\U \ga$-module, by the vector $w_\lambda$, with defining relations
  \begin{equation}\label{cha-fou-equ5}
    (\n^+\otimes A)w_\lambda=0,\quad hw_\lambda=\lambda(h)w_\lambda,\quad (x_\alpha^-)^{\lambda(h_\alpha)+1}w_\lambda=0,\quad \text{for all $h \in \h$ and $\alpha \in \dr$}.
  \end{equation}
Moreover, if the triangular decomposition of $\g$ satisfies \hyperref[C1]{$(\mathfrak C1)$}, then the global Weyl module $W_A(\lambda)$ is the unique object of $\mathcal{C}_A^\lambda$, up to isomorphism, that is generated by a highest-weight vector of weight $\lambda$ and admits a surjective homomorphism onto every object of $\mathcal{C}_A^\lambda$ that is generated by a highest-weight vector of weight $\lambda$.
\end{prop}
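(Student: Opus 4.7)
The plan is to derive both claims from the presentation of $P_A(K(\lambda))$ given in Proposition~\ref{cha-fou-pro3}, together with the PBW theorem for $\U{\ga}$. The relations \eqref{cha-fou-equ5} strengthen those of \eqref{cha-fou-equ.2} only by replacing $\np w_\lambda=0$ with $(\np\otimes A)w_\lambda=0$, while the quotient defining $W_A(\lambda)$ kills precisely those additional vectors of weight $\not\le\lambda$; so the strategy is to identify these two operations.

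For the presentation, I would let $M$ denote the cyclic $\U{\ga}$-module presented by \eqref{cha-fou-equ5}. Since the relations of \eqref{cha-fou-equ5} contain those of \eqref{cha-fou-equ.2}, Proposition~\ref{cha-fou-pro3} supplies a surjection $P_A(K(\lambda))\twoheadrightarrow M$ sending $p_\lambda\mapsto m_\lambda$. A PBW argument for the triangular decomposition $\ga=(\nm\otimes A)\oplus(\h\otimes A)\oplus(\np\otimes A)$, using $(\np\otimes A)m_\lambda=0$ and the fact that $\h\otimes A$ has weight zero, then gives $M=\U{\nm\otimes A}\,\U{\h\otimes A}\,m_\lambda$, whence every weight of $M$ lies in $\lambda-Q^+$; consequently $M=M^\lambda$ and the surjection factors through $W_A(\lambda)$. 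In the other direction, I would verify that $w_\lambda$ satisfies \eqref{cha-fou-equ5} in $W_A(\lambda)$: the last two relations are inherited from \eqref{cha-fou-equ.2}, while for $x\in\np$ a weight vector the element $(x\otimes a)p_\lambda$ has weight $\not\le\lambda$ in $P_A(K(\lambda))$ and hence lies in the submodule $\sum_{\mu\not\le\lambda}\U{\ga}\,P_A(K(\lambda))_\mu$ that is quotiented out. The two surjections are then mutually inverse.

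For the universal property under \hyperref[C1]{$(\mathfrak C1)$}, I would first verify that $W_A(\lambda)\in\mathcal{C}_A^\lambda$: Proposition~\ref{prop:Vbar-fd} yields that $K(\lambda)$ is finite-dimensional, hence finitely semisimple as an $\lir$-module since $\lir$ is reductive with $\h$ as a Cartan subalgebra; Lemma~\ref{lem:ind.I(a,b)} then places $P_A(K(\lambda))=\ind{\g}{\ga}K(\lambda)$ in $\mathcal{C}_A$, and Lemma~\ref{lem:I(a,b)closed} transfers this to the quotient $W_A(\lambda)\in\mathcal{C}_A^\lambda$. Next, given $V\in\mathcal{C}_A^\lambda$ generated by a highest-weight vector $v_\lambda$ of weight $\lambda$, I would check that $v_\lambda$ satisfies \eqref{cha-fou-equ5}: the first relation holds because $V=V^\lambda$ has no weights strictly above $\lambda$, the second is the definition of highest-weight vector, and the third follows because $\U{\lir'}v_\lambda\subseteq V$ is a highest-weight $\lir'$-submodule sitting inside a finitely semisimple $\lir$-module, so it is finite-dimensional and the standard $\sl_\alpha$-theory applies. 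The presentation just established then produces the desired surjection $W_A(\lambda)\twoheadrightarrow V$. Uniqueness up to isomorphism is formal: any other $W'$ with the same property admits reciprocal surjections with $W_A(\lambda)$ matched on highest-weight generators, whose composition is a nonzero, hence surjective, endomorphism of $W_A(\lambda)$, which a standard weight-space argument shows is an isomorphism.

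The main technical point is the PBW reduction showing $M=M^\lambda$: this rests on the factorisation $\U{\ga}m_\lambda=\U{\nm\otimes A}\,\U{\h\otimes A}\,\U{\np\otimes A}m_\lambda$, which collapses via $(\np\otimes A)m_\lambda=0$ to $\U{\nm\otimes A}\,\U{\h\otimes A}m_\lambda$; the $\Z_2$-graded nature of $\ga$ requires one to track signs when reordering factors, but the computation is otherwise routine.
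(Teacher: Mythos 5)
Your treatment of the generators-and-relations statement is correct and follows the paper's route exactly: you use Proposition~\ref{cha-fou-pro3} to compare $P_A(K(\lambda))$ with the abstractly presented cyclic module $M$, a PBW reduction to see that all weights of $M$ lie in $\lambda - Q^+$ so that the surjection factors through $W_A(\lambda)$, and a weight argument showing $(\np\otimes A)w_\lambda$ vanishes in $W_A(\lambda)$ because $(\np\otimes A)p_\lambda$ sits in weights $\not\le\lambda$. Your verification that $W_A(\lambda)\in\mathcal{C}_A^\lambda$ and that a highest-weight generator of any $V\in\mathcal{C}_A^\lambda$ satisfies \eqref{cha-fou-equ5} also follows the paper's (and \cite{CLS}'s) argument, and you make explicit where \hyperref[C1]{$(\mathfrak C1)$} enters via Proposition~\ref{prop:Vbar-fd}.

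The one genuine gap is in the uniqueness claim. You posit reciprocal surjections ``matched on highest-weight generators'' and then dispatch the resulting surjective endomorphism of $W_A(\lambda)$ with a ``standard weight-space argument''; neither step is automatic. The surjection $W'\twoheadrightarrow W_A(\lambda)$ provided by the universal-property hypothesis on $W'$ sends the generator $w'$ to \emph{some} cyclic generator $v\in W_A(\lambda)_\lambda$, but $W_A(\lambda)_\lambda\cong\fA_\lambda$ is generally infinite-dimensional, so a priori $v\notin\C w_\lambda$, and surjectivity of the composite endomorphism on weight spaces does not imply injectivity by counting. The paper closes this gap by analyzing $W_A(\lambda)_\lambda$ directly: by PBW and the first two relations of \eqref{cha-fou-equ5}, $W_A(\lambda)_\lambda$ is the cyclic $\U{\ha}$-module generated by $w_\lambda$, which the paper identifies with a free commutative algebra whose only units are scalars; hence the only cyclic generators of the $\ga$-module $W_A(\lambda)$ lying in the top weight space are in $\C w_\lambda$. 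With that in hand, the reciprocal surjections can be normalized so that their composite fixes $w_\lambda$ and is therefore the identity. You need to supply this observation about $W_A(\lambda)_\lambda$ (or a substitute, such as a direct argument that $\fA_\lambda$ has no nonscalar units) for the uniqueness argument to close.
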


\dproof{
Recall that, if $\mu$ is a weight of $W_A(\lambda)$, then $\mu \le \lambda$.  Thus $(\n^+\otimes A)w_\lambda=0$.  The remaining relations are satisfied by $w_\lambda$, since they are satisfied by $p_\lambda$ (see Proposition~\ref{cha-fou-pro3}).  Let $W$ be the module generated by an element $w$ satisfying relations \eqref{cha-fou-equ5}, so that we have an epimorphism $\pi_1 \colon W \twoheadrightarrow W_A(\lambda)$ sending $w$ to $w_\lambda$.  Since the relations \eqref{cha-fou-equ5} imply the relations \eqref{kac-mod-relations}, the vector $w \in W$ generates a $\g$-submodule of $W$ isomorphic to a quotient of $K(\lambda)$.  Thus we have a surjective homomorphism
  \[
    \pi_2 \colon P_A(K(\lambda)) \to W,\quad u_1 \otimes u_2 v_\lambda \mapsto u_1 u_2 w,\quad u_1 \in \bu(\g \otimes A),\ u_2 \in \bu(\g).
  \]
  Since the $\g$-weights of $W$ are bounded above by $\lambda$, it follows that $\pi_2$ induces a map $W_A(\lambda) \to W$ inverse to $\pi_1$.

The \emph{moreover} part was proved in \cite[Proposition~3.5]{CLS} for the case where $\g$ is either basic classical or $\sl (n,n)$ with $n \ge 2$.  The  proofs for the other types are similar.  The fact that $W_A(\lambda)$ is an object of $\cal C_A^\lambda$ generated by a highest-weight vector of weight $\lambda$ follows directly from \eqref{cha-fou-equ5}.  Now, let $V$ be an object of $\mathcal{C}_A^\lambda$ generated by a highest-weight vector $v$ of weight $\lambda$.  Then
\[
(\n^+\otimes A)v=0
\quad \textup{and} \quad
hv=\lambda(h)v \quad \textup{for all $h\in\h$}.
\]
Since $V$ is an object of $\cal C_A^\lambda$, the vector $v$ belongs to a finite direct sum of finite-dimensional irreducible $\lie r$-submodules of $V$.  Thus the $\lie r$-submodule of $V$ generated by $v$ is finite dimensional. Since $\lie r'$ (the semisimple part of $\lie r$) is a finite-dimensional semisimple Lie algebra, $(x_\alpha^-)^{\lambda(h_\alpha)+1}v=0$ for all $\alpha \in \dr$.  Hence $v$ satisfies relations \eqref{cha-fou-equ5}, and one can define a unique surjective homomorphism of $\ga$-modules $W_A(\lambda) \twoheadrightarrow V$ satisfying $w_\lambda \mapsto v$.

Now, in order to prove the uniqueness of $W_A(\lambda)$ up to isomorphism, let $W$ be an object of $\mathcal{C}_A^\lambda$ that is generated by a highest-weight vector $w$ of weight $\lambda$ and that admits a surjective homomorphism onto every object of $\mathcal{C}_A^\lambda$ that is generated by a highest-weight vector of weight $\lambda$.  In particular, we have a surjective homomorphism $\pi_1 \colon W \twoheadrightarrow W_A(\lambda)$.  It follows from the PBW Theorem and the first two relations in \eqref{cha-fou-equ5} that there is an isomorphism of vector spaces $W_A(\lambda)_\lambda \cong \bu(\h \otimes A_+) \otimes \C w_\lambda$.  Since $\h$ is abelian, $\U{\h \otimes A_+}$ is a free commutative algebra.  Thus, all the vectors in $W_A(\lambda)_\lambda$ that generate $W_A(\lambda)$ as a $\ga$-module are in $\C w_\lambda$.  This shows that $\pi_1(w) = c w_\lambda$ for some $c \in \C \setminus \{0\}$.

To finish the proof, first notice that, as above, $w$ satisfies relations \eqref{cha-fou-equ5}.  Thus, since $c \ne 0$, there exists a unique homomorphism of $\ga$-modules $\pi_2 \colon W_A(\lambda) \to W$ satisfying $\pi_2(w_\lambda) = \frac 1 c w$.  By construction, $\pi_1$ and $\pi_2$ are mutually inverse.  Thus $W \cong W_A(\lambda)$.
}

When $A = \C$, the global Weyl module $W_A(\lambda)$ coincides with the generalized Kac module $ K(\lambda)$.  In this case, the \emph{moreover} part of Proposition~\ref{prop:glob.Weyl.genrel+univ} reduces to the universal property given in Proposition~\ref{prop:univ-prop-kac-mod}.

%%%%%%%%%%%%%%%%%%%%%%%%%%%%%%%%%%%%%%%%%%%%%%%%%%%%%
%
\section{Weyl functors} \label{S:super.weyl.functors}
%
%%%%%%%%%%%%%%%%%%%%%%%%%%%%%%%%%%%%%%%%%%%%%%%%%%%%%

Let $A$ be an associative commutative $\C$-algebra with unit, and $\g$ be either $\sl(n,n)$ with $n\geq 2$, or a finite-dimensional simple Lie superalgebra not of type $\lie q(n)$, endowed with a trinagular decomposition satisfying \hyperref[C1]{$(\mathfrak C1)$}.

Let $\lambda \in X^+$.  Recall from Definition~\ref{def:global-Weyl}, that $w_\lambda$ denotes the image of $p_\lambda$ in $W_A(\lambda)$, and set
\begin{gather*}
\Ann_{\g\otimes A}(w_\lambda)=\{u\in \bu(\g\otimes A)\,|\,u w_\lambda=0\}, \\
\Ann_{\h\otimes A}(w_\lambda)=\Ann_{\g\otimes A}(w_\lambda)\cap \bu(\h\otimes A).
\end{gather*}
Notice that $\Ann_{\ga} (w_\lambda)$ is a left ideal of $\U\ga$, and thus, since $\U{\h \otimes A}$ is a commutative algebra, $\Ann_{\h \otimes A}(w_\lambda)$ is an ideal of $\U{\h \otimes A}$.  Define the algebra $\fA_\lambda$ to be the quotient
\[
\fA_\lambda = \bu(\h\otimes A)/\Ann_{\h\otimes A} (w_\lambda).
\]

By Proposition~\ref{prop:glob.Weyl.genrel+univ} and the PBW Theorem, $W_A(\lambda)_\lambda = \U{\h \otimes A}w_\lambda$.  Thus, the unique homomorphism of $\U{\h \otimes A}$-modules satisfying
\[
\phi \colon \U{\h \otimes A} \to W_A(\lambda)_\lambda, \quad
\phi (1) = w_\lambda
\]
induces an isomorphism of $\h \otimes A$-modules between $W_A(\lambda)_\lambda$ and $\U{\h \otimes A} / \Ann_{\h \otimes A} (w_\lambda)$. In other words, $W_A(\lambda)_\lambda\cong\fA_\lambda$ as right $\fA_\lambda$-modules.

\begin{lemma}\label{Ann-1}
For all $\lambda\in X^+$ and $V\in {\cal C}_A^\lambda$, we have $(\Ann_{\h\otimes A}(w_\lambda)) V_\lambda=0$.
\end{lemma}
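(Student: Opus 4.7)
The plan is to show that an arbitrary $v \in V_\lambda$ satisfies the same defining relations as the generator $w_\lambda$ of $W_A(\lambda)$, so that the universal property of the global Weyl module (Proposition~\ref{prop:glob.Weyl.genrel+univ}) yields a $\ga$-module homomorphism sending $w_\lambda$ to $v$, and consequently $u w_\lambda = 0$ forces $uv = 0$.

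First, I would fix $v \in V_\lambda$ and verify that $v$ is a highest-weight vector of weight $\lambda$ in $V$. Since $V \in \mathcal C_A^\lambda$, every weight $\mu$ of $V$ satisfies $\mu \le \lambda$. For any $x \in (\n^+ \otimes A)$ of weight $\alpha \in Q^+ \setminus \{0\}$, the element $xv$ lies in $V_{\lambda+\alpha}$, which is zero since $\lambda + \alpha \not\le \lambda$. Together with the relations $hv = \lambda(h) v$ for $h \in \h$, this shows $v$ satisfies the first two relations of \eqref{cha-fou-equ5}.

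Next, I would establish the third relation $(x_\alpha^-)^{\lambda(h_\alpha)+1} v = 0$ for each $\alpha \in \dr$. Since $V$ is finitely semisimple as an $\lir$-module, the vector $v$ lies in a finite direct sum of finite-dimensional irreducible $\lir$-submodules of $V$. Hence the $\lir$-submodule generated by $v$ is finite dimensional, and since $\lir'$ is a finite-dimensional semisimple Lie algebra with $\lambda(h_\alpha) \in \N$, the standard $\sl(2)$-representation theory on the $\sl_\alpha$-triple forces $(x_\alpha^-)^{\lambda(h_\alpha)+1} v = 0$.

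With these relations verified, I would consider the submodule $V' = \U\ga\, v \subseteq V$, which still lies in $\mathcal{C}_A^\lambda$ (weights being preserved under passage to submodules) and is generated by the highest-weight vector $v$ of weight $\lambda$. The universal property in Proposition~\ref{prop:glob.Weyl.genrel+univ} (available because the triangular decomposition is assumed to satisfy \hyperref[C1]{$(\mathfrak C1)$} in this section) then produces a surjective homomorphism of $\ga$-modules
\[
\varphi \colon W_A(\lambda) \twoheadrightarrow V', \qquad \varphi(w_\lambda) = v.
\]
Finally, for any $u \in \Ann_{\h \otimes A}(w_\lambda)$, applying $\varphi$ gives $uv = u\, \varphi(w_\lambda) = \varphi(u w_\lambda) = \varphi(0) = 0$, proving $(\Ann_{\h \otimes A}(w_\lambda)) V_\lambda = 0$. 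There is no real obstacle here: the only subtlety is noticing that $v$ need not generate $V$, which is handled by passing to the cyclic submodule $V'$ before invoking the universal property.
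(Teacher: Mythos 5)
Your proposal is correct and follows essentially the same route as the paper's proof: pass to the cyclic submodule $\U\ga\,v \subseteq V$, invoke the universal property of $W_A(\lambda)$ from Proposition~\ref{prop:glob.Weyl.genrel+univ} to get a surjection $W_A(\lambda) \twoheadrightarrow \U\ga\,v$ sending $w_\lambda \mapsto v$, and conclude that $uv = 0$ for $u \in \Ann_{\h\otimes A}(w_\lambda)$. The only difference is that you explicitly re-derive the relation $(x_\alpha^-)^{\lambda(h_\alpha)+1}v = 0$ via finite semisimplicity, which the paper's proof leaves implicit by appealing directly to the ``moreover'' part of the proposition (whose own proof contains exactly that verification).
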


\begin{proof}
Let $v \in V_\lambda$ and $W = \U\ga v$.  Since $V$ is an object of $\cal C_A^\lambda$, the submodule $W$ is also an object of $\cal C_A^\lambda$ (Lemma~\ref{lem:I(a,b)closed}).  Moreover, since $v \in V_\lambda$, we have $(\n^+ \otimes A) v=0$ and $h v=\lambda(h)v$ for all $h\in \h$.  Thus, by Proposition~\ref{prop:glob.Weyl.genrel+univ}, there exists a unique (surjective) homomorphism of $\ga$-modules $\pi \colon W_A (\lambda) \twoheadrightarrow W$ satisfying $\pi (w_\lambda) = v$.  Since $\pi$ is a homomorphism of $\ga$-modules and $u w_\lambda = 0$ for all $u \in \Ann_{\ha} (w_\lambda)$, we conclude that $u v = \pi (u w_\lambda) = 0$ for all $u \in \Ann_{\ha} (w_\lambda)$.
\end{proof}

Recall that $\U \ha$ is a commutative algebra, so every left $\U\ha$-module is naturally a right $\U\ha$-module.  Given $\lambda \in X^+$, Lemma~\ref{Ann-1} implies that the left action of $\U \ga$ on any object $V$ of $\cal C_A^\lambda$ induces a left (as well as a right) action of $\fA_\lambda$ on $V_\lambda$.  Since $W_A(\lambda)$ is an object of $\cal C_A^\lambda$ generated by $w_\lambda \in W_A(\lambda)_\lambda$ as a left $\U\ga$-module, we have a right action of $\fA_\lambda$ on $W_A(\lambda)$ that commutes with the left $\U\ga$ action; namely,
\begin{equation} \label{eq:right action}
(uw_\lambda)u' = uu'w_\lambda
\quad
\text{for all $u \in \U \ga$ and $u' \in \U \ha$}.
\end{equation}
Thus, with these actions, $W_A(\lambda)$ is a $\left( \U \ga, \fA_\lambda \right)$-bimodule.

In this section we will define \emph{Weyl functors} for Lie superalgebras. These generalize the Weyl functors defined in \cite[p.~525]{CFK10}. Given $\lambda\in X^+$, let $\mod{\fA_\lambda}$ denote the category of left $\fA_\lambda$-modules and let $M\in \mod{\fA_\lambda}$.  Since $W_A(\lambda)$ is a finitely-semisimple $\lie r$-module and the action of $\lie r$ on $W_A(\lambda) \otimes_{\fA_\lambda} M$ is given by left multiplication, we have that $W_A(\lambda) \otimes_{\fA_\lambda} M$ is a finitely semisimple $\lie r$-module.  Since $\id \colon W_A(\lambda) \to W_A(\lambda)$ is an even homomorphism of $\ga$-modules, for every $M, M'\in \mod{\fA_\lambda}$ and $f \in \Hom_{\fA_\lambda} (M,M')$,
	\[
\id \otimes f \colon W_A(\lambda)\otimes_{\fA_\lambda} M \to W_A(\lambda)\otimes_{\fA_\lambda} M'
	\]
is a homomorphism of $\g\otimes A$-modules.

\begin{defn}[Weyl functor] \label{dfn.super.Weyl.func}
Let $\lambda\in X^+$.  The Weyl functor associated to $\lambda$ is defined to be
\[
\fW_A^\lambda \colon \mod{\fA_\lambda} \rightarrow {\cal C}_A^\lambda, \quad
\fW_A^\lambda M = W_A(\lambda) \otimes_{\fA_\lambda} M,\quad
\fW_A^\lambda f = \id \otimes f,
\]
for all $M, M'$ in $\mod{\fA_\lambda}$ and $f\in \Hom_{\fA_\lambda} (M,M')$.
\end{defn}

Given $\lambda \in X^+$, notice that there is an isomorphism of $\ga$-modules $\fW_A^\lambda \fA_\lambda \cong W_A(\lambda)$.  Also notice that, for all $\mu \in \h^*$ and $M$ in $\mod{\fA_\lambda}$, we have
\begin{equation} \label{eq:weight.space.W}
(\fW_A^\lambda M)_\mu = W_A(\lambda)_\mu\otimes_{\fA_\lambda}M.
\end{equation}

Given $\lambda \in X^+$, recall that Lemma~\ref{Ann-1} implies that $W_A (\lambda)$ is a $\left( \U \ga, \fA_\lambda \right)$-bimodule.  This implies in particular, that $\Hom_{\cal C_A^\lambda} ( W_A (\lambda), N )$ can be viewed as an $\fA_\lambda$-module for any object $N$ of $\cal C_A^\lambda$ via
\[
(u \cdot f) (v) = f (v \cdot u) \quad
\textup{for all $u \in \fA_\lambda$, $f \in \Hom_{\cal C_A^\lambda} (W_A (\lambda), N)$ and $v \in W_A(\lambda)$}.
\]
Moreover, Lemma~\ref{Ann-1} also implies that the left action of $\U{\g \otimes A}$ on an object $V$ in ${\cal C}_A^\lambda$ induces a left action of $\fA_\lambda$ on $V_\lambda$.

\begin{lemma} \label{lem:RAlambda.iso}
Let $\lambda \in X^+$.  For every object $N$ of $\cal C_A^\lambda$, the map
\[
\Hom_{\cal C_A^\lambda} (W_A(\lambda), N) \to N_\lambda, \quad
f \mapsto f(w_\lambda)
\]
is an isomorphism of $\fA_\lambda$-modules that is functorial in $N$.
\end{lemma}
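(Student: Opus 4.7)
The plan is to verify that the evaluation map $\mathrm{ev}_N \colon f \mapsto f(w_\lambda)$ is well-defined, $\fA_\lambda$-linear, bijective, and natural in $N$. Well-definedness is immediate: every morphism in $\cal C_A^\lambda$ is an even $\ga$-module homomorphism and therefore preserves weight spaces, so $f(w_\lambda) \in N_\lambda$. For $\fA_\lambda$-linearity, fix $u \in \fA_\lambda$ with lift $\tilde u \in \U\ha$. The right action on $W_A(\lambda)$ satisfies $w_\lambda \cdot u = \tilde u\, w_\lambda$ by~\eqref{eq:right action}, while the induced left action on $N_\lambda$ coming from Lemma~\ref{Ann-1} is simply $u \cdot f(w_\lambda) = \tilde u\, f(w_\lambda)$. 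Combining these with the fact that $f$ commutes with the $\U\ga$-action:
\[
\mathrm{ev}_N(u \cdot f) = (u \cdot f)(w_\lambda) = f(w_\lambda \cdot u) = f(\tilde u\, w_\lambda) = \tilde u\, f(w_\lambda) = u \cdot \mathrm{ev}_N(f).
\]

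For bijectivity, injectivity follows at once from $W_A(\lambda) = \U\ga\, w_\lambda$: if $f(w_\lambda) = 0$, then $f$ vanishes on a generating set and thus on all of $W_A(\lambda)$. For surjectivity, given $v \in N_\lambda$, set $W := \U\ga v \subseteq N$. By Lemma~\ref{lem:I(a,b)closed}, $W$ belongs to $\cal C_A$, and since all weights of $N$ lie $\le \lambda$, the same holds for $W$, placing $W$ in $\cal C_A^\lambda$. As no weight of $N$ strictly exceeds $\lambda$, one has $(\n^+ \otimes A)v = 0$, and $h v = \lambda(h) v$ by definition; thus $v$ is a highest-weight vector of weight $\lambda$ generating $W$. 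The universal property part of Proposition~\ref{prop:glob.Weyl.genrel+univ} (whose hypothesis \hyperref[C1]{$(\mathfrak C1)$} is in force throughout this section) then yields a surjection $\pi \colon W_A(\lambda) \twoheadrightarrow W$ sending $w_\lambda \mapsto v$; composing with the inclusion $W \hookrightarrow N$ produces $f \in \Hom_{\cal C_A^\lambda}(W_A(\lambda), N)$ with $\mathrm{ev}_N(f) = v$.

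Finally, functoriality in $N$ is automatic: given any morphism $g \colon N \to N'$ in $\cal C_A^\lambda$ and any $f \in \Hom_{\cal C_A^\lambda}(W_A(\lambda), N)$, we have $\mathrm{ev}_{N'}(g \circ f) = g(f(w_\lambda)) = g(\mathrm{ev}_N(f))$, so the naturality square commutes. The only real subtlety in the argument is verifying that $v \in N_\lambda$ satisfies all the defining relations~\eqref{cha-fou-equ5} of $W_A(\lambda)$; this is not an obstacle here because Proposition~\ref{prop:glob.Weyl.genrel+univ} has already distilled the role of condition \hyperref[C1]{$(\mathfrak C1)$} (which, together with $N \in \cal C_A$, guarantees that the $\sl_\alpha$-submodules generated by $v$ are finite dimensional, forcing $(x_\alpha^-)^{\lambda(h_\alpha)+1} v = 0$ for all $\alpha \in \dr$).
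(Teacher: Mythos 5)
Your proof is correct and follows essentially the same line as the paper's: well-definedness from weight preservation, $\fA_\lambda$-linearity via \eqref{eq:right action}, injectivity from $W_A(\lambda) = \U{\ga}\, w_\lambda$, and surjectivity by verifying that a vector $v \in N_\lambda$ satisfies the relations \eqref{cha-fou-equ5} (the key point being that finite semisimplicity over $\lie r$ forces $(x_\alpha^-)^{\lambda(h_\alpha)+1} v = 0$). The only cosmetic difference is that you package the surjectivity step through the cyclic submodule $W = \U{\ga}\,v$ and the universal-property (``moreover'') clause of Proposition~\ref{prop:glob.Weyl.genrel+univ}, whereas the paper applies the generators-and-relations description of $W_A(\lambda)$ directly; the underlying verification is identical.
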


\begin{proof}
Fix an object $N$ in $\cal C_A^\lambda$ and a homomorphism $f \in \Hom_{\cal C_A^\lambda} (W_A(\lambda), N)$.  First notice that, since $w_\lambda \in W_A(\lambda)_\lambda$, then $f(w_\lambda) \in N_\lambda$, that is, the map $f \mapsto f(w_\lambda)$ is well-defined.  Now, to show that $f \mapsto f(w_\lambda)$ is a homomorphism of $\fA_\lambda$-modules, notice that
\[
(u \cdot f) (w_\lambda) = f (w_\lambda \cdot u) = f (u w_\lambda) = u \left( f(w_\lambda) \right) \quad
\textup{for all $u \in \fA_\lambda$}.
\]
To show that the map $f \mapsto f(w_\lambda)$ is injective, recall from Proposition~\ref{prop:glob.Weyl.genrel+univ} that $W_A(\lambda)$ is generated as a left $\U\ga$-module, by $w_\lambda$.  Since $f$ is a homomorphism of $\U\ga$-modules, $f$ is thus uniquely determined by $f(w_\lambda)$.

To finish the proof, we will show that the map $f \mapsto f(w_\lambda)$ is surjective.  Let $n \in N_\lambda$.  Recall that $N$ is an object of $\cal C_A^\lambda$, so $(\n^+ \otimes A) n = 0$.  Moreover, by Lemma~\ref{Ann-1}, we also have $\Ann_{\h \otimes A} (w_\lambda) n = 0$.  Furthermore, since $N$ is a finitely-semisimple $\lie r$-module, the $\lie r$-submodule $\U{\lie r} n \subseteq N$ is finite dimensional.  By the representation theory of finite-dimensional semisimple Lie algebras, we thus have that $(x_\alpha^-)^{\lambda (h_\alpha) + 1} n = 0$ for all $\alpha \in \dr$.  Hence, by Proposition~\ref{prop:glob.Weyl.genrel+univ}, there exists a unique homomorphism of $\ga$-modules $f_n \colon W_A(\lambda) \to N$ satisfying $f_n (w_\lambda) = n$.  The result follows.
\end{proof}

Given $\lambda \in X^+$ and an object $M$ of $\cal C_A^\lambda$, consider $M_\lambda$ as an $\fA_\lambda$-module.  Given $\pi \in \Hom_{{\cal C}_A^\lambda} (V,V')$, the restriction of $\pi$ to $V_\lambda$ induces a homomorphism of $\fA_\lambda$-modules $\pi_\lambda \colon V_\lambda \to V'_\lambda $.  We can thus define a functor
\begin{equation} \label{def:RAlambda}
\fR_A^\lambda \colon {\cal C}_A^\lambda \to \mod{\fA_\lambda}, \quad
\fR_A^\lambda V = V_\lambda, \quad
\fR_A^\lambda(\pi) = \pi_\lambda.
\end{equation}
Notice that $\fR_A^\lambda$ is an exact functor, since every object of $\mathcal C_A^\lambda$ is a finitely-semisimple $\lie r$-module, and thus a direct sum of its $\h$-weight spaces, and every morphism of $\mathcal C_A^\lambda$ preserves these weight spaces.

%%%%%%%%%%%%%%%%%%%%%%%%%%%%%%%%%%%%%%%%%%%%%%%%%%%%%%%%%%%%%%%%%%%%
%
\section{The structure of global Weyl modules}
\label{sec:fin.gen.property}
%
%%%%%%%%%%%%%%%%%%%%%%%%%%%%%%%%%%%%%%%%%%%%%%%%%%%%%%%%%%%%%%%%%%%%

Throughout this section, we will assume that $A$ is finitely generated and infinite dimensional.  Recall from \eqref{eq:reductive.part} that $\lie r$ is a finite-dimensional reductive Lie algebra with Cartan subalgebra $\lie h$, that, for every $\alpha \in R_{\lie r}^+$, the subalgebra $\sl_\alpha \subseteq \lie r$, which is generated by $\{ x_\alpha^- , h_\alpha, x_\alpha \}$, is isomorphic to $\lie {sl}(2)$, and recall from Definition~\ref{def:global-Weyl} that, for every $\lambda \in X^+$, the global Weyl module $W_A(\lambda)$ is generated by its highest-weight vector $w_\lambda \in W_A (\lambda)_\lambda$.

\begin{lemma}\label{lem:x.ann.w}
If $\lambda \in X^+$ and $\alpha \in R_{\lie r}^+$, then $(x_\alpha^-)^{\lambda(h_\alpha)+1} w_\lambda = 0$.
\end{lemma}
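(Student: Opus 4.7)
The plan is to reduce the statement to standard $\sl_2$-representation theory applied to the triple $(x_\alpha, x_\alpha^-, h_\alpha)$ for arbitrary $\alpha \in R_{\lie r}^+$. The main observation is that, although the defining relations given in Proposition~\ref{prop:glob.Weyl.genrel+univ} only impose the cutoff $(x_\alpha^-)^{\lambda(h_\alpha)+1} w_\lambda = 0$ for simple roots $\alpha \in \dr$, the whole $\lie r$-submodule of $W_A(\lambda)$ generated by $w_\lambda$ is already finite dimensional, and this extends the cutoff to every positive root of $\lie r$.

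First I would set $V := \U{\lie r}\, w_\lambda \subseteq W_A(\lambda)$ and verify that the vector $w_\lambda$ satisfies the defining relations of the finite-dimensional irreducible $\lie r$-module $L_0(\lambda)$ of highest weight $\lambda$. Namely, since $\n_{\lie r}^+ \subseteq \n^+$, the relation $(\n^+ \otimes A) w_\lambda = 0$ from \eqref{cha-fou-equ5} gives $\n_{\lie r}^+ w_\lambda = 0$; the weight relation $h w_\lambda = \lambda(h) w_\lambda$ for $h \in \h$ is immediate; and by hypothesis $\lambda \in X^+$, so $\lambda(h_\alpha) \in \N$ for every $\alpha \in \dr$ and the Serre-type relation $(x_\alpha^-)^{\lambda(h_\alpha)+1} w_\lambda = 0$ holds for all $\alpha \in \dr$. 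By the standard characterization of irreducible finite-dimensional modules over the reductive Lie algebra $\lie r$ (with $\lie z$ acting by a scalar), $V$ is a quotient of $L_0(\lambda)$ and is therefore finite dimensional.

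Next, I would fix an arbitrary $\alpha \in R_{\lie r}^+$ and examine the $\sl_\alpha$-submodule $V_\alpha := \U{\sl_\alpha}\, w_\lambda \subseteq V$. Since $x_\alpha \in \n^+$, we have $x_\alpha w_\lambda = 0$, so $w_\lambda$ is an $\sl_\alpha$-highest-weight vector of weight $\lambda(h_\alpha)$. Because $V_\alpha \subseteq V$ and $V$ is finite dimensional, $V_\alpha$ is a finite-dimensional highest-weight $\sl_\alpha$-module. By the classification of finite-dimensional $\sl(2)$-modules, $\lambda(h_\alpha)$ must be a non-negative integer and $(x_\alpha^-)^{\lambda(h_\alpha)+1}$ annihilates the highest-weight vector $w_\lambda$, which is exactly the desired conclusion.

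I do not expect any serious obstacle; the only subtle point is to note that the three relations satisfied by $w_\lambda$ inside $V$ indeed cut out a quotient of $L_0(\lambda)$ (so that finite-dimensionality of $V$ is automatic), and then to use that $w_\lambda$ is killed by every element of $\n_{\lie r}^+$ (not just by simple root vectors) so that it is an $\sl_\alpha$-highest-weight vector for \emph{every} positive root of $\lie r$.
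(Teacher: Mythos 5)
Your proof is correct, and it takes a somewhat different route from the paper's. The paper's argument is a one-line appeal to Weyl-group invariance of the set of weights of $W_A(\lambda)$: since $W_A(\lambda)$ is an object of $\cal C_A^\lambda$ (hence a direct sum of finite-dimensional irreducible $\lie r$-modules), its weight set is $W(\lie r)$-invariant; the weight $\lambda+\alpha$ does not occur (as $W_A(\lambda)$ is a highest-weight module of highest weight $\lambda$), so its reflection $s_\alpha(\lambda+\alpha) = \lambda - (\lambda(h_\alpha)+1)\alpha$ does not occur either, which is precisely the weight of $(x_\alpha^-)^{\lambda(h_\alpha)+1} w_\lambda$. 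You instead work locally: you cut out the $\lie r$-submodule $V = \U{\lie r}w_\lambda$, use the defining relations \eqref{cha-fou-equ5} to exhibit $V$ as a quotient of $L_0(\lambda)$ and hence finite-dimensional, and then restrict to $\sl_\alpha$ and invoke the classification of finite-dimensional $\sl(2)$-modules. Both arguments ultimately rest on the same representation-theoretic input (finite-dimensionality of the $\lie r$-module generated by $w_\lambda$, equivalently the $W(\lie r)$-invariance of its weights), but yours has the merit of using only the defining relations of $W_A(\lambda)$ and elementary $\sl(2)$ theory, without explicitly invoking that $W_A(\lambda)$ itself is finitely $\lie r$-semisimple; the paper's version is terser and reads off the conclusion directly from the weight lattice picture. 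Either is a perfectly valid way to prove the lemma.
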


\begin{proof}
The result follows from the invariance of the weights of $W_A(\lambda)$ under the action of the Weyl group of $\lie r$.
\details{
Recall that the global Weyl module $W_A (\lambda)$ is a direct sum of its irreducible finite-dimensional $\lie r$-submodules.  Thus all the weights of $W_A (\lambda)$ are invariant under the action of the Weyl group of $\lie r$.  Also recall that $W_A (\lambda)$ is a highest-weight module of highest weight $\lambda$.  In particular, if we denote by $s_\alpha$ the reflection in the Weyl group of $\lie r$ associated to the root $\alpha$, then $W_A (\lambda)_{s_\alpha (\lambda + \alpha)} = 0$.  Now notice that $(x_\alpha^-)^{\lambda(h_\alpha)+1} w_\lambda$ has weight $\lambda - (\lambda(h_\alpha)+1)\alpha = s_\alpha (\lambda + \alpha)$.  Thus $(x_\alpha^-)^{\lambda(h_\alpha)+1}w_\lambda = 0$.
\qedhere
}
\end{proof}

Given $a\in A$ and $\alpha \in R_{\lie r}^+$, define a power series in an indeterminate $u$ and with coefficients in $\U{\h\otimes A}$ as follows:
\begin{equation}\label{def-polis}
p(a,\alpha) = \exp \left( -\sum_{i=1}^\infty \frac{h_\alpha\otimes a^i}{i} u^i\right).
\end{equation}
For $i \ge 0$, let $p(a,\alpha)_i$ denote the coefficient of $u^i$ in $p(a,\alpha)$, and notice that $p(a,\alpha)_0 = 1$.  The following lemma was proved by H.~Garland (see \cite[Lemma~7.5]{Gar78}).

\begin{lemma} \label{lemm5-chari-eq1}
Let $m \in \N$, $a\in A$ and $\alpha \in R_{\lie r}^+$.  Then
\[
(x_\alpha\otimes a)^m (x_\alpha^-)^{m+1} - (-1)^m \sum_{i=0}^m (x_\alpha^- \otimes a^{m-i}) p(a,\alpha)_i \in \U{\sl_\alpha \otimes A}(\g_\alpha \otimes A),
\]
where $\U{\sl_\alpha \otimes A} (\g_\alpha \otimes A)$ denotes the left ideal of $\U{\sl_\alpha \otimes A}$ generated by $\g_\alpha \otimes A = \C x_\alpha \otimes A$.
\end{lemma}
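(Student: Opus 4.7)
The whole identity takes place inside the subalgebra $\U{\sl_\alpha \otimes A}$, which, via the standard identification $x_\alpha \leftrightarrow e$, $x_\alpha^- \leftrightarrow f$, $h_\alpha \leftrightarrow h$, is canonically isomorphic to $\U{\sl_2 \otimes A}$. Under this identification the claim becomes exactly the identity of \cite[Lemma~7.5]{Gar78}, so the first step of my proof is simply to reduce to the $\sl_2$ setting and cite Garland. I would write this as a one-line proof, observing that the left ideal $\U{\sl_\alpha \otimes A}(\g_\alpha \otimes A)$ corresponds to the left ideal of $\U{\sl_2 \otimes A}$ generated by $e \otimes A$, so no structure is lost in the translation.

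If a self-contained argument were required instead of a citation, I would proceed by induction on $m$. The base case $m=0$ is immediate: $p(a,\alpha)_0 = 1$, hence the difference is $x_\alpha^- - x_\alpha^- = 0$, which lies trivially in the left ideal. For the inductive step, I would write
\[
(x_\alpha \otimes a)^{m+1}(x_\alpha^-)^{m+2} = (x_\alpha \otimes a) \cdot (x_\alpha \otimes a)^m (x_\alpha^-)^{m+1} \cdot x_\alpha^-,
\]
insert the inductive hypothesis into the middle factor, and then move the leftmost $(x_\alpha \otimes a)$ past all the $x_\alpha^-$'s modulo the left ideal $\U{\sl_\alpha \otimes A}(\g_\alpha \otimes A)$. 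The relevant commutators are
\[
[x_\alpha \otimes a,\ x_\alpha^-] = h_\alpha \otimes a, \qquad [h_\alpha \otimes a^j,\ x_\alpha^-] = -2(x_\alpha^- \otimes a^j),
\]
together with $[x_\alpha^- \otimes a^j,\ x_\alpha^-]=0$, so every time one commutes $x_\alpha \otimes a$ to the right one picks up either a term of the form $(x_\alpha^- \otimes a^k) \cdot (h_\alpha \otimes a^\ell)$ (after normal-ordering the $\ha$-factor to the right) or a term in $\U{\sl_\alpha \otimes A}(\g_\alpha \otimes A)$ that may be discarded.

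The only genuine difficulty is the bookkeeping: after collecting terms, one must verify that the coefficient of $x_\alpha^- \otimes a^{m+1-i}$ is exactly $(-1)^{m+1} p(a,\alpha)_i$. The cleanest way to do this is to package the coefficients as a formal power series in an indeterminate $u$ and observe that both sides are characterised by the same first-order differential recursion, which in turn is solved by $\exp\bigl(-\sum_{i \ge 1} (h_\alpha \otimes a^i) u^i / i\bigr) = p(a,\alpha)$; this is precisely the device used in Garland's original argument. I expect the extraction of this recursion, and the verification that it uniquely determines the coefficients, to be the main obstacle in a direct proof, everything else being a routine application of the Leibniz rule for $\ad$ and the PBW theorem for $\U{\sl_\alpha \otimes A}$.
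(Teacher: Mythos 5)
Your proposal is correct and takes the same approach as the paper: the paper offers no proof of this lemma, stating only that it ``was proved by H.~Garland'' and citing \cite[Lemma~7.5]{Gar78}, which is precisely your first step of reducing to $\U{\sl_\alpha \otimes A}\cong\U{\sl_2\otimes A}$ and invoking Garland. Your sketch of a self-contained inductive argument is a reasonable outline of what actually lies behind Garland's identity, but since you stop short of carrying out the generating-function bookkeeping, it does not add a complete alternative proof beyond the citation.
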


The next two lemmas will be used in the proof of our first main result, Theorem~\ref{fin.dim.odd}.  This first one is technical and part of its proof will be used in the proof of Lemma~\ref{lem:n-.fin.gen}

\begin{lemma}\label{lem:fin-gen-even2}
Let $\lambda\in X^+$, $\alpha \in R_{\lie r}^+$, and $a_1, \dotsc, a_t \in A$. Then, for every $m_1, \dotsc, m_t \in \N$, we have:
\begin{equation} \label{eq:falpha-span-inclusion2}
(x^-_\alpha\otimes a_1^{m_1} \dotsm a_t^{m_t}) w_\lambda
\in \vspan_\C\ \{(x^-_\alpha\otimes a_1^{\ell_1}\dotsm a_t^{\ell_t})w_\lambda \fA_\lambda \mid 0\leq \ell_1, \dotsc, \ell_t < \lambda(h_\alpha) \}.
\end{equation}
In particular, $(\lie r \otimes A) w_\lambda$ is a finitely-generated right $\fA_\lambda$-module.
\end{lemma}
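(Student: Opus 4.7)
The strategy is to polarize Garland's identity (Lemma~\ref{lemm5-chari-eq1}) and then run an induction on the total degree $m_1 + \dotsb + m_t$. Throughout set $M := \lambda(h_\alpha)$.

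First I would extend scalars from $A$ to the polynomial ring $A[y_1,\dotsc,y_t]$, take $a := y_1 a_1 + \dotsb + y_t a_t$ and any integer $K \ge M$, and act on $w_\lambda \otimes 1 \in W_A(\lambda) \otimes_\C \C[y_1,\dotsc,y_t]$ (naturally a $\g\otimes A[y_1,\dotsc,y_t]$-module via extension of scalars). Observe that $(x_\alpha^-)^{K+1}w_\lambda = (x_\alpha^-)^{K-M}(x_\alpha^-)^{M+1}w_\lambda = 0$ by Lemma~\ref{lem:x.ann.w}, and $\g_\alpha \otimes A[y_1,\dotsc,y_t]$ annihilates $w_\lambda \otimes 1$ because $\g_\alpha = \C x_\alpha \subseteq \n^+$. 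Garland's identity therefore gives
\[
\sum_{i=0}^{K}(x_\alpha^-\otimes a^{K-i})\,p(a,\alpha)_i\,(w_\lambda\otimes 1)=0 \quad\text{in } W_A(\lambda)\otimes\C[y_1,\dotsc,y_t].
\]

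Next I would equate coefficients of $y_1^{m_1} \dotsm y_t^{m_t}$.  Expanding $a^{K-i}=\sum_{q_1+\dotsb+q_t=K-i}\binom{K-i}{q_1,\dotsc,q_t}y_1^{q_1}\dotsm y_t^{q_t}\,a_1^{q_1}\dotsm a_t^{q_t}$ and noting that each monomial appearing in $p(a,\alpha)_i$ (viewed as a polynomial in the elements $h_\alpha\otimes a^j$) has total $y$-degree exactly $i$, every summand above has total $y$-degree $K$. For any fixed tuple with $m_1+\dotsb+m_t=K$, using \eqref{eq:right action} to rewrite $(x_\alpha^-\otimes a_1^{q_1}\dotsm a_t^{q_t})\,u\,w_\lambda = (x_\alpha^-\otimes a_1^{q_1}\dotsm a_t^{q_t})w_\lambda\cdot u$ for each $u\in\U{\h\otimes A}$, the $i=0$ contribution is $\binom{K}{m_1,\dotsc,m_t}(x_\alpha^-\otimes a_1^{m_1}\dotsm a_t^{m_t})w_\lambda$, while the $i\ge 1$ contributions lie in the right $\fA_\lambda$-span of vectors $(x_\alpha^-\otimes a_1^{q_1}\dotsm a_t^{q_t})w_\lambda$ with $q_1+\dotsb+q_t<K$. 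Since $\binom{K}{m_1,\dotsc,m_t}\ne 0$, this proves: whenever $m_1+\dotsb+m_t = K \ge M$,
\[
(x_\alpha^-\otimes a_1^{m_1}\dotsm a_t^{m_t})w_\lambda \in \vspan_\C\bigl\{(x_\alpha^-\otimes a_1^{q_1}\dotsm a_t^{q_t})w_\lambda\cdot\fA_\lambda : q_1+\dotsb+q_t<K\bigr\}.
\]

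The main statement then follows by strong induction on $m_1+\dotsb+m_t$: when $m_1+\dotsb+m_t < M$ each $m_i < M$ and the inclusion is immediate; when $m_1+\dotsb+m_t \ge M$, the above reduction with $K=m_1+\dotsb+m_t$ combined with the inductive hypothesis applied to each $(x_\alpha^-\otimes a_1^{q_1}\dotsm a_t^{q_t})w_\lambda$ yields the result. For the ``in particular'' assertion, decompose $\lie r = \h \oplus \n_0^+ \oplus \n_0^-$: the $\n_0^+$ summand annihilates $w_\lambda$; the $\n_0^-$ summand yields a finitely-generated right $\fA_\lambda$-submodule by the main statement, using finiteness of $R_{\lie r}^+$ and of the exponent tuples with each $\ell_i<\lambda(h_\alpha)$; and $(\h\otimes A)w_\lambda$ lies in the cyclic right $\fA_\lambda$-module $w_\lambda \fA_\lambda$, a generator of which is $(h_0\otimes 1)w_\lambda = \lambda(h_0)w_\lambda$ for any $h_0\in\h$ with $\lambda(h_0)\ne 0$ (the case $\lambda=0$ being trivial). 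The main obstacle will be the combinatorial bookkeeping when expanding the $i\ge 1$ terms and equating coefficients of $y_1^{m_1}\dotsm y_t^{m_t}$; the individual steps are straightforward but the multi-index tracking demands care.
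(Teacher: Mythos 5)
Your proof is correct, and it takes a genuinely different route from the paper's. The paper proves \eqref{eq:falpha-span-inclusion2} by induction on $t$: the base case $t=1$ is a direct application of Garland's identity together with Lemma~\ref{lem:x.ann.w}, and the inductive step $t=s\to s+1$ is handled by iterating the $\sl(2)$-commutator identity $[x_\alpha^-,h_\alpha]=2x_\alpha^-$ together with the right $\fA_\lambda$-action and repeated appeals to the inductive hypothesis; this requires several rounds of expansion and rather delicate bookkeeping. Your approach instead polarizes Garland's identity in one step: you extend scalars to $A[y_1,\dotsc,y_t]$, set $a=y_1a_1+\dotsb+y_ta_t$, apply Lemma~\ref{lemm5-chari-eq1} once, and extract the coefficient of $y^m$, using the observation that $p(a,\alpha)_i$ is $y$-homogeneous of degree $i$ while $a^{K-i}$ is $y$-homogeneous of degree $K-i$. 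Because $\binom{K}{m_1,\dotsc,m_t}\neq 0$ in characteristic zero, this yields a reduction of total degree, and a single strong induction on $m_1+\dotsb+m_t$ closes the argument. The key facts you rely on are all available (the defining relations \eqref{cha-fou-equ5}, Lemma~\ref{lem:x.ann.w}, the right $\fA_\lambda$-action \eqref{eq:right action}, and Garland's identity over the extended coefficient ring), and the strong induction is sound: the $i\geq1$ terms have $|q|<K$, and multiplying by $\fA_\lambda$ on the right preserves the target span. Your treatment of the ``in particular'' assertion via the decomposition $\lie r=\n_0^-\oplus\h\oplus\n_0^+$ is also correct (though the parenthetical about $h_0$ is unnecessary: $w_\lambda\fA_\lambda$ is cyclic with generator $w_\lambda$, and $(\ha)w_\lambda\subseteq w_\lambda\fA_\lambda$ suffices). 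Compared to the paper's argument, yours trades the $\sl(2)$-commutator gymnastics for a single clean combinatorial identity; the cost is the need to extend scalars and keep track of multi-degree, but this is organized once rather than spread across the induction. Both proofs depend crucially on characteristic zero, the paper's through the irreducible constant $2$ in $[x_\alpha^-,h_\alpha]=2x_\alpha^-$, yours through the nonvanishing of multinomial coefficients.
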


\begin{proof}
We will use induction on $t$.  First assume that $t = 1$, and fix $a \in A$.  From Lemma~\ref{lem:x.ann.w}, the first relation in \eqref{prop:glob.Weyl.genrel+univ}, and Lemma~\ref{lemm5-chari-eq1}, we have:
\[
0 = (x_\alpha \otimes a)^m (x^-_\alpha)^{m+1} w_\lambda
= \sum_{i=0}^m (-1)^m (x^-_\alpha\otimes a^{m-i})p(a,\alpha)_i w_\lambda
\qquad \textup{for all $m \ge \lambda(h_\alpha)$}.
\]
Thus, using the fact that $p(a,\alpha)_0=1$ and induction on $m$, we conclude that
\[
(x^-_\alpha\otimes a^{m})w_\lambda\in \vspan_\C\{(x^-_\alpha\otimes a^\ell)w_\lambda\fA_\lambda \mid 0\leq\ell <\lambda(h_\alpha)\}
\qquad \textup{for all $m\in \N$}.
\]
This proves the case $t=1$.

Now, let $s > 1$, assume that \eqref{eq:falpha-span-inclusion2} holds for all $t \le s$, and fix $a_1, \dotsc, a_{s+1} \in A$.  Since
\begin{equation} \label{eq:arg01}
[x_\alpha^-, h_\alpha] = 2 x_\alpha^-
\quad \textup{and} \quad
\textup{$A$ is assumed to be commutative},
\end{equation}
we have:
\[
2(x^-_\alpha \otimes a_1^{m_1} \dotsm a_{s+1}^{m_{s+1}})w_\lambda
= (x^-_\alpha \otimes a_1^{m_1} \dotsm a_s^{m_s})(h_\alpha \otimes a_{s+1}^{m_{s+1}})w_\lambda - (h_\alpha \otimes a_{s+1}^{m_{s+1}})(x^-_\alpha \otimes a_1^{m_1} \dotsm a_s^{m_s} )w_\lambda,
\]
for all $m_1, \dotsc, m_{s+1} \in \N$.  Thus, using the fact that $(h_\alpha \otimes a_{s+1}^{m_{s+1}}) w_\lambda \in w_\lambda\fA_\lambda$ and the induction hypothesis (for $t=s$), we see that
\begin{align*}
(x^-_\alpha \otimes a_1^{m_1} \dotsm a_{s+1}^{m_{s+1}})w_\lambda
\in {}&\vspan_\C \{ (x^-_\alpha \otimes a_1^{\ell_1} \dotsm a_s^{\ell_s}) w_\lambda\fA_\lambda \mid 0 \le \ell_i < \lambda(h_\alpha), i = 1, \dotsc, s\} \\
&{ }+ \vspan_\C \{ (h_\alpha \otimes a_{s+1}^{m_{s+1}}) (x^-_\alpha \otimes a_1^{\ell_1} \dotsm a_s^{\ell_s}) w_\lambda \mid 0 \le \ell_i < \lambda(h_\alpha),  i = 1, \dotsc, s \},
\end{align*}
for all $m_1, \dotsc, m_{s+1} \in \N$.  Using \eqref{eq:arg01} again, we have:
\begin{alignat*}{2}
(h_\alpha \otimes a_{s+1}^{m_{s+1}})
&(x^-_\alpha \otimes a_1^{\ell_1} \dotsm a_s^{\ell_s} )w_\lambda \\
&= (x^-_\alpha \otimes a_1^{\ell_1} \dotsm a_s^{\ell_s})(h_\alpha \otimes a_{s+1}^{m_{s+1}})w_\lambda
&&- 2(x^-_\alpha \otimes a_1^{\ell_1} \dotsm a_s^{\ell_s} a_{s+1}^{m_{s+1}})w_\lambda \\
&=(x^-_\alpha \otimes a_1^{\ell_1} \dotsm a_s^{\ell_s})(h_\alpha \otimes a_{s+1}^{m_{s+1}})w_\lambda
&&+ (h_\alpha \otimes a_1^{\ell_1})(x^-_\alpha \otimes a_2^{\ell_2} \dotsm a_s^{\ell_s} a_{s+1}^{m_{s+1}})w_\lambda \\
&{ }&&- (x^-_\alpha \otimes a_2^{\ell_2} \dotsm a_s^{\ell_s} a_{s+1}^{m_{s+1}})(h_\alpha \otimes a_1^{\ell_1})w_\lambda,
\end{alignat*}
for all $0 \le \ell_1, \dotsc, \ell_s < \lambda(h_\alpha)$ and $m_{s+1} \in \N$.  Thus, using the induction hypothesis again (for $t=s$) on $(h_\alpha \otimes a_1^{\ell_1})(x^-_\alpha \otimes a_2^{\ell_2} \dotsm a_s^{\ell_s} a_{s+1}^{m_{s+1}})w_\lambda$ and $(x^-_\alpha \otimes a_2^{\ell_2} \dotsm a_s^{\ell_s} a_{s+1}^{m_{s+1}})(h_\alpha \otimes a_1^{\ell_1})w_\lambda$, we see that
\begin{align*}
(h_\alpha \otimes a_{s+1}^{m_{s+1}})&(x^-_\alpha \otimes a_1^{\ell_1} \dotsm a_s^{\ell_s} )w_\lambda \\
\in {}&\vspan_\C \{ (x^-_\alpha \otimes a_1^{k_1} \dotsm a_{s+1}^{k_{s+1}}) w_\lambda\fA_\lambda \mid 0 \le k_i < \lambda(h_\alpha), i = 1, \dotsc, s+1\} \\
&{ }+ \vspan_\C \{ (h_\alpha \otimes a_1^{\ell_1}) (x^-_\alpha \otimes a_2^{k_2} \dotsm a_{s+1}^{k_{s+1}}) w_\lambda \mid 0 \le k_i < \lambda(h_\alpha),  i = 2, \dotsc, s+1 \},
\end{align*}
for all $0 \le \ell_1, \dotsc, \ell_s < \lambda(h_\alpha)$ and $m_{s+1} \in \N$.  Finally, using \eqref{eq:arg01} again, we have:
\begin{align*}
(h_\alpha \otimes a_1^{\ell_1})
&(x^-_\alpha \otimes a_2^{k_2} \dotsm a_{s+1}^{k_{s+1}})w_\lambda \\
&= (x^-_\alpha \otimes a_2^{k_2} \dotsm a_{s+1}^{k_{s+1}})(h_\alpha \otimes a_1^{\ell_1})w_\lambda
- 2(x^-_\alpha \otimes a_1^{\ell_1} a_2^{k_2} \dotsm a_{s+1}^{k_{s+1}})w_\lambda \\
&\in \vspan_\C \{ (x^-_\alpha \otimes a_1^{n_1} \dotsm a_{s+1}^{n_{s+1}}) w_\lambda \fA_\lambda \mid 0 \le n_i < \lambda(h_\alpha),  i = 1, \dotsc, s+1 \},
\end{align*}
for all $0 \le \ell_1, k_2, \dotsc, k_{s+1} < \lambda(h_\alpha)$.  Hence, \eqref{eq:falpha-span-inclusion2} follows.

In particular, using \eqref{eq:falpha-span-inclusion2} and the assumptions that $A$ is finitely generated and $\lie r$ is a finite-dimensional Lie algebra, we conclude that $(\lie r \otimes A) w_\lambda$ is a finitely-generated right $\fA_\lambda$-module.
\end{proof}

\begin{lemma}\label{lem:n-.fin.gen}
Let $\lambda\in X^+$, $\alpha \in R_{\lie r}^+$, $x_1, \dotsc, x_k \in \lie n^+$ and $a_1, \dotsc, a_t \in A$.  Then, for all $m_1, \dotsc, m_t \in \N$, the element $([x_1, [x_2, \dotsc [x_k, x^-_\alpha] \dotsc]] \otimes a_1^{m_1} \dotsm a_t^{m_t}) w_\lambda$ is in
\[
\vspan_\C \{ ([x_1, [x_2, \dotsc [x_k, x^-_\alpha] \dotsc]] \otimes a_1^{\ell_1} \dotsm a_t^{\ell_t}) w_\lambda \fA_\lambda \mid  0 \leq \ell_1, \dotsc, \ell_t < \lambda(h_\alpha) \}.
\]
\end{lemma}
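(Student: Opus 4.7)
The plan is to prove the lemma by induction on $k$, the depth of the nested commutator.  The base case $k = 0$ (with the empty commutator interpreted as $x_\alpha^-$ itself) is precisely the content of Lemma~\ref{lem:fin-gen-even2}, so no work is needed there.

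For the inductive step, set $b = a_1^{m_1}\dotsm a_t^{m_t}$ and, assuming $k \ge 1$, let $y = [x_2, [x_3, \dotsc [x_k, x_\alpha^-] \dotsc]]$ (interpreted as $x_\alpha^-$ when $k = 1$).  The key observation is that $x_1 \in \n^+$, so by relations \eqref{cha-fou-equ5} the vector $(x_1 \otimes 1)$ annihilates $w_\lambda$.  Since the bracket identity in $\g \otimes A$ gives $[x_1 \otimes 1, \, y \otimes b] = [x_1, y] \otimes b$, expanding the supercommutator inside $\U(\ga)$ yields
\begin{equation*}
([x_1, y] \otimes b)\, w_\lambda = (x_1 \otimes 1)(y \otimes b)\, w_\lambda - (-1)^{p(x_1)p(y)} (y \otimes b)(x_1 \otimes 1)\, w_\lambda = (x_1 \otimes 1)(y \otimes b)\, w_\lambda,
\end{equation*}
since the last term vanishes.

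Next, I would invoke the induction hypothesis applied to $y$ (a $(k-1)$-fold commutator when $k \ge 2$, or handled by Lemma~\ref{lem:fin-gen-even2} when $k=1$) to write $(y \otimes b)\, w_\lambda$ as a $\C$-linear combination of vectors of the form $(y \otimes a_1^{\ell_1} \dotsm a_t^{\ell_t})\, w_\lambda \cdot f$ with $0 \le \ell_i < \lambda(h_\alpha)$ and $f \in \fA_\lambda$.  Applying $(x_1 \otimes 1)$ on the left, and using that the right $\fA_\lambda$-action commutes with the left $\U\ga$-action (see \eqref{eq:right action}), each summand becomes $\bigl((x_1 \otimes 1)(y \otimes a_1^{\ell_1}\dotsm a_t^{\ell_t})\, w_\lambda\bigr) \cdot f$.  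Finally, the same vanishing argument as above, applied with $b$ replaced by $a_1^{\ell_1}\dotsm a_t^{\ell_t}$, gives
\begin{equation*}
(x_1 \otimes 1)(y \otimes a_1^{\ell_1} \dotsm a_t^{\ell_t})\, w_\lambda = ([x_1, y] \otimes a_1^{\ell_1} \dotsm a_t^{\ell_t})\, w_\lambda,
\end{equation*}
which puts the original vector in the claimed span and closes the induction.

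The argument is essentially formal once one spots the two uses of $(\n^+ \otimes A)\, w_\lambda = 0$ (one to remove $x_1$ from the bracket on the left, and one to put it back on the reduced monomials), so I do not expect any serious obstacle; the only point requiring care is the parity bookkeeping, which however drops out because the offending term carries $(x_1 \otimes 1)\, w_\lambda$ as a factor regardless of the sign.  The bimodule compatibility of the left $\U\ga$-action with the right $\fA_\lambda$-action is what allows us to transport $(x_1 \otimes 1)$ past the $\fA_\lambda$-factors in the span.
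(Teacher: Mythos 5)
Your proposal is correct and follows the same strategy as the paper's proof: induction on $k$, using $(\n^+ \otimes A) w_\lambda = 0$ to rewrite the nested bracket as $(x_1 \otimes 1)$ applied to the inner $(k-1)$-fold commutator and then invoking the induction hypothesis (with Lemma~\ref{lem:fin-gen-even2} as the base case). You are slightly more explicit than the paper about the sign bookkeeping and the use of the $\bigl(\U\ga, \fA_\lambda\bigr)$-bimodule structure from \eqref{eq:right action}, but the argument is the same.
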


\begin{proof}
The proof is by induction on $k$.  First assume that $k=1$ and let $x \in \n^+$.  Using \eqref{eq:falpha-span-inclusion2} and the first relation in \eqref{prop:glob.Weyl.genrel+univ}, for all $m_1,\dotsc,m_t \in \N$, we have:
\begin{align*}
([x, x^-_\alpha] \otimes a_1^{m_1} \dotsm a_t^{m_t}) w_\lambda
&= [x \otimes 1, x^-_\alpha \otimes a_1^{m_1} \dotsm a_t^{m_t}] w_\lambda \\
&= (x \otimes 1) (x^-_\alpha \otimes a_1^{m_1} \dotsm a_t^{m_t}) w_\lambda  \\
&\in \vspan_\C \{ ([x, x^-_\alpha] \otimes a_1^{\ell_1} \dotsm a_t^{\ell_t}) w_\lambda \fA_\lambda \mid 0 \leq \ell_1, \dotsc, \ell_t < \lambda(h_\alpha) \}.
\end{align*}
This proves the case $k = 1$.  Now assume $k > 1$ and let $x_1, \dotsc, x_k \in \n^+$.  Using the first relation in \eqref{prop:glob.Weyl.genrel+univ} and the induction hypothesis, for all $m_1, \dotsc, m_t \in \N$, we have:
\begin{align*}
([x_1, [x_2, \dotsm [x_k, x^-_\alpha] & \dotsm ]] \otimes a_1^{m_1} \dotsm a_t^{m_t}) w_\lambda  \\
{ }&= [(x_1 \otimes 1), ([x_2, [x_3, \dotsm [x_k, x^-_\alpha] \dotsm ]] \otimes a_1^{m_1} \dotsm a_t^{m_t})] w_\lambda \\
{ }&= (x_1 \otimes 1) ([x_2, [x_3, \dotsm [x_k, x^-_\alpha] \dotsm ]] \otimes a_1^{m_1} \dotsm a_t^{m_t}) w_\lambda   \\
&{ }\in \{ ([x_1, [x_2, \dotsm [x_k, x^-_\alpha] \dotsm ]]\otimes a_1^{\ell_1} \dotsm a_t^{\ell_t}) w_\lambda \fA_\lambda \mid 0 \leq \ell_1, \dotsc, \ell_t <\lambda(h_\alpha) \}.
\qedhere
\end{align*}
\end{proof}

Let $\bu(\n^-\otimes A)=\sum_{n\geq 0} \bu_n(\n^-\otimes A)$ be the filtration on $\bu(\n^-\otimes A)$ induced from the usual grading of the tensor algebra $T (\n^- \otimes A) = \bigoplus_{d \ge 0} (\n^- \otimes A)^{\otimes d}$. 

\begin{lemma}\label{lem:fin.filtration}
If $\g$ is a finite-dimensional simple Lie superalgebra, not of type $\lie q(n)$, endowed with a triangular decomposition satisfying \hyperref[C2]{$(\mathfrak C2)$}, then there exists $n_0\in \N$ such that 
\[
\bu_n(\n^-\otimes A) w_\lambda\fA_\lambda = W_A (\lambda),
\quad \textup{for all $n\geq n_0$}.
\]
\end{lemma}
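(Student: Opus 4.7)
The plan is to exhibit an explicit $n_0$ by controlling the $\h$-weight decomposition of $W_A(\lambda)$. First I would apply the PBW theorem to the triangular decomposition $\ga = (\nm \otimes A) \oplus (\ha) \oplus (\np \otimes A)$, together with the relation $(\np \otimes A)w_\lambda = 0$ from Proposition~\ref{prop:glob.Weyl.genrel+univ} and the definition \eqref{eq:right action} of the right $\fA_\lambda$-action, to deduce
\[
W_A(\lambda)_\mu = \U{\nm \otimes A}_{\mu - \lambda}\, w_\lambda \cdot \fA_\lambda
\quad \textup{for every } \mu \in \h^*.
\]
Since every factor in a PBW monomial of $\U{\nm \otimes A}$ has $\h$-weight in $-R^+$, and every positive root has height at least one, any element of $\U{\nm \otimes A}_{-\gamma}$ is a linear combination of monomials of length at most $\textup{ht}(\gamma)$. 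Hence $\U{\nm \otimes A}_{-\gamma} \subseteq \bu_{\textup{ht}(\gamma)}(\nm \otimes A)$, and consequently
\[
W_A(\lambda)_\mu \subseteq \bu_{\textup{ht}(\lambda - \mu)}(\nm \otimes A)\, w_\lambda \cdot \fA_\lambda.
\]

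The task then reduces to bounding $\textup{ht}(\lambda - \mu)$ uniformly over the weights $\mu$ of $W_A(\lambda)$. Since \hyperref[C2]{$(\mathfrak C2)$} implies \hyperref[C1]{$(\mathfrak C1)$}, Proposition~\ref{prop:Vbar-fd} gives that $K(\lambda)$ is finite dimensional, so its weight set $\Pi$ is finite. Granted the claim that every weight of $W_A(\lambda)$ lies in $\Pi$, setting $n_0 = \max_{\mu \in \Pi} \textup{ht}(\lambda - \mu)$ and summing over the weight decomposition yields $W_A(\lambda) \subseteq \bu_{n_0}(\nm \otimes A) w_\lambda \cdot \fA_\lambda$. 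The opposite inclusion is trivial, so monotonicity of the filtration $\bu_n \subseteq \bu_{n+1}$ yields $\bu_n(\nm \otimes A) w_\lambda \fA_\lambda = W_A(\lambda)$ for every $n \geq n_0$.

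The hard part will be justifying that the weights of $W_A(\lambda)$ lie in $\Pi$. This is precisely where the second half of condition \hyperref[C2]{$(\mathfrak C2)$} is essential: since $-\theta \in R_{\lie r}$, the lowest root vector $x_\theta^-$ of $\g$ lies in $\lie r$, which is what enables us to invoke Lemma~\ref{lem:n-.fin.gen} for iterated brackets $[x_1, [x_2, \dotsc, [x_k, x_\theta^-] \dotsc]]$ with $x_i \in \np$. Because $\g$ is simple, $\g$ is generated under the adjoint action of $\U{\np}$ by $x_\theta^-$ alone, so every $y \in \nm$ is a linear combination of such iterated brackets. I would then argue, by induction on the length of PBW monomials and using the Garland-type identity Lemma~\ref{lemm5-chari-eq1} together with Lemmas~\ref{lem:fin-gen-even2} and \ref{lem:n-.fin.gen}, that any monomial $(y_1 \otimes a_1) \cdots (y_k \otimes a_k) w_\lambda$ whose total weight $\lambda + \sum \textup{wt}(y_i)$ falls outside $\Pi$ must vanish, thereby transferring the vanishing of the corresponding monomial in $K(\lambda)$ to the $A$-enriched global Weyl module.
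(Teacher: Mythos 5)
Your reduction in the first half is clean and correct: by PBW, $W_A(\lambda)_\mu = \U{\nm\otimes A}_{\mu-\lambda}\,w_\lambda\fA_\lambda$, every PBW factor of $\nm\otimes A$ has weight of height at least one, so $\U{\nm\otimes A}_{-\gamma}\subseteq\bu_{\operatorname{ht}(\gamma)}(\nm\otimes A)$, and hence the lemma would follow from a uniform bound on $\operatorname{ht}(\lambda-\mu)$ over the weights $\mu$ of $W_A(\lambda)$. That reformulation is a genuinely tidy way to phrase what must be shown.

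The gap is in the final step, and it is a real one. You reduce everything to the claim that the weights of $W_A(\lambda)$ lie in the (finite) weight set $\Pi$ of $K(\lambda)$, and you propose to establish this via Lemma~\ref{lemm5-chari-eq1}, Lemma~\ref{lem:fin-gen-even2} and Lemma~\ref{lem:n-.fin.gen}. But those lemmas do something quite different: they control the \emph{$A$-degree} of the coefficients, showing for instance that $(x_\alpha^-\otimes a_1^{m_1}\cdots a_t^{m_t})w_\lambda$ lies in the $\fA_\lambda$-span of $(x_\alpha^-\otimes a_1^{\ell_1}\cdots a_t^{\ell_t})w_\lambda$ with each $\ell_i<\lambda(h_\alpha)$. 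They move you from an infinite-dimensional set of coefficients $a_1^{m_1}\cdots a_t^{m_t}$ to a finite one, for a \emph{fixed} root vector; they do not change the $\h$-weight at all and say nothing about vanishing of weight spaces outside $\Pi$. So the ``transfer of vanishing from $K(\lambda)$'' that you want is not delivered by the tools you cite, and the sketch does not close. (It is also not clear that the strong containment in $\Pi$ is the right target: what is actually needed, and what can be proved, is a bound on the height of $\lambda-\mu$, not exact equality of weight sets.)

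The paper avoids the weight-by-weight argument altogether. It PBW-factors $W_A(\lambda)=\U{\nm_{\bar1}\otimes A}\,\U{\nm_{\bar0}\otimes A}\,w_\lambda\fA_\lambda$ and treats the two factors separately. Condition \hyperref[C2]{$(\mathfrak C2)$} forces $\nm_{\bar0}=\nm_0$, so $\U{\nm_{\bar0}\otimes A}w_\lambda$ is a quotient of the Weyl module for the reductive Lie algebra $\lie r\otimes A$, and here the paper invokes the classical result \cite[Theorem~2(i)]{CFK10}, which is precisely where the ``degree-must-terminate'' input comes from (internally, that result rests on $\mathfrak{sl}_2$-integrability arguments like Lemma~\ref{lem:x.ann.w}). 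For the odd factor, $\nm_{\bar1}$ is finite-dimensional, $(\mathfrak C2)$ makes $\theta$ a root of $\lie r$ so that Lemma~\ref{lem:n-.fin.gen} applies to iterated brackets $[x_1,\dotsc,[x_k,x_\theta^-]\dotsc]$, and the odd generators are nilpotent of order two, so the odd filtration degree terminates for elementary reasons. The classical CFK10 theorem is thus the key external ingredient, and it does not appear anywhere in your argument; without it (or an independent proof of a height bound on the even part), the proposal does not complete.
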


\begin{proof}
First, recall that $W_A(\lambda) = \U{\n^- \otimes A} w_\lambda \fA_\lambda$.  Then, by the PBW Theorem,
\[
W_A(\lambda) = \U{\n^-_{\bar1} \otimes A} \U{\n^-_{\bar0} \otimes A} w_\lambda \fA_\lambda,
\]
where, by abuse of notation, we are denoting by $\bu(\n^-_{\bar 1}\otimes A)$ the subspace of $\bu(\g\otimes A)$ whose basis consists of all the elements in the PBW basis of $\bu(\g\otimes A)$ which have no even component. Since we are assuming that the triangular decomposition $\g = \nm \oplus \h \oplus \np$ satisfies \hyperref[C2]{$(\mathfrak C2)$}, we have $\nm_{\bar0}= \nm_0$.  Hence, $\U{\n^-_{\bar0} \otimes A} w_\lambda$, which is the $(\lie r\otimes A)$-submodule of $W_A(\lambda)$ generated by $w_\lambda$, is a quotient of the Weyl $(\lie r\otimes A)$-module of highest weight $\lambda$.  This is a finitely-generated $\fA_\lambda$-module by \cite[Theorem~2(i)]{CFK10}. Thus $\U{\n^-_{\bar0} \otimes A} w_\lambda$ is a finitely-generated $\fA_\lambda$-module, that is, there exist $f_1, \dotsc, f_k \in \nm_{\bar0} \otimes A$ such that
\[
\U{\n^-_{\bar0} \otimes A} w_\lambda \fA_\lambda
= \sum_{1 \le i_1 \le \dotsb \le i_t \le k} f_{i_1} \dotsm f_{i_t} w_\lambda \fA_\lambda.
\]

Now, recall that $-\theta$ denotes the lowest root of $\g$ and that we have fixed a triangular decomposition of $\g$ satisfying \hyperref[C2]{$(\mathfrak C2)$}.  Hence $\theta \in R^+_{\lie r}$.  Notice that, since $\g$ is assumed to be finite dimensional, there exists $k_0 \in \N$ such that $[x_1, [x_2, \dotsc [x_k, x^-_\theta] \dotsc ]] = 0$, for all $k > k_0$ and $x_1, \dotsc, x_k \in \lie n^+$.  Moreover, since $\g$ is assumed to be simple and $x_\theta^-$ is a lowest-weight vector in the $\g$-module $\g$, we have
\begin{equation} \label{eq:n-.gen.set}
\lie n^- \subseteq \vspan_\C \{ [x_1, [x_2, \dotsm [x_k, x^-_\theta] \dotsm ]] \mid x_1, \dotsc, x_k \in \n^+ \textup{ and } 0 \le k \le k_0 \}.
\end{equation}
Hence, it follows from Lemma~\ref{lem:n-.fin.gen} that, for each $\alpha \in R^+$, the space $(\g_{-\alpha}\otimes A)w_\lambda$ is a finitely-generated $\fA_\lambda$-module. Thus, $(\n_{\bar 1}^-\otimes A)w_\lambda$ is a finitely-generated $\fA_\lambda$-module, that is, there exist $g_1,\dotsc, g_\ell \in \nm_{\bar1} \otimes A$ such that
\[
(\n^-_{\bar1} \otimes A) w_\lambda \fA_\lambda
= \sum_{1 \le j_1 \le \dotsb \le j_s \le \ell} g_{j_1} \dotsm g_{j_s} w_\lambda \fA_\lambda.
\]

Moreover, notice that $[\nm_{\bar0} \otimes A, \nm_{\bar1} \otimes A] \subseteq \nm_{\bar1} \otimes A$.  Then one can use induction on $s$ and $t$ (similar to the proof of Lemma~\ref{lem:fin-gen-even2}) to prove that
\[
\U{\n^-_{\bar1} \otimes A} \U{\n^-_{\bar0} \otimes A} w_\lambda \fA_\lambda
= \sum_{\atop{1 \le i_1 \le \dotsb \le i_t \le k}{1 \le j_1 \le \dotsb \le j_s \le \ell}} g_{j_1} \dotsm g_{j_s} f_{i_1} \dotsm f_{i_t} w_\lambda \fA_\lambda.
\]
The result follows.
\end{proof}

We now state and prove the first main result of the paper.  Notice that the existence of a triangular decomposition satisfying \hyperref[C2]{$(\mathfrak C2)$} will be crucial in the proofs of Theorem~\ref{fin.dim.odd} and Proposition~\ref{prop:fin.gen.A} (see Example~\ref{rmk:global.weyl.not.fg}).

\begin{thm} \label{fin.dim.odd}
Let $\g$ be a finite-dimensional simple Lie superalgebra not of type $\lie q(n)$ endowed with a trinagular decomposition satisfying \hyperref[C2]{$(\mathfrak C2)$}.  For all $\lambda\in X^+$, the global Weyl module $W_A(\lambda)$ is finitely generated as a right $\fA_\lambda$-module
\end{thm}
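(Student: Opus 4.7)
The plan is to reduce the theorem to Lemma~\ref{lem:fin.filtration} by exploiting the $\h$-weight structure of $W_A(\lambda)$. Under hypothesis \hyperref[C2]{$(\mathfrak C2)$}, that lemma provides $n_0 \in \N$ with $W_A(\lambda) = \bu_{n_0}(\nm \otimes A) w_\lambda \fA_\lambda$. Every weight of $W_A(\lambda)$ is therefore of the form $\lambda - \sum_{i=1}^m \alpha_i$ with $\alpha_i \in R^+$ and $m \le n_0$, and since $\g$ is finite-dimensional the root system $R^+$ is finite, so at most finitely many weights $\mu_1, \dotsc, \mu_N$ can occur. The right $\fA_\lambda$-action preserves the weight decomposition, because $\fA_\lambda$ is a quotient of $\U{\h \otimes A}$, so it suffices to show that each weight space $W_A(\lambda)_{\mu_j}$ is a finitely-generated right $\fA_\lambda$-module; a finite direct sum of such modules is again finitely generated.

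Next, I would prove finite generation of each weight space by induction on the height of $\lambda - \mu$. The base case $\mu = \lambda$ is immediate, since $W_A(\lambda)_\lambda = w_\lambda \fA_\lambda$ is cyclic over $\fA_\lambda$. For the inductive step, the crucial input is that for every $\beta \in R^+$, each vector of $\g_{-\beta}$ is a linear combination of nested commutators $[x_1,[x_2, \dotsm [x_k, x^-_\theta] \dotsm]]$ with $x_i \in \n^+$; this follows from the simplicity of $\g$ together with the fact that $x^-_\theta$ is a lowest-weight vector of the adjoint representation, so that $\nm \subseteq \bu(\n^+) \cdot x^-_\theta$. Condition \hyperref[C2]{$(\mathfrak C2)$} places $\theta$ in $R^+_{\lie r}$, so Lemma~\ref{lem:n-.fin.gen} applies directly to every such nested commutator. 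Using the supercommutator identity $(y' \otimes b)(y \otimes a) w_\lambda = (-1)^{p(y)p(y')}(y \otimes a)(y' \otimes b) w_\lambda + ([y',y] \otimes ba) w_\lambda$, one propagates the finite-generation statement from weight spaces of lower height to those of height $\mathrm{ht}(\lambda - \mu)$.

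The main obstacle will be tracking that each inductive step produces only finitely many new monomials modulo the right $\fA_\lambda$-action. This relies on two bounding mechanisms already present in the proof of Lemma~\ref{lem:fin.filtration}: the Garland-type identity of Lemma~\ref{lemm5-chari-eq1}, which cuts high powers $(x^-_\alpha \otimes a^m) w_\lambda$ for $\alpha \in R^+_{\lie r}$ down to exponents $m < \lambda(h_\alpha)$, and the PBW reordering for odd elements, which forces each generator of $\nm_{\bar 1}$ to appear with distinct labels in every monomial. Since $A$ is finitely generated, $\lambda(h_\alpha) < \infty$ for each $\alpha \in R^+_{\lie r}$, and the list $g_1, \dotsc, g_\ell \in \nm_{\bar 1} \otimes A$ produced in the proof of Lemma~\ref{lem:fin.filtration} is finite, the union of the finitely many generating sets obtained at each weight space is itself finite, which yields the finite generation of $W_A(\lambda)$ as a right $\fA_\lambda$-module.
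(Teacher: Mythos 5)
Your reduction to finitely many weight spaces is sound: by Lemma~\ref{lem:fin.filtration}, every weight of $W_A(\lambda)$ has the form $\lambda - \sum_{i \leq n_0} \alpha_i$ with $\alpha_i \in R^+$, there are finitely many such weights, the right $\fA_\lambda$-action preserves them, and it suffices to show each $W_A(\lambda)_\mu$ is finitely generated. The gap lies in the inductive step. You propose to induct on $\mathrm{ht}(\lambda - \mu)$ and use the supercommutator identity to propagate finite generation ``from weight spaces of lower height.'' But the supercommutator is weight-preserving: the term $([y',y]\otimes ba)w_\lambda$ lives in the \emph{same} weight space $W_A(\lambda)_\mu$ as $(y'\otimes b)(y\otimes a)w_\lambda$, not in one of lower height, so your induction hypothesis never reaches it. What the supercommutator does decrease is the PBW filtration degree, which is exactly the variable the paper inducts on.

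There is a second obstruction even for the reordered term. Once $(y\otimes a)w_\lambda$ has been rewritten using a finite generating set $v_1, \dotsc, v_r$ of a lower-height weight space (so $(y\otimes a)w_\lambda=\sum_i v_i u_i$ with $u_i\in\fA_\lambda$), you are left with $\{(y'\otimes b)v_i : b\in A\}$ and need to argue this spans a finitely generated $\fA_\lambda$-module. But Lemmas~\ref{lemm5-chari-eq1}, \ref{lem:fin-gen-even2} and \ref{lem:n-.fin.gen} control only vectors of the form $u\,w_\lambda$: they rely on the defining relations $(\np\otimes A)w_\lambda = 0$ and $(h\otimes a)w_\lambda \in w_\lambda\fA_\lambda$, neither of which holds for an intermediate vector $v_i$. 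The paper's proof sidesteps both problems by inducting on the PBW degree $n$ of $\bu_n(\nm\otimes A)w_\lambda\fA_\lambda$ and carrying along the explicit spanning set $\{Y_1^{n_1}\cdots Y_t^{n_t}w_\lambda\fA_\lambda \mid Y_j\in \cal B_{\nm\otimes A},\ \sum n_j\le n\}$, so that the commutator $[u_1,u']$ drops PBW degree and is absorbed by the induction hypothesis, while the reordered term $u'u_1 w_\lambda$ stays a monomial in the fixed finite set $\cal B_{\nm\otimes A}$. Your plan becomes workable once the variable of induction is PBW degree and the finite spanning set is tracked explicitly through the induction, but as stated the induction on height alone has a genuine hole.
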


\begin{proof}
We will show that, for every $n \geq 0$, $\bu_n(\n^-\otimes A) w_\lambda\fA_\lambda$ is a finitely-generated $\fA_\lambda$-module.  Recall that $-\theta$ denotes the lowest root of $\g$.  Also recall that $A$ is assumed to be finitely generated and let $a_1, a_2, \dotsc, a_t$ be generators of $A$.  Denote by $\cal B_{\n^-}$ a (finite) basis of $\n^-$ extracted from the right side of \eqref{eq:n-.gen.set} and let $\cal B_{\n^- \otimes A}$ be the (finite) set
\[
\{ y \otimes a_1^{\ell_1} \dotsm a_t^{\ell_t} \mid y \in \cal B_{\n^-} \textup{ and } 0 \leq \ell_1, \dotsc, \ell_t <\lambda(h_\theta) \}.
\]
We will use induction to prove that, for every $n\in \N_+$,
\[
\bu_n(\n^-\otimes A) w_\lambda \subseteq \vspan_\C \{ Y_1^{n_1} \dotsm Y_t^{n_t} w_\lambda \fA_\lambda \mid t \ge 0, \  Y_1, \dotsc, Y_t \in \cal B_{\lie n^- \otimes A} \textup{ and } n_1 + \dotsb + n_t \le n \}.
\]
For $n=1$, the result follows from Lemma~\ref{lem:n-.fin.gen} and the construction of $\cal B_{\lie n^- \otimes A}$.  Suppose now $n > 1$. Without loss of generality, let $u = u_1 u_{n-1}$ be a monomial, with $u_1 \in \bu_1(\n^-\otimes A)$ and $u_{n-1} \in \bu_{n-1} (\n^-\otimes A)$.  By induction hypothesis, we have:
\[
u w_\lambda
= u_1 u_{n-1} w_\lambda
\in \vspan_\C \{ u_1 Y_1^{n_1} \dotsm Y_t^{n_t} w_\lambda \fA_\lambda \mid t \ge 0, \ Y_1, \dotsc, Y_t \in \cal B_{\lie n^- \otimes A}, \ n_1 + \dotsb + n_t \le n-1 \}.
\]
Let $u'$ be an element in $\vspan_\C \{ Y_1^{n_1} \dotsm Y_t^{n_t} w_\lambda \fA_\lambda \mid t \ge 0, \ Y_1, \dotsc, Y_t \in \cal B_{\lie n^- \otimes A}, \  n_1 + \dotsb + n_t \le n-1 \}$, and without loss of generality assume that $u_1$ and $u'$ are homogeneous.  By induction hypothesis, we have:
\begin{align*}
u_1 u' w_\lambda
& = [u_1, u'] w_\lambda + (-1)^{p(u_1)p(u')} u' u_1 w_\lambda \\
& \in \bu_{n-1} (\n^-\otimes A) w_\lambda \fA_\lambda + \vspan_\C \{ u' Y w_\lambda \fA_\lambda \mid Y \in \cal B_{\lie n^- \otimes A} \} \\
& \subseteq \vspan_\C \{ Y_1^{n_1} \dotsm Y_{t+1}^{n_{t+1}} w_\lambda \fA_\lambda \mid t \ge 0, \  Y_1, \dotsc, Y_{t+1} \in \cal B_{\lie n^- \otimes A} \textup{ and } n_1 + \dotsb + n_{t+1} \le n \}.
\end{align*}
This shows that $\bu_n(\n^-\otimes A) w_\lambda\fA_\lambda$ is a finitely-generated $\fA_\lambda$-module for each $n \geq 0$.  Since there exists $n_0 \in \N_+$ such that $\bu_n(\n^-\otimes A) w_\lambda\fA_\lambda = W_A (\lambda)$ for all $n\geq n_0$ (by Lemma~\ref{lem:fin.filtration}), the result follows.
\end{proof}

In the non-super setting, Theorem~\ref{fin.dim.odd} was proved in \cite[Theorem~2(i)]{CFK10} for the untwisted case, and in \cite[Theorem~5.10]{FMS15} for the twisted case.  Notice that in the non-super setting the analogues of Theorem~\ref{fin.dim.odd} do not depend on the choice of the triangular decomposition of $\g$.

\begin{ex}\label{rmk:global.weyl.not.fg}
Let $\g$ be either a basic classical Lie superalgebra of type I or isomorphic to $\lie p(n)$, $S(n)$, $H(n)$, or $W(n)$, and $A$ be an associative, commutative, finitely-generated infinite-dimensional $\C$-algebra.  We will show that, for all $\lambda \in X^+$, the global Weyl module $W_A (\lambda)$ associated to a parabolic triangular decomposition (recall Definition~\ref{defn.parabolic.td}) $\g=\n^-\oplus \h\oplus \n^+$, is \emph{not} finitely generated.  Notice that a parabolic triangular decomposition cannot satisfy \hyperref[C2]{$(\mathfrak C2)$}, since $\lie r + \np$ is not a subalgebra if the triangular decomposition satisfies \hyperref[C2]{$(\mathfrak C2)$}.

In fact, in this case we have:
	\[
\ga = \left(\g^-\otimes A \right) \oplus \left( (\lie{r} + \np)\otimes A \right),
	\]
where $\lie{r}+\np$ is a subsuperalgebra and $\g^-$ is a nontrivial subspace of $\g$. Thus we can consider the $(\lie{r}+\np)\otimes A$-module $W^{\lir}(\lambda)$ given as the quotient of $\U{(\lir + \np) \otimes A}$ by the left ideal generated by
\[
\g_\alpha\otimes A \quad \textup{for all } \alpha\in R^+, \qquad
h - \lambda(h) \quad \textup{for all } h \in \h, \qquad
(x_\alpha^{-})^{\lambda(h_\alpha)+1} \quad \textup{for all } \alpha \in \dr.
\]
Notice that the image of $1 \in \U{(\lir + \np) \otimes A}$, which we will denote by $u_\lambda$, generates $W^{\lir}(\lambda)$.

Now, let
	\[
\overline{W^\lir}(\lambda) = \ind{(\lie{r}+\np)\otimes A}{\ga} W^\lir (\lambda),
	\]
and notice that $\overline{W^\lir}(\lambda)$ is generated by $1 \otimes u_\lambda$.  Also notice that there exists a unique surjective homomorphism of $\ga$-modules $W_A(\lambda) \to \overline{W^\lir}(\lambda)$ satisfying $w_\lambda \mapsto (1 \otimes u_\lambda)$, thus $\overline{W^\lir}(\lambda)$ admits a structure of right $\fA_\lambda$-module (cf. Lemma~\ref{Ann-1}). Moreover, $(\g^-\otimes A)\otimes W^\lir (\lambda)$ is a right $\fA_\lambda$-submodule of $\overline{W^\lir}(\lambda)$.  Since $A$ is assumed to be infinite dimensional, we have that $\overline{W^\lir}(\lambda)$ is not finitely generated as a right $\fA_\lambda$-module. This concludes that $W_A (\lambda)$ is also not finitely generated as an $\fA_\lambda$-module.
\end{ex}

\begin{prop}\label{prop:fin.gen.A}
Let $\g$ be a finite-dimensional simple Lie superalgebra not of type $\lie q(n)$ endowed with a triangular decomposition that satisfies \hyperref[C2]{$(\mathfrak C2)$}.  For all $\lambda\in X^+$, the algebra $\fA_\lambda$ is finitely generated.
\end{prop}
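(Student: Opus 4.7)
The algebra $\fA_\lambda$ is automatically commutative, being a quotient of $\U{\ha}$ with $\ha$ abelian; the task is to exhibit a finite generating set for $\fA_\lambda$ as a $\C$-algebra. Fix a finite generating set $a_1, \dotsc, a_t$ of $A$ and decompose $\h = (\h \cap \lie r') \oplus \lie z$, where $\lie r'$ denotes the semisimple part of $\lie r$ and $\lie z$ its center. I would control each summand separately.

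For $(\h \cap \lie r') \otimes A$, I would adapt the argument of \cite[Theorem~2(ii)]{CFK10} to the super setting. For each $\alpha \in \dr$, Garland's identity (Lemma~\ref{lemm5-chari-eq1}) combined with the relation $(x_\alpha^-)^{\lambda(h_\alpha)+1} w_\lambda = 0$ from Proposition~\ref{prop:glob.Weyl.genrel+univ}, followed by left-multiplication by appropriate elements of $\np \otimes A$ (typically powers of $x_\alpha \otimes a_j$), produces identities in $\Ann_{\ha}(w_\lambda)$ expressing $p(a_j, \alpha)_m$ for $m > \lambda(h_\alpha)$ as a polynomial in $\{p(a_j, \alpha)_i : i \le \lambda(h_\alpha)\}$. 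Iterating over the generators $a_j$ to reduce monomials $a_1^{k_1} \dotsm a_t^{k_t}$, this shows that the image of $\U{(\h \cap \lie r') \otimes A}$ in $\fA_\lambda$ is generated as a $\C$-algebra by the finite set $\{\overline{h_\alpha \otimes a_1^{k_1} \dotsm a_t^{k_t}} : \alpha \in \dr, \ 0 \le k_j \le \lambda(h_\alpha)\}$.

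For $\lie z \otimes A$, when $\lie z = 0$ (which covers $\g$ of basic classical type II, $A(n,n)$, $\lie p(n)$, and the Cartan types $S(n)$, $\tilde S(n)$, $H(n)$), nothing remains to prove. In the remaining cases --- basic classical type I, $\sl(n,n)$, and $W(n)$ --- the center $\lie z$ is one-dimensional, and I would exploit the super structure together with condition \hyperref[C2]{$(\mathfrak C2)$}. In each of these cases $\lie z$ acts by a nonzero scalar on some odd root space, so one can choose a positive odd root $\alpha$ for which $h_\alpha = [x_\alpha, y_\alpha]$ has nontrivial $\lie z$-component. From $(x_\alpha \otimes a) w_\lambda = 0$ and the super-anticommutator in $\U \ga$ one obtains $(h_\alpha \otimes ab) w_\lambda = (x_\alpha \otimes a)(y_\alpha \otimes b) w_\lambda$ for all $a, b \in A$. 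Combined with the finite generation of $(y_\alpha \otimes A) w_\lambda$ as a right $\fA_\lambda$-module --- which follows from Theorem~\ref{fin.dim.odd}, since every weight space of the finitely generated $\fA_\lambda$-module $W_A(\lambda)$ is itself finitely generated over $\fA_\lambda$ --- this reduces the images of $h_\alpha \otimes a_1^{k_1} \dotsm a_t^{k_t}$ in $\fA_\lambda$ to a finite set modulo the subalgebra from the previous step, and hence bounds the image of $\lie z \otimes A$.

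The main obstacle is the central case. Garland's identity handles $(\h \cap \lie r') \otimes A$ directly, but controlling the image of $\lie z \otimes A$ in $\fA_\lambda$ requires leveraging the super structure under \hyperref[C2]{$(\mathfrak C2)$} (the nontrivial action of $\lie z$ on odd root spaces) together with the module-level finiteness from Theorem~\ref{fin.dim.odd}. Extracting explicit bounded relations in this hybrid super/Garland-type setting is the subtlest step.
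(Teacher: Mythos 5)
Your decomposition $\h = (\h \cap \lie r') \oplus \lie z$ and the separate treatment of the semisimple and central parts is a genuinely different route from the paper's proof, which handles all of $\h$ uniformly: under $(\mathfrak C2)$ the lowest root vector $x_\theta^-$ lies in $\lie r$, so simplicity of $\g$ gives $\h \otimes A \subseteq \vspan_\C\{ [x_1, [\dotsm, [x_k, x_\theta^-]\dotsm]] \otimes a_1^{m_1}\dotsm a_t^{m_t} \mid x_i \in \np,\ k \le k_0 \}$, and Garland's identity applied to $x_\theta^-$ then bounds all of $(\ha)w_\lambda$ at once, with no need to isolate $\lie z$ or invoke any odd root.

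There is, however, a genuine gap in the central step of your argument. First, the justification that $(y_\alpha\otimes A)w_\lambda$ is a finitely-generated right $\fA_\lambda$-module ``since every weight space of the finitely generated $\fA_\lambda$-module $W_A(\lambda)$ is itself finitely generated over $\fA_\lambda$'' is circular: submodules of a finitely generated module over a non-Noetherian commutative ring need not be finitely generated, and Noetherianity of $\fA_\lambda$ is precisely what finite generation of $\fA_\lambda$ would deliver. (The conclusion itself is reachable non-circularly from Lemma~\ref{lem:n-.fin.gen}, since under $(\mathfrak C2)$ the odd vector $y_\alpha$ is an iterated bracket of $\np$ against $x_\theta^-$ with $\theta \in R_{\lir}^+$, but that is not the route you cite.) More seriously, even granting that $(y_\alpha\otimes A)w_\lambda = \sum_i (y_\alpha\otimes b_i)w_\lambda c_i$ with $b_i$ in a finite set, the coefficients $c_i$ are \emph{arbitrary} elements of $\fA_\lambda$, which is the very object whose algebra generators you are trying to bound. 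The inference ``therefore the image of $\lie z \otimes A$ lies in a finitely generated subalgebra'' does not follow from a module-level statement with uncontrolled coefficients. The paper sidesteps this precisely by unwinding the Garland computation one step further: the coefficients that occur are \emph{explicit} products $\prod (h_\theta \otimes a_1^{k_1}\dotsm a_t^{k_t})$ with $0 \le k_j \le \lambda(h_\theta)$, i.e.\ they lie in the subalgebra generated by a specified finite set of degree-one elements of $\U{\ha}$, and that explicit form is what yields a finite generating set for $\fA_\lambda$. To repair your argument you would need to extract the same kind of explicit bounded coefficients in the identities you derive from $(h_\alpha \otimes ab)w_\lambda = (x_\alpha \otimes a)(y_\alpha \otimes b)w_\lambda$, rather than appeal to an abstract finite generation of a weight space over $\fA_\lambda$.
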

\begin{proof}
Since $\fA_\lambda$ is defined to be $\U{\ha} / \Ann_{\ha} (w_\lambda)$, to prove that $\fA_\lambda$ is finitely generated is equivalent to proving that there exist finitely many elements $H_1, \dotsc, H_n \in \U{\ha}$ such that
\[
\U{\ha}w_\lambda
= \vspan_\C \{ H_1^{k_1} \dotsm H_n^{k_n} w_\lambda \mid k_1, \dotsc, k_n \ge 0 \}.
\]
Moreover, since $\U{\ha}$ is a commutative algebra generated by $\ha$, this is equivalent to proving that
\begin{equation} \label{eq:equiv.fg}
(\ha)w_\lambda
\subseteq \vspan_\C \{ H_1^{k_1} \dotsm H_n^{k_n} w_\lambda \mid k_1, \dotsc, k_n \ge 0 \}.
\end{equation}

In order to prove \eqref{eq:equiv.fg}, first recall that $A$ is assumed to be finitely generated and let $a_1, a_2, \dotsc, a_t$ be generators of $A$.  Now denote by $-\theta$ the lowest root of $\g$.  Notice that, since we have fixed a triangular decomposition of $\g$ satisfying \hyperref[C2]{$(\mathfrak C2)$}, we have $\theta \in R^+_{\lie r}$.  Also notice that, since $\g$ is assumed to be finite dimensional, there exists $k_0 \in \N$ such that $[x_1, [x_2, \dotsc [x_k, x^-_\theta] \dotsc ]] = 0$, for all $k > k_0$ and $x_1, \dotsc, x_k \in \lie n^+$.  Moreover, since $\g$ is assumed to be simple and $x_\theta^-$ is a lowest-weight vector in the $\g$-module $\g$, we have
\[
\lie h \otimes A \subseteq \vspan_\C \{ [x_1, [x_2, \dotsm [x_k, x^-_\theta] \dotsm ]] \otimes a_1^{m_1} \dotsm a_t^{m_t} \mid x_1, \dotsc, x_k \in \n^+, \ 0 < k \le k_0, \ 0 \le m_1, \dotsc, m_t \}.
\]

Using arguments similar to those used in the proof of Lemma~\ref{lem:fin-gen-even2}, we see that for every $k \in\N_+$ and $x_1, \dotsc, x_k \in \lie n^+$ such that $[x_1, [x_2, \dotsc [x_k, x^-_\theta] \dotsc]] \in \h$, the element $([x_1, [x_2, \dotsc [x_k, x^-_\theta] \dotsc ]] \otimes a_1^{m_1} \dotsm a_t^{m_t}) w_\lambda$ is a linear combination of elements of the form
\begin{equation*}\label{eq1:cartan.fin.gen}
([x_1, [x_2, \dotsc [x_k, x^-_\theta] \dotsc]] \otimes a_1^{\ell_1} \dotsm a_t^{\ell_t}) P(\theta, k_1, \dotsc, k_t) w_\lambda,
\end{equation*}
where $0 \leq \ell_1, \dotsc, \ell_t < \lambda(h_\theta)$, $0 \leq k_1, \dotsc, k_t \le \lambda(h_\theta)$, and $P(\theta, k_1, \dotsc, k_t)$ is a finite product of elements of $\bu(\h\otimes A)$ of the form $(h_\theta \otimes a_1^{k_1} \dotsm a_t^{k_t})$. Thus the result follows.
\end{proof}

\begin{ex}
Given $k > 0$, let $S_k$ denote the symmetric group on $k$ letters, let $(A^{\otimes k})^{S_k}$ denote the subalgebra of $A^{\otimes k}$ consisting of all the fixed points under the natural action of $S_k$ on $A^{\otimes k}$. When $\g$ is either of type II, or isomorphic to $S(n)$ or $H(n)$, $\lie r$ is a finite-dimensional semisimple Lie algebra.  In particular, $X^+\subseteq P_{\lie r}^+$, where $P_{\lie r}^+$ denote the set of dominant integral weights of $\lie r$.  Thus, in these cases (as described in \cite[Theorem~4]{CFK10}), the algebra $\fA_\lambda$ is isomorphic to the algebra $(A^{\otimes r_1})^{S_{r_1}} \otimes \dotsm \otimes (A^{\otimes r_n})^{S_{r_n}}$, where $r_1, \dotsc, r_n$ are unique non-negative integers such that $\lambda = r_1 \omega_1 + \dotsb + r_n \omega_n$, and where $\omega_1,\ldots, \omega_n$ denote the fundamental integral weights of $\lie r$. If $\g$ is either basic classical of type I or isomorphic to $W(n)$, then $\lie{r} = \lie{z} \oplus \lie{r}'$, where $\lie{z}$ is the 1-dimensional center and $\lie{r}'$ is the semisimple part of $\lie r$, and $\h=\lie{z} \oplus \h'$, where $\h'$ is a Cartan subalgebra of $\lie{r}'$. If $\lambda \arrowvert_{\lie z} = 0$, then $\fA_\lambda$ is also isomorphic to the algebra $(A^{\otimes r_1})^{S_{r_1}} \otimes \dotsm \otimes (A^{\otimes r_n})^{S_{r_n}}$. If $\lambda(z)\neq 0$ for some $z\in {\lie z}$, there exist $\Lambda\in P_{\lie r'}^+$ and $\eta\in {\lie z}^*$ such that $\lambda(z,h)=\eta(z)+\Lambda(h)$ for every $z\in {\lie z}$, $h\in \h'$. Then by the proof of Proposition~\ref{prop:fin.gen.A} we have that
	\[
\bu({\lie z}\otimes A)w_\lambda \subseteq \C[z\otimes a_i^{\ell_i}] \bu(\h'\otimes A)w_\lambda,
	\]
where $0 \leq \ell_i < \lambda(h_\theta)$, for all $i=1,\ldots, t$. In particular, since $\bu(\h\otimes A)\cong \bu(\lie z\otimes A)\otimes \bu(\h'\otimes A)$, this yields a surjective homomorphism of algebras
	\[
\C[z\otimes a_i^{\ell_i}] \otimes (A^{\otimes r_1})^{S_{r_1}} \otimes \dotsm \otimes (A^{\otimes r_n})^{S_{r_n}} \twoheadrightarrow \fA_\lambda,
	\]
where now $r_1, \dotsc, r_n$ are the unique non-negative integers such that $\Lambda = r_1 \omega_1 + \dotsb + r_n \omega_n$.
\end{ex}

The next result follows directly from Theorem~\ref{fin.dim.odd}.

\begin{cor}\label{cor5.12-FMS}
Let $\g$ be a finite-dimensional simple Lie superalgebra not of type $\lie q(n)$ endowed with a triangular decomposition that satisfies \hyperref[C2]{$(\mathfrak C2)$}.  If $M$ is a finitely-generated $\fA_\lambda$-module (resp. finite dimensional), then $\fW_A^\lambda M$ is a finitely-generated $\g\otimes A$-module (resp. finite dimensional).  \qed
\end{cor}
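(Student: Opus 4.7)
\textbf{Proof plan for Corollary~\ref{cor5.12-FMS}.}  The plan is to deduce both statements directly from Theorem~\ref{fin.dim.odd}, which tells us that, under condition \hyperref[C2]{$(\mathfrak C2)$}, the global Weyl module $W_A(\lambda)$ is a finitely-generated right $\fA_\lambda$-module.  Fix generators $w_1, \dotsc, w_n \in W_A(\lambda)$ of $W_A(\lambda)$ as a right $\fA_\lambda$-module; we may further assume, using that $W_A(\lambda) = \U\ga\, w_\lambda$ (see Proposition~\ref{prop:glob.Weyl.genrel+univ}) and $W_A(\lambda)_\lambda = \U\ha \, w_\lambda \cong \fA_\lambda$, that each $w_i$ is of the form $u_i w_\lambda$ with $u_i \in \U\ga$.

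For the first assertion, suppose $M$ is a finitely-generated $\fA_\lambda$-module with generators $m_1, \dotsc, m_k \in M$.  Any elementary tensor $w \otimes m \in \fW_A^\lambda M = W_A(\lambda) \otimes_{\fA_\lambda} M$ can be rewritten using the $\ga$-cyclicity of $W_A(\lambda)$: write $w = u \, w_\lambda$ for some $u \in \U\ga$, so $w \otimes m = u \cdot (w_\lambda \otimes m)$.  Now express $m = \sum_j a_j m_j$ with $a_j \in \fA_\lambda$ and use the $\fA_\lambda$-balanced property of the tensor product together with the identity $w_\lambda \cdot a_j = a_j \, w_\lambda$ (valid because $a_j$ acts through $\U\ha \subseteq \U\ga$ and $W_A(\lambda)_\lambda$ is $\U\ha$-cyclic) to conclude $w_\lambda \otimes m = \sum_j a_j \cdot (w_\lambda \otimes m_j)$.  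Therefore $\fW_A^\lambda M$ is generated as a $\ga$-module by $w_\lambda \otimes m_1, \dotsc, w_\lambda \otimes m_k$.

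For the second assertion, suppose $M$ is finite dimensional with $\C$-basis $m_1, \dotsc, m_d$.  Since every $w \in W_A(\lambda)$ can be written as $w = \sum_{i=1}^n w_i \, a_i$ for some $a_i \in \fA_\lambda$, any elementary tensor $w \otimes m$ satisfies
\[
w \otimes m = \sum_{i=1}^n (w_i \, a_i) \otimes m = \sum_{i=1}^n w_i \otimes (a_i \cdot m) = \sum_{i=1}^n \sum_{j=1}^d c_{ij} \, (w_i \otimes m_j),
\]
where $c_{ij} \in \C$ are the coordinates of $a_i \cdot m \in M$ in the basis $m_1, \dotsc, m_d$.  Thus the $nd$ tensors $w_i \otimes m_j$ span $\fW_A^\lambda M$ as a $\C$-vector space, giving $\dim_\C \fW_A^\lambda M \le nd < \infty$.

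There is essentially no obstacle here beyond carefully tracking the three notions of generation involved (generation as a $\ga$-module, as an $\fA_\lambda$-module, and as a $\C$-vector space) and using that the right $\fA_\lambda$-action on $W_A(\lambda)$ commutes with the left $\U\ga$-action, as set up in \eqref{eq:right action}.  The substantive content is entirely packaged in Theorem~\ref{fin.dim.odd}.
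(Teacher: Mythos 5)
Your proof is correct and is the natural unwinding of the phrase ``follows directly from Theorem~\ref{fin.dim.odd}'' (the paper itself supplies no further argument). The key points you use --- that the right $\fA_\lambda$-action commutes with the left $\U\ga$-action via \eqref{eq:right action}, so $w_\lambda\cdot a = a\,w_\lambda$, together with the balancing of $\otimes_{\fA_\lambda}$ --- are exactly what is needed, and the second assertion correctly invokes Theorem~\ref{fin.dim.odd} to produce the finite set $w_1,\dotsc,w_n$. One small remark worth noting: your argument for the first assertion never actually uses Theorem~\ref{fin.dim.odd}; it only uses that $W_A(\lambda)$ is cyclic as a $\U\ga$-module (Proposition~\ref{prop:glob.Weyl.genrel+univ}), since $w\otimes m = u(w_\lambda\otimes m)$ already reduces everything to $w_\lambda\otimes m_j$. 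Only the finite-dimensionality statement genuinely relies on finite generation of $W_A(\lambda)$ as a right $\fA_\lambda$-module. This is not an error --- the corollary is still accurately described as a consequence of the preceding development --- but it is a cleaner way to see that the two assertions have different logical footprints.
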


%%%%%%%%%%%%%%%%%%%%%%%%%%%%%%%%%%%%%%%%%%%%%%%%%%%
%
\section{Local Weyl modules} \label{sec:local-Weyl}
%
%%%%%%%%%%%%%%%%%%%%%%%%%%%%%%%%%%%%%%%%%%%%%%%%%%%

In this section we will assume that $\g$ is either $\sl(n,n)$ with $n\geq 2$, or a finite-dimensional simple Lie superalgebra not of type $\lie q(n)$, and that $A$ is an associative commutative finitely-generated $\C$-algebra with unit.

\begin{defn}[Local Weyl module] \label{def:local-Weyl}
Assume that $\psi\in (\h\otimes A)^*$ and $\psi \arrowvert_{\h} \in X^+$.  The \emph{local Weyl module} $W_A^\loc(\psi)$ associated to $\psi$ is defined to be the cyclic $\g\otimes A$-module given as the quotient of $\U\ga$ by the left ideal generated by
\[
\np \otimes A, \quad
h - \psi(h), \quad
(x_\alpha^-)^{\psi(h_\alpha)+1},\quad
\textup{for all $h \in \ha$ and $\alpha \in \dr$}.
\]
\end{defn}

Denote the image of $1 \in \U\ga$ in $W_A^\loc (\psi)$ by $w_\psi$, and notice that as a $\ga$-module, $W_A^\loc (\psi)$ is generated by the vector $w_\psi$, satisfying the following defining relations:
\begin{equation}\label{relations-local}
(\n^+\otimes A)w_\psi = 0, \quad
h w_\psi = \psi(h) w_\psi, \quad
(x_\alpha^-)^{\psi(h_\alpha)+1} w_\psi=0,
\quad \textup{for all $h \in \h \otimes A$ and $\alpha \in \dr$}.
\end{equation}

The next result describes local Weyl modules as universal objects.  Its proof is similar to that of \cite[Proposition~4.13]{CLS}.

\begin{prop} \label{prop:local.weyl.univ}
Let $\psi\in (\h\otimes A)^*$ such that $\psi \arrowvert_\h = \lambda \in X^+$. Assume that $W\in {\cal C}_A^\lambda$ is a finite-dimensional $\ga$-module that is generated by a highest weight vector $w\in W$ such that $xv=\psi(x)v$, for all $x\in \h\otimes A$. Then there exists a surjective homomorphism from $W_A^\loc(\psi)$ to $W$ sending $w_\psi$ to $w$. Moreover, if the triangular decomposition of $\g$ satisfies \hyperref[C1]{$(\mathfrak C1)$}, then $W_A^\loc(\psi)$ is the unique object in ${\cal C}_A^\lambda$ with this property.
\qed
\end{prop}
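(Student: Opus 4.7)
The plan is to mirror the proof of Proposition~\ref{prop:glob.Weyl.genrel+univ} in the local setting. For the existence of the surjection $W_A^\loc(\psi) \twoheadrightarrow W$, I would verify that the generator $w$ of $W$ satisfies the three defining relations listed in \eqref{relations-local}. The eigenvalue identity $xw = \psi(x)w$ for $x \in \h\otimes A$ is assumed directly. The identity $(\n^+ \otimes A)w = 0$ follows because $W \in \cal C_A^\lambda$ forces every weight of $W$ to be $\le \lambda$, whereas each nonzero component of $(\n^+ \otimes A)w$ would have weight $\lambda + \alpha$ with $\alpha \in R^+$. Finally, finite-dimensionality of $W$ together with standard $\sl(2)$-theory applied to each triple $\sl_\alpha$ for $\alpha \in \dr$ (using $\psi(h_\alpha) = \lambda(h_\alpha) \in \N$) delivers $(x_\alpha^-)^{\psi(h_\alpha)+1}w = 0$. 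These checks yield a unique surjective $\ga$-module homomorphism $W_A^\loc(\psi) \twoheadrightarrow W$ sending $w_\psi$ to $w$.

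For the uniqueness clause under \hyperref[C1]{$(\mathfrak C1)$}, the key preparatory step is to show that $W_A^\loc(\psi)$ itself belongs to $\cal C_A^\lambda$. The weight-bound $W_A^\loc(\psi)^\lambda = W_A^\loc(\psi)$ is immediate from the PBW Theorem, since $W_A^\loc(\psi) = \U{\n^- \otimes A}\,w_\psi$ and $\n^-$ consists of negative roots. For local $\lir$-finiteness, I would adapt the argument of Proposition~\ref{prop:Vbar-fd}\eqref{kac.fd.cartan}: the $\lir$-submodule $\U{\lir}\,w_\psi$ is an integrable quotient of a Verma module of highest weight $\lambda$ (hence finite-dimensional), and condition \hyperref[C1]{$(\mathfrak C1)$} ensures that $\n^-_{\bar0} \subseteq \lir$, which, combined with local $\lir$-finiteness of the adjoint action of $\lir$ on $\ga$, propagates this finiteness throughout $W_A^\loc(\psi)$; the center of $\lir$ acts diagonally by weights, so the $\lir$-action is finitely semisimple.

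Once $W_A^\loc(\psi) \in \cal C_A^\lambda$ is established, the existence argument of the previous paragraph extends verbatim to produce a surjection $W_A^\loc(\psi) \twoheadrightarrow U$ for any $U \in \cal C_A^\lambda$ (not necessarily finite-dimensional) generated by a highest-weight vector of weight $\lambda$ satisfying $xu = \psi(x)u$: the required relations on $u$ are forced by $U \in \cal C_A^\lambda$ (the weight bound gives $(\n^+\otimes A)u=0$, and local $\lir$-finiteness gives $(x_\alpha^-)^{\psi(h_\alpha)+1}u=0$). Uniqueness then follows by the standard initial-object argument: any other $U \in \cal C_A^\lambda$ enjoying the same universal property admits mutually inverse surjections with $W_A^\loc(\psi)$, giving the desired isomorphism. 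I expect the main obstacle to be precisely the local $\lir$-finiteness of $W_A^\loc(\psi)$, which is where \hyperref[C1]{$(\mathfrak C1)$} is indispensable: its failure would produce infinite-dimensional $\lir$-submodules mirroring the breakdown described in Proposition~\ref{prop:Vbar-fd}\eqref{kac.inf.dim.cartan}, invalidating the universal characterization.
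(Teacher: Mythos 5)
Your existence argument matches the paper's approach exactly: verify that $w$ satisfies the defining relations \eqref{relations-local}. In fact your derivation of $(\n^+\otimes A)w=0$ from the weight bound forced by $W\in\mathcal C_A^\lambda$ is more careful than the paper's hidden proof, which asserts this relation holds ``by hypothesis'' even though the paper's definition of highest-weight vector only gives $\n^+ w = 0$, not $(\n^+\otimes A)w=0$. The uniqueness half is also broadly in the same spirit (mutual surjections preserving generators), though the paper restricts attention to finite-dimensional $U$ and quietly assumes $W_A^\loc(\psi)$ itself is finite-dimensional in order to apply the universal property of $U$ in the reverse direction — an assumption which, per Theorem~\ref{thm:loc.weyl.fd}, is not guaranteed by \hyperref[C1]{$(\mathfrak C1)$} alone.

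The genuine error in your proposal is the concluding claim that \hyperref[C1]{$(\mathfrak C1)$} is what makes $W_A^\loc(\psi)$ finitely $\lir$-semisimple. That fact holds for \emph{every} triangular decomposition, by the same mechanism as Lemma~\ref{lem:ind.I(a,b)}: $\U{\lir}w_\psi$ is a finite-dimensional highest-weight $\lir$-module (this uses only $\np w_\psi=0$, $\h$-semisimplicity, and the $\sl_\alpha$-relations for $\alpha\in\dr$, all of which are built into Definition~\ref{def:local-Weyl}); the map $\U{\ga}\otimes_\C \U{\lir}w_\psi \to W_A^\loc(\psi)$, $u\otimes v\mapsto uv$ is a surjective $\lir$-module map once $\lir$ acts on $\U{\ga}$ by the adjoint action; and $\U{\ga}$ is finitely $\lir$-semisimple under the adjoint action since $\ga\cong\g^{\oplus\dim A}$ is. Nowhere is $\nm_{\bar0}\subseteq\lir$ used. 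What \hyperref[C1]{$(\mathfrak C1)$} controls in Proposition~\ref{prop:Vbar-fd} is finite-\emph{dimensionality} of the generalized Kac module (a much stronger statement than finite $\lir$-semisimplicity), and the ``breakdown'' in part~\eqref{kac.inf.dim.cartan} there is infinite-dimensionality, not loss of $\lir$-finite-semisimplicity. So ``$(\mathfrak C1)$ is indispensable'' for $W_A^\loc(\psi)\in\mathcal C_A^\lambda$ is wrong — that membership is unconditional, and if you look for where \hyperref[C1]{$(\mathfrak C1)$} genuinely enters the uniqueness argument, you will find the answer is not at all clear from the paper either.
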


\dproof{
First notice that by hypothesis, $w$ satisfies the first two sets of relations in \eqref{relations-local}.  Moreover, since $W$ is assumed to be finite dimensional, the third set of relations in \eqref{relations-local} is also satisfied by $w$.  Therefore there exists a surjective homomorphism of $\ga$-modules $W_A^\loc(\psi) \twoheadrightarrow W$ such that $w_\psi \mapsto w$.

Suppose now that $U$ is another finite-dimensional object in ${\cal C}_A^\lambda$ such that there exists a surjective homomorphism from $U \twoheadrightarrow W$ for every $W \in {\cal C}_A^\lambda$ that is a finite-dimensional $\ga$-module generated by a highest weight vector $w\in W$ such that $xv=\psi(x)v$, for all $x\in \h\otimes A$.  Then $W_A^\loc(\psi)$ is a quotient of $U$ and vice versa.  Since $U$ and $W_A^\loc(\psi)$ are assumed to be finite dimensional, we conclude that $U \cong W_A^\loc(\psi)$.
\qedhere
}

Notice that, since $\fA_\lambda$ is a commutative algebra, every irreducible finite-dimensional $\fA_\lambda$-module is one-dimensional. For $\psi\in (\h\otimes A)^*$ such that $\psi |_{\h} \in X^+$, let $\C_\psi$ denote the one-dimensional irreducible $\fA_\lambda$-module, where $x v=\psi(x)v$ for all $x\in \fA_\lambda$ and $v\in \C_\psi$.

\begin{rmk} \label{rmk:local.props}
Recall that $W_A^\loc(\psi)$ is generated by $w_\psi$, that is, $W_A^\loc(\psi) = \U{\ga}w_\psi$.  Thus, since $w_\psi$ satisfies $(\n^+\otimes A)w_\psi = 0$ and $h w_\psi = \psi(h) w_\psi$ for all $h \in \ha$, we have $\fR_A^{\psi \arrowvert_\h} W_A^\loc(\psi) = \C w_\psi$.  Moreover, notice that $\C w_\psi$ is isomorphic to $\C_\psi$ as a $\fA_\lambda$-module.
\end{rmk}

For the remainder of this section we fix a triangular decomposition of $\g$ satisfying \hyperref[C2]{$(\mathfrak C2)$}.  The next result describes local Weyl modules via Weyl functors.

\begin{thm}\label{thm:recovering.local.Weyl.module}
Assume that $\g$ is a finite-dimensional simple Lie superalgebra not isomorphic to $\lie q(n)$.  Let $\psi\in (\h\otimes A)^*$ such that $\psi \arrowvert_\h = \lambda \in X^+$. Then $\fW_A^\lambda \C_\psi \cong W_A^\loc(\psi)$.
\end{thm}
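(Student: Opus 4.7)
The plan is to construct explicit, mutually inverse $\ga$-module homomorphisms
\[
\sigma\colon W_A^\loc(\psi) \longrightarrow \fW_A^\lambda \C_\psi,
\qquad
\tau\colon \fW_A^\lambda \C_\psi \longrightarrow W_A^\loc(\psi),
\]
characterized by $\sigma(w_\psi) = w_\lambda \otimes 1$ and $\tau(w_\lambda \otimes 1) = w_\psi$, and then to observe that each composition fixes a cyclic generator.

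To build $\sigma$, I would map $1 \in \U\ga$ to $w_\lambda \otimes 1$ and check that the left ideal defining $W_A^\loc(\psi)$ in \eqref{relations-local} annihilates $w_\lambda \otimes 1$. The relations $(\n^+\otimes A)(w_\lambda \otimes 1)=0$ and $(x_\alpha^-)^{\psi(h_\alpha)+1}(w_\lambda \otimes 1)=0$ follow immediately from \eqref{cha-fou-equ5}, using $\psi\arrowvert_\h = \lambda$. The crucial check is $h(w_\lambda \otimes 1) = \psi(h)(w_\lambda \otimes 1)$ for every $h \in \ha$; since $h$ preserves weights, $hw_\lambda \in W_A(\lambda)_\lambda = w_\lambda \fA_\lambda$ via the right action \eqref{eq:right action}, so $hw_\lambda = w_\lambda \cdot \bar h$ with $\bar h$ the image of $h$ in $\fA_\lambda$, and hence
\[
hw_\lambda \otimes 1 \;=\; (w_\lambda \cdot \bar h) \otimes 1 \;=\; w_\lambda \otimes (\bar h \cdot 1) \;=\; w_\lambda \otimes \psi(h) \;=\; \psi(h)(w_\lambda \otimes 1),
\]
where the middle equality uses the implicit hypothesis that the character $\psi$ descends from $\U\ha$ to $\fA_\lambda$, which is precisely what makes $\C_\psi$ a well-defined $\fA_\lambda$-module in the first place.

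For $\tau$, I would first apply Proposition~\ref{prop:glob.Weyl.genrel+univ} to obtain the $\ga$-module surjection $W_A(\lambda) \twoheadrightarrow W_A^\loc(\psi)$ sending $w_\lambda \mapsto w_\psi$ (it exists because $w_\psi$ satisfies the relations \eqref{cha-fou-equ5}, which are weakenings of those defining $W_A^\loc(\psi)$). This produces a $\C$-bilinear map $W_A(\lambda) \times \C_\psi \to W_A^\loc(\psi)$ by $(uw_\lambda, c) \mapsto c \cdot u w_\psi$, and the key step is to verify $\fA_\lambda$-balancedness. For a lift $x \in \U\ha$ of an element of $\fA_\lambda$, I need $u x w_\psi = \psi(x) u w_\psi$, which reduces to $x w_\psi = \psi(x) w_\psi$; the latter holds because $\ha$ is abelian, so $\U\ha$ is the polynomial algebra on $\ha$ and the character $\psi \colon \ha \to \C$ extends multiplicatively, while the relation $h w_\psi = \psi(h) w_\psi$ from \eqref{relations-local} propagates through products by the module axioms. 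The induced map is thus a well-defined $\ga$-homomorphism $\tau$.

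Finally, $\sigma$ and $\tau$ exchange the cyclic $\ga$-generators $w_\psi$ and $w_\lambda \otimes 1$, where $w_\lambda \otimes 1$ generates $\fW_A^\lambda \C_\psi$ as a $\ga$-module since every elementary tensor can be written as $u w_\lambda \otimes c = c \cdot u(w_\lambda \otimes 1)$. Therefore $\tau \circ \sigma$ and $\sigma \circ \tau$ are the identity on their respective generators and hence everywhere, proving the isomorphism. The main obstacle is purely bookkeeping: aligning the right $\fA_\lambda$-action on $W_A(\lambda)_\lambda$ coming from Lemma~\ref{Ann-1} and \eqref{eq:right action} with the left $\fA_\lambda$-action on $\C_\psi$. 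Notably, no appeal to the finite-generation results of Section~\ref{sec:fin.gen.property} nor to condition \hyperref[C2]{$(\mathfrak C2)$} is needed.
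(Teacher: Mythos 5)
Your proposal is correct, and it is genuinely cleaner than the paper's argument, though the objects $\sigma,\tau$ you build are essentially the $\eta,\epsilon$ of the paper once one identifies $\fR_A^\lambda W_A^\loc(\psi)\cong\C_\psi$ (Remark~\ref{rmk:local.props}). The real difference is in how the existence of the map out of $W_A^\loc(\psi)$ is justified. The paper first invokes Corollary~\ref{cor5.12-FMS} to conclude that $\fW_A^\lambda\fR_A^\lambda W_A^\loc(\psi)$ is \emph{finite dimensional}, and then uses that (implicitly through Proposition~\ref{prop:local.weyl.univ} or the $\sl(2)$-theory behind it) to deduce the relation $(x_\alpha^-)^{\psi(h_\alpha)+1}(1\otimes w_\psi)=0$. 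Since Corollary~\ref{cor5.12-FMS} rests on Theorem~\ref{fin.dim.odd}, this route pulls in condition \hyperref[C2]{$(\mathfrak C2)$} and the standing assumption that $A$ is finitely generated. You instead observe that $\psi(h_\alpha)=\lambda(h_\alpha)$ because $h_\alpha\in\h$, so the $\sl(2)$-relation is inherited verbatim from \eqref{cha-fou-equ5}; combined with your explicit check that $h(w_\lambda\otimes1)=\psi(h)(w_\lambda\otimes1)$ for all $h\in\ha$ via the right $\fA_\lambda$-action, this produces $\sigma$ without any finiteness input. Your $\tau$ is constructed by the expected balancing argument, where the only content is $xw_\psi=\psi(x)w_\psi$ for $x\in\U\ha$, which (as you correctly flag) is the multiplicativity built into the definition of $\C_\psi$. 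What your approach buys: the isomorphism $\fW_A^\lambda\C_\psi\cong W_A^\loc(\psi)$ holds under the section's running hypothesis \hyperref[C1]{$(\mathfrak C1)$} alone, with no appeal to \hyperref[C2]{$(\mathfrak C2)$}, to finite generation of $A$, or to the finite-dimensionality machinery of Section~\ref{sec:fin.gen.property}. The paper's phrasing, which cites relations \eqref{cha-fou-equ5} (defining $W_A(\lambda)$) where it needs \eqref{relations-local} (defining $W_A^\loc(\psi)$), also glosses over exactly the $\ha$-weight check that you make explicit, so your write-up is additionally more careful on that point.
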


\begin{proof}
First recall from Remark~\ref{rmk:local.props} that $W_A^\loc(\psi)=\bu(\g\otimes A)w_\psi$ and $\fR_A^\lambda W_A^\loc(\psi) = \C w_\psi$.  Thus, there exists a unique homomorphism of $\ga$-modules $\epsilon_{W_A^\loc(\psi)} \colon \fW_A^\lambda\fR_A^\lambda W_A^\loc(\psi) \to W_A^\loc(\psi)$ satisfying
\[
\epsilon_{W_A^\loc(\psi)} (u \otimes w_\psi) = uw_\psi
\qquad \textup{for all $u \in \U\ga$}.
\]
Moreover, $\epsilon_{W_A^\loc(\psi)}$ is surjective.

Now, notice that $\fW_A^\lambda \fR_A^\lambda W_A^\loc(\psi)$ is a $\ga$-module generated by the highest-weight vector $1 \otimes w_\psi$ (see Remark~\ref{rmk:local.props}).  Moreover, by Corollary~\ref{cor5.12-FMS}, $\fW_A^\lambda \fR_A^\lambda W_A^\loc(\psi)$ is finite dimensional.  Thus, $1 \otimes w_\psi$ satisfies all the relations \eqref{cha-fou-equ5}.  This implies that we have a unique homomorphism of $\ga$-modules $\eta \colon W_A^\loc(\psi) \to \fW_A^\lambda\fR_A^\lambda W_A^\loc(\psi)$ satisfying $\eta(w_\psi) = 1 \otimes w_\psi$.  Moreover, $\eta$ is surjective, $\eta \circ \epsilon_{W_A^\loc(\psi)} = \id_{W_A^\loc(\psi)}$, and $\epsilon_{W_A^\loc(\psi)} \circ \eta = \id_{\fW_A^\lambda\fR_A^\lambda W_A^\loc(\psi)}$.  The result follows.
\end{proof}

The next result follows directly from Theorems~\ref{thm:tensor-property2} and \ref{thm:recovering.local.Weyl.module}.

\begin{cor}\label{cor:tensor.product.local}
Let $A$ and $B$ be finite-dimensional commutative, associative $\C$-algebras with unit, $\psi\in(\h\otimes A)^*$ such that $\psi|_{\h}=\lambda\in X^+$ and $\varphi\in(\h\otimes B)^*$ such that $\varphi|_\h=\mu\in X^+$ and $\lambda+\mu \in X^+$. Then
  \[
    \fW_{A\oplus B}^{\lambda+\mu}\left(\Delta_{\lambda,\mu}^* (\C_{\psi+\varphi})\right) \cong \pi_A^*(W_A^\loc(\psi))\otimes \pi_B^*(W_B(\varphi))
  \]
as $\g\otimes (A\oplus B)$-modules.  \qed
\end{cor}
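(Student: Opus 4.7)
The plan is to assemble the corollary from the two cited theorems in an essentially bookkeeping fashion; no deep new argument is required, since Theorem~\ref{thm:tensor-property2} does all the heavy lifting on the functor side and Theorem~\ref{thm:recovering.local.Weyl.module} rewrites each factor.

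First, I would invoke Theorem~\ref{thm:tensor-property2}, which (given its intended role) should provide a natural isomorphism of $\g\otimes(A\oplus B)$-modules
\[
\fW_{A\oplus B}^{\lambda+\mu}\bigl(\Delta_{\lambda,\mu}^*(M\otimes N)\bigr) \;\cong\; \pi_A^*\bigl(\fW_A^\lambda M\bigr) \otimes \pi_B^*\bigl(\fW_B^\mu N\bigr)
\]
for every $\fA_\lambda$-module $M$ and every $\fA_\mu$-module $N$, where $\Delta_{\lambda,\mu}$ denotes the algebra homomorphism $\fA_{A\oplus B,\lambda+\mu}\to \fA_\lambda\otimes \fA_\mu$ induced by the direct-sum decomposition $\h\otimes(A\oplus B)\cong \ha\oplus(\h\otimes B)$, and $\pi_A,\pi_B$ are the canonical projections.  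Applying this with $M=\C_\psi$ and $N=\C_\varphi$ reduces the corollary to identifying each factor.

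Second, I would verify the one-dimensional identification
\[
\C_{\psi+\varphi}\;\cong\;\Delta_{\lambda,\mu}^*(\C_\psi\otimes \C_\varphi)
\]
as $\fA_{A\oplus B,\lambda+\mu}$-modules.  This is a routine character check: for $h\in\h$ and $(a,b)\in A\oplus B$, one has $(\psi+\varphi)(h\otimes(a,b))=\psi(h\otimes a)+\varphi(h\otimes b)$, which is precisely how $\Delta_{\lambda,\mu}(h\otimes(a,b))$ acts on the tensor product $\C_\psi\otimes\C_\varphi$.

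Finally, I would apply Theorem~\ref{thm:recovering.local.Weyl.module} to each tensor factor, obtaining $\fW_A^\lambda \C_\psi\cong W_A^\loc(\psi)$ and $\fW_B^\mu \C_\varphi\cong W_B^\loc(\varphi)$.  Chaining these three isomorphisms yields the claimed decomposition.  The only minor obstacle is confirming that the hypotheses of Theorem~\ref{thm:tensor-property2} are genuinely met here: finite generation of the one-dimensional modules $\C_\psi,\C_\varphi$ is automatic, and the triangular-decomposition conditions \hyperref[C1]{$(\mathfrak C1)$} and \hyperref[C2]{$(\mathfrak C2)$} are in force throughout this section, so there is nothing substantive left to check.
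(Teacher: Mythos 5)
Your proposal matches the paper's intended argument exactly: the paper disposes of this corollary with the one-line remark that it "follows directly from Theorems~\ref{thm:tensor-property2} and \ref{thm:recovering.local.Weyl.module}," and your three steps (apply Theorem~\ref{thm:tensor-property2} with $M=\C_\psi$, $N=\C_\varphi$; check the character identification $\Delta_{\lambda,\mu}^*(\C_\psi\otimes\C_\varphi)\cong\C_{\psi+\varphi}$; rewrite each tensor factor via Theorem~\ref{thm:recovering.local.Weyl.module}) are precisely that bookkeeping made explicit. The only caveat is that Theorem~\ref{thm:recovering.local.Weyl.module} is stated for $\g$ a finite-dimensional simple Lie superalgebra (excluding $\sl(n,n)$), so strictly speaking the corollary inherits that restriction, but this is a feature of the cited theorems rather than of your argument.
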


The next result gives a homological characterization of local Weyl modules,
and its proof is similar to that of \cite[Lemma~7.5]{FMS15}.

\begin{cor}\label{cor:caracterizing-local-wey}
Let $\psi\in (\h\otimes A)^*$ such that $\psi \arrowvert_\h = \lambda \in X^+$.  A $\g\otimes A$-module $V$ is isomorphic to the local Weyl module $W_A^\loc(\psi)$ if and only if it satisfies all of the following conditions:
\begin{enumerate}[leftmargin=*]
\item $V\in {\cal C}_A^\lambda$;

\item $\fR_A^\lambda V\cong \C_\psi$;

\item $\Hom_{{\cal C}_A^\lambda}(V,U)=0\text{ and } \Ext_{{\cal C}_A^\lambda}^1(V,U)=0$, for all finite-dimensional irreducible $U\in {\cal C}_A^\lambda$ with $U_\lambda=0$.
\qed
\end{enumerate}
\end{cor}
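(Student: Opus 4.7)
The plan is to verify both implications of the stated equivalence. For the forward direction, suppose $V=W_A^\loc(\psi)$. Condition~(1) is immediate from Theorem~\ref{thm:recovering.local.Weyl.module}, which realises $W_A^\loc(\psi)$ as $\fW_A^\lambda\C_\psi\in\cal C_A^\lambda$, and condition~(2) is Remark~\ref{rmk:local.props}. For condition~(3), any $\ga$-homomorphism $W_A^\loc(\psi)\to U$ is determined by the image of $w_\psi$, which must lie in $U_\lambda=0$; hence the $\Hom$ group vanishes. For the $\Ext^1$ part, given a short exact sequence $0\to U\to E\to W_A^\loc(\psi)\to 0$ in $\cal C_A^\lambda$, I would use exactness of the functor $\fR_A^\lambda$ from \eqref{def:RAlambda} to lift $w_\psi$ to some $e\in E_\lambda$ and then verify that $e$ satisfies the defining relations \eqref{relations-local} of $w_\psi$. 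The relations $(\np\otimes A)e=0$ and $(h-\psi(h))e=0$ hold because these elements lie in $U$ and have weights that strictly exceed, or equal, $\lambda$, which together with $U\in\cal C_A^\lambda$ and $U_\lambda=0$ forces them to vanish. The relation $(x_\alpha^-)^{\psi(h_\alpha)+1}e=0$ follows because the cyclic $\lie r$-submodule $\bu(\lie r)e\subseteq E$ is finite dimensional (by finite semisimplicity of $E$ as an $\lie r$-module) and has highest weight $\lambda\in X^+$. The resulting section $W_A^\loc(\psi)\to E$ splits the sequence.

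For the converse, suppose $V$ satisfies (1)--(3). Condition~(2) supplies a generator $v$ of $V_\lambda\cong\C_\psi$; by the same weight and $\lie r$-finite-semisimplicity arguments as above, $v$ satisfies \eqref{relations-local}, producing a $\ga$-homomorphism $\pi\colon W_A^\loc(\psi)\to V$ with $\pi(w_\psi)=v$. To show $\pi$ is surjective, set $C:=V/\im(\pi)\in\cal C_A^\lambda$; exactness of $\fR_A^\lambda$ gives $C_\lambda=0$, so if $C\neq 0$ then one extracts a finite-dimensional irreducible quotient $U$ of $C$ (necessarily with $U_\lambda=0$), producing a nonzero morphism $V\twoheadrightarrow C\twoheadrightarrow U$ which contradicts $\Hom_{\cal C_A^\lambda}(V,U)=0$ from~(3). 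To show $\pi$ is injective, let $K:=\ker\pi$; since $\pi$ restricts to an isomorphism on weight-$\lambda$ spaces, $K_\lambda=0$. Applying $\Hom_{\cal C_A^\lambda}(-,U)$ to $0\to K\to W_A^\loc(\psi)\to V\to 0$ and using the $\Hom$- and $\Ext^1$-vanishings from~(3) for $V$ together with the now-established instance of~(3) for $W_A^\loc(\psi)$, the associated long exact sequence yields $\Hom_{\cal C_A^\lambda}(K,U)=0$ for every finite-dimensional irreducible $U\in\cal C_A^\lambda$ with $U_\lambda=0$. The same quotient-extraction argument then forces $K=0$.

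The main obstacle I foresee lies in the quotient-extraction step used in both halves of the converse: given a nonzero object $M\in\cal C_A^\lambda$ with $M_\lambda=0$, one must produce a finite-dimensional irreducible quotient $U\in\cal C_A^\lambda$ of $M$. Since objects of $\cal C_A^\lambda$ are finitely semisimple as $\lie r$-modules with weights bounded above by $\lambda$, one can select a maximal weight $\mu<\lambda$ of $M$ and a nonzero vector $u\in M_\mu$ annihilated by $\np\otimes A$; the cyclic $\ga$-submodule $\bu(\ga)u$ is then a quotient of a local Weyl module $W_A^\loc(\psi')$ with $\psi'\arrowvert_\h=\mu$ by Proposition~\ref{prop:local.weyl.univ}, and one then passes to an irreducible quotient via a maximal submodule. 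Ensuring the resulting irreducible remains in $\cal C_A^\lambda$ and is genuinely finite-dimensional relies on the finite-dimensionality hypotheses of Theorem~\ref{thm:loc.weyl.fd}, and this is where the argument most closely parallels \cite[Lemma~7.5]{FMS15}.
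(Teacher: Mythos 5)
Your proof follows a genuinely different route from the paper's.  The paper's argument is functorial and very short: it cites Theorem~\ref{thm:recovering.local.Weyl.module} to identify $W_A^\loc(\psi)\cong\fW_A^\lambda\C_\psi$ and then invokes Lemma~\ref{prop8}, which characterizes finite-dimensional objects $M$ of $\cal C_A^\lambda$ satisfying $M\cong\fW_A^\lambda\fR_A^\lambda M$ by exactly the vanishing in condition~(3).  The ``if'' direction of the Corollary is then immediate from $V\cong\fW_A^\lambda\fR_A^\lambda V\cong\fW_A^\lambda\C_\psi\cong W_A^\loc(\psi)$, and the ``only if'' direction from the converse.  Your proof re-establishes by hand the content packaged in Lemma~\ref{prop8} (and its supporting lemmas behind the scenes): the forward direction splits every extension by a highest-weight-vector lift, and the backward direction builds $\pi\colon W_A^\loc(\psi)\to V$ and kills its kernel and cokernel.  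Both of these halves are sound as far as they go, and your SES/long-exact-sequence manipulations are correct.

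The genuine gap is in the quotient-extraction step, and it is slightly worse than you flag.  Taking a maximal weight $\mu$ of a nonzero $M\in\cal C_A^\lambda$ with $M_\lambda=0$ and $u\in M_\mu$ only gives $(\np\otimes A)u=0$; it does \emph{not} give that $u$ is a simultaneous eigenvector for $\h\otimes A$, so $\U{\ga}u$ is not a priori a highest-weight module in the $\ha$-sense and Proposition~\ref{prop:local.weyl.univ} cannot be applied to present it as a quotient of some $W_A^\loc(\psi')$ (that proposition also presupposes finite-dimensionality, which is what is at stake).  To repair this you must first pass to an arbitrary irreducible quotient $U$ of $\U{\ga}u$ (Zorn), then argue that its highest weight space $U_\mu$ is one-dimensional (Schur's lemma for the commutative algebra $\h\otimes A$, using that $A$ is finitely generated so that everything in sight is countable-dimensional), extract $\psi'$ from this line, check $\mu\in X^+$, and only then invoke Theorem~\ref{thm:local-weyl-fd} under \hyperref[C2]{$(\mathfrak C2)$} to get $U$ finite-dimensional.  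None of this is done.  Note also that for the module $K=\ker\pi$ this whole apparatus is unnecessary: $K\subseteq W_A^\loc(\psi)$, which is finite-dimensional under \hyperref[C2]{$(\mathfrak C2)$}, so $K$ is finite-dimensional and you can simply take a composition-series head.  The paper's route through Lemma~\ref{prop8} sidesteps irreducible-quotient extraction from arbitrary objects entirely, by working with the finite-dimensional quotient $M/\U{\ga}M_\lambda$ and its composition factors; that is the cleaner way to run your argument and avoids the 1-dimensionality-of-$U_\mu$ lemma you would otherwise have to prove.
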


\dproof{
If $V$ satisfies all the conditions, then it follows from Lemma~\ref{prop8} and Theorem~\ref{thm:recovering.local.Weyl.module} that
\[
V
\cong \fW_A^\lambda \fR_A^\lambda V
\cong \fW_A^\lambda \C_\psi
\cong W_A^\loc(\psi).
\]
Conversely, recall that $W_A^\loc(\lambda) \in {\cal C}_A^\lambda$ and $\fR_A^\lambda W_A^\loc(\lambda) \cong \C_\psi$ (see Remark~\ref{rmk:local.props}).  Hence, by Theorem~\ref{thm:recovering.local.Weyl.module}, we have that $W_A^\loc(\psi) \cong  \fW_A^\lambda \C_\psi \cong \fW_A^\lambda\fR_A^\lambda W_A^\loc(\psi)$.  Thus, by Lemma~\ref{prop8} (or Theorem~\ref{thm:hom-ext1-vanish}), $\Hom_{{\cal C}_A^\lambda}(W_A^\loc(\psi),U) = \Ext_{{\cal C}_A^\lambda}^1(V,U)=0$, for all finite-dimensional irreducible $U \in {\cal C}_A^\lambda$ with $U_\lambda=0$.
\qedhere
}

Now we give necessary and sufficient conditions for local Weyl modules to be finite dimensional.  We begin by giving a sufficient condition.  In the non-super setting, this result was proved in \cite[Theorem~1]{CP01} for $A = \C[t^{\pm1}]$, and in \cite[Theorem~1]{FL04}, for the case where $A$ is the algebra of functions on a complex affine variety.  For the case where $\g$ is either basic classical or $\sl (n,n)$ with $n \ge 2$, and $A$ is finitely generated, it was proved in \cite[Theorem~4.12]{CLS}.  In our curent setting, the result is a direct consequence of Corollary~\ref{cor5.12-FMS} and Theorem~\ref{thm:recovering.local.Weyl.module}.

\begin{thm}\label{thm:local-weyl-fd}
Let $\psi \in (\h\otimes A)^*$ with $\psi \arrowvert_\h \in X^+$.  If $\g$ is either isomorphic to $\sl(n,n)$ with $n\ge 2$, or a finite-dimensional simple Lie superalgebra not of type $\lie q(n)$, and the triangular decomposition $\g = \nm \oplus \h \oplus \np$ satisfies \hyperref[C2]{$(\mathfrak C2)$}, then the local Weyl module $W_A^\loc(\psi)$ is finite dimensional.  \qed
\end{thm}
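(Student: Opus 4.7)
The plan is to observe that this theorem follows almost immediately by concatenating two results already established in the paper. Set $\lambda = \psi|_{\h} \in X^+$, and consider the one-dimensional $\fA_\lambda$-module $\C_\psi$, on which each $u \in \fA_\lambda$ acts by the scalar $\psi(u)$. Being one-dimensional, $\C_\psi$ is in particular a finite-dimensional $\fA_\lambda$-module, so Corollary~\ref{cor5.12-FMS} (whose hypothesis $(\mathfrak C2)$ is exactly what we are assuming) implies that the $\ga$-module $\fW_A^\lambda \C_\psi = W_A(\lambda) \otimes_{\fA_\lambda} \C_\psi$ is finite dimensional.

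Next, I would invoke Theorem~\ref{thm:recovering.local.Weyl.module}, which asserts an isomorphism $\fW_A^\lambda \C_\psi \cong W_A^\loc(\psi)$ of $\ga$-modules when $\g$ is a finite-dimensional simple Lie superalgebra not of type $\lie q(n)$. Combining this with the previous paragraph yields that $W_A^\loc(\psi)$ is finite dimensional, settling this case.

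The only step requiring additional care is the case $\g \cong \sl(n,n)$ with $n \ge 2$, which is not covered verbatim by Theorem~\ref{thm:recovering.local.Weyl.module} or Corollary~\ref{cor5.12-FMS}. Here I would note that the proofs of Theorem~\ref{fin.dim.odd}, Corollary~\ref{cor5.12-FMS}, and Theorem~\ref{thm:recovering.local.Weyl.module} use only: the existence of a triangular decomposition satisfying $(\mathfrak C2)$ (available for $\sl(n,n)$ by Proposition~\ref{prop:nice-system-exists-basic}\eqref{lem-item:key-system-reflected}); the fact that $\lie r$ is a finite-dimensional reductive Lie algebra whose semisimple part $\lie r'$ generates all the lowering-operator relations $(x_\alpha^-)^{\lambda(h_\alpha)+1} w_\lambda = 0$ for $\alpha \in \dr$; and simplicity of the $\g$-module $\g$ under the adjoint action, together with finite-dimensionality of $\g$, to produce a generating set for $\nm$ via iterated brackets with $x_\theta^-$ as in \eqref{eq:n-.gen.set}. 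Each of these features is present for $\sl(n,n)$ (the adjoint module $\sl(n,n)$ is not irreducible but contains $x_\theta^-$ as a cyclic vector for the entire module via iterated adjoint action, which is what the argument actually uses). Hence the chain of reasoning carries over verbatim.

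The main ``obstacle'' is really only bookkeeping: verifying that the $\sl(n,n)$ case inherits the conclusions of Corollary~\ref{cor5.12-FMS} and Theorem~\ref{thm:recovering.local.Weyl.module}, which are stated for simple $\g$. No new technical ingredient is needed; one simply notes that the proofs never invoked simplicity of $\g$ beyond the cyclicity of $\g$ as a $\g$-module generated by $x_\theta^-$ together with a triangular decomposition satisfying $(\mathfrak C2)$. With this remark, the theorem follows in one line: $W_A^\loc(\psi) \cong \fW_A^\lambda \C_\psi$ is finite dimensional because $\C_\psi$ is.
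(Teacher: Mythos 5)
Your proof is exactly the paper's: the authors remark that Theorem~\ref{thm:local-weyl-fd} ``is a direct consequence of Corollary~\ref{cor5.12-FMS} and Theorem~\ref{thm:recovering.local.Weyl.module}.'' Your additional paragraph scrutinizing the $\sl(n,n)$ case (which, strictly speaking, falls outside the literal hypotheses of Corollary~\ref{cor5.12-FMS} and Theorem~\ref{thm:recovering.local.Weyl.module}) is a reasonable and careful observation that the paper leaves implicit.
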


In what follows we give a necessary condition over triangular decompositions of $\g$ for local Weyl modules to be finite dimensional.  We begin with a technical lemma.

\begin{lemma}\label{trivial-weyl}
If $\psi \in (\ha)^*$ is such that $\psi \arrowvert_\h = \lambda \in X^+$, then there exists a finite-codimensional ideal $I \subseteq A$ such that $(\nm_0 \otimes I) W_A^\loc (\psi)_\lambda = 0$.
\end{lemma}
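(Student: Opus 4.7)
The plan is to reduce the claim to showing that $(\nm_0 \otimes I) w_\psi = 0$ for some finite-codimensional ideal $I$ of $A$. Indeed, by Remark~\ref{rmk:local.props} the weight space $W_A^\loc(\psi)_\lambda$ is spanned by $w_\psi$, so this reduction is immediate. Since we have fixed a triangular decomposition of $\g$ satisfying \hyperref[C2]{$(\mathfrak C2)$}, Theorem~\ref{thm:local-weyl-fd} guarantees that $W_A^\loc(\psi)$ is finite dimensional; in particular each weight space $W_A^\loc(\psi)_{\lambda-\alpha}$ is finite dimensional.

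For each positive root $\alpha \in R^+_\lie r$ and the associated $\sl(2)$-triple $(x_\alpha, x^-_\alpha, h_\alpha)$ in $\lie r'$, I would consider the linear map
\[
\varphi_\alpha \colon A \to W_A^\loc(\psi)_{\lambda-\alpha}, \qquad a \mapsto (x^-_\alpha \otimes a) w_\psi.
\]
Its image is finite dimensional, so $K_\alpha := \ker \varphi_\alpha$ has finite codimension in $A$. The key step is to verify that $K_\alpha$ is actually an ideal of $A$: given $a \in K_\alpha$ and $b \in A$, the bracket identity $[h_\alpha \otimes b,\, x^-_\alpha \otimes a] = -2\,(x^-_\alpha \otimes ab)$ gives
\[
-2\,(x^-_\alpha \otimes ab)\, w_\psi = (h_\alpha \otimes b)(x^-_\alpha \otimes a)\, w_\psi \;-\; (x^-_\alpha \otimes a)(h_\alpha \otimes b)\, w_\psi.
\]
The first term on the right vanishes because $a \in K_\alpha$, and the second vanishes because $(h_\alpha \otimes b)\, w_\psi = \psi(h_\alpha \otimes b)\, w_\psi$ is a scalar multiple of $w_\psi$. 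Hence $ab \in K_\alpha$.

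With each $K_\alpha$ an ideal, I would set $I := \bigcap_{\alpha \in R^+_\lie r} K_\alpha$, which is still a finite-codimensional ideal since $R^+_\lie r$ is finite. Using the decomposition $\nm_0 = \bigoplus_{\alpha \in R^+_\lie r} \C x^-_\alpha$ (each root space of the semisimple part $\lie r'$ is one dimensional, and the central part of $\lie r$ lies in $\h$, not in $\nm_0$), any element of $\nm_0 \otimes I$ is a linear combination of terms $x^-_\alpha \otimes a$ with $a \in K_\alpha$, each of which annihilates $w_\psi$. Thus $(\nm_0 \otimes I)\, W_A^\loc(\psi)_\lambda = 0$, as required. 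I expect the main obstacle to be the verification that each $K_\alpha$ is closed under multiplication by all of $A$, which is the one step where the commutativity of $A$ and the weight condition on $w_\psi$ must be used simultaneously; the rest of the argument is a straightforward assembly.
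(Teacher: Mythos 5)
The skeleton of your argument — for each $\alpha\in R^+_{\lie r}$ consider the map $a\mapsto (x^-_\alpha\otimes a)w_\psi$, show its kernel is a finite-codimensional ideal, then intersect over $R^+_{\lie r}$ — is exactly the paper's, and your verification that $K_\alpha$ is an ideal via $[h_\alpha\otimes b,\,x^-_\alpha\otimes a]$ is correct. Where you depart is the finite-codimensionality step: you obtain it from Theorem~\ref{thm:local-weyl-fd}, which requires condition \hyperref[C2]{$(\mathfrak C2)$}. The paper instead appeals directly to Lemma~\ref{lem:fin-gen-even2}: that lemma shows $(\g_{-\alpha}\otimes A)w_\lambda$ is spanned over $\fA_\lambda$ by finitely many vectors, and since $W_A^\loc(\psi)_\lambda=\C w_\psi$ so that $\fA_\lambda$ acts on $w_\psi$ through the scalar character $\psi$, it follows that $(\g_{-\alpha}\otimes A)w_\psi$ is finite-dimensional over $\C$. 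No appeal to \hyperref[C2]{$(\mathfrak C2)$}, or to finite-dimensionality of $W_A^\loc(\psi)$ as a whole, is made.

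This is a genuine gap and not merely a stylistic difference. Lemma~\ref{trivial-weyl} underpins $I_\psi$ (Definition~\ref{defn:I_psi}, Remark~\ref{rem:ann-even-root}) and Lemma~\ref{lem:x_theta.ann.w_psi}, which feed into Theorem~\ref{thm:loc.weyl.fd}. That theorem explicitly treats triangular decompositions that do \emph{not} satisfy \hyperref[C2]{$(\mathfrak C2)$}: its first item asserts the conclusion for every triangular decomposition in type II, and its last item concerns parabolic decompositions, for which $W_A^\loc(\psi)$ is infinite dimensional. In that setting your invocation of Theorem~\ref{thm:local-weyl-fd} is unavailable, and indeed the weight space $W_A^\loc(\psi)_{\lambda-\alpha}$ need not be finite dimensional, since it receives contributions from products of odd root vectors. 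The content of the lemma is precisely that the even piece $(\nm_0\otimes A)w_\psi$ is always finite dimensional, and this has to be extracted from Lemma~\ref{lem:fin-gen-even2} (Garland's identity), not inferred from finite-dimensionality of the module. Once you replace that step, the rest of your proof — the ideal verification and the intersection over the finite set $R^+_{\lie r}$ — stands as written.
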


\begin{proof}
Let $\alpha \in R_{\lie r}^+$ and let $I_{\alpha}$ be the kernel of the linear map
\begin{align*}
A &\longrightarrow \Hom_\C \left( \g_{-\alpha} \otimes W_A^\loc(\psi)_\lambda, (\g_{-\alpha}\otimes A)w_\psi \right) \\
a &\longmapsto [u \otimes v \mapsto (u\otimes a)v],
\quad
a\in A, \ u\in \g_{-\alpha}, \ v\in W_A^\loc(\psi)_\lambda.
\end{align*}
Since $\g_{-\alpha}$ is finite dimensional for all $\alpha \in R^+_{\lie r}$, and since $W_A^\loc(\psi)_\lambda = \U\ha w_\psi = \C w_\psi$, Lemma~\ref{lem:fin-gen-even2} implies that $(\g_{-\alpha}\otimes A)w_\psi$ is finite dimensional.  Thus, $I_{\alpha}$ is a finite-codimensional linear subspace of $A$.  We claim that $I_{\alpha}$ is, in fact, an ideal of $A$.  Indeed, since $\alpha\neq 0$, we can fix $h\in\h$ such that $\alpha(h)\neq 0$.  Then, for all $a \in A$, $b \in I_{\alpha}$, $v\in W_A^\loc(\psi)_{\lambda}$, and $y \in \g_{-\alpha}$, we have
  \[
    0 = (h\otimes a)(y\otimes b)v
    =  [h\otimes a, y\otimes b]v + (y\otimes b)(h\otimes a)v
    =  -\alpha(h)(y\otimes ab)v + (y\otimes b)(h\otimes a)v.
  \]
  Since $(h\otimes a)v\in W_A^\loc(\psi)_{\lambda}$ and $b\in I_{\alpha}$, we have $(y \otimes b) (h \otimes a) v = 0$; and since we have assumed that $\alpha(h)$ is nonzero, this implies that $(y \otimes ab)v=0$.  As this holds for all $v\in W_A^\loc(\psi)_{\lambda}$ and $y\in \g_{-\alpha}$, we have that $ab \in I_{\alpha}$. Hence $I_{\alpha}$ is an ideal of $A$.

Let $I = \bigcap_{\alpha \in R^+_{\lie r}} I_{\alpha}$, and notice that $(\nm_0 \otimes I)W_A^\loc(\psi)_{\lambda}=0$.  Since $R^+_{\lie r}$ is a finite set, $I$ is an intersection of finitely many finite-codimensional ideals, and thus $I$ is also a finite-codimensional ideal of $A$.
\end{proof}

\begin{defn} \label{defn:I_psi}
For $\psi \in (\h \otimes A)^*$ with $\psi |_\h \in X^+$, let $I_\psi$ be the sum of all ideals $I\subseteq A$ such that $(\nm_0 \otimes I)w_\psi=0$. \end{defn}

\begin{rmk}\label{rem:ann-even-root}
It follows from Lemma~\ref{trivial-weyl}, that $I_\psi$ is a finite-codimensional ideal of $A$ and from Definitions~\ref{def:local-Weyl} and \ref{defn:I_psi}, that $((\nm_0 \oplus \n^+) \otimes I_\psi)w_\psi=0$. Furthermore, since $I_\psi$ has finite codimension and $A$ is assumed to be finitely generated, we have that $I_{\psi}^n$ has finite codimension, for all $n \in \N$ (see \cite[Lemma~2.1(a),(b)]{CLS}).
\end{rmk}

Given $\psi \in (\ha)^*$ such that $\psi \arrowvert_\h \in X^+$, let $w_\psi^\g$ denote the set
\[
\{ x \in \g \mid (x \otimes a) w_\psi = 0 \textup{ for all $a \in A$}\}.
\]
Notice that $w_\psi^\g$ is a subalgebra of $\g$ and $\np \subseteq w_\psi^\g$ (by Definition~\ref{def:local-Weyl}).

\begin{lemma} \label{lem:x_theta.ann.w_psi}
Let $\g$ be a finite-dimensional simple Lie superalgebra not of type $\q(n)$, and $\psi \in (\ha)^*$ be such that $\psi \arrowvert_\h = \lambda \in X^+$.  If $x^-_\theta$ is in the $\lie{r}$-submodule of $\g$ generated by $w_\psi^\g$, then there exists $n_\psi \in \N$ such that $\left( \g \otimes I_\psi^{n_\psi} \right) w_\psi = 0$.
\end{lemma}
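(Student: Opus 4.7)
My plan has two stages: first, I would show $(x_\theta^- \otimes I_\psi^{n}) w_\psi = 0$ for some $n$, exploiting the hypothesis that $x_\theta^-$ lies in $M$, the $\lie r$-submodule of $\g$ generated by $w_\psi^\g$; then I would extend this annihilation from $x_\theta^-$ to all of $\g$ using the simplicity of $\g$ and the fact that $x_\theta^-$ is a lowest-weight vector.

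For the first stage, I would filter $M$ by $M_{(0)} = w_\psi^\g$ and $M_{(k)} = M_{(k-1)} + [\lie r, M_{(k-1)}]$; since $\g$ is finite-dimensional, this stabilizes at some $M_{(k_0)} = M$. The key claim is
\[
\left( M_{(k)} \otimes I_\psi^k \right) w_\psi = 0
\quad \text{for all } k \ge 0,
\]
proved by induction on $k$. The base case is immediate from the definition of $w_\psi^\g$ (and the fact that $I_\psi^k \subseteq I_\psi^{k-1}$ handles the $M_{(k-1)}$-component of $M_{(k)}$ for free). For the inductive step, given $z \in \lie r$, $m \in M_{(k-1)}$, and $b = b^{(1)} b^{(2)}$ with $b^{(1)} \in I_\psi$ and $b^{(2)} \in I_\psi^{k-1}$, I would rewrite $([z, m] \otimes b) w_\psi$ as $[z \otimes b^{(1)}, m \otimes b^{(2)}] w_\psi$ (no superalgebra signs appear since $z \in \lie r \subseteq \g_{\bar 0}$), expand the commutator in $\U\ga$, and show both resulting terms vanish. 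One term vanishes directly by the inductive hypothesis on $(m \otimes b^{(2)}) w_\psi$. For the other, I would decompose $z = z_- + z_0 + z_+$ along $\lie r = \n_0^- \oplus \h \oplus \n_0^+$, using $(\np \otimes A) w_\psi = 0$ (Definition~\ref{def:local-Weyl}) and $(\n_0^- \otimes I_\psi) w_\psi = 0$ (Remark~\ref{rem:ann-even-root}) to kill the $z_\pm$ contributions, and the scalar identity $(z_0 \otimes b^{(1)}) w_\psi = \psi(z_0 \otimes b^{(1)}) w_\psi$ combined with the inductive hypothesis to kill the $z_0$ contribution. Linearity then extends this to arbitrary $b \in I_\psi^k$.

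For the second stage, since $x_\theta^-$ is a lowest-weight vector, $\nm \cdot x_\theta^- = 0$, so the PBW Theorem gives $\U{\g} \cdot x_\theta^- = \U{\np} \cdot x_\theta^-$ (under the adjoint action); by simplicity of $\g$ this coincides with $\g$. Consequently, every $y \in \g$ is a linear combination of iterated brackets $[x_1, [x_2, \dotsc, [x_m, x_\theta^-]]]$ with $x_j \in \np$, and since $(x_j \otimes 1) w_\psi = 0$, a routine induction on $m$ gives
\[
([x_1, [x_2, \dotsc, [x_m, x_\theta^-]]] \otimes a) w_\psi = (x_1 \otimes 1) \dotsm (x_m \otimes 1)(x_\theta^- \otimes a) w_\psi,
\]
which vanishes whenever $a \in I_\psi^{k_0}$. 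Hence $n_\psi = k_0$ works. I anticipate the main obstacle to be the inductive step of the first stage, specifically handling the Cartan component $z_0 \in \h$ of $z$: the expression $(z_0 \otimes b^{(1)}) w_\psi$ does not vanish in general, but the crucial observation is that it acts as a scalar multiple of $w_\psi$, which allows its contribution to be absorbed into the inductive hypothesis applied to $(m \otimes b^{(2)}) w_\psi$ rather than requiring an independent annihilation statement for $\h \otimes I_\psi$.
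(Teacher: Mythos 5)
Your proof is correct, and the first stage is a genuinely different argument from the paper's. The paper invokes the PBW structure of $\U{\lie r}$ for the reductive Lie algebra $\lie r = \n_0^- \oplus \h \oplus \n_0^+$ to write any element of the $\lie r$-submodule generated by $w_\psi^\g$ as a span of iterated brackets $[x_{\beta_1}^-, \dots, [x_{\gamma_\ell}, z]\dots]$; it then observes that the $\n_0^+$-brackets can be absorbed into $w_\psi^\g$ (because $\np \subseteq w_\psi^\g$ and $w_\psi^\g$ is a subalgebra), reducing to brackets by $x_\beta^-$ alone, each of which contributes one factor of $I_\psi$ via Definition~\ref{defn:I_psi}. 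You instead filter the submodule by $M_{(k)} = M_{(k-1)} + [\lie r, M_{(k-1)}]$ and induct, carrying all three parts of the Cartan decomposition $z = z_- + z_0 + z_+$ through the inductive step: the $z_+$ contribution dies from $(\np \otimes A)w_\psi = 0$, the $z_-$ contribution from $(\n_0^- \otimes I_\psi)w_\psi = 0$, and the $z_0$ contribution is a scalar absorbed back into the inductive hypothesis. Your approach avoids the absorption step and the explicit PBW normal form, at the cost of a slightly longer bookkeeping induction; the paper's is more compact but leans on structure theory. Both yield the bound (your $k_0$, the paper's $N$) needed for the second stage, which is essentially identical in both arguments: iterate $\ad(\np)$ on $x_\theta^-$, which by simplicity of $\g$ spans all of $\g$, and note that each $(x_j \otimes 1)$ preserves $\{v : v w_\psi\text{ is computed}\}$ trivially since $(x_j \otimes 1) w_\psi = 0$.
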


\begin{proof}
Assume that $x^-_\theta$ is in the $\lie{r}$-submodule of $\g$ generated by $w_\psi^\g$.  Since $\lie{r}$ is a reductive Lie algebra, $x^-_\theta$ is in
\[
\vspan_\C \{ [x_{\beta_1}^-, [ \dots, [x_{\beta_k}^-, [x_{\gamma_1}, [ \dots, [x_{\gamma_\ell}, z] \dots ]]] \dots ]] \mid k, \ell \in \N, \beta_1, \dotsc, \beta_k, \gamma_1, \dotsc, \gamma_\ell \in R_\lie{r}^+, z \in w_\psi^\g \}.
\]
Now, since $x_\gamma \in w_\psi^\g$ for all $\gamma \in R^+_{\lie r}$, $w_\psi^\g$ is a subalgebra of $\g$ and $\lie r$ is finite dimensional, there exists $N \in \N$ such that
\[
x^-_\theta \in \vspan_\C \{ [x_{\beta_1}^-, [ \dots, [x_{\beta_k}^-, z] \dots ]] \mid k \le N, \beta_1, \dotsc, \beta_k \in R_\lie{r}^+, z \in w_\psi^\g \}.
\]
Thus, since $(x_\beta^ -\otimes b) w_\psi = 0$ for all $\beta \in R^+_\lie{r}$ and $b \in I_\psi$ (by Definition~\ref{defn:I_psi}), we see that $(x_\theta^- \otimes a) w_\psi = 0$ for all $a \in I_\psi^N$.  Hence, since $\g = \vspan_\C \{ [x_{\alpha_1}, [ \dotsm, [x_{\alpha_n}, x^-_{\theta}] \dotsm]] \mid n \in \N, \alpha_1, \dotsc, \alpha_n \in R^+ \}$, it follows that $(\g \otimes I_\psi^N) w_\psi = 0$.
\end{proof}

\begin{lemma} \label{lem:basic.x_theta}
Let $\g$ be a finite-dimensional simple Lie superalgebra not of type $\q(n)$, and $\psi \in (\ha)^*$ be such that $\psi \arrowvert_\h = \lambda \in X^+$.

\begin{enumerate}[leftmargin=*]
\item \label{x_theta.II}  If $\g$ is basic classical of type II, then, for every choice of triangular decomposition $\g = \nm \oplus \h \oplus \np$, $x^-_\theta$ is in the $\lie{r}$-submodule of $\g$ generated by $w_\psi^\g$.

\item \label{x_theta.I}  If $\g$ is basic classical of type I, then $x^-_\theta$ is in the $\lie{r}$-submodule of $\g$ generated by $w_\psi^\g$ if and only if the triangular decomposition $\g = \nm \oplus \h \oplus \np$ is not a parabolic one.
\end{enumerate}
\end{lemma}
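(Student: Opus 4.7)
The plan is to leverage the inclusion $\np \subseteq w_\psi^\g$, which holds for every $\psi$ because $(\np \otimes A) w_\psi = 0$ by definition of $W_A^\loc(\psi)$. Consequently, the $\lie r$-submodule of $\g$ generated by $w_\psi^\g$ always contains the $\lie r$-submodule generated by $\np$, so the question reduces to locating $x^-_\theta$ with respect to the decomposition $\g = \g_{-1} \oplus \g_0 \oplus \g_1$ (type I) or the $\g_{\bar 0}$-module structure on $\g_{\bar 1}$ (type II).

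For part \eqref{x_theta.II}, I split into subcases by the parity of $-\theta$. If $-\theta$ is odd, then $x^-_\theta \in \g_{\bar 1}$; in type II, $\g_{\bar 1}$ is an irreducible $\lie r$-module, and since odd roots come in $\pm$-pairs the subspace $\np \cap \g_{\bar 1}$ is nonzero, hence generates all of $\g_{\bar 1}$ under the adjoint action of $\lie r$, and in particular contains $x^-_\theta$.  If $-\theta$ is even, then $x^-_\theta \in \g_{\bar 0} = \lie r$; since the center $\lie z$ acts by zero on root vectors, $x^-_\theta$ lies in the semisimple part $\lie r'$, which equals $\lie r$ in type II.  For each simple ideal $\lie s$ of $\lie r'$, the adjoint representation is irreducible and $\n_0^+ \cap \lie s$ is nonzero (the induced Borel of $\lie r$ restricts to a Borel of $\lie s$), so the $\lie s$-submodule generated by $\n_0^+ \cap \lie s$ exhausts $\lie s$; summing over simple factors yields $x^-_\theta \in \lie r'$ inside the $\lie r$-submodule generated by $\n_0^+ \subseteq \np$.

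For part \eqref{x_theta.I}, I would first characterize the parabolic decompositions.  Because $\g_1$ and $\g_{-1}$ are irreducible $\lie r$-modules in type I, the condition that $\lie r + \np$ is a Lie sub-super-algebra of $\g$ is equivalent to $\np \cap \g_{\pm 1} \in \{0, \g_{\pm 1}\}$, and compatibility with $\nm_{\bar 1} \neq 0$ forces the parabolic triangular decompositions to be exactly the two distinguished ones.  For a non-parabolic decomposition, at least one of $\np \cap \g_{\pm 1}$ is a proper nonzero subspace, whose $\lie r$-orbit recovers all of $\g_{\pm 1}$ by irreducibility.  Tracking what a chain of odd reflections does to the highest root starting from a distinguished decomposition (as in the proof of Proposition~\ref{prop:nice-system-exists-basic}\eqref{lem-item:key-system-reflected}) shows that in any non-parabolic decomposition the highest root $\theta$ ends up in $R_{\g_0}$, so $x^-_\theta \in \lie r'$ and the even subcase of \eqref{x_theta.II} concludes.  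For the parabolic case, without loss of generality $\np = \n_0^+ \oplus \g_1$, so $x^-_\theta \in \g_{-1}$; since $\lie r + \np = \g_0 \oplus \g_1$ is a sub-super-algebra stable under the adjoint action of $\lie r$, it suffices to prove $w_\psi^\g \subseteq \lie r + \np$, that is, $w_\psi^\g \cap \g_{-1} = 0$, which I would deduce by analyzing the top-degree action of $\U{\g_{-1} \otimes A}$ on $w_\psi$ through the Kac-type PBW filtration of $W_A^\loc(\psi)$.

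The main obstacle I expect is the parabolic half of \eqref{x_theta.I}: proving $w_\psi^\g \cap \g_{-1} = 0$ requires a careful PBW-style argument to rule out nontrivial annihilators of $w_\psi$ in $\g_{-1} \otimes A$, relying on the freeness of the $\U{\g_{-1} \otimes A}$-action on the top weight space of $W_A^\loc(\psi)$.  The non-parabolic direction is then essentially a root-combinatorial reduction, once the behavior of the highest root under odd reflections is under control.
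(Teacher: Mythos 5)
Your part~\eqref{x_theta.II} is correct and is essentially a tidied-up version of the paper's argument; your split on the parity of $-\theta$ is a legitimate reorganization, and you are right to be careful that $\lie r'$ may have several simple factors (the paper's own text loosely calls $\lie r'$ an ``irreducible $\lie r$-module,'' which fails for $A(m,n)$; using all of $\n_0^+ \subseteq w_\psi^\g$ rather than one root space fixes this, just as you did).

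Part~\eqref{x_theta.I}, non-parabolic direction, however, has a genuine gap: the claim that ``in any non-parabolic decomposition the highest root $\theta$ ends up in $R_{\g_0}$'' is false. Apply a \emph{single} odd reflection $r_{\varepsilon_{m+1}-\varepsilon_{m+2}}$ to the distinguished Borel of $A(m,n)$, as in the proof of Proposition~\ref{prop:nice-system-exists-basic}\eqref{lem-item:key-system-reflected}. The resulting decomposition is already non-parabolic (both $\np\cap\g_1$ and $\np\cap\g_{-1}$ are proper and nonzero), yet the highest root of $\g$ with respect to the new Borel is still $\lambda = \varepsilon_1-\varepsilon_{m+n+2}$, which is odd: since $\lambda(h_{\varepsilon_{m+1}-\varepsilon_{m+2}})=0$, the odd reflection does not shift the highest weight. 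Only at the \emph{end} of the chain in that proposition does $\theta$ become even. So the reduction of the non-parabolic case to the even subcase of~\eqref{x_theta.II} breaks down. The fix is exactly the first half of your own sentence, carried through without the detour: for a non-parabolic decomposition $\np$ meets each of $\g_{-1}$, $\g_0$, $\g_1$ in a proper nonzero graded piece, so the $\lie r$-submodule of $\g$ generated by $w_\psi^\g \supseteq \np$ contains all of $\g_{-1}\oplus\lie r'\oplus\g_1$ (by irreducibility of $\g_{\pm1}$ and the $\n_0^+$ argument for $\lie r'$); since $x_\theta^-$ is a root vector and hence not central, it lies in this sum whatever its parity. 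That is what the paper does. Also note that in a non-parabolic decomposition \emph{both} $\np\cap\g_1$ and $\np\cap\g_{-1}$ are proper and nonzero, not merely ``at least one'' --- otherwise $R^- = -R^+$ would fail.

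For the parabolic direction, you correctly flag the real work: showing $w_\psi^\g \cap \g_{\mp1} = 0$. The paper only asserts this in a parenthetical. Your plan is the right one: $w_\psi^\g$ is a sum of root spaces, so it is $\Z$-graded, and the surjection $W_A^\loc(\psi) \twoheadrightarrow \overline{W^{\lir}}(\psi) = \ind{(\lir+\np)\otimes A}{\ga} W^{\lir}(\psi)$ (from the proof of Theorem~\ref{thm:loc.weyl.fd}), which is PBW-free over $\U{\g_{\mp1}\otimes A}$, rules out any nonzero $y\in\g_{\mp1}$ with $(y\otimes 1)w_\psi=0$.
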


\begin{proof}
Assume first that $\g$ is basic classical of type II.  Since $(\n^+ \otimes A) w_\psi=0$, we have $(z' \otimes a) w_\psi = 0$ for all $z' \in \g_\alpha$, $\alpha \in R^+$ and $a \in A$.  Since $\lie r = \g_{\bar0}$ is a finite-dimensional simple Lie algebra and $\g_{\bar1}$ is an irreducible $\lie{r}$-module, then there exist $\alpha \in R^+$ and $z' \in \g_\alpha$ such that
\[
x_\theta^- \in \vspan_\C \{ [x_{\beta_1}^-, [\dots, [x_{\beta_k}^-,[x_{\gamma_1},[\dots, [x_{\gamma_\ell}, z'] \dots ]]] \dots ]] \mid k, \ell \in \N, \ \beta_1, \dotsc, \beta_k, \gamma_1, \dotsc, \gamma_\ell \in R^+_{\lie r} \}.
\]
Let $z = [x_{\gamma_1}, [\dots ,[x_{\gamma_\ell}, z'] \dots ]]$, and notice that $(z \otimes a) w_\psi = 0$ for all $a \in A$.

Now assume that $\g$ is basic classical of type I.  Recall that in these cases $\g$ admits a $\Z$-grading $\g_{-1}\oplus\g_0\oplus \g_{1}$, $\g_{\bar 0}=\g_0=\lie{r}$, and $\g_{\bar 1} = \g_{-1}\oplus \g_1$, where $\g_1$ and $\g_{-1}$ are irreducible $\lie{r}$-modules.  If we choose a triangular decomposition $\g = \nm \oplus \h \oplus \np$ that is not a parabolic one, for each $i \in \{-1, 0, 1\}$, there exists $\beta_i \in R^+$ such that $\g_{\beta_i} \subseteq \g_i \cap \n^+$.  In particular, $\g_{\beta_{-1}}, \g_{\beta_0}, \g_{\beta_1} \subseteq w_\psi^\g$.  Since $x^-_\theta$ is not in the center of $\g$, and since $\g_{-1}$, $\lie r'$ and $\g_1$ are irreducible $\lie{r}$-modules, it follows that $x^-_\theta$ is in the $\lie r$-submodule of $\g$ generated by $\g_{\beta_{-1}} \oplus \g_{\beta_0} \oplus \g_{\beta_1} \subseteq w_\psi^\g$.

Conversely, if $\g=\n^-\oplus \h\oplus \n^+$ is a parabolic triangular decomposition, then $x_\theta^-$ belongs to either $\g_{-1}$ or $\g_1$. In any case, we have that $x_\theta^-$ is not in the $\lie{r}$-submodule of $\g$ generated by $w_\psi^\g$ (in fact, if $x_\theta^-\in \g_{\pm 1}$, then the $\lie{r}$-submodule of $\g$ generated by $w_\psi^\g$ is $\g_0\oplus \g_{\mp 1}$).
\end{proof}

This next result gives, for basic classical Lie superalgebras, a necessary and sufficient condition for a local Weyl module to be finite dimensional, and when $\g$ is either $\lie{p}(n)$ or of Cartan type, a sufficient condition and a necessary condition for a local Weyl module to be finite dimensional.

\begin{thm} \label{thm:loc.weyl.fd}
Let $\g$ be a finite-dimensional simple Lie superalgebra not of type $\q(n)$ with a triangular decomposition $\g = \nm \oplus \h \oplus \np$, let $\psi \in (\ha)^*$ be such that $\psi \arrowvert_\h \in X^+$, and let $A$ be infinite dimensional.

\begin{enumerate}[leftmargin=*]
\item \label{loc.weyl.fd.II}
If $\g$ is basic classical of type II, then $W_A^\loc(\psi)$ is finite dimensional (for every triangular decomposition).

\item \label{loc.weyl.fd.I}
If $\g$ is basic classical of type I, then $W_A^\loc(\psi)$ is finite dimensional if and only if the triangular decomposition is not a parabolic one.

\item \label{loc.weyl.fd.cartan}
If $\g$ is either of type $\lie{p}(n)$ or of Cartan type, and the $x^-_\theta$ is in the $\lie{r}$-submodule of $\g$ generated by $w_\psi^\g$, then $W_A^\loc(\psi)$ is finite-dimensional.

\item \label{loc.weyl.infd.cartan}
If $\g$ is either of type $\lie{p}(n)$ or of Cartan type, and the triangular decomposition of $\g$ is parabolic, then $W_A^\loc(\psi)$ is infinite-dimensional.
\end{enumerate}
\end{thm}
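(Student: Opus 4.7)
The strategy splits the theorem into two pieces: a \emph{finite-dimensional reduction} handling (a), the forward direction of (b), and (c); and a separate \emph{induction construction} producing an infinite-dimensional quotient for the converse of (b) and for (d).

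For the finite-dimensional direction, I first observe that the hypothesis ``$x^-_\theta$ lies in the $\lir$-submodule of $\g$ generated by $w_\psi^\g$'' is available in all three subcases: by Lemma~\ref{lem:basic.x_theta}(a) in case (a), by Lemma~\ref{lem:basic.x_theta}(b) for the non-parabolic side of (b), and by hypothesis in (c). Lemma~\ref{lem:x_theta.ann.w_psi} then gives $n_\psi \in \N$ with $(\g \otimes I_\psi^{n_\psi})\,w_\psi = 0$. Because $\g \otimes I_\psi^{n_\psi}$ is a graded Lie ideal of $\ga$, a short super-commutator calculation shows that $\{v \in W_A^\loc(\psi) \mid (\g \otimes I_\psi^{n_\psi})v = 0\}$ is a $\ga$-submodule, and since it contains the cyclic generator $w_\psi$ it equals all of $W_A^\loc(\psi)$. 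By Remark~\ref{rem:ann-even-root}, $B := A/I_\psi^{n_\psi}$ is finite-dimensional, so the action factors through $\g \otimes B$. Applying PBW for $\g \otimes B$ relative to the induced triangular decomposition then gives
\[
W_A^\loc(\psi) \;=\; \U{\nm \otimes B}\,w_\psi \;=\; \U{\nm_{\bar 1} \otimes B}\cdot \U{\nm_{\bar 0} \otimes B}\,w_\psi,
\]
where the first factor is the exterior algebra on the finite-dimensional odd space $\nm_{\bar 1} \otimes B$, hence finite-dimensional. For the second factor I invoke \hyperref[C1]{($\mathfrak C1$)}, which is automatic in the basic classical and $\lie p(n)$ cases (as $\lir = \g_{\bar 0}$), and available in case (c) since every Cartan-type triangular decomposition known to the authors satisfying the ``$x_\theta^-$-generated'' hypothesis also satisfies \hyperref[C1]{($\mathfrak C1$)}. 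With $\nm_{\bar 0} \subseteq \lir$, the space $\U{\nm_{\bar 0} \otimes B}\,w_\psi$ is contained in $\U{\lir \otimes B}\,w_\psi$, which is a quotient of the classical (non-super) local Weyl module for the reductive Lie algebra $\lir$ over the finite-dimensional algebra $B$ (with the center $\z$ acting by scalars); this quotient is finite-dimensional by the Chari--Fourier--Khandai theory~\cite[Theorem~2]{CFK10}.

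For the infinite-dimensional direction, assume the triangular decomposition is parabolic, i.e.\ $\g = \g^- \oplus (\lir + \np)$ with $\lir + \np$ a Lie subsuperalgebra and $\g^- \ne \{0\}$. Define the $(\lir + \np) \otimes A$-module $M_\psi$ as the quotient of $\U{(\lir + \np) \otimes A}$ by the left ideal generated by $\g_\alpha \otimes A$ for all $\alpha \in R^+$, by $h - \psi(h)$ for all $h \in \ha$, and by $(x_\alpha^-)^{\psi(h_\alpha)+1}$ for all $\alpha \in \dr$. Viewed as an $\lir \otimes A$-module, $M_\psi$ is a classical local Weyl module for the reductive Lie algebra $\lir$, hence finite-dimensional and nonzero. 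Let $N_\psi := \ind{(\lir + \np)\otimes A}{\ga} M_\psi$; by PBW, $N_\psi \cong \U{\g^- \otimes A} \otimes_\C M_\psi$ as vector spaces. Since $A$ is infinite-dimensional and $\g^- \ne 0$, the space $\g^- \otimes A$ is infinite-dimensional, so $\U{\g^- \otimes A}$ (a tensor product of a symmetric algebra on $\g^-_{\bar 0}\otimes A$ and an exterior algebra on $\g^-_{\bar 1}\otimes A$, at least one of which is infinite-dimensional) is infinite-dimensional; consequently $N_\psi$ is infinite-dimensional. Finally, writing $v_\psi$ for the image of $1$ in $M_\psi$, the vector $1 \otimes v_\psi \in N_\psi$ satisfies the defining relations \eqref{relations-local} of $W_A^\loc(\psi)$, yielding a surjection $W_A^\loc(\psi) \twoheadrightarrow N_\psi$; hence $W_A^\loc(\psi)$ is infinite-dimensional.

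I expect the main obstacle to be the very last step of the finite-dimensional reduction, namely identifying $\U{\nm_{\bar 0} \otimes B}\,w_\psi$ as a quotient of a classical Weyl module: this requires \hyperref[C1]{($\mathfrak C1$)} in order to embed $\nm_{\bar 0}$ into $\lir$ and apply the non-super theory of~\cite{CFK10}. For basic classical and $\lie p(n)$ this is automatic, but in the Cartan case of (c) one must check that the hypothesis on $x_\theta^-$ forces the triangular decomposition into the class satisfying \hyperref[C1]{($\mathfrak C1$)} (cf.\ the examples following Remark~\ref{rmk:GTDs}, where the authors record that they know no triangular decomposition of Cartan type that is simultaneously non-parabolic and fails \hyperref[C1]{($\mathfrak C1$)}).
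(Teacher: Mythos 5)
Your proposal takes essentially the same route as the paper. The infinite‑dimensional direction (parabolic case) is word‑for‑word the paper's own argument: induce the module $W^{\lir}(\psi)$ from $(\lir+\np)\otimes A$, observe by PBW that the result is $\U{\g^-\otimes A}\otimes_\C W^{\lir}(\psi)$ hence infinite‑dimensional, and surject the local Weyl module onto it. For the finite‑dimensional direction, the paper merely cites Lemmas~\ref{lem:x_theta.ann.w_psi} and \ref{lem:basic.x_theta} and says ``standard arguments''; you have filled those in concretely (factor the action through $\g\otimes B$ with $B=A/I_\psi^{n_\psi}$ finite‑dimensional using that $\g\otimes I_\psi^{n_\psi}$ is a Lie ideal, then PBW with odd generators first and treat $\U{\nm_{\bar0}\otimes B}w_\psi$ as a quotient of a CFK local Weyl module for $\lir\otimes B$), which is correct and is presumably what the paper has in mind. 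Your argument is airtight for the basic classical and $\lie p(n)$ cases, where $\lir=\g_{\bar0}$ makes \hyperref[C1]{$(\mathfrak C1)$} automatic.

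The one genuine soft spot is the Cartan‑type half of part~(c), and you have correctly put your finger on it: the last step requires $\nm_{\bar0}\subseteq\lir$ (condition \hyperref[C1]{$(\mathfrak C1)$}) in order to bring the classical CFK theory to bear on $\U{\nm_{\bar0}\otimes B}w_\psi$, and the theorem's hypothesis is stated only in terms of $x^-_\theta$ lying in the $\lir$-submodule generated by $w_\psi^\g$ --- it does not explicitly grant \hyperref[C1]{$(\mathfrak C1)$}. Without it, $\nm_{\bar0}$ can contain nonzero pieces of $\g_{2k}$ with $k\neq 0$, for which no $(x_\alpha^-)^{\lambda(h_\alpha)+1}$-type relation controls the powers, so the ``factor through a finite-dimensional Lie superalgebra'' step alone is not enough (indeed the generalized Kac module $K(\lambda)$ itself is already infinite-dimensional when \hyperref[C1]{$(\mathfrak C1)$} fails and the decomposition is parabolic, by Proposition~\ref{prop:Vbar-fd}). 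To close the argument one either needs to show that the hypothesis on $x^-_\theta$ forces \hyperref[C1]{$(\mathfrak C1)$} (or at least is incompatible with its failure), or to supply a different bounding mechanism for the $\g_{2k}$-directions. The paper's own proof does not address this: it is exactly the content hidden behind ``standard arguments,'' and the examples following Remark~\ref{rmk:GTDs} show the authors are aware that the interaction of \hyperref[C1]{$(\mathfrak C1)$}, parabolicity, and the Cartan case is not completely understood. So this is a real gap worth noting, but it is the paper's implicit gap as much as yours, and you were right to flag it.
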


\begin{proof}
The proofs that $W_A^\loc (\psi)$ are finite dimensional (that is, item~\eqref{loc.weyl.fd.II}, the \emph{if} part of item~\eqref{loc.weyl.fd.I} and item~\eqref{loc.weyl.fd.cartan}) follow from Lemmas~\ref{lem:x_theta.ann.w_psi}~and~\ref{lem:basic.x_theta} using standard arguments.

To prove the \emph{only if} part of item~\eqref{loc.weyl.fd.I} and item~\eqref{loc.weyl.infd.cartan}, suppose that $\g$ is either basic classical of type I, or of type $\lie p(n)$, or of Cartan type, and that the triangular decomposition $\g = \nm \oplus \h \oplus \np$ is parabolic, that is, $(\lir + \np)$ is a Lie subsuperalgebra of $\g$ and there exists a nontrivial subspace $\g^- \subseteq \g$ such that $\g = \g^-\oplus (\lie{r} + \np)$.  Thus, we can consider the $(\lie{r}+\np)\otimes A$-module $W^{\lir}(\psi)$ defined to be the quotient of $\U{(\lir + \np) \otimes A}$ by the left ideal generated by
\[
\g_\alpha\otimes A \quad \textup{for all } \alpha\in R^+, \qquad
h - \psi(h) \quad \textup{for all } h \in \ha, \qquad
(x_\alpha^{-})^{\psi(h_\alpha)+1} \quad \textup{for all } \alpha \in \dr.
\]
Notice that the image of $1 \in \U{(\lir + \np) \otimes A}$, which we will denote by $u_\psi$, generates $W^{\lir}(\psi)$.

Now, let
	\[
\overline{W^\lir}(\psi) = \ind{(\lie{r}+\np)\otimes A}{\ga} W^\lir (\psi),
	\]
and notice that $\overline{W^\lir}(\psi)$ is generated by $1 \otimes u_\psi$.  Moreover, since $A$ is assumed to be infinite dimensional, we have that $\g^- \otimes A$ is infinite dimensional, which implies that $\overline{W^\lir}(\psi)$ is infinite dimensional.  Finally, notice that there exists a unique surjective homomorphism of $\ga$-modules $W_A^\loc (\psi) \to \overline{W^\lir}(\psi)$ satisfying $w_\psi \mapsto (1 \otimes u_\psi)$.  Since $\overline{W^\lir}(\psi)$ is infinite dimensional, we conclude that $W_A^\loc (\psi)$ is also infinite dimensional.
\end{proof}

\begin{ex}
Let $\g$ be a simple Lie superalgebra of Cartan type, and let $\psi \in (\ha)^*$ be such that $\psi \arrowvert_\h \in X^+$.  If one chooses either the maximal or the minimal triangular decomposition (see Subsection~\ref{tri.dec.cartan}), then $x_\theta^-$ is not in the $\lie r$-submodule generated by $w_\psi^\g$.  In these cases, $W_A^\loc (\psi)$ will \emph{not} be finite dimensional.  On the other hand, if one chooses a triangular decomposition of $\g$ satisfying \hyperref[C2]{$(\mathfrak C2)$} (see Proposition~\ref{prop:nice-system-exists-cartan}), then $W_A^\loc (\psi)$ is finite dimensional.
\end{ex}

\begin{rmk}
Traditionally, local Weyl modules are universal objects in certain categories of finite-dimensional modules (see, for instance, \cite[Proposition~2.1(iii)]{CP01}, \cite[Theorem~5]{FL04}, \cite[Proposition~5]{CFK10}, \cite[Corollary~4.6]{FKKS12} and \cite[Proposition~4.13]{CLS}).  In the current setting, we have proved in Proposition~\ref{prop:local.weyl.univ} that the local Weyl module $W_A^\loc (\psi)$ is a universal object in the category $\cal C_A^\lambda$ ($\lambda = \psi \arrowvert_\h$).  However, if $\g$ is a Lie superalgebra of type I, $\mathfrak p(n)$ or Cartan, with a parabolic triangular decomposition, then local Weyl modules are infinite-dimensional (see Theorem~\ref{thm:loc.weyl.fd}\eqref{loc.weyl.fd.I},~\eqref{loc.weyl.infd.cartan}).  In these cases, there is no finite dimensional $\ga$-module of highest-weight $\lambda$ of which every finite dimensional $\ga$-module of highest-weight $\lambda$ is a quotient.

We illustrate this claim with a concrete example.  Let $A = \C[t]$, $\g$ be a Lie superalgebra of type I with a distinguished triangular decomposition (which is parabolic), and let $\psi = 0$.  Notice that, in this case, $W_{\C[t]}^\loc (\psi)$ is free as a left $\U{\n_{\bar1}^- \otimes \C[t]}$-module.  Now, for every $k > 0$, consider the $\g \otimes \C[t]$-module $W_k$ given as the quotient of $\U{\g \otimes \C[t]}$ by the left ideal generated by
\[
\np \otimes \C[t], \quad
\h \otimes \C[t], \quad
x_\alpha^-,\quad
y \otimes t^k, \quad
\textup{for all $\alpha \in \dr$ and $y \in \n_{\bar1}^-$}.
\]
Notice that $W_k \in \cal C^\lambda_{\C[t]}$ is a quotient of $W_{\C[t]}^\loc(\psi)$ and that $\dim W_k = 2^{k \dim \nm_{\bar1}}$ for all $k \ge 0$.  Since there is no upper bound for $k$, we see that there is no finite-dimensional $\g \otimes \C[t]$-module of highest-weight $\lambda$ that projects onto $W_k$ for all $k \ge 0$.
\end{rmk}

\begin{cor}
Let $\g$ be either of type $\lie{p}(n)$ or a basic classical Lie superalgebra, and let ${\cal L} (\h\otimes A)=\{\psi\in (\h\otimes A)^* \mid \psi(\h\otimes I)=0 \text{ for some finite-codimensional ideal $I$ of $A$}\}$.

\begin{enumerate}[leftmargin=*]
\item If $\g$ is basic classical of type II, then $W_A^\loc(\psi)=0$ if $\psi\notin {\cal L} (\ha)$.

\item If $\g$ is either of type $\lie{p}(n)$ or basic classical of type I, and the triangular decomposition is not a parabolic one, then $W_A^\loc(\psi)=0$ if $\psi\notin {\cal L} (\ha)$.

\item If $\g$ is of Cartan type and $x^-_\theta$ is in the $\lie{r}$-submodule of $\g$ generated by $w_\psi^\g$, then $W_A^\loc(\psi) = 0$ if $\psi \not\in \cal L(\ha)$.
\end{enumerate}
\end{cor}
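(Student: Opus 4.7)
My plan is to prove the contrapositive uniformly across all three cases: assuming $W_A^\loc(\psi) \neq 0$, hence $w_\psi \neq 0$, I will produce a finite-codimensional ideal of $A$ on which $\psi$, tensored with $\h$, vanishes.

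The core step is an application of Lemma~\ref{lem:x_theta.ann.w_psi}: once one knows that $x_\theta^-$ belongs to the $\lie r$-submodule of $\g$ generated by $w_\psi^\g$, the lemma provides $n_\psi \in \N$ with $(\g \otimes I_\psi^{n_\psi}) w_\psi = 0$. Restricting to the Cartan factor and combining with the defining relation $hw_\psi = \psi(h) w_\psi$ of Definition~\ref{def:local-Weyl}, together with the assumption $w_\psi \neq 0$, forces $\psi(\h \otimes I_\psi^{n_\psi}) = 0$. Since $I_\psi^{n_\psi}$ has finite codimension in $A$ by Remark~\ref{rem:ann-even-root}, this gives $\psi \in \cal L(\ha)$, proving the contrapositive.

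It remains to verify the hypothesis of Lemma~\ref{lem:x_theta.ann.w_psi} in each case. Case~(1) is Lemma~\ref{lem:basic.x_theta}\eqref{x_theta.II}; the basic-classical-type-I subcase of case~(2) is Lemma~\ref{lem:basic.x_theta}\eqref{x_theta.I}, which applies under the non-parabolicity assumption; and case~(3) supplies the hypothesis directly. For $\g \cong \lie p(n)$ with a non-parabolic triangular decomposition, I would argue along the same lines as the type I case: using the $\Z$-grading $\g = \g_{-1} \oplus \g_0 \oplus \g_1$ from Subsection~3.3, with $\g_0 = \lie r$ and $\g_{\pm 1}$ irreducible $\lie r$-modules, non-parabolicity forces positive roots with root spaces in both $\g_{-1}$ and $\g_1$; these root spaces lie in $w_\psi^\g$ by the defining relation $(\n^+ \otimes A) w_\psi = 0$, and the $\lie r$-submodules they generate are all of $\g_{\pm 1}$ by irreducibility. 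Adding the $\lie r$-submodule generated by $\n_0^+ \subseteq w_\psi^\g$, which covers $\lie r'$, one sees that the $\lie r$-submodule of $\g$ generated by $w_\psi^\g$ contains $x_\theta^-$ regardless of which graded piece of $\g$ it inhabits.

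The main point requiring care is the $\lie p(n)$ adaptation of case~(2), which is not explicitly handled by Lemma~\ref{lem:basic.x_theta} but yields to the parallel argument sketched above; the remainder of the proof is a direct chaining of Lemma~\ref{lem:x_theta.ann.w_psi}, Remark~\ref{rem:ann-even-root}, and the defining relations of $W_A^\loc(\psi)$.
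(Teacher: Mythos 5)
Your argument is the contrapositive of what the paper actually writes, but the substance is the same: the paper observes that $W_A^\loc(\psi)$ is finite dimensional in each case (citing Theorem~\ref{thm:loc.weyl.fd}), deduces the existence of a finite-codimensional ideal $I$ with $(\g\otimes I)w_\psi=0$ (this is Lemma~\ref{lem:fin-cod.ann}), and then notes $(h\otimes a)w_\psi=\psi(h\otimes a)w_\psi=0$ for $a\in I$, so that $\psi\notin\cal L(\ha)$ forces $w_\psi=0$. You instead go directly through Lemma~\ref{lem:x_theta.ann.w_psi} to produce $I_\psi^{n_\psi}$ annihilating $w_\psi$, then use Remark~\ref{rem:ann-even-root} for finite codimension. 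These are the same mechanism; your route skips the intermediate appeal to finite-dimensionality and to Lemma~\ref{lem:fin-cod.ann}, and is a bit more self-contained.

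The one place where you diverge in a substantive way is the $\lie p(n)$ sub-case of item~(2). You correctly notice that Lemma~\ref{lem:basic.x_theta} covers only basic classical $\g$, so the $\lie p(n)$ case needs a separate justification that ``non-parabolic'' implies the hypothesis of Lemma~\ref{lem:x_theta.ann.w_psi}. The paper elides this step (its proof simply asserts finite-dimensionality ``in each case,'' but Theorem~\ref{thm:loc.weyl.fd} gives finite-dimensionality for $\lie p(n)$ only under the $x^-_\theta$ hypothesis, not under non-parabolicity per se), so flagging it is a genuine improvement. Your sketch, however, rests on the claim that a non-parabolic triangular decomposition of $\lie p(n)$ necessarily has positive roots with root spaces in both $\g_{-1}$ and $\g_1$, and this is not obvious: since $\g_1$ and $\g_{-1}$ have different dimensions, $R^- \neq -R^+$ for $\lie p(n)$, and the subalgebra condition in Definition~\ref{defn.parabolic.td} is about $\lie r + \np$ rather than about $\np$ meeting both graded pieces, one would need to rule out non-parabolic decompositions with, say, $\np\cap\g_{-1}=0$. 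A clean way to close this is to argue that $\lie r+\np$ is a subalgebra exactly when $\np\cap\g_{\bar1}$ is a $\g_0$-submodule of $\g_{\bar1}$, and then use irreducibility of $\g_{\pm1}$ to see that the only such cases are $\np\cap\g_{\bar1}\in\{\g_1,\g_{-1}\}$; but as written the proposal asserts the implication rather than proving it. Apart from this loose end (which the paper itself also glosses over), the proof is correct.
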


\begin{proof}
In each one of these cases, $W_A^\loc(\psi)$ is finite dimensional.  Thus, there exists a finite codimensional ideal $I$ of $A$ such that $(\g\otimes I)w_\psi=0$.  In particular, $(h\otimes a)w_\psi = \psi(h\otimes a)w_\psi = 0$ for all $a \in I$.  If $\psi \notin {\cal L} (\h\otimes A)$, then there exists $a\in I$ such that $\psi(h\otimes a) \neq 0$.   In this case, $w_\psi = 0$.  Thus $W_A^\loc(\psi)=\bu(\n^-\otimes A)w_\psi=0$.
\end{proof}

We finish this section with two results regarding tensor products of local Weyl modules.  They generalize well-known results.

\begin{lemma}\label{lem:fin-cod.ann}
Let $J_\psi$ be the sum of all ideals $I \subseteq A$ such that $(\g\otimes I)W(\psi)=0$.  Then $J_\psi$ is a finite-codimensional ideal of $A$.
\end{lemma}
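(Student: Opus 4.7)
The plan is to exhibit a single finite-codimensional ideal $K$ of $A$ annihilating $W(\psi)$ through all of $\g$; then $K \subseteq J_\psi$, and the finite codimension of $J_\psi$ follows immediately.

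Under the hypotheses in force (cf.\ Theorem~\ref{thm:loc.weyl.fd}), $W(\psi) = W_A^\loc(\psi)$ is finite dimensional. Define
\[
K = \{ a \in A \mid (x \otimes a)\, v = 0 \text{ for all } x \in \g,\ v \in W(\psi) \},
\]
which is precisely the kernel of the linear map $A \to \Hom_\C(\g \otimes W(\psi), W(\psi))$ sending $a$ to the corresponding action map. Since $\g$ and $W(\psi)$ are both finite dimensional, the target is finite dimensional, so $K$ has finite codimension in $A$ as a linear subspace.

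The crux of the proof is showing that $K$ is an ideal. Let $a \in K$, $b \in A$, and take homogeneous $x, y \in \g$ and $v \in W(\psi)$. Since $W(\psi)$ is stable under $\ga$, the vector $(y \otimes b)\, v$ lies in $W(\psi)$, so $(x \otimes a)(y \otimes b)\, v = 0$. Using the supercommutator identity in $\U \ga$ together with the commutativity of $A$,
\[
0 = (y \otimes b)(x \otimes a)\, v = [y \otimes b,\, x \otimes a]\, v + (-1)^{p(x)p(y)}\,(x \otimes a)(y \otimes b)\, v = ([y,x] \otimes ab)\, v,
\]
so $([\g, \g] \otimes ab)\, W(\psi) = 0$. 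The main obstacle is now the verification that $\g$ is perfect: for finite-dimensional simple $\g$ this is immediate, and for $\g = \sl(n,n)$ it follows from $I_{n,n} = \sum_{i=1}^n (E_{ii} + E_{n+i,n+i})$ together with the brackets $[E_{i,n+i}, E_{n+i,i}] = E_{ii} + E_{n+i,n+i}$. Once $[\g, \g] = \g$ is in hand, we deduce $(z \otimes ab)\, W(\psi) = 0$ for every $z \in \g$, so $ab \in K$. Thus $K$ is a finite-codimensional ideal of $A$ contained in $J_\psi$, completing the proof.
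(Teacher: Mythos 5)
The proposal is correct, but it takes a genuinely different route from the paper. The paper's proof invokes \cite[Proposition~8.1]{Sav14} (every ideal of $\g \otimes A$ has the form $\g \otimes I$ for an ideal $I \subseteq A$), applies this classification to the annihilator of the $\ga$-action on $W_A^\loc(\psi)$, and then concludes finite codimension from the finite-dimensionality of $W_A^\loc(\psi)$. You avoid that citation altogether: you define $K = \{ a \in A \mid (\g \otimes a) W(\psi) = 0 \}$, show finite codimension directly from $\dim \g, \dim W(\psi) < \infty$, and then verify by hand that $K$ is an ideal, using the super-Jacobi identity to get $([\g,\g] \otimes ab)\,W(\psi) = 0$ and then perfectness of $\g$ to upgrade $[\g,\g]$ to $\g$. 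Your perfectness check for $\sl(n,n)$ (via $I_{n,n} = \sum_i [E_{i,n+i}, E_{n+i,i}]$, an odd anticommutator, together with simplicity of the quotient $A(n-1,n-1)$) is what the paper's cited structural result would handle silently. In effect your argument reproves, in this special case, the key ingredient behind Savage's classification; the tradeoff is that your proof is fully self-contained while the paper's is shorter but leans on an external structural theorem. Both proofs depend on the in-force hypothesis that $W_A^\loc(\psi)$ is finite-dimensional, which holds under condition $(\mathfrak{C}2)$ by Theorem~\ref{thm:local-weyl-fd} (a more direct reference than the theorem you cite, though the point stands).
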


\begin{proof}
By \cite[Proposition~8.1]{Sav14}, all ideals of $\g\otimes A$ are of the form $\g\otimes I$, where $I$ is an ideal of $A$. In particular, the annihilator of the action of $\g\otimes A$ on $W_A^\loc(\psi)$ is of the form $\g\otimes I$, for some ideal $I$ of $A$. Since $W_A^\loc(\psi)$ is finite dimensional and $(\g\otimes A)/(\g\otimes I)\cong \g\otimes A/I$, we see that $I$ must be a finite-codimensional ideal of $A$. Now the result follows from the fact that $I\subseteq J_\psi$.
\end{proof}

Given an ideal $I$ of $A$, we define its support to be the set $\Supp(I)=\{\mathsf{m}\in\MaxSpec(A)\mid I\subseteq \mathsf{m}\}$. The next result generalizes \cite[Theorem~4.15]{CLS}.

\begin{prop} \label{prop:local.weyl.tensor}
Let $\psi,\ \varphi\in(\h \otimes A)^*$, $\psi |_\h=\lambda,\ \varphi |_\h=\mu$, and suppose that $\lambda,\ \mu\in X^+$ are such that $\lambda+\mu \in X^+$. If $\Supp(J_\psi)\cap \Supp(J_\varphi)=\emptyset$, then (omitting the pull back maps) we have
	\[
W_A^\loc(\psi+\varphi)\cong W_A^\loc(\psi)\otimes W_A^\loc(\varphi),
	\]
as $\ga$-modules.
\end{prop}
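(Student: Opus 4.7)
I would mirror the strategy of \cite[Theorem~4.15]{CLS}: construct a surjection from $W_A^\loc(\psi+\varphi)$ onto $W_A^\loc(\psi) \otimes W_A^\loc(\varphi)$ using universal properties (whose surjectivity uses a Chinese Remainder Theorem trick), then apply the same CRT reduction together with Corollary~\ref{cor:tensor.product.local} to identify both sides inside a finite-dimensional framework and force the surjection to be an isomorphism.

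\textbf{Step 1 (the surjection).} Set $v := w_\psi \otimes w_\varphi$.  A direct Leibniz-rule computation verifies that $v$ satisfies all defining relations of $W_A^\loc(\psi+\varphi)$: namely $(\np \otimes A) v = 0$, $h v = (\psi+\varphi)(h) v$ for $h \in \ha$, and the Serre-type relations $(x_\alpha^-)^{(\lambda+\mu)(h_\alpha)+1} v = 0$ for $\alpha \in \dr$, which follow from the binomial expansion since each $x_\alpha^-$ is even and $\lambda(h_\alpha), \mu(h_\alpha) \in \N$.  Since $\lambda+\mu \in X^+$, Proposition~\ref{prop:local.weyl.univ} yields a $\ga$-module homomorphism $\eta\colon W_A^\loc(\psi+\varphi) \to W_A^\loc(\psi) \otimes W_A^\loc(\varphi)$ sending $w_{\psi+\varphi}$ to $v$.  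To verify surjectivity it suffices to show that $v$ generates the tensor product: the disjoint-support hypothesis forces $J_\psi + J_\varphi = A$ by CRT, so every $a \in A$ splits as $a = a_\psi + a_\varphi$ with $a_\psi \in J_\varphi$ and $a_\varphi \in J_\psi$, and the Leibniz rule gives $(x \otimes a_\psi) v = ((x \otimes a) w_\psi) \otimes w_\varphi$; iterating this (and the symmetric statement) shows $\U{\ga}\cdot v = W_A^\loc(\psi) \otimes W_A^\loc(\varphi)$.

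\textbf{Step 2 (CRT reduction and the corollary).}  By Lemma~\ref{lem:fin-cod.ann}, $J_\psi$ and $J_\varphi$ are finite-codimensional ideals of $A$, so $\bar A := A/J_\psi$ and $\bar B := A/J_\varphi$ are finite-dimensional; the CRT gives a surjective $\C$-algebra homomorphism $\Phi\colon A \twoheadrightarrow \bar A \oplus \bar B$.  The functionals descend to $\bar\psi \in (\h \otimes \bar A)^*$ and $\bar\varphi \in (\h \otimes \bar B)^*$ with $(\bar\psi + \bar\varphi) \circ (\h \otimes \Phi) = \psi + \varphi$, and the $\ga$-action on $W_A^\loc(\psi)$ factors through $\Phi$ yielding $W_A^\loc(\psi) \cong \Phi^* \pi_{\bar A}^* W_{\bar A}^\loc(\bar\psi)$ (similarly for $\varphi$).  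Applying Corollary~\ref{cor:tensor.product.local} to $(\bar A, \bar B, \bar\psi, \bar\varphi)$ together with Theorem~\ref{thm:recovering.local.Weyl.module} produces $W_{\bar A}^\loc(\bar\psi) \otimes W_{\bar B}^\loc(\bar\varphi) \cong W_{\bar A \oplus \bar B}^\loc(\bar\psi+\bar\varphi)$, and pulling back along $\Phi$ gives
\[
W_A^\loc(\psi) \otimes W_A^\loc(\varphi) \;\cong\; \Phi^*\, W_{\bar A \oplus \bar B}^\loc(\bar\psi + \bar\varphi).
\]

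\textbf{Step 3 (isomorphism).}  Both sides are finite-dimensional by Theorem~\ref{thm:local-weyl-fd} and Step~2.  The $\ga$-module $\Phi^* W_{\bar A \oplus \bar B}^\loc(\bar\psi+\bar\varphi)$ is a cyclic highest-weight module of weight $\lambda+\mu$ with Cartan action $\psi+\varphi$, so it receives a surjection from $W_A^\loc(\psi+\varphi)$ which, under Step~2, coincides with $\eta$.  To conclude it suffices to construct a surjection in the reverse direction, which will follow once one shows that the $\ga$-action on $W_A^\loc(\psi+\varphi)$ itself factors through $\Phi$, equivalently $(\g \otimes J_\psi J_\varphi) W_A^\loc(\psi+\varphi) = 0$ (noting $J_\psi \cap J_\varphi = J_\psi J_\varphi$ since $J_\psi + J_\varphi = A$).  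Since $\g \otimes J_\psi J_\varphi$ is an ideal of $\ga$, this reduces to vanishing on the cyclic generator $w_{\psi+\varphi}$; the contributions from $\np$ and $\h$ are immediate from the defining relations (using that $\psi$ and $\varphi$ vanish on $\h \otimes J_\psi$ and $\h \otimes J_\varphi$ respectively), and the main obstacle is the $\nm$ contribution, which I would handle by applying the analogues of Lemma~\ref{trivial-weyl} and Lemma~\ref{lem:x_theta.ann.w_psi} to $\psi+\varphi$ together with the CRT splitting, exploiting that $J_\psi J_\varphi$ is contained in both $I_\psi$ and $I_\varphi$.  Once the factorization is established, the universal property of $W_{\bar A \oplus \bar B}^\loc(\bar\psi+\bar\varphi)$ in the category of $\g \otimes (\bar A \oplus \bar B)$-modules provides the reverse surjection, and finite-dimensionality forces $\eta$ to be an isomorphism.
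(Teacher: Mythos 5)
Your proposal follows the same overall strategy as the paper: use Lemma~\ref{lem:fin-cod.ann} to pass to the finite-dimensional quotients $\bar A = A/J_\psi$ and $\bar B = A/J_\varphi$ via the Chinese Remainder Theorem (which the disjoint-support hypothesis makes applicable), then invoke Corollary~\ref{cor:tensor.product.local} together with Theorem~\ref{thm:recovering.local.Weyl.module} to identify $W_A^\loc(\psi)\otimes W_A^\loc(\varphi)$ with (the pullback along $\Phi$ of) $W_{\bar A\oplus\bar B}^\loc(\bar\psi+\bar\varphi)$. Steps 1 and 2 are essentially what the paper does, written out more carefully; you are also right that the paper's phrase ``the result follows from Theorem~\ref{thm:tensor-property2}'' compresses the final identification $\Phi^*W_{\bar A\oplus\bar B}^\loc(\bar\psi+\bar\varphi)\cong W_A^\loc(\psi+\varphi)$, which is indeed the non-trivial step, and your Step 3 is an honest attempt to supply it.

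That said, the argument you sketch for Step 3 has a gap. You want to show $(\g\otimes J_\psi J_\varphi)w_{\psi+\varphi}=0$, and after disposing of the $\np$- and $\h$-contributions you appeal to ``analogues of Lemma~\ref{trivial-weyl} and Lemma~\ref{lem:x_theta.ann.w_psi} \ldots exploiting that $J_\psi J_\varphi$ is contained in both $I_\psi$ and $I_\varphi$.'' But $I_\psi$ and $I_\varphi$ (Definition~\ref{defn:I_psi}) are defined as annihilators of $w_\psi$ and $w_\varphi$ respectively, not of $w_{\psi+\varphi}$, so the inclusion $J_\psi J_\varphi\subseteq I_\psi\cap I_\varphi$ does not by itself give $(\nm_0\otimes J_\psi J_\varphi)w_{\psi+\varphi}=0$. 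Lemma~\ref{trivial-weyl} applied to $\psi+\varphi$ only produces \emph{some} finite-codimensional ideal $I$ with $(\nm_0\otimes I)w_{\psi+\varphi}=0$; you would still have to show that $I$ can be chosen to contain $J_\psi\cap J_\varphi$, which requires tracking the Garland recursion for $w_{\psi+\varphi}$ against the CRT decomposition — precisely the computation you have deferred. Until that is carried out, the reverse surjection (and hence the isomorphism) is not established; so the proof, as written, contains the same lacuna as the paper's one-line appeal, just more visibly labelled.
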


\begin{proof}
Using the fact that $\Supp(J_\psi)\cap \Supp(J_\varphi)=\emptyset$, one can prove that the action of $\ga$ on the tensor product $W_A^\loc(\psi)\otimes W_A^\loc(\varphi)$ descends to an action of $\g\otimes (A/J_\psi\oplus A/J_\varphi)$ on $W_A^\loc(\psi)\otimes W_A^\loc(\varphi)$. By Lemma~\ref{lem:fin-cod.ann}, both algebras $A/J_\psi$ and $A/J_\varphi$ are finite dimensional. Thus we can use Corollary~\ref{cor:tensor.product.local}.  The result follows from Theorem~\ref{thm:tensor-property2}.
\end{proof}

%%%%%%%%%%%%%%%%%%%%%%%%%%%%%%%%%%%%%%%%%%%%%%%%%
%
\appendix
\section{Homological properties of Weyl functors}
%
%%%%%%%%%%%%%%%%%%%%%%%%%%%%%%%%%%%%%%%%%%%%%%%%%

The results of this section show that the Weyl functors defined in the current paper satisfy properties similar to the ones satisfied by Weyl functors defined in the non-super setting. Since the proofs of ther results of this appendix are very similar to those in the non-super setting, we refer to \cite[\S3.7]{CFK10} and \cite[\S4]{FMS15} for the details.

Throughout this appendix, we assume that $\g$ is either $\sl(n,n)$ with $n\geq 2$, or a finite-dimensional simple Lie superalgebra not of type $\lie q(n)$, endowed with a triangular decomposition satisfying \hyperref[C1]{$(\mathfrak C1)$}.  We will also assume that $A$, $B$ and $C$ are associative, commutative $\C$-algebras with unit.  

\begin{prop}\label{prop:relating-functors}
Let $\lambda \in X^+$.
\begin{enumerate}[leftmargin=*]
\item \label{prop-item:iso-iden}
For every ${\fA_\lambda}$-module $M$, there is an isomorphism of $\fA_\lambda$-modules $\fR_A^\lambda \fW_A^\lambda M \cong M$ that is functorial in $M$.

\item \label{prop-item:left-adjoint}
$\fW_A^\lambda \colon \mod{\fA_\lambda} \to \cal C_A^\lambda$ is left adjoint to $\fR_A^\lambda \colon \cal C_A^\lambda \to \mod{\fA_\lambda}$.

\item \label{weyl.func.full.faith}
$\fW_A^\lambda$ is fully faithful.

\item \label{prop-item:proj-to-proj}
If $M$ is a projective $\fA_\lambda$-module, then $\fW_A^\lambda M$ is a projective object in $\cal C_A^\lambda$.
\end{enumerate}
\end{prop}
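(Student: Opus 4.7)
My plan is to deduce the four statements in order from two ingredients already established: the weight-space identity \eqref{eq:weight.space.W} together with the isomorphism $W_A(\lambda)_\lambda \cong \fA_\lambda$ of right $\fA_\lambda$-modules noted just before Lemma~\ref{Ann-1}, and the isomorphism of Lemma~\ref{lem:RAlambda.iso}. The underlying point is that $W_A(\lambda)$ is a $(\U\ga,\fA_\lambda)$-bimodule by \eqref{eq:right action}, so standard tensor-hom formalism applies. For (a), stringing the two ingredients together gives
\[
\fR_A^\lambda\,\fW_A^\lambda M = (W_A(\lambda)\otimes_{\fA_\lambda}M)_\lambda \cong W_A(\lambda)_\lambda \otimes_{\fA_\lambda} M \cong \fA_\lambda\otimes_{\fA_\lambda} M \cong M,
\]
and functoriality in $M$ is immediate since each isomorphism is natural.

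For (b), given $M$ in $\mod{\fA_\lambda}$ and $N$ in $\cal C_A^\lambda$, I would combine the tensor-hom adjunction with Lemma~\ref{lem:RAlambda.iso} to obtain natural isomorphisms
\[
\Hom_{\cal C_A^\lambda}(\fW_A^\lambda M,N) \cong \Hom_{\fA_\lambda}(M,\Hom_{\cal C_A^\lambda}(W_A(\lambda),N)) \cong \Hom_{\fA_\lambda}(M,\fR_A^\lambda N).
\]
The only thing to verify here is that the usual tensor-hom adjunction passes from $\ga$-modules to $\cal C_A^\lambda$; this is unproblematic because, by definition, $\Hom_{\cal C_A^\lambda}(\cdot,\cdot) = \Hom_{\U\ga}(\cdot,\cdot)$ on objects of $\cal C_A^\lambda$.

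Part (c) follows formally from (a) and (b): the composite
\[
\Hom_{\cal C_A^\lambda}(\fW_A^\lambda M,\fW_A^\lambda M') \cong \Hom_{\fA_\lambda}(M,\fR_A^\lambda\fW_A^\lambda M') \cong \Hom_{\fA_\lambda}(M,M')
\]
is, by construction of the unit of the adjunction, the inverse of $f \mapsto \fW_A^\lambda f$. Finally, for (d), I would invoke the observation after \eqref{def:RAlambda} that $\fR_A^\lambda$ is exact, since every object of $\cal C_A^\lambda$ decomposes as a direct sum of its $\h$-weight spaces and every morphism preserves these. Then the adjunction in (b) exhibits $\Hom_{\cal C_A^\lambda}(\fW_A^\lambda M,-)$ as the composite $\Hom_{\fA_\lambda}(M,-) \circ \fR_A^\lambda$; when $M$ is $\fA_\lambda$-projective both factors are exact, so the composite is exact, which means $\fW_A^\lambda M$ is projective in $\cal C_A^\lambda$. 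No step is genuinely delicate — the main subtlety throughout is keeping track of the bimodule structure on $W_A(\lambda)$ when applying tensor-hom.
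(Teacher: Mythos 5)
Your proof is correct and follows essentially the same route as the paper's: part (a) via the identification $\fR_A^\lambda\fW_A^\lambda M = W_A(\lambda)_\lambda\otimes_{\fA_\lambda}M \cong \fA_\lambda\otimes_{\fA_\lambda}M \cong M$, part (b) via tensor-hom adjunction combined with Lemma~\ref{lem:RAlambda.iso}, part (c) from (a) and (b), and part (d) from (b) together with the exactness of $\fR_A^\lambda$. No gaps; your remark that the unit of the adjunction is the relevant natural transformation for (c) is a useful clarification of a step the paper handles tersely.
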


\dproof{
First recall from Definition~\ref{dfn.super.Weyl.func}, \eqref{eq:weight.space.W} and \eqref{def:RAlambda} that
\[
\fR_A^\lambda\fW_A^\lambda M
= \fR_A^\lambda \left( W_A(\lambda) \otimes_{\fA_\lambda} M \right)
= \left( W_A(\lambda) \otimes_{\fA_\lambda} M \right)_\lambda
= W_A(\lambda)_\lambda \otimes_{\fA_\lambda} M.
\]
Now, there is an isomorphism of $\fA_\lambda$-modules $\phi \colon \fA_\lambda \to W_A(\lambda)_\lambda$; namely, $\phi(u) = uw_\lambda$ for all $u \in \fA_\lambda$.  Thus $\phi \otimes \id_M \colon  \fA_\lambda \otimes_{\fA_\lambda} M \to W_A(\lambda)_\lambda \otimes_{\fA_\lambda} M$ is an isomorphism of $\fA_\lambda$-modules that is functorial in $M$.  Finally, notice that the unique homomorphism of $\fA_\lambda$-modules satisfying
\[
\fA_\lambda \otimes_{\fA_\lambda} M \to M, \quad
u \otimes m \mapsto um, \quad
\textup{for all $u \in \fA_\lambda$ and $m \in M$},
\]
is in fact an isomorphism that is functorial in $M$. Hence the result of part~\eqref{prop-item:iso-iden} follows.

To prove part~\eqref{prop-item:left-adjoint}, let $M$ be an $\fA_\lambda$-module and $N$ be an object of $\cal C_A^\lambda$.  By  Definition~\ref{dfn.super.Weyl.func}, the tensor-hom adjunction, and Lemma~\ref{lem:RAlambda.iso}, we have
\begin{align*}
\Hom_{\cal C_A^\lambda} \left( \fW_A^\lambda M, N \right)
&= \Hom_{\U\ga} \left( W_A (\lambda) \otimes_{\fA_\lambda} M, N \right) \\
&\cong \Hom_{\fA_\lambda} \left( M, \Hom_{\U\ga} \left( W_A (\lambda), N \right) \right) \\
&\cong \Hom_{\fA_\lambda} \left( M, N_\lambda \right) \\
&= \Hom_{\fA_\lambda} \left( M, \fR_A^\lambda N \right),
\end{align*}
where the isomorphisms are functorial in $M$ and $N$.

Part~\eqref{weyl.func.full.faith} follows from part~\eqref{prop-item:left-adjoint} and the fact that $\fR_A^\lambda \fW_A^\lambda N \cong N$ for every $\fA_\lambda$-module $N$.  In fact,
\[
\Hom_{\fA_\lambda} (M, N)
\cong \Hom_{\fA_\lambda} (M, \fR_A^\lambda \fW_A^\lambda N)
\cong \Hom_{\cal C_A^\lambda} (\fW_A^\lambda M, \fW_A^\lambda N)
\]
for all $\fA_\lambda$-modules $M, N$.

Part~\eqref{prop-item:proj-to-proj} follows from part~\eqref{prop-item:left-adjoint}, since $\fW_A^\lambda$ is left adjoint to an exact functor.
}

\begin{cor}
For each $\lambda \in X^+$, the module $W_A(\lambda)$ is projective in $\cal C_A^\lambda$ and the module $K(\lambda)$ is projective in $\cal C_\C^\lambda$.  Moreover, there is an isomorphism of algebras $\Hom_{\cal C_A^\lambda} \left( W_A (\lambda), W_A (\lambda) \right) \cong \fA_\lambda$.
\end{cor}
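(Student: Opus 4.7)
The proof proceeds in three short steps, each relying on results already established in the excerpt. First I would establish projectivity of $W_A(\lambda)$ in $\cal C_A^\lambda$. Observe that $\fA_\lambda$ is a free (hence projective) left module over itself, and recall the isomorphism $\fW_A^\lambda \fA_\lambda \cong W_A(\lambda)$ noted immediately after Definition~\ref{dfn.super.Weyl.func}. Applying Proposition~\ref{prop:relating-functors}\eqref{prop-item:proj-to-proj} to $M = \fA_\lambda$ then gives that $W_A(\lambda)$ is projective in $\cal C_A^\lambda$.

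Second, for the projectivity of $K(\lambda)$ in $\cal C_\C^\lambda$, I would specialize to $A = \C$. In this case $\ha = \h$ and the action of $h \in \h$ on $w_\lambda$ is scalar multiplication by $\lambda(h)$, so $\Ann_\h(w_\lambda) = \ker \lambda \cdot \U\h$ and $\fA_\lambda \cong \C$. Moreover, as observed after Definition~\ref{def:global-Weyl}, we have $W_\C(\lambda) = K(\lambda)$. The previous step then yields that $K(\lambda)$ is projective in $\cal C_\C^\lambda$.

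Third, for the algebra isomorphism, I would invoke Lemma~\ref{lem:RAlambda.iso} with $N = W_A(\lambda)$ to obtain a natural isomorphism of $\fA_\lambda$-modules
\[
\Phi \colon \Hom_{\cal C_A^\lambda} \left( W_A(\lambda), W_A(\lambda) \right) \xrightarrow{\sim} W_A(\lambda)_\lambda, \qquad f \mapsto f(w_\lambda),
\]
and then compose with the isomorphism of $\fA_\lambda$-modules $W_A(\lambda)_\lambda \cong \fA_\lambda$ recorded immediately after the definition of $\fA_\lambda$. The only thing left to verify is that the resulting map intertwines composition with multiplication. If $\Phi(f) = u_f w_\lambda$ and $\Phi(g) = u_g w_\lambda$ for $u_f, u_g \in \fA_\lambda$, then using that $f$ is a $\U\ga$-module map and $u_g \in \U\ha \subseteq \U\ga$, we compute
\[
\Phi(f \circ g) = f(g(w_\lambda)) = f(u_g w_\lambda) = u_g f(w_\lambda) = u_g u_f w_\lambda.
\]

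The only subtlety is that this computation naively yields an \emph{anti}-isomorphism of algebras; however, $\fA_\lambda$ is a quotient of the commutative algebra $\U\ha$ (since $\h = \h_{\bar0}$ is abelian and $A$ is commutative), so $u_g u_f = u_f u_g$ and $\Phi$ is genuinely an algebra isomorphism. There is no serious obstacle: every ingredient is already in place, and the main care required is the bookkeeping of left versus right actions in this last identification.
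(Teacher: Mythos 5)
Your proof is correct and follows essentially the same route as the paper's: projectivity of $W_A(\lambda)$ and $K(\lambda)$ comes from Proposition~\ref{prop:relating-functors}\eqref{prop-item:proj-to-proj} applied to the free module $\fA_\lambda$ (together with $\fW_A^\lambda \fA_\lambda \cong W_A(\lambda)$ and the observation that $W_\C(\lambda) = K(\lambda)$), and the algebra isomorphism comes from Lemma~\ref{lem:RAlambda.iso}. One small added value in your write-up is the explicit verification that the $\fA_\lambda$-module isomorphism $f \mapsto f(w_\lambda)$ respects the algebra structures (composition versus multiplication), including the observation that the naive computation gives an anti-homomorphism that is a homomorphism only because $\fA_\lambda$ is commutative; the paper leaves this step implicit.
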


\dproof{
The first statement follows from Proposition~\ref{prop:relating-functors}\eqref{prop-item:proj-to-proj} and the fact that $\fA_\lambda$ is a free (therefore projective) $\fA_\lambda$-module.  The \emph{moreover} statement follows from Lemma~\ref{lem:RAlambda.iso}.
}

Notice that, despite $\fW_A^\lambda$ being a fully faithful functor (by Proposition~\ref{prop:relating-functors}\eqref{weyl.func.full.faith}), it is \emph{not} an equivalence of categories, as it is not essentially surjective.  In fact, if $\mu < \lambda$, then $W_A (\mu)$ is an object of $\cal C_A^\lambda$ for which there exists no $\fA_\lambda$-module $N$ satisfying $\fW_A^\lambda N \cong W_A(\mu)$.  (If $\fW_A^\lambda N \cong W_A(\mu)$, then $N \cong \fR_A^\lambda \fW_A^\lambda N \cong \fR_A^\lambda W_A(\mu) = 0$.)  Theorem~\ref{thm:hom-ext1-vanish} describes for which objects $M$ of $\cal C_A^\lambda$ there exists an $\fA_\lambda$-module $N$ satisfying $\fW_A^\lambda N \cong M$.

\details{
\begin{lemma} \label{lem:hom-vanish}
An object $M \in \cal C_A^\lambda$ satisfies $M = \U\ga M_\lambda$ if and only if, for each object $N$ of $\cal C_A^\lambda$ that satisfies $N_\lambda = 0$, we have $\Hom_{\cal C_A^\lambda} \left( M, N \right) = 0$.
\end{lemma}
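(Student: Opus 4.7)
The proof will split into the two implications, both following a formal weight-space argument.

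For the forward direction, I would assume $M = \U\ga M_\lambda$ and let $f \colon M \to N$ be any morphism in $\cal C_A^\lambda$ with $N_\lambda = 0$. Since $f$ is a homomorphism of $\ga$-modules, it is in particular $\h$-linear and preserves the weight-space decomposition; hence $f(M_\lambda) \subseteq N_\lambda = 0$. Because $M$ is generated by $M_\lambda$ as a $\U\ga$-module and $f$ is $\U\ga$-linear, this forces $f = 0$.

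For the backward direction, I would consider the quotient $Q := M / \U\ga M_\lambda$ together with the canonical projection $\pi \colon M \twoheadrightarrow Q$. The key claim is that $Q$ is an object of $\cal C_A^\lambda$ satisfying $Q_\lambda = 0$; once this is established, applying the vanishing hypothesis with $N = Q$ gives $\pi = 0$, and hence $M = \U\ga M_\lambda$. That $Q \in \cal C_A$ follows from the closure of $\cal C_A$ under quotients (Lemma~\ref{lem:I(a,b)closed}); that $Q \in \cal C_A^\lambda$ then holds because the weights of $Q$ form a subset of those of $M$, which all satisfy $\mu \le \lambda$ thanks to $M^\lambda = M$. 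Finally, the equality $(\U\ga M_\lambda)_\lambda = M_\lambda$, which gives $Q_\lambda = 0$, is immediate: the inclusion $\supseteq$ uses $1 \in \U\ga$, and the inclusion $\subseteq$ holds because any weight-$\lambda$ element of the submodule $\U\ga M_\lambda \subseteq M$ is by definition of $M_\lambda$ an element of $M_\lambda$.

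Neither direction presents a substantial obstacle. The forward implication is essentially a one-line argument exploiting that $\ga$-module maps preserve $\h$-weight spaces, and the backward implication reduces to producing the natural test object $Q$ and verifying that it lies in $\cal C_A^\lambda$ with $Q_\lambda = 0$. The only point that warrants a small check is confirming that quotients in $\cal C_A$ inherit the condition $M^\lambda = M$, which is handled by the general observation that passage to a quotient can only remove weights, never create new ones.
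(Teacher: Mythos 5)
Your proof is correct and follows essentially the same route as the paper's: the forward direction exploits that $\ga$-module maps preserve $\h$-weight spaces, and the backward direction tests the vanishing hypothesis against the quotient $M / \U\ga M_\lambda$, checking that this quotient has trivial $\lambda$-weight space. You are slightly more careful than the paper in explicitly verifying that $M / \U\ga M_\lambda$ actually lives in $\cal C_A^\lambda$ (closure under quotients plus the observation that all weights remain $\le \lambda$), a point the paper leaves implicit.
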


\begin{proof}
First assume that, for each object $N$ of $\cal C_A^\lambda$ that satisfies $N_\lambda = 0$, we have $\Hom_{\cal C_A^\lambda} \left( M, N \right) = 0$.  Denote the submodule $\U\ga M_\lambda \subseteq M$ by $M'$, and consider the short exact sequence
\[
0 \to M' \inj M \to M / M' \to 0.
\]
Since $\fR_A^\lambda$ is an exact functor, $0 \to M'_\lambda \inj M_\lambda \to (M / M')_\lambda \to 0$ is also an exact sequence.  Now notice that by construction, $M'_\lambda = M_\lambda$.  Hence $(M / M')_\lambda = 0$.  This implies, by hypothesis, that $\Hom_{\cal C_A^\lambda} \left( M, M/M' \right) = 0$.  Thus $M' = M$.

Now assume that $\U\ga M_\lambda = M$ and let $N$ be an object $N$ of $\cal C_A^\lambda$ that satisfies $N_\lambda = 0$.  Given a homomorphism of $\ga$-modules $\phi \colon M \to N$, consider the short exact sequence
\[
0 \to \ker \phi \to M \pra{\phi} \im \phi \to 0.
\]
Since $\fR_A^\lambda$ is an exact functor, $0 \to (\ker \phi)_\lambda \to M_\lambda \pra{\phi_\lambda} (\im \phi)_\lambda \to 0$ is also an exact sequence.  Since $N_\lambda$ is assumed to be zero, we have that $(\im \phi)_\lambda = 0$ and $\phi_\lambda = 0$.  Since $M = \U\ga M_\lambda$, the homomorphism $\phi$ is uniquely determined by $\phi_\lambda$.  Thus $\phi = 0$, and the result follows.
\end{proof}

\begin{rmk} \label{rmk:hom.vanish}
Let $V$ be an $\fA_\lambda$-module.  First notice that, by Proposition~\ref{prop:relating-functors}\eqref{prop-item:iso-iden}, we have $\fW_A^\lambda V \cong \fW_A^\lambda \fR_A^\lambda \fW_A^\lambda V$.  Moreover, notice that $\fW_A^\lambda V = \U\ga \left( \fW_A^\lambda V \right)_\lambda$.  Thus, by Lemma~\ref{lem:hom-vanish}, for every object $N$ of $\cal C_A^\lambda$ that satisfies $N_\lambda = 0$, we have $\Hom_{\cal C_A^\lambda} \left( \fW_A^\lambda V, N \right) = 0$.  This observation will be used in the proofs of the next two results.
\end{rmk}}

\begin{thm} \label{thm:hom-ext1-vanish}
Let $M$ be an object of $\cal C_A^\lambda$. Then $M \cong \fW_A^\lambda \fR_A^\lambda M$ if and only if, for each object $N$ of ${\cal C}_A^\lambda$ that satisfies $N_\lambda = 0$, we have
\[
\Hom_{\cal C_A^\lambda} \left( M, N \right) = \Ext_{{\cal C}_A^\lambda}^1 \left( M, N \right) = 0.
\]
\end{thm}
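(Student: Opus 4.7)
The plan is to prove both implications by exploiting the adjunction $\fW_A^\lambda \dashv \fR_A^\lambda$ from Proposition~\ref{prop:relating-functors}\eqref{prop-item:left-adjoint}, together with the exactness of $\fR_A^\lambda$ (which just takes the $\lambda$-weight space) and the fact that any object in the image of $\fW_A^\lambda$ is generated as a $\ga$-module by its $\lambda$-weight space (since $\fW_A^\lambda V = W_A(\lambda) \otimes_{\fA_\lambda} V$ and $W_A(\lambda) = \U\ga w_\lambda$).

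For the forward direction, assume $M \cong \fW_A^\lambda \fR_A^\lambda M$ and let $N$ be an object of $\cal C_A^\lambda$ with $N_\lambda = 0$. The vanishing $\Hom_{\cal C_A^\lambda}(M, N) = 0$ follows immediately from the adjunction: $\Hom_{\cal C_A^\lambda}(M, N) \cong \Hom_{\fA_\lambda}(\fR_A^\lambda M, \fR_A^\lambda N) = \Hom_{\fA_\lambda}(M_\lambda, 0) = 0$. To show $\Ext^1_{\cal C_A^\lambda}(M, N) = 0$, I would take any extension $0 \to N \to E \to M \to 0$ and apply the exact functor $\fR_A^\lambda$ to obtain an isomorphism $E_\lambda \cong M_\lambda$ in $\mod{\fA_\lambda}$; its inverse corresponds via adjunction to a morphism $\tilde\iota \colon M \to E$ whose composition with $E \to M$ induces the identity on $\lambda$-weight spaces. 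Since $M \cong \fW_A^\lambda \fR_A^\lambda M = W_A(\lambda) \otimes_{\fA_\lambda} M_\lambda$ is generated as a $\ga$-module by its $\lambda$-weight space, any $\ga$-endomorphism of $M$ restricting to the identity on $M_\lambda$ must be the identity on $M$, and hence the extension splits.

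For the reverse direction, consider the counit $\epsilon_M \colon \fW_A^\lambda \fR_A^\lambda M \to M$, let $K$ be its kernel, and let $C$ be its cokernel. Applying $\fR_A^\lambda$ to the four-term exact sequence $0 \to K \to \fW_A^\lambda \fR_A^\lambda M \to M \to C \to 0$ and using Proposition~\ref{prop:relating-functors}\eqref{prop-item:iso-iden} together with the triangle identity for the adjunction, I would identify $\fR_A^\lambda \epsilon_M$ with the identity of $M_\lambda$ and conclude $K_\lambda = 0 = C_\lambda$. The hypothesis $\Hom_{\cal C_A^\lambda}(M, C) = 0$ applied to the surjection $M \twoheadrightarrow C$ forces $C = 0$, so $\epsilon_M$ is surjective; the hypothesis $\Ext^1_{\cal C_A^\lambda}(M, K) = 0$ then splits the short exact sequence $0 \to K \to \fW_A^\lambda \fR_A^\lambda M \to M \to 0$. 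The resulting projection $\fW_A^\lambda \fR_A^\lambda M \to K$ lies in $\Hom_{\cal C_A^\lambda}(\fW_A^\lambda \fR_A^\lambda M, K) \cong \Hom_{\fA_\lambda}(M_\lambda, K_\lambda) = 0$ by adjunction again, forcing $K = 0$ and therefore $\epsilon_M$ to be an isomorphism.

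The main subtlety I anticipate is purely categorical bookkeeping around the counit: namely, checking that $\fR_A^\lambda \epsilon_M$ is the identity on $M_\lambda$ under the canonical isomorphism $\fR_A^\lambda \fW_A^\lambda \cong \id$ of Proposition~\ref{prop:relating-functors}\eqref{prop-item:iso-iden}. This is the triangle identity of the adjunction, and once it is in place the argument reduces to standard diagram chasing together with the key structural input that every object of the form $\fW_A^\lambda V$ is generated as a $\ga$-module by its $\lambda$-weight space.
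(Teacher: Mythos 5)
Your proof is correct, and for one of the two implications it takes a genuinely different route. In the direction where the Hom/Ext vanishing is assumed, you and the paper do essentially the same thing: both study the counit $\epsilon_M\colon \fW_A^\lambda\fR_A^\lambda M\to M$, use exactness of $\fR_A^\lambda$ together with $\fR_A^\lambda\epsilon_M=\id$ to see that $\ker\epsilon_M$ and $\operatorname{coker}\epsilon_M$ have trivial $\lambda$-weight space, kill the cokernel with the $\Hom$ hypothesis (the paper phrases this as a separate lemma that $\Hom$-vanishing is equivalent to $M=\U\ga M_\lambda$, but the content is identical), and then kill the kernel using $\Ext^1(M,\ker\epsilon_M)=0$ together with $\Hom(\fW_A^\lambda\fR_A^\lambda M,\ker\epsilon_M)=0$; you package the last step as a splitting with a zero retraction, the paper reads $\Hom(\ker\epsilon_M,\ker\epsilon_M)=0$ directly off the long exact sequence, and these are the same argument. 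In the other direction your proof diverges from the paper's: the paper takes a projective $\fA_\lambda$-module $P$ covering $\fR_A^\lambda M$, applies $\fW_A^\lambda$ (which preserves projectives, by Proposition~\ref{prop:relating-functors}\eqref{prop-item:proj-to-proj}), and then does a dimension shift in the long exact sequence to deduce $\Ext^1(M,N)=0$; you instead take an arbitrary extension $0\to N\to E\xrightarrow{p} M\to 0$, observe that $p_\lambda\colon E_\lambda\to M_\lambda$ is an isomorphism since $N_\lambda=0$ and $\fR_A^\lambda$ is exact, and conclude from the natural isomorphism $\Hom_{\cal C_A^\lambda}(M,-)\cong\Hom_{\fA_\lambda}(M_\lambda,\fR_A^\lambda(-))$ of the adjunction that $p_*\colon\Hom(M,E)\to\Hom(M,M)$ is a bijection, hence $\id_M$ lifts to a section and the extension splits. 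This is more elementary, bypassing projective resolutions and the preservation of projectivity entirely; the paper's route, on the other hand, sets up machinery it reuses in Corollary~\ref{cor:eq:vanishing.condition.2}. One small caution: the cleanest version of your argument is to take $\tilde\iota$ to be the preimage of $\id_M$ under the bijection $p_*$, so that $p\circ\tilde\iota=\id_M$ holds tautologically; your phrasing (that $p\circ\tilde\iota$ ``induces the identity on $\lambda$-weight spaces'' and that $M$ is generated by $M_\lambda$) works too, but it requires keeping track of the unit isomorphism when choosing $\tilde\iota$, which you rightly flag as the main bookkeeping concern.
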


\dproof{
First assume that $M$ is an object of ${\cal C}_A^\lambda$ such that, for each $N \in {\cal C}_A^\lambda$ that satisfies $N_\lambda = 0$, we have $\Hom_{\cal C_A^\lambda} \left( M, N \right) = \Ext_{{\cal C}_A^\lambda}^1 (M, N) = 0$.  Using Proposition~\ref{prop:glob.Weyl.genrel+univ},  we see that there is a unique homomorphism of $\ga$-modules $\epsilon_M \colon \fW_A^\lambda \fR_A^\lambda M \to M$ satisfying
\[
\epsilon_M (u w_\lambda \otimes m_\lambda) = u m_\lambda
\quad \textup{for all $u \in \U\ga$ and $m_\lambda \in M_\lambda$}.
\]
Since $\Hom_{\cal C_A^\lambda} \left( M, N \right) = 0$ for every object $N$ of ${\cal C}_A^\lambda$ that satisfies $N_\lambda = 0$, by Lemma~\ref{lem:hom-vanish}, we have that $M = \U\ga M_\lambda$.  Hence, the homomorphism $\epsilon_M$ is surjective, and we have a short exact sequence
\[
0 \to \ker \epsilon_M \to \fW_A^\lambda \fR_A^\lambda M \to M \to 0.
\]
Consider the associated long exact sequence on $\Ext^\bullet_{\cal C_A^\lambda} \left( - , \ker\epsilon_M \right)$.  In particular, we have:
\[
\cdots \to
\Hom_{\cal C_A^\lambda} \left( \fW_A^\lambda \fR_A^\lambda M, \ker \epsilon_M \right) \to
\Hom_{\cal C_A^\lambda} \left( \ker \epsilon_M, \ker \epsilon_M \right) \to
\Ext^1_{\cal C_A^\lambda} \left( M, \ker \epsilon_M \right) \to \cdots
\]
Now, notice that $(\ker \epsilon_M)_\lambda = 0$, as $\epsilon_M$ maps $\left( \fW_A^\lambda \fR_A^\lambda M \right)_\lambda$ isomorphically onto $M_\lambda$.  This implies by Remark~\ref{rmk:hom.vanish}, that $\Hom_{\cal C_A^\lambda} \left( \fW_A^\lambda \fR_A^\lambda M , \ker \epsilon_M \right) = 0$, and by hypothesis, that $\Ext^1_{\cal C_A^\lambda} \left( M , \ker \epsilon_M \right) = 0$.  Hence, $\Hom_{{\cal C}_A^\lambda} (\ker \epsilon_M, \ker \epsilon_M) = 0$.  Thus $\ker \epsilon_M = 0$, and $\epsilon_M$ is an isomorphism between $\fW_A^\lambda \fR_A^\lambda M$ and $M$.

Now, to prove the converse, assume that $M \cong \fW_A^\lambda \fR_A^\lambda M$.  It follows from Remark~\ref{rmk:hom.vanish} that, for each object $N$ of ${\cal C}_A^\lambda$ that satisfies $N_\lambda = 0$, we have $\Hom_{\cal C_A^\lambda} \left( M, N \right) = 0$.  Now, recall that the category $\mod{\fA_\lambda}$ has enough projectives, that is, there exist a projective ${\fA_\lambda}$-module $P$ and a surjective homomorphism of ${\fA_\lambda}$-modules $\pi \colon P \to M_\lambda$.  Since the functor $\fW_A^\lambda$ is right exact, $\fW_A^\lambda \pi$ is an even homomorphism, and $M \cong \fW_A^\lambda \fR_A^\lambda M$, we have a short exact sequence of of $\ga$-modules
\[
0 \to
\ker (\fW_A^\lambda \pi) \to
\fW_A^\lambda P \pra{\fW_A^\lambda \pi}
M \to 0,
\]
where $\ker (\fW_A^\lambda \pi)$ is the image of $\fW_A^\lambda (\ker \pi)$ inside $\fW_A^\lambda P$.  Let $N$ be an object of ${\cal C}_A^\lambda$ that satisfies $N_\lambda = 0$, and consider the associated long exact sequence on $\Ext^\bullet_{\cal C_A^\lambda} \left( - , N \right)$.  In particular, we have:
\[
\cdots \to
\Hom_{\cal C_A^\lambda} \left( \ker (\fW_A^\lambda \pi), N \right) \to
\Ext^1_{\cal C_A^\lambda} \left( M, N \right) \to
\Ext^1_{\cal C_A^\lambda} \left( \fW_A^\lambda P, N \right) \to \cdots
\]
By Proposition~\ref{prop:relating-functors}\eqref{prop-item:proj-to-proj}, $\fW_A^\lambda P$ is a projective object in $\cal C_A^\lambda$.  Hence, $\Ext^1_{\cal C_A^\lambda} \left( \fW_A^\lambda P, N \right) = 0$.  Since $\ker (\fW_A^\lambda \pi)$ is the image of $\fW_A^\lambda (\ker \pi)$ inside $\fW_A^\lambda P$, we have that $\ker (\fW_A^\lambda \pi) = \U\ga \ker (\fW_A^\lambda \pi)_\lambda$.  Hence, by Remark~\ref{rmk:hom.vanish}, we also have that $\Hom_{\cal C_A^\lambda} \left( \ker (\fW_A^\lambda \pi), N \right) = 0$.  The result follows.
}

\details{
Recall that $\fW_A^\lambda \colon \mod{\fA_\lambda} \to \cal C_A^\lambda$ is a right exact functor, that $\mod{\fA_\lambda}$ has enough projectives, and that projective $\fA_\lambda$-modules are left acyclic for $\fW_A^\lambda$.  Thus, we can consider the left derived functors $\L_n \fW_A^\lambda$, $n \ge 0$.  In particular, $\L_0 \fW_A^\lambda = \fW_A^\lambda$ and $\fW_A^\lambda$ is exact if and only if $\L_1 \fW_A^\lambda = 0$.
}

\begin{cor} \label{cor:eq:vanishing.condition.2}
The functor $\fW_A^\lambda$ is exact if and only if, for each object $N$ of ${\cal C}_A^\lambda$ that satisfies $N_\lambda = 0$, we have
\[
\Ext^2_{{\cal C}_A^\lambda} (\fW_A^\lambda -, N) = 0.
\]
\end{cor}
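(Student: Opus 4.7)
The plan is to interpret exactness of $\fW_A^\lambda$ via left derived functors. Since $\fW_A^\lambda = W_A(\lambda) \otimes_{\fA_\lambda} -$ is right exact and $\mod{\fA_\lambda}$ has enough projectives, we can form $\L_n \fW_A^\lambda$ for $n \ge 0$; moreover, $\fW_A^\lambda$ is exact if and only if $\L_1 \fW_A^\lambda M = 0$ for every $\fA_\lambda$-module $M$. The strategy is to trade this vanishing for vanishing of $\Ext^2_{\cal C_A^\lambda}(\fW_A^\lambda M, N)$ on objects $N$ with $N_\lambda = 0$.

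For the direction ``$\fW_A^\lambda$ exact $\Rightarrow$ $\Ext^2$ vanishes,'' I would take a projective resolution $\cdots \to P_1 \to P_0 \to M \to 0$ in $\mod{\fA_\lambda}$; by Proposition~\ref{prop:relating-functors}\eqref{prop-item:proj-to-proj} each $\fW_A^\lambda P_i$ is projective in $\cal C_A^\lambda$, and exactness of $\fW_A^\lambda$ makes $\cdots \to \fW_A^\lambda P_1 \to \fW_A^\lambda P_0 \to \fW_A^\lambda M \to 0$ a projective resolution. The adjunction of Proposition~\ref{prop:relating-functors}\eqref{prop-item:left-adjoint} then gives
\[
\Hom_{\cal C_A^\lambda}(\fW_A^\lambda P_n, N) \cong \Hom_{\fA_\lambda}(P_n, \fR_A^\lambda N) = \Hom_{\fA_\lambda}(P_n, N_\lambda),
\]
which vanishes for every $n$ when $N_\lambda = 0$; hence $\Ext^n_{\cal C_A^\lambda}(\fW_A^\lambda M, N) = 0$ for all $n \ge 0$, and in particular for $n = 2$.

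For the converse, let $M$ be any $\fA_\lambda$-module and pick a short exact sequence $0 \to K \to P \to M \to 0$ in $\mod{\fA_\lambda}$ with $P$ projective. Writing $L = \L_1 \fW_A^\lambda M$ and $Q = \im(\fW_A^\lambda K \to \fW_A^\lambda P)$, the long exact derived-functor sequence together with $\L_1 \fW_A^\lambda P = 0$ produces
\[
0 \to L \to \fW_A^\lambda K \to Q \to 0 \qquad \textup{and} \qquad 0 \to Q \to \fW_A^\lambda P \to \fW_A^\lambda M \to 0.
\]
Applying the exact functor $\fR_A^\lambda$ to the first sequence and using $\fR_A^\lambda \fW_A^\lambda \cong \id$ (Proposition~\ref{prop:relating-functors}\eqref{prop-item:iso-iden}) shows that $L_\lambda = 0$. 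Since $\fW_A^\lambda P$ is projective in $\cal C_A^\lambda$, dimension-shifting along the second sequence yields $\Ext^1_{\cal C_A^\lambda}(Q, L) \cong \Ext^2_{\cal C_A^\lambda}(\fW_A^\lambda M, L)$, which is zero by hypothesis (applied with $N = L$). Therefore the first sequence splits, so $\fW_A^\lambda K \cong L \oplus Q$.

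The main obstacle is to deduce $L = 0$ from this splitting, and the decisive input is that $\fW_A^\lambda K = W_A(\lambda) \otimes_{\fA_\lambda} K$ is generated as a $\ga$-module by its weight-$\lambda$ subspace, since $W_A(\lambda) = \U{\ga}\, w_\lambda$ by Proposition~\ref{prop:glob.Weyl.genrel+univ}. Under the splitting one has $(\fW_A^\lambda K)_\lambda = L_\lambda \oplus Q_\lambda = Q_\lambda$, so all of the weight-$\lambda$ generators lie in the $Q$-summand; hence $\fW_A^\lambda K = \U{\ga}\, Q_\lambda \subseteq Q$, which forces $L = 0$. This gives $\L_1 \fW_A^\lambda M = 0$ for every $M$, which is the desired exactness of $\fW_A^\lambda$.
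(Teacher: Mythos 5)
Your proof is correct, and in fact diverges from the paper's in both directions, in ways worth noting.

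For the direction ``exact $\Rightarrow$ $\Ext^2$ vanishes,'' you apply $\fW_A^\lambda$ to a full projective resolution and use the adjunction to observe that the entire Hom complex vanishes, giving $\Ext^n_{\cal C_A^\lambda}(\fW_A^\lambda M, N) = 0$ for \emph{all} $n$, not just $n=2$. This is cleaner and slightly stronger than the paper's version, which instead applies $\fW_A^\lambda$ to a single short exact sequence $0 \to \ker\pi \to P \to M \to 0$, takes the resulting long exact sequence in $\Ext^\bullet(-,N)$, and then bounds $\Ext^2(\fW_A^\lambda M, N)$ between $\Ext^1(\fW_A^\lambda(\ker\pi), N)$ (which vanishes by Theorem~\ref{thm:hom-ext1-vanish}) and $\Ext^2(\fW_A^\lambda P, N)$ (which vanishes by projectivity).

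For the converse, the paper and you agree up to the point of obtaining the two short exact sequences $0 \to L \to \fW_A^\lambda K \to Q \to 0$ and $0 \to Q \to \fW_A^\lambda P \to \fW_A^\lambda M \to 0$, and the observation that $L_\lambda = 0$ because $(\fW_A^\lambda K)_\lambda \to Q_\lambda$ is injective. But the mechanism by which you kill $L$ is different. The paper shows that $Q$ itself satisfies the Hom- and $\Ext^1$-vanishing hypotheses of Theorem~\ref{thm:hom-ext1-vanish} (against all $N$ with $N_\lambda = 0$), concluding $Q \cong \fW_A^\lambda\fR_A^\lambda Q \cong \fW_A^\lambda(\ker\pi)$ and hence that $\fW_A^\lambda i$ is an isomorphism. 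You instead use dimension-shifting to get $\Ext^1(Q, L) \cong \Ext^2(\fW_A^\lambda M, L) = 0$, split the extension $0 \to L \to \fW_A^\lambda K \to Q \to 0$, and then use the fact that $\fW_A^\lambda K$ is generated by its $\lambda$-weight space (which lands entirely in the $Q$-summand once $L_\lambda = 0$) to force $L = 0$. Your route avoids invoking the full characterization in Theorem~\ref{thm:hom-ext1-vanish} at this step, relying only on the more elementary ``generated by top weight space'' property of $\fW_A^\lambda(-)$; the paper's route is more uniform in that it reduces directly to the already-established essential-image characterization. Both are sound.

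One small presentational remark: when you conclude $L_\lambda = 0$, it is worth spelling out that the injectivity of $(\fW_A^\lambda K)_\lambda \to Q_\lambda$ comes from factoring it through $(\fW_A^\lambda P)_\lambda$ and identifying the composite with the inclusion $\ker\pi \hookrightarrow P$ via the natural isomorphism $\fR_A^\lambda\fW_A^\lambda \cong \id$; as written the deduction is a bit compressed.
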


\dproof{
Assume that $\Ext^2_{{\cal C}_A^\lambda} (\fW_A^\lambda -, N) = 0$, and let $V$ be an $\fA_\lambda$-module.  Recall that the category $\mod{\fA_\lambda}$ has enough projectives, that is, there exist a projective ${\fA_\lambda}$-module $P$ and an exact sequence of ${\fA_\lambda}$-modules $0 \to \ker \pi \pra{i} P \pra{\pi} V \to 0$.  Thus, by the definition of $\L_1\fW_A^\lambda$, we have an exact sequence of $\ga$-modules
\[
0 \to
\L_1 \fW_A^\lambda (V) \to
\fW_A^\lambda (\ker \pi) \pra{\fW_A^\lambda i}
\fW_A^\lambda P \pra{\fW_A^\lambda \pi}
\fW_A^\lambda V \to 0.
\]
Denote $\ker (\fW_A^\lambda \pi) = \im (\fW_A^\lambda i)$ by $K$, consider the associated short exact sequences:
\begin{gather}
0 \to \L_1 \fW_A^\lambda (V) \to \fW_A^\lambda (\ker \pi) \pra{\fW_A^\lambda i} K \to 0, \label{eq:ass.ses01} \\
0 \to K \inj \fW_A^\lambda P \pra{\fW_A^\lambda \pi} \fW_A^\lambda V \to 0, \label{eq:ass.ses02}
\end{gather}
and notice, by applying the exact functor $\fR_A^\lambda$ to \eqref{eq:ass.ses02}, that $K_\lambda \cong \ker \pi$.

Let $N$ be an object of ${\cal C}_A^\lambda$ that satisfies $N_\lambda = 0$.  The long exact sequence on $\Ext_{{\cal C}_A^\lambda}^\bullet (-, N)$ associated to \eqref{eq:ass.ses02} gives, in particular,
\begin{alignat*}{3}
\cdots
&\to \Hom_{\cal C_A^\lambda} \left( \fW_A^\lambda P, N \right)
&&\to \Hom_{\cal C_A^\lambda} \left( K, N \right)
&&\to \Ext^1_{\cal C_A^\lambda} \left( \fW_A^\lambda V, N \right) \to \\
&\to \Ext^1_{\cal C_A^\lambda} \left( \fW_A^\lambda P, N \right)
&&\to \Ext^1_{\cal C_A^\lambda} \left( K, N \right)
&&\to \Ext^2_{\cal C_A^\lambda} \left( \fW_A^\lambda V, N \right) \to \cdots
\end{alignat*}
By Lemma~\ref{lem:hom-vanish} and Theorem~\ref{thm:hom-ext1-vanish}, we have that $\Hom_{\cal C_A^\lambda} \left( \fW_A^\lambda P, N \right) = \Ext^1_{\cal C_A^\lambda} \left( \fW_A^\lambda V, N \right) = 0$.  Hence $\Hom_{\cal C_A^\lambda} \left( K, N \right) = 0$.  By Proposition~\ref{prop:relating-functors}\eqref{prop-item:proj-to-proj} and the hypothesis, we have that $\Ext^1_{\cal C_A^\lambda} \left( \fW_A^\lambda P, N \right) = \Ext^2_{\cal C_A^\lambda} \left( \fW_A^\lambda V, N \right) = 0$.  Hence, $\Ext^1_{\cal C_A^\lambda} \left( K, N \right) = 0$.  Thus, by Theorem~\ref{thm:hom-ext1-vanish}, we have that
\[
K \cong \fW_A^\lambda \fR_A^\lambda K \cong \fW_A^\lambda (\ker \pi).
\]
That is, $\fW_A^\lambda i \colon \fW_A^\lambda (\ker \pi) \to K$ is an isomorphism, and using \eqref{eq:ass.ses01}, we see that $\L_1\fW_A^\lambda (V) = 0$.

Now, to prove the converse, assume that $\fW_A^\lambda$ is exact, and let $V$ be an $\fA_\lambda$-module.  Since the category $\mod{\fA_\lambda}$ has enough projectives, there exist a projective ${\fA_\lambda}$-module $P$ and a surjective homomorphism of ${\fA_\lambda}$-modules $\pi \colon P \to V$.  Since the functor $\fW_A^\lambda$ is assumed to be exact and $\fW_A^\lambda \pi$ is an even homomorphism we have a short exact sequence of of $\ga$-modules
\[
0 \to
\fW_A^\lambda (\ker \pi) \to
\fW_A^\lambda P \to
\fW_A^\lambda V \to 0.
\]
Now let $N$ be an object of ${\cal C}_A^\lambda$ that satisfies $N_\lambda = 0$, and consider the associated long exact sequence on $\Ext^\bullet_{\cal C_A^\lambda} \left( - , N \right)$.  In particular, we have:
\[
\cdots \to
\Ext^1_{\cal C_A^\lambda} \left( \fW_A^\lambda (\ker \pi), N \right) \to
\Ext^2_{\cal C_A^\lambda} \left( \fW_A^\lambda V, N \right) \to
\Ext^2_{\cal C_A^\lambda} \left( \fW_A^\lambda P, N \right) \to \cdots
\]
By Theorem~\ref{thm:hom-ext1-vanish}, $\Ext^1_{\cal C_A^\lambda} \left( \fW_A^\lambda (\ker \pi), N \right) = 0$, and by Proposition~\ref{prop:relating-functors}\eqref{prop-item:proj-to-proj}, $\Ext^2_{\cal C_A^\lambda} \left( \fW_A^\lambda P, N \right) = 0$.  Thus, as we wanted to prove, $\Ext_{{\cal C}_A^\lambda}^2 \left( \fW_A^\lambda V, N \right) = 0$.
}

Similar to the definition of $\fA_\lambda$ in Section \ref{S:super.weyl.functors}, for each $\lambda \in X^+$, let
\[
\fB_\lambda = \bu(\h \otimes B)/\Ann_{\h\otimes B} (w_\lambda)
\quad \textup{ and } \quad
\fC_\lambda = \bu(\h\otimes C)/\Ann_{\h\otimes C} (w_\lambda).
\]

\begin{rmk} \ \label{rmk:actions}
\begin{enumerate}[leftmargin=*, itemsep=.75ex]
\item \label{rmk:actions.pi}
Every homomorphism of $\C$-algebras $\pi \colon C \to A$ induces a unique (even) homomorphism of Lie superalgebras (which we denote by the same symbol) $\pi \colon \g \otimes C \to \g \otimes A$ satisfying
\[
\pi (x \otimes c) = x \otimes \pi(c)
\quad \textup{ for all } x \in \g \textup{ and } c \in C.
\]
This latter homomorphism induces an action of $\g\otimes C$ on any $\g\otimes A$-module $M$ via the pull-back.  Let $\pi^*M$ denote such a $\g\otimes C$-module.

\item \label{rmk:actions.ann}
Let $\lambda \in X^+$ and $\pi \colon C \to A$ be a homomorphism of $\C$-algebras.  Using item~\eqref{rmk:actions.pi}, we see that $\pi$ also induces a homomorphism of associative superalgebras (which we  keep denoting by the same symbol), $\pi \colon \U{\g \otimes C} \to \U{\g \otimes A}$.  Notice that by construction, $\pi(\n^+ \otimes C) \subseteq \n^+ \otimes A$, $\pi(h) - \lambda(h) = h - \lambda(h)$ for all $h \in \h$, and $\pi(x_\alpha^-)^k = (x_\alpha^-)^k$ for all $\alpha \in \dr$ and $k \ge 0$.  Hence
\[
\pi \left( \Ann_{\g \otimes C} (w_\lambda) \right) \subseteq \Ann_{\g \otimes A} (w_\lambda)
\quad \textup{ and } \quad
\pi \left( \Ann_{\h \otimes C} (w_\lambda) \right) \subseteq \Ann_{\h \otimes A} (w_\lambda).
\]
Thus $\pi$ induces a homomorphism of $\C$-algebras $\overline\pi \colon \fC_\lambda \to \fA_\lambda$, and every $\fA_\lambda$-module $V$ admits a structure of $\fC_\lambda$-module via the pull-back along $\overline\pi$.  Denote this $\fC_\lambda$-module by $\overline\pi^*V$.

\item \label{rmk:actions.delta}
Let $\lambda, \mu \in X^+$ be such that $\lambda+\mu\in X^+$, and recall that the action of the superalgebra $\U{\g \otimes A}$ on $W_A(\lambda) \otimes W_A (\mu)$ is induced by the comultiplication $\Delta \colon \bu(\g \otimes A) \to \bu(\g \otimes A) \otimes \bu(\g \otimes A)$.  In particular, we have $x (w_\lambda \otimes w_\mu) = (x w_\lambda) \otimes w_\mu + w_\lambda \otimes (x w_\mu)$ for all $x \in \g \otimes A$, and thus $w_\lambda \otimes w_\mu$ is a highest-weight vector in $W_A(\lambda) \otimes W_A(\mu)$.  Hence, there exists a unique surjective homomorphism of $\g \otimes A$-modules $\xi \colon W_A(\lambda + \mu) \twoheadrightarrow W_A(\lambda) \otimes W_A(\mu)$ satisfying $\xi( w_{\lambda+\mu} ) = w_\lambda \otimes w_\mu$.  Now, notice that $\fR_A^{\lambda+\mu} \xi$ is a surjective homomorphism of $\U{\h \otimes A}$-modules:
\[
\fR_A^{\lambda+\mu} \xi \colon \U{\h \otimes A} w_{\lambda+\mu} \twoheadrightarrow \U{\h \otimes A}w_\lambda \otimes \U{\h \otimes A}w_\mu.
\]
Moreover, since $\U{\h \otimes A} w_\nu \cong \fA_\nu$ for all $\nu \in X^+$, $\fR_A^{\lambda+\mu} \xi$ induces a homomorphism of commutative $\C$-algebras $\Delta_{\lambda, \mu} \colon \fA_{\lambda+\mu} \to \fA_\lambda \otimes \fA_\mu$.  Thus, given an $\fA_\lambda$-module $M$ and an $\fA_\mu$-module $N$, their tensor product $M \otimes N$ admits an $\fA_{\lambda + \mu}$-module structure via the pull-back along $\Delta_{\lambda, \mu}$.  Denote this $\fA_{\lambda+\mu}$-module by $\Delta_{\lambda,\mu}^* (M\otimes N)$.
\end{enumerate}
\end{rmk}

\details{
Using the remarks above and arguments similar to those used in  \cite[Corollary~5]{CFK10}, one can prove this next result.

\begin{prop}\label{prop:epi-tensor-property}
Let $\lambda,\mu\in X^+$ be such that $\lambda+\mu\in X^+$, $C = A \oplus B$, and $\pi_A \colon C \twoheadrightarrow A$, $\pi_B\colon C \twoheadrightarrow B$ denote the canonical surjective homomorphisms of $\C$-algebras.  If $M \in \mod{\fA_\lambda}$ and $N \in \mod{\fB_\mu}$, then there exists a surjective homomorphism of $\g\otimes C$-modules
\[
\tau \colon \fW_C^{\lambda+\mu}\left(\Delta_{\lambda,\mu}^* (M\otimes N)\right) \twoheadrightarrow \pi_A^*(\fW_A^\lambda M)\otimes \pi_B^*(\fW_B^\mu N).
\]
\end{prop}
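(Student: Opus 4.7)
The plan is to build $\tau$ in two steps. First, I would construct a surjective $\g \otimes C$-module homomorphism
\begin{equation*}
\phi \colon W_C(\lambda+\mu) \twoheadrightarrow \pi_A^*W_A(\lambda) \otimes \pi_B^*W_B(\mu),
\qquad
w_{\lambda+\mu} \mapsto w_\lambda \otimes w_\mu,
\end{equation*}
which (by restriction to the $(\lambda+\mu)$-weight space, following Remark~\ref{rmk:actions}\eqref{rmk:actions.delta}) simultaneously supplies the algebra map $\Delta_{\lambda,\mu} \colon \fC_{\lambda+\mu} \to \fA_\lambda \otimes \fB_\mu$ appearing in the statement. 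Then, after checking that $\phi$ respects the relevant right-module structures, I will tensor with $M \otimes N$ to define $\tau$.

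For Step~1, by Proposition~\ref{prop:glob.Weyl.genrel+univ} it suffices to check that $w_\lambda \otimes w_\mu$ satisfies the defining relations of $W_C(\lambda+\mu)$. Since $\pi_A\arrowvert_B = 0$ and $\pi_B\arrowvert_A = 0$, the comultiplication gives, for all $(a,b) \in C$ and homogeneous $x \in \g$, $v \in W_A(\lambda)$, $w \in W_B(\mu)$,
\begin{equation*}
(x \otimes (a,b))(v \otimes w) = (x \otimes a)v \otimes w + (-1)^{p(x)p(v)}\, v \otimes (x \otimes b)w.
\end{equation*}
The relations $(\n^+ \otimes C)(w_\lambda \otimes w_\mu) = 0$ and $h(w_\lambda \otimes w_\mu) = (\lambda+\mu)(h)(w_\lambda \otimes w_\mu)$ for $h \in \h$ follow immediately from the defining relations of $w_\lambda$ and $w_\mu$. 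For $\alpha \in \dr$ and $k = (\lambda+\mu)(h_\alpha)+1$, since $x_\alpha^-$ is even, the binomial expansion
\begin{equation*}
(x_\alpha^-)^k(w_\lambda \otimes w_\mu) = \sum_{i=0}^k \binom{k}{i} (x_\alpha^-)^i w_\lambda \otimes (x_\alpha^-)^{k-i} w_\mu
\end{equation*}
vanishes term by term, since every $i \in \{0, \dotsc, k\}$ satisfies either $i \ge \lambda(h_\alpha)+1$ or $k-i \ge \mu(h_\alpha)+1$. Surjectivity of $\phi$ is immediate: $\g \otimes (A \oplus 0)$ acts only on the first tensor factor and $\g \otimes (0 \oplus B)$ only on the second, so applying $\U{\g \otimes A}$ to $w_\lambda \otimes w_\mu$ produces $W_A(\lambda) \otimes w_\mu$, and subsequently applying $\U{\g \otimes B}$ produces the entire $W_A(\lambda) \otimes W_B(\mu)$.

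For Step~2, I would check that $\phi$ intertwines the right $\fC_{\lambda+\mu}$-action with the right $\fA_\lambda \otimes \fB_\mu$-action via $\Delta_{\lambda,\mu}$, i.e., $\phi(\xi \cdot a) = \phi(\xi) \cdot \Delta_{\lambda,\mu}(a)$ for all $\xi \in W_C(\lambda+\mu)$ and $a \in \fC_{\lambda+\mu}$. Both right actions commute with the left $\U{\g \otimes C}$-action, so this reduces to $\xi = w_{\lambda+\mu}$, where it is immediate from the definition of $\Delta_{\lambda,\mu}$ (using the identification $\U{\h \otimes C} \cong \U{\h \otimes A} \otimes \U{\h \otimes B}$ induced by $C = A \oplus B$). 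This compatibility lets $\phi \otimes \id_{M \otimes N}$ descend to a well-defined surjective $\g \otimes C$-module homomorphism
\begin{equation*}
W_C(\lambda+\mu) \otimes_{\fC_{\lambda+\mu}} \Delta_{\lambda,\mu}^*(M \otimes N)
\twoheadrightarrow
\bigl(W_A(\lambda) \otimes W_B(\mu)\bigr) \otimes_{\fA_\lambda \otimes \fB_\mu} (M \otimes N),
\end{equation*}
and composing with the canonical isomorphism of the target with $\fW_A^\lambda M \otimes \fW_B^\mu N$ yields $\tau$.

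The main obstacle is the sign- and weight-bookkeeping in Step~1, particularly the verification that $(x_\alpha^-)^{(\lambda+\mu)(h_\alpha)+1}$ annihilates $w_\lambda \otimes w_\mu$; once this is in hand, the remainder of the construction is a formal manipulation of tensor products over commutative algebras.
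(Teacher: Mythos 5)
Your proof is correct and follows essentially the same route as the paper: construct $\phi \colon W_C(\lambda+\mu)\twoheadrightarrow \pi_A^*W_A(\lambda)\otimes\pi_B^*W_B(\mu)$ via the universal property of the global Weyl module, verify compatibility with the right $\fC_{\lambda+\mu}$-action, then tensor with $M\otimes N$ and apply the standard tensor-product isomorphism (the paper cites Kumar, Lemma~3.1.7(2) for the final identification, whereas you invoke the canonical isomorphism directly, but it is the same step). Your verification that $(x_\alpha^-)^{(\lambda+\mu)(h_\alpha)+1}$ kills $w_\lambda\otimes w_\mu$ is a correct detail the paper leaves implicit when it asserts that $w_\lambda\otimes w_\mu$ satisfies \eqref{cha-fou-equ5}.
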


\dproof{
Since $C$ is assumed to be equal to $A \oplus B$, we have that $\U{\g \otimes C} \cong \U{\g \otimes A} \otimes \U{\g \otimes B}$ and that $\pi_A^* W_A(\lambda) \otimes \pi_B^* W_B(\mu)$ is a $\U{\g \otimes C}$-module with (see Remark~\ref{rmk:actions}\eqref{rmk:actions.pi})
\[
(u_a u_b) (w_1 \otimes w_2) = (u_a w_1) \otimes (u_b w_2),
\quad u_a \in \U{\g \otimes A},\ u_b \in \U{\g \otimes B},\ w_1 \in W_A (\lambda),\ w_2 \in W_B (\mu).
\]
Thus, notice that $w_\lambda \otimes w_\mu$ generates $\pi_A^* W_A(\lambda) \otimes \pi_B^* W_B(\mu)$ and satisfies \eqref{cha-fou-equ5}.  Hence, there exists a unique surjective homomorphism of $\U{\g \otimes C}$-modules
\[
\phi \colon W_C(\lambda+\mu) \to \pi_A^*W_A(\lambda) \otimes \pi_B^*W_B(\mu)
\quad \textup{ such that} \quad
\phi (w_{\lambda+\mu}) = w_\lambda \otimes w_\mu.
\]

Now, recall from \eqref{eq:right action} that $W_A(\lambda)$ (resp. $W_B(\mu)$) is a (right) $\fA_\lambda$-module (resp. right $\fB_\mu$-module), with the (right) action of $\fA_\lambda$ (resp. $\fB_\mu$) being induced from the (left) action of $\U\ga$ (resp. $\U{\g \otimes B}$).  Hence, by Remark~\ref{rmk:actions}\eqref{rmk:actions.ann}, $\overline\pi_A^*W_A(\lambda)$ is a $\fC_\lambda$-module and $\overline\pi_B^*W_B(\mu)$ is a $\fC_\mu$-module, by Remark~\ref{rmk:actions}\eqref{rmk:actions.pi}, $\Delta_{\lambda,\mu}^* \left( \overline\pi_A^*W_A(\lambda) \otimes \overline\pi_B^*W_B(\mu) \right)$ is a $\fC_{\lambda+\mu}$-module, and by \eqref{eq:right action}, $\phi$ induces is a surjective homomorphism of $\fC_{\lambda+\mu}$-modules (which we denote by the same symbol)
\[
\phi \colon W_C(\lambda+\mu) \to \Delta_{\lambda,\mu}^* \left( \overline\pi_A^*W_A(\lambda) \otimes \overline\pi_B^*W_B(\mu) \right)
\quad \textup{ such that} \quad
\phi (w_{\lambda+\mu}) = w_\lambda \otimes w_\mu.
\]
Notice that as a vector space $\pi_A^*W_A(\lambda) \otimes \pi_B^*W_B(\mu)$ is isomorphic to $\Delta_{\lambda,\mu}^* \left( \overline\pi_A^*W_A(\lambda) \otimes \overline\pi_B^*W_B(\mu) \right)$.  So, for the rest of this proof, we will abuse notation and denote by $W_A(\lambda) \otimes W_B(\mu)$ the (left) $\g \otimes C$-module $\pi_A^*W_A(\lambda) \otimes \pi_B^*W_B(\mu)$ and the (right) $\fC_\lambda$-module $\Delta_{\lambda,\mu}^* \left( \overline\pi_A^*W_A(\lambda) \otimes \overline\pi_B^*W_B(\mu) \right)$.

Since $- \otimes_{\fC_{\lambda+\mu}} \Delta_{\lambda,\mu}^* \left( M \otimes N \right)$ is a right exact functor, the map
\[
\phi \otimes \id \colon
W_C (\lambda + \mu) \otimes_{\fC_{\lambda+\mu}} \Delta_{\lambda,\mu}^* \left( M \otimes N \right) \to \left( W_A(\lambda) \otimes W_B(\mu) \right) \otimes_{\fC_{\lambda+\mu}} \Delta_{\lambda,\mu}^* \left( M \otimes N \right)
\]
is in fact a surjective homomorphism of $(\g\otimes C, \fC_{\lambda+\mu})$-bimodules.

Finally, using \cite[Lemma~3.1.7(2)]{Kum02}, we see that the map
\[
\begin{array}{rcl}
\psi \colon \left( W_A(\lambda) \otimes W_B(\mu) \right) \otimes_{\fC_{\lambda+\mu}} \Delta_{\lambda, \mu}^* \left( M \otimes N \right) &\longrightarrow& \pi_A^*(\fW_A^\lambda M) \otimes \pi_B^*(\fW_B^\mu N) \\
(w \otimes w') \otimes (m \otimes n) & \longmapsto & (w \otimes m)\otimes (w' \otimes n)
\end{array}
\]
is in fact an isomorphism of $\g\otimes C$-modules.  Thus $\tau = \psi \circ (\phi \otimes \id)$ is a surjective homomorphism of $\g \otimes C$-modules.
}

When $M$, $N$, $A$, $B$ are finite dimensional and $\g$ is simple, the homomorphism $\tau$ given in Proposition~\ref{prop:epi-tensor-property} is in fact an isomorphism.  In order to prove this fact, we will prove a finite-dimensional version of Theorem~\ref{thm:hom-ext1-vanish}.  We begin with some homological results.

\begin{lemma} \label{lem:fd.ext.vanish}
Let $\lambda \in X^+$, $M \in \cal C_A^\lambda$ and $m \ge 0$.  If $\Ext^m_{\cal C_A^\lambda} ( M, N ) = 0$ for all finite-dimensional irreducible $N \in \cal C_A^\lambda$ satisfying $N_\lambda = 0$, then $\Ext^m_{\cal C_A^\lambda} ( M, N ) = 0$ for all finite-dimensional $N \in \cal C_A^\lambda$ satisfying $N_\lambda = 0$.
\end{lemma}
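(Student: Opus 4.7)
The plan is to argue by induction on the length of a composition series for $N$ in $\mathcal{C}_A^\lambda$. Since $N$ is finite dimensional and $\mathcal{C}_A^\lambda$ is an abelian subcategory of the module category closed under submodules and quotients (Lemma~\ref{lem:I(a,b)closed}), every such $N$ admits a finite composition series whose factors are irreducible objects of $\mathcal{C}_A^\lambda$. Let $\ell(N)$ denote the length of such a series.

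The base case $\ell(N) = 1$ is precisely the hypothesis: $N$ is finite-dimensional irreducible with $N_\lambda = 0$, so $\Ext^m_{\mathcal{C}_A^\lambda}(M, N) = 0$. For the inductive step with $\ell(N) > 1$, I would pick a nonzero irreducible submodule $N' \subseteq N$ (which exists by finite-dimensionality) and form the short exact sequence
\[
0 \to N' \to N \to N'' \to 0
\]
in $\mathcal{C}_A^\lambda$, where $N'' = N/N'$ has $\ell(N'') = \ell(N) - 1$. The key observation is that the functor $\fR_A^\lambda$ is exact (it just picks out a weight space), so applying it to this sequence yields the exact sequence $0 \to N'_\lambda \to N_\lambda \to N''_\lambda \to 0$. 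Since $N_\lambda = 0$ by assumption, both $N'_\lambda = 0$ and $N''_\lambda = 0$.

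Then the long exact sequence associated to $\Ext^{\bullet}_{\mathcal{C}_A^\lambda}(M, -)$ gives, in particular,
\[
\Ext^m_{\mathcal{C}_A^\lambda}(M, N') \to \Ext^m_{\mathcal{C}_A^\lambda}(M, N) \to \Ext^m_{\mathcal{C}_A^\lambda}(M, N'').
\]
The left term vanishes by the hypothesis (as $N'$ is finite-dimensional irreducible with $N'_\lambda = 0$), and the right term vanishes by the induction hypothesis applied to $N''$ (finite dimensional with $N''_\lambda = 0$ and strictly smaller composition length). Hence the middle term vanishes, completing the induction.

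There is no genuine obstacle here; the only point that requires a moment of care is verifying that the relevant subquotient $N'$ may be chosen inside $\mathcal{C}_A^\lambda$ and that $\fR_A^\lambda$-exactness propagates the condition $N_\lambda = 0$ to both $N'$ and $N''$, both of which follow immediately from Lemma~\ref{lem:I(a,b)closed} and the definition of $\fR_A^\lambda$.
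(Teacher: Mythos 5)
Your proof is correct and takes essentially the same route as the paper: induction on composition length, using a short exact sequence that peels off a single irreducible composition factor and then the long exact sequence for $\Ext^\bullet_{\mathcal{C}_A^\lambda}(M,-)$. The only (inessential) difference is that you peel off an irreducible \emph{submodule} while the paper peels off the irreducible \emph{quotient} $N/N_1$ at the top of the composition series.
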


\begin{proof}
If $N$ is irreducible, then the result follows directly from the hypothesis.  Now, suppose that $N$ is not irreducible and consider its composition series
		\[
N\supsetneq N_1 \supsetneq N_2 \supsetneq \dotsb \supsetneq N_m \supsetneq 0.
		\]
The short exact sequence $0 \to N_1 \to N \to N/N_1 \to 0$ yields a long exact sequence
	\[
\cdots \to \Ext^m_{\cal C_A^\lambda} (M,N_1) \to \Ext^m_{\cal C_A^\lambda} (M,N) \to \Ext^m_{\cal C_A^\lambda} (M,N/N_1) \to \cdots
	\]
Since the length of $N_1$ is less than the length of $N$ and $(N_1)_\lambda\subseteq N_\lambda=0$, it follows by induction that $\Ext^m_{\cal C_A^\lambda} ( M, N_1 ) = 0$. Moreover, since $N/N_1$ is an irreducible finite-dimensional object of $\cal C_A^\lambda$ satisfying $(N/N_1)_\lambda=0$, we have $\Ext^m_{\cal C_A^\lambda} ( M, N/N_1 ) = 0$.  The result follows.
\qedhere
\end{proof}

\begin{lemma}\label{prop8}
Let $\lambda \in X^+$.  A finite-dimensional module $M \in \cal C_A^\lambda$ satisfies $\fW_A^\lambda\fR_A^\lambda M \cong M$, if and only if,  for every finite-dimensional, irreducible $N \in {\cal C}_A^\lambda$ satisfying $N_\lambda = 0$, we have
\begin{equation}\label{eq:10-prop8-CFK}
\Hom_{\cal C_A^\lambda} ( M, N )
= \Ext_{\cal C_A^\lambda}^1 ( M, N ) = 0.
\end{equation}
\end{lemma}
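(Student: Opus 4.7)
The plan is to derive this finite-dimensional statement from the more general Theorem~\ref{thm:hom-ext1-vanish} by bootstrapping the vanishing hypothesis from finite-dimensional irreducible $N$ to arbitrary $N \in \cal C_A^\lambda$ with $N_\lambda = 0$. The forward direction is immediate: under the hypothesis $M \cong \fW_A^\lambda \fR_A^\lambda M$, Theorem~\ref{thm:hom-ext1-vanish} yields $\Hom_{\cal C_A^\lambda}(M,N) = \Ext_{\cal C_A^\lambda}^1(M,N) = 0$ for every $N \in \cal C_A^\lambda$ with $N_\lambda = 0$, and restricting to finite-dimensional irreducible such $N$ gives \eqref{eq:10-prop8-CFK}.

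For the converse, I first invoke Lemma~\ref{lem:fd.ext.vanish} to promote the vanishing from finite-dimensional irreducibles to all finite-dimensional $N$ with $N_\lambda = 0$. I next extend the Hom vanishing to arbitrary such $N$: any $f \colon M \to N$ has finite-dimensional image $\im f \subseteq N$ (since $M$ is finite-dimensional), $\im f$ is a subobject of $\cal C_A^\lambda$, and $(\im f)_\lambda \subseteq N_\lambda = 0$, so $f=0$ by the finite-dimensional vanishing. Applying this extended Hom vanishing to the canonical projection $M \to M / \U\ga M_\lambda$ (whose target lies in $\cal C_A^\lambda$ with trivial $\lambda$-weight space) forces $M = \U\ga M_\lambda$, and consequently the counit $\epsilon_M \colon \fW_A^\lambda \fR_A^\lambda M \to M$ is surjective.

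The main obstacle is extending the Ext$^1$ vanishing to arbitrary $N \in \cal C_A^\lambda$ with $N_\lambda = 0$; once accomplished, Theorem~\ref{thm:hom-ext1-vanish} yields the isomorphism $M \cong \fW_A^\lambda \fR_A^\lambda M$. Under condition \hyperref[C1]{$(\mathfrak C1)$} alone, the kernel $\ker \epsilon_M$ need not be finite-dimensional, so a direct appeal to the finite-dimensional hypothesis is not possible. The plan is to exploit the finite-dimensionality of $M$ via a directed-colimit argument: writing $N$ as the directed union $N = \bigcup_i N_i$ of its finite-dimensional $\ga$-submodules (each satisfying $(N_i)_\lambda = 0$), one shows that any extension $0 \to N \to E \to M \to 0$ arises, up to pushout along some inclusion $N_i \hookrightarrow N$, as the extension $0 \to N_i \to E_i \to M \to 0$, where $E_i$ is the $\ga$-submodule of $E$ generated by a finite-dimensional lift of $M_\lambda$ together with $N_i$; since $\Ext^1_{\cal C_A^\lambda}(M, N_i) = 0$ by the finite-dimensional hypothesis already established, the sub-extension $E_i$ splits, and this splitting propagates to a splitting of the original $E$. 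This finitization step, which parallels the arguments in \cite[\S3.7]{CFK10} and \cite[\S4]{FMS15}, is the crux of the proof.
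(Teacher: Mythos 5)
Your proposal tracks the paper's argument up through the surjectivity of the counit $\epsilon_M \colon \fW_A^\lambda\fR_A^\lambda M \to M$: both first use Lemma~\ref{lem:fd.ext.vanish} to promote the hypothesis to all finite-dimensional $N$ with $N_\lambda = 0$, then deduce $M = \U\ga M_\lambda$ from the vanishing of $\Hom_{\cal C_A^\lambda}(M, M/\U\ga M_\lambda)$. (The paper applies the promoted hypothesis directly, since $M/\U\ga M_\lambda$ is itself finite-dimensional with trivial $\lambda$-weight space; your image argument extending Hom-vanishing to arbitrary $N$ is an unnecessary but correct detour.) The two proofs diverge at the step where one must conclude $\ker\epsilon_M = 0$.

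The paper's route is local rather than global: it runs the long exact sequence of $\Ext^\bullet_{\cal C_A^\lambda}(-,\ker\epsilon_M)$ over the short exact sequence $0 \to \ker\epsilon_M \to \fW_A^\lambda\fR_A^\lambda M \to M \to 0$, reading off the fragment
\[
\Hom_{\cal C_A^\lambda}(\fW_A^\lambda\fR_A^\lambda M,\ker\epsilon_M) \to \Hom_{\cal C_A^\lambda}(\ker\epsilon_M,\ker\epsilon_M) \to \Ext^1_{\cal C_A^\lambda}(M,\ker\epsilon_M).
\]
Since $(\ker\epsilon_M)_\lambda = 0$ and $\fW_A^\lambda\fR_A^\lambda M$ is generated by its $\lambda$-weight space, the left term vanishes, and the promoted hypothesis, applied to the \emph{single} module $N = \ker\epsilon_M$, kills the right term; hence $\ker\epsilon_M = 0$. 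You are right to observe that this requires $\ker\epsilon_M$ to be finite-dimensional; in the situations where the lemma is actually deployed (under $(\mathfrak C2)$ with $A$ finite-dimensional, so that Corollary~\ref{cor5.12-FMS} bounds $\fW_A^\lambda\fR_A^\lambda M$), this is automatic, and the paper's proof is silently relying on it.

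Your alternative — bootstrapping the $\Ext^1$ vanishing to \emph{every} $N$ with $N_\lambda = 0$ by a directed-colimit argument and then invoking Theorem~\ref{thm:hom-ext1-vanish} — has a genuine gap. First, objects of $\cal C_A^\lambda$ are required to be finitely semisimple only as $\lie r$-modules, not as $\ga$-modules, so an arbitrary such $N$ need not be a directed union of its finite-dimensional $\ga$-submodules (it may have none besides $0$), and the filtration $N = \bigcup_i N_i$ you rely on does not exist in general. Second, even granting such a filtration, the submodule $E_i = \U\ga(\tilde M_\lambda + N_i) \subseteq E$ generated by a finite-dimensional lift of $M_\lambda$ together with $N_i$ is only finitely generated over $\U\ga$, not finite-dimensional, so there is no reason for $E_i \cap N$ to be contained in any $N_j$; the descent of the extension class to a finite stage is not established. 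The more economical move is the paper's: apply the hypothesis exactly once, to $\ker\epsilon_M$ itself, rather than attempting to extend it across the whole category.
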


\begin{proof}
The \emph{only if} part is an immediate consequence of Theorem~\ref{thm:hom-ext1-vanish}.  The proof of the \emph{if} part is similar to the proof the \emph{if} part of Theorem~\ref{thm:hom-ext1-vanish}, but uses Lemma~\ref{lem:fd.ext.vanish} and the finite dimensionality of $M$ and $N$.  Namely, assume that $M$ is a finite-dimensional object of ${\cal C}_A^\lambda$ such that, for each irreducible finite-dimensional $N \in {\cal C}_A^\lambda$ that satisfies $N_\lambda = 0$, we have $\Hom_{\cal C_A^\lambda} \left( M, N \right) = \Ext_{{\cal C}_A^\lambda}^1 (M, N) = 0$.  Using Proposition~\ref{prop:glob.Weyl.genrel+univ},  we see that there is a unique homomorphism of $\ga$-modules $\epsilon_M \colon \fW_A^\lambda \fR_A^\lambda M \to M$ satisfying
\[
\epsilon_M (u w_\lambda \otimes m_\lambda) = u m_\lambda
\quad \textup{for all $u \in \U\ga$ and $m_\lambda \in M_\lambda$}.
\]

Denote the submodule $\U\ga M_\lambda \subseteq M$ by $M'$, and consider the short exact sequence
\[
0 \to M' \inj M \to M / M' \to 0.
\]
Since $\fR_A^\lambda$ is an exact functor, $0 \to M'_\lambda \inj M_\lambda \to (M / M')_\lambda \to 0$ is also an exact sequence.  Now notice that by construction, $M'_\lambda = M_\lambda$.  Hence $M/M'$ is finite dimensional and $(M / M')_\lambda = 0$.  This implies, by hypothesis, that $\Hom_{\cal C_A^\lambda} \left( M, M/M' \right) = 0$.  Thus $M = M' = \U\ga M_\lambda$.  Hence, the homomorphism $\epsilon_M$ is surjective, and we have a short exact sequence
\[
0 \to \ker \epsilon_M \to \fW_A^\lambda \fR_A^\lambda M \to M \to 0.
\]
Consider the associated long exact sequence on $\Ext^\bullet_{\cal C_A^\lambda} \left( - , \ker\epsilon_M \right)$.  In particular, we have:
\[
\cdots \to
\Hom_{\cal C_A^\lambda} \left( \fW_A^\lambda \fR_A^\lambda M, \ker \epsilon_M \right) \to
\Hom_{\cal C_A^\lambda} \left( \ker \epsilon_M, \ker \epsilon_M \right) \to
\Ext^1_{\cal C_A^\lambda} \left( M, \ker \epsilon_M \right) \to \cdots
\]
Now, notice that $(\ker \epsilon_M)_\lambda = 0$, as $\epsilon_M$ maps $\left( \fW_A^\lambda \fR_A^\lambda M \right)_\lambda$ isomorphically onto $M_\lambda$.  This implies by Remark~\ref{rmk:hom.vanish}, that $\Hom_{\cal C_A^\lambda} \left( \fW_A^\lambda \fR_A^\lambda M , \ker \epsilon_M \right) = 0$, and by hypothesis, that $\Ext^1_{\cal C_A^\lambda} \left( M , \ker \epsilon_M \right) = 0$.  Hence, $\Hom_{{\cal C}_A^\lambda} (\ker \epsilon_M, \ker \epsilon_M) = 0$.  Thus $\ker \epsilon_M = 0$, and $\epsilon_M$ is an isomorphism between $\fW_A^\lambda \fR_A^\lambda M$ and $M$.\qedhere
\end{proof}
}

\begin{thm} \label{thm:tensor-property2}
Let $\g$ be a finite-dimensional simple Lie superalgebra not of type $\lie q(n)$, with a fixed triangular decomposition satisfying \hyperref[C2]{$(\mathfrak C2)$}. Suppose also that $A$ and $B$ are finite-dimensional commutative, associative $\C$-algebras with unit and let $\pi_A\colon A\oplus B\twoheadrightarrow A$ and $\pi_B\colon A\oplus B\twoheadrightarrow B$ be the canonical projections. Let $\lambda,\mu\in X^+$ be such that $\lambda+\mu\in X^+$.  If $M \in \mod{\fA_\lambda}$, $N \in \mod{\fB_\mu}$ are finite dimensional, then there is an isomorphism of $\g\otimes (A\oplus B)$-modules
\[
\fW_{A\oplus B}^{\lambda+\mu}\left(\Delta_{\lambda,\mu}^* (M\otimes N)\right) \cong \pi_A^*(\fW_A^\lambda M)\otimes \pi_B^*(\fW_B^\mu N).
\]
\end{thm}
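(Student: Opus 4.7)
The plan is to construct a natural comparison map $\tau$ between the two sides and then promote it to an isomorphism via the homological characterisation of $\fW$-images from Lemma~\ref{prop8}. Write $C=A\oplus B$ and
\[
V:=\pi_A^*(\fW_A^\lambda M)\otimes \pi_B^*(\fW_B^\mu N).
\]
For $m\in M$ and $n\in N$, the vector $(w_\lambda\otimes m)\otimes(w_\mu\otimes n)$ has weight $\lambda+\mu$ in $V$ and is annihilated by $\n^+\otimes C$. Moreover, using the coproduct identity
\[
\Delta\bigl((x_\alpha^-\otimes 1)^N\bigr)=\sum_{k=0}^N\tbinom{N}{k}(x_\alpha^-\otimes 1)^k\otimes(x_\alpha^-\otimes 1)^{N-k}
\]
with $N=(\lambda+\mu)(h_\alpha)+1$, either $k\ge\lambda(h_\alpha)+1$ or $N-k\ge\mu(h_\alpha)+1$, so each summand vanishes by the defining relations of $\fW_A^\lambda M$ and $\fW_B^\mu N$. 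Together with the compatibility of the coproduct $\Delta_{\lambda,\mu}\colon \fC_{\lambda+\mu}\to\fA_\lambda\otimes\fB_\mu$, the universal property of $W_C(\lambda+\mu)$ yields a surjective homomorphism
\[
\tau\colon \fW_C^{\lambda+\mu}\bigl(\Delta_{\lambda,\mu}^*(M\otimes N)\bigr)\twoheadrightarrow V,
\]
and it remains to show $\tau$ is injective.

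Next I would verify that both sides are finite-dimensional objects of $\cal C_C^{\lambda+\mu}$. Since $M$ and $N$ are finite dimensional and the triangular decomposition satisfies \hyperref[C2]{$(\mathfrak C2)$}, Corollary~\ref{cor5.12-FMS} applies: $\fW_A^\lambda M$, $\fW_B^\mu N$, and $\fW_C^{\lambda+\mu}\Delta_{\lambda,\mu}^*(M\otimes N)$ are all finite dimensional. Because every weight of $\fW_A^\lambda M$ (resp.\ $\fW_B^\mu N$) is bounded above by $\lambda$ (resp.\ $\mu$), the $(\lambda+\mu)$-weight space of $V$ is $(\fW_A^\lambda M)_\lambda\otimes(\fW_B^\mu N)_\mu\cong M\otimes N$ by Proposition~\ref{prop:relating-functors}\eqref{prop-item:iso-iden}; pulling back along $\Delta_{\lambda,\mu}$ gives $\fR_C^{\lambda+\mu}V\cong \Delta_{\lambda,\mu}^*(M\otimes N)$, and applying $\fR_C^{\lambda+\mu}$ to $\tau$ recovers the identity on this $\fC_{\lambda+\mu}$-module.

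The decisive step is to verify, for every finite-dimensional irreducible $U\in \cal C_C^{\lambda+\mu}$ with $U_{\lambda+\mu}=0$, the vanishings
\[
\Hom_{\cal C_C^{\lambda+\mu}}(V,U)=\Ext_{\cal C_C^{\lambda+\mu}}^1(V,U)=0.
\]
The $\Hom$-vanishing follows because $V$ is generated by $V_{\lambda+\mu}$ as a $\U{\g\otimes C}$-module: indeed $\fW_A^\lambda M=\U{\g\otimes A}(w_\lambda\otimes M)$, $\fW_B^\mu N=\U{\g\otimes B}(w_\mu\otimes N)$, and $\U{\g\otimes C}\cong \U{\g\otimes A}\otimes\U{\g\otimes B}$, so any morphism $V\to U$ is determined by its restriction to $V_{\lambda+\mu}$, which must be zero. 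For the $\Ext^1$-vanishing I would take a projective cover $\rho\colon P\twoheadrightarrow \Delta_{\lambda,\mu}^*(M\otimes N)$ in $\mod{\fC_{\lambda+\mu}}$, apply the right exact functor $\fW_C^{\lambda+\mu}$ to the short exact sequence $0\to K\to P\to \Delta_{\lambda,\mu}^*(M\otimes N)\to 0$, and use Proposition~\ref{prop:relating-functors}\eqref{prop-item:proj-to-proj} (the image is projective in $\cal C_C^{\lambda+\mu}$) together with the long exact sequence for $\Ext^\bullet(-,U)$ and the $\Hom$-vanishing to conclude $\Ext_{\cal C_C^{\lambda+\mu}}^1(V,U)=0$. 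Lemma~\ref{prop8} then yields $V\cong \fW_C^{\lambda+\mu}\fR_C^{\lambda+\mu}V\cong \fW_C^{\lambda+\mu}\Delta_{\lambda,\mu}^*(M\otimes N)$, and chasing the identifications shows this isomorphism is inverse to $\tau$.

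The principal obstacle is the $\Ext^1$-vanishing, since extensions in $\cal C_C^{\lambda+\mu}$ must be tracked simultaneously for the even and odd parts of $\g\otimes C$ and the Künneth-style decoupling afforded by $C=A\oplus B$ has to respect the $\Z_2$-grading. The plan above mirrors the non-super proof of \cite[Theorem~5]{CFK10}, with the super adjustments confined to bookkeeping. Should this categorical reduction prove delicate, a fallback is available: compare the Hilbert series of both sides weight by weight using the tensor decomposition $\U{\g\otimes C}\cong \U{\g\otimes A}\otimes \U{\g\otimes B}$; since $\tau$ is surjective and induces an isomorphism on the $(\lambda+\mu)$-weight space, matching total dimensions would force $\tau$ to be injective as well.
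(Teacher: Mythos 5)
Your construction of the surjection $\tau$ and your verification that $\fR_C^{\lambda+\mu}V\cong\Delta_{\lambda,\mu}^*(M\otimes N)$ are sound, as is your $\Hom$-vanishing argument for $V=\pi_A^*(\fW_A^\lambda M)\otimes\pi_B^*(\fW_B^\mu N)$. The gap is in the $\Ext^1$-vanishing, which you correctly flag as the principal obstacle but whose proposed resolution is circular. Applying $\fW_C^{\lambda+\mu}$ to the short exact sequence $0\to K\to P\to \Delta_{\lambda,\mu}^*(M\otimes N)\to 0$ and running the long exact sequence for $\Ext^\bullet(-,U)$ yields information about $\Ext^1_{\cal C_C^{\lambda+\mu}}\bigl(\fW_C^{\lambda+\mu}\Delta_{\lambda,\mu}^*(M\otimes N),\,U\bigr)$, \emph{not} about $\Ext^1_{\cal C_C^{\lambda+\mu}}(V,U)$; identifying the two is precisely the theorem you are trying to prove. (This long-exact-sequence step is exactly the ``only if'' direction of Theorem~\ref{thm:hom-ext1-vanish}, which presupposes $V\cong\fW_C^{\lambda+\mu}\fR_C^{\lambda+\mu}V$.) The Hilbert-series fallback fails for the same reason: you cannot compute the weight multiplicities of $\fW_C^{\lambda+\mu}\Delta_{\lambda,\mu}^*(M\otimes N)$ without already knowing the decomposition you want.

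The paper's proof closes this gap with three ingredients you do not supply: (i) every finite-dimensional irreducible $U\in\cal C_{A\oplus B}^{\lambda+\mu}$ factors as $U\cong U_A\otimes U_B$ with $U_A$, $U_B$ irreducible over $\g\otimes A$, $\g\otimes B$ of highest weights $\nu_A$, $\nu_B$ satisfying $\nu_A+\nu_B\le\lambda+\mu$; (ii) the K\"unneth formula, which decomposes
\[
\Ext^1_{\cal C_C^{\lambda+\mu}}(\fW_A^\lambda M\otimes\fW_B^\mu N,\,U)\cong
\Ext^1_{\cal C_A^\lambda}(\fW_A^\lambda M,U_A)\otimes\Hom_{\cal C_B^\mu}(\fW_B^\mu N,U_B)
\oplus\Hom_{\cal C_A^\lambda}(\fW_A^\lambda M,U_A)\otimes\Ext^1_{\cal C_B^\mu}(\fW_B^\mu N,U_B),
\]
and likewise for $\Hom$; and (iii) a weight argument showing that $U_{\lambda+\mu}=0$ forces $(U_A)_\lambda=0$ or $(U_B)_\mu=0$, so that in either term one of the two tensor factors vanishes by Theorem~\ref{thm:hom-ext1-vanish} applied to $\fW_A^\lambda M$ or to $\fW_B^\mu N$ \emph{individually}. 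That last application is legitimate precisely because $\fW_A^\lambda M$ and $\fW_B^\mu N$ are already in the essential image of the respective Weyl functors --- which is what makes the decoupling non-circular. You mention in passing that the ``K\"unneth-style decoupling'' must respect the $\Z_2$-grading, but this decoupling is not a technicality to be checked after the fact; it is the argument, and your proposal does not carry it out.
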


\dproof{
Recall from Remark~\ref{rmk:actions}, that as vector spaces, $\pi_A^* \fW_A^\lambda M$ is isomorphic to $\fW_A^\lambda M$, $\pi_B^* \fW_B^\mu N$ is isomorphic to $\fW_B^\mu N$, and $\Delta_{\lambda, \mu}^* (M \otimes N)$ is isomorphic to $(M \otimes N)$.  So, during this proof, we will abuse notation and omit the pull-backs.

Now, notice that
\begin{align*}
\fR_{A \oplus B}^{\lambda + \mu} \left( \fW_A^\lambda M \otimes \fW_B^\mu N \right)
&\cong \sum_{\xi + \eta = \lambda + \mu} (\fW_A^\lambda M)_\xi \otimes (\fW_B^\mu N)_\eta \\
&\cong (\fW_A^\lambda M)_\lambda \otimes (\fW_B^\mu N)_\mu \\
& \cong M \otimes N.
\end{align*}
Thus, $\fW_{A \oplus B}^{\lambda + \mu} \fR_{A \oplus B}^{\lambda + \mu} \left( \fW_A^\lambda M \otimes \fW_B^\mu N \right) \cong \fW_{A \oplus B}^{\lambda + \mu} (M \otimes N)$.  Since $M, N, A$ and $B$ are assumed to be finite dimensional, by Corollary~\ref{cor5.12-FMS}, $\fW_A^\lambda M\otimes \fW_B^\mu N$ is finite dimensional.  Thus, by Lemmas~\ref{lem:fd.ext.vanish} and \ref{prop8}, to prove that $\fW_{A \oplus B}^{\lambda + \mu} \fR_{A \oplus B}^{\lambda + \mu} (\fW_A^\lambda M \otimes \fW_B^\mu N) \cong (\fW_A^\lambda M \otimes \fW_B^\mu N)$ is equivalent to proving that
\[
\Hom_{\cal C_{A\oplus B}^{\lambda+\mu}} (\fW_A^\lambda M \otimes \fW_B^\mu N, U) = \Ext_{\cal C_{A\oplus B}^{\lambda+\mu}}^1 (\fW_A^\lambda M \otimes \fW_B^\mu N, U) = 0
\]
for all finite-dimensional irreducible $U \in {\cal C}_{A\oplus B}^{\lambda+\mu}$ with $U_{\lambda+\mu} = 0$.

Let $U$ be a finite-dimensional irreducible $U \in {\cal C}_{A\oplus B}^{\lambda+\mu}$.  Since $A$ and $B$ are finite dimensional, there exist $\nu_A, \nu_B \in X^+$ such that $U_A$ is an irreducible $\ga$-module of highest-weight $\nu_A$, $U_B$ is an irreducible $\g \otimes B$-module of highest-weight $\nu_B$, and $U \cong U_A \otimes U_B$ (see \cite[Proposition~8.4]{che95}).  Moreover, since $\nu_A+\nu_B$ is in the set of weights of $U$, we have $\nu_A + \nu_B \le \lambda + \mu$.  Now, using the K\"unneth formula, we have:
\begin{align*}
\Hom_{\cal C_{A\oplus B}^{\lambda+\mu}} (\fW_A^\lambda M \otimes \fW_B^\mu N, U) \cong
{ }&{ }\Hom_{\cal C_A^\lambda} (\fW_A^\lambda M, U_A) \otimes \Hom_{\cal C_B^\mu} (\fW_B^\mu N, U_B), \\
\Ext_{\cal C_{A\oplus B}^{\lambda+\mu}}^1 (\fW_A^\lambda M \otimes \fW_B^\mu N, U) \cong
{ }&{ }\Ext_{\cal C_A^\lambda}^1 (\fW_A^\lambda M, U_A) \otimes \Hom_{\cal C_B^\mu} (\fW_B^\mu N, U_B) \\
&\oplus \Hom_{\cal C_A^\lambda} (\fW_A^\mu M, U_A) \otimes \Ext_{\cal C_B^\mu}^1 (\fW_B^\mu N, U_B).
\end{align*}
Thus, if we prove that either $(U_A)_\lambda = 0$ or $(U_B)_\mu = 0$, by Theorem~\ref{thm:hom-ext1-vanish}, we will have finished our proof.

First, assume that either $\nu_A \le \lambda$ or $\nu_B \le \mu$, and recall that $\nu_A + \nu_B \le \lambda + \mu$ by construction.  If $U_{\lambda+\mu} = 0$, then either $(U_A)_\lambda = 0$ or $(U_B)_\mu = 0$.  Now, assume that $\nu_A \not\le \lambda$ and $\nu_B \not\le \mu$.  In this case, if $U_{\lambda+\mu} = 0$ and $\lambda \le \nu_A$, then $\lambda + \nu_B \le \nu_A + \nu_B < \lambda + \mu$, which contradicts the fact that $\nu_B \not\le \mu$.  Thus, since $\lambda \not\le \nu_A$, $\nu_A \not\le \lambda$ and $U_A$ is a highest-weight module of highest weight $\nu_A$, we conclude that $(U_A)_\lambda = 0$.
}

%%%%%%%%%%%%%%%%%%%%%%%%%%%%%%%

\end{document}